\documentclass[12pt]{amsart}
\makeatletter
\usepackage[nosort]{cite}
\usepackage{amsmath,amsthm,amsfonts,amssymb,amscd,mathrsfs,slashed,graphicx,braket,mathtools}
\usepackage{tikz-cd}
\usepackage[a4paper,top=3cm,bottom=3cm,left=3cm,right=3cm]{geometry}
\usepackage[colorlinks, linkcolor=blue, anchorcolor=blue, citecolor=blue]{hyperref}

\newlength{\xtrawidth}
\setlength{\xtrawidth}{8mm}
\newlength{\xtraheight}
\setlength{\xtraheight}{10mm}
\addtolength{\textwidth}{\xtrawidth}
\addtolength{\textwidth}{\xtrawidth}
\addtolength{\oddsidemargin}{-\xtrawidth}
\addtolength{\evensidemargin}{-\xtrawidth}
\addtolength{\textheight}{\xtraheight}
\addtolength{\textheight}{\xtraheight}
\addtolength{\topmargin}{-\xtraheight}

\numberwithin{equation}{section}
\numberwithin{table}{section}
\numberwithin{figure}{section}
\setcounter{tocdepth}{2} 

\theoremstyle{definition}
\newtheorem{definition}{Definition}[section]
\theoremstyle{theorem}
\newtheorem{prop}[definition]{Proposition}
\theoremstyle{theorem}
\newtheorem{theorem}[definition]{Theorem}
\theoremstyle{remark}
\newtheorem{remark}[definition]{Remark}
\theoremstyle{definition}
\newtheorem{exa}[definition]{Example}
\theoremstyle{theorem}
\newtheorem{lem}[definition]{Lemma}
\theoremstyle{theorem}

\theoremstyle{theorem}
\newtheorem{cor}[definition]{Corollary}


\newcommand{\be}{\begin{equation}}
\newcommand{\ee}{\end{equation}}
\newcommand{\ax}{\alpha}
\newcommand{\bx}{\beta}

\newcommand{\ox}{\omega}

\newcommand{\Ox}{\Omega}
\newcommand{\Dx}{\Delta}


\title[]{Stability in Categories and Normal Projective Varieties over Perfect Fields}
\author[]{Hung-Yu~Yeh}
\address{Department of Mathematics, National Central University, No.300, Jhongda Rd., Jhongli City, Taoyuan County 32001, TAIWAN}
\thanks{}
\email{hyyeh@math.ncu.edu.tw}
\date{}

\begin{document}
\begin{abstract}{We present a notion of $\Delta$-stability and stability filtration in arbitrary categories which is equivalent to the existence of Harder-Narasimhan (HN) sequences on objects. Indeed it is equivalent to the existence of a zero morphism, a partial order on objects, and a collection of some universal sequences. In additive categories embedded in an ambient triangulated category, we could obtain a numerical polynomial or central charge of objects by calculating the Euler characteristic of slope sequences and objects, inducing a partial order and HN sequences. In the case of normal projective surfaces over an arbitrary perfect field $k$ we show the existence of $\Delta$-stabilities of degree one on the relevant bounded derived categories which is equivalent to the existence of Bridgeland's stabilities on normal surfaces. This result also leads to new effective restriction theorem of slope semistable sheave on normal projective varieties over perfect fields. Our approach also gives alternative proofs of Hodge Index Theorem and Bogomolov Inequality.}
\end{abstract}
\maketitle
\tableofcontents
\pagenumbering{arabic}

\section{Introduction}

Stability in algebraic geometry, first introduced by Mumford in the 1960's, is used as a tool to construct moduli space of sheaves on projective varieties and generalized by Takemoto, Gieseker, Simpson and Maruyama. Later this notion was extended to objects in arbitrary abelian category by A.~Rudakov~\cite{MR1480783}. On the other hand, in the particle physics the concept of the stability is associated to the formation or decay process, that is, particles combine or split to form other particles. More recently, motivated by the conjecture of homological mirror symmetry and Douglas' $\Pi$-stability on the category of B-branes (see~\cite{MR2567952}) Bridgeland introduces a precise definition of stability conditions on triangulated categories which depends on the existence of Harder-Narasimhan (HN) filtration and central charges on the relevant K group of associated triangulated categories~\cite{MR2373143}, and gives explicit constructions on smooth elliptic curves and K3 surfaces over $\mathbb C$~\cite{MR2376815}. 

Mumford-Takemoto, or $\mu$-semistable coherent sheaves on projective schemes over a field~$k$ have been studied for a long time. One of the main problems we have to meet is the boundedness of the family of semistable sheaves with a given Hilbert polynomial. In particulr, we would like the restriction of a semistable sheaves to general hypersurfaces to be semistable again. The theorem of Mehta and Ramanathan~\cite{MR649194} shows that the restriction of a $\mu$-(semi)stable sheaf on smooth projective variety over an algebraically closed field $k$ of arbitrary characteristic to a general hypersurface of sufficiently large degree is also $\mu$-semistable. On a normal projective variety over $k$ of characteristic zero the restriction theorem of Flenner~\cite{MR780080} gives effective bounds on the degree of hypersurfaces for $\mu$-semistable sheaves. Later Langer~\cite{MR2051393} proves effective restriction theorems for $\mu$-(semi)stable sheaves on smooth projective varieties in characteristic~$p$. In characteristic zero Bogomolov's effective restriction theorem~\cite{MR522939} is the strongest result on smooth projective surfaces. However, for normal projective varieties over algebraically closed fields of arbitrary characteristic, it is still an open and important problem whether there exists any (effective) restriction theorem for $\mu$-semistable sheaves. A detailed overview of moduli spaces of sheaves is given by Huybrechts and Lehn~\cite{MR2665168}.

In another direction, a crucial and open problem is the existence of Bridgeland stability conditions on $\mathrm{D^b}(X)$ for a general smooth scheme $X$ over $\mathbb C$. Even for general smooth projective threefolds it is still open because of the absence of Bogomolov-Gieseker type inequality for the third Chern character of stable objects~\cite{MR3121850}. Indeed, the existence of Bridgeland stability on smooth projectives surfaces is one of the important consequences of Bogomolov Inequality and Hodge Index Theorem~\cite{MR2998828}. For a good introduction to Birdgeland stability conditions, we refer to the notes by Macr{\`\i} and Schmidt~\cite{MR3729077}. 

From the point of view of the derived category of coherent sheaves on Noetherian schemes, it is unnatural to only consider stability conditions on smooth schemes over $\mathbb C$. However, when $X$ is a singular scheme over $k$, Chern character might only be defined on the full triangulated subcategory $\mathrm{Perf}(X) \subset \mathrm{D^b}(X)$, in which each object is locally isomorphic to a bounded complex of locally free sheaves of finite rank. But based on ref.~\cite{MR3935042} there are $K$-theoretic obstructions to the existence of bounded $t$-structure on $\mathrm{Perf}(X)$, which is one of the necessary conditions of Bridgeland stability conditions, and $\mathrm{D^b}(X)$ appears more natural in the theory of stability conditions. Without Chern character and boundedness property for coherent sheaves on singular schemes, the existence of Bridgeland stability conditions on $\mathrm{D^b}(X)$ is an open problem even for general normal projective surfaces over $k$. In contrast, Gieseker stability and other important geometric filtration which are not covered by Bridgeland stability behave better on projective schemes over an arbitrary field $k$. This suggests that we should need some suitable definition of such \textit{binding process} in the category of interest. 

One goal of this paper is to study the structure of an arbitrary category $\mathcal A$ that arises from a subcategory of the functor category from a partial order to the category $\mathcal A$. This leads to a notion of stability filtration in categories which is equivalent to the existence of $\Dx$-stability data, or HN sequences of objects in $\mathcal A$. 
\begin{theorem}
If the category $\mathcal A$ has a $\Delta$-stability data, then there exists a subcategory $\mathcal A^s$ with the same objects of $\mathcal A$ and same $\Delta$-stability data such that $\mathcal A^s$ has a stability filtration. Conversely, if $\mathcal A$ has a stability filtration, then $\mathcal A$ has a $\Delta$-stability data.
\end{theorem}
Indeed it is equivalent to existences of a zero morphism, a partial order on objects and $\Dx$-class, a collection of some universal sequences. In order to determine a $\Dx$-stability data on $\mathcal A$, an important question is to which extent the objects of the semistable subcategory induced by the $\Dx$-stability structure, a partial order, approximate non-semistable objects in $\mathcal A$. In a certain situation, each object in $\mathcal A$ can have a unique HN sequence.
\begin{theorem}
Given an exact $\Delta$-stability structure on a category $\mathcal A$ with $\mathrm{Hom}(A, B) = 0_c$ for $\Delta$-semistable objects $A \succ B$. If $\mathcal A$ is weakly $\Delta$-Artinian and weakly $\Delta$-Noetherian, then for every object $E \in \mathcal A$, there exists a unique Harder-Narasimhan sequence
$$
E_0 \xrightarrow{i_0} E_1 \xrightarrow{i_1} \cdots \xrightarrow{i_{n-1}} E_{n} \xrightarrow{i_{n}} E
$$ 
such that for $j > 0$, $E_{j-1} \xrightarrow{i_{j-1}} E_j \rightarrow F_{j-1} \in \Delta$, and $F_j$ are $\Delta$-semistable objects of $\mathcal A$ with 
$$
F_0 \succ F_1 \succ \cdots \succ F_n.
$$
\end{theorem}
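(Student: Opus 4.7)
My plan is to follow the classical Harder--Narasimhan strategy, adapted to this categorical framework: first establish existence by iteratively extracting a maximal $\Delta$-semistable quotient, and then deduce uniqueness from the Hom-vanishing hypothesis between $\Delta$-semistable objects of strictly decreasing slope.

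\textbf{Existence.} Given $E \in \mathcal{A}$, I would first produce the initial datum $(E_0 \xrightarrow{i_0} E_1 \to F_0) \in \Delta$ as a maximal destabilizing sequence mapping into $E$: among all $\Delta$-sequences $A \to B \to F$ equipped with a morphism $B \to E$, the weak $\Delta$-Noetherian hypothesis supplies one whose cofibre $F$ is maximal with respect to $\succ$, and I write $F_0$ for this maximum. The next step is to show that such an $F_0$ is itself $\Delta$-semistable: otherwise a destabilizer of $F_0$ would, after pullback, produce a strictly $\succ$-larger cofibre mapping into $E$, contradicting maximality. I would then pass to the $\Delta$-cofibre of $E_1 \to E$ and repeat the construction, obtaining semistable factors $F_1, F_2, \ldots$ together with morphisms assembling into the required partial sequence $E_0 \to E_1 \to \cdots \to E_j \to E$. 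By construction the chain $F_0 \succ F_1 \succ F_2 \succ \cdots$ is strictly decreasing, so the weak $\Delta$-Artinian hypothesis forces termination after finitely many steps; equivalently, the ascending chain $E_0 \to E_1 \to E_2 \to \cdots$ stabilizes by weak $\Delta$-Noetherianness.

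\textbf{Uniqueness.} Suppose $E$ admits two HN sequences, with $\Delta$-semistable factors $F_0 \succ \cdots \succ F_n$ and $G_0 \succ \cdots \succ G_m$. Without loss of generality $F_0 \succeq G_0$. Composing $E_1 \to E$ with the $\Delta$-projections coming from the second filtration, the vanishing $\mathrm{Hom}(F_0, G_j) = 0_c$ for each $G_j \prec F_0$ forces the composite $F_0 \to G_j$ to vanish; walking up the second filtration from the smallest factor then pins $F_0$ down as factoring into the first term $G_0$, so $F_0 \preceq G_0$ and by symmetry $F_0 = G_0$. A matching argument identifies the corresponding maps $E_0 \to E_1$ on the two sides, after which uniqueness of the remaining data follows by induction on the $\Delta$-cofibre of $E_1 \to E$.

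\textbf{Main obstacle.} The principal difficulty lies in the first existence step: because one works only with the partial order $\succ$ rather than numerical slopes, one cannot just take a supremum of real numbers. One must instead leverage weak $\Delta$-Noetherianness to extract a maximal $\Delta$-quotient and then bootstrap to semistability purely through $\succ$ and the structure of $\Delta$. A secondary technical point is to verify that $\Delta$-cofibre formation respects the stability structure, so that iterating on the cofibre yields successive semistable factors in precisely the required strictly decreasing order under $\succ$; this is where the \emph{exactness} of the $\Delta$-stability structure is expected to do the essential work.
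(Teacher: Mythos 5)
Your overall plan is the classical Harder--Narasimhan strategy and is workable in principle, but as written it has concrete gaps at exactly the points where the paper's proof of Theorem~\ref{tHN} does the real work. Note first that the paper builds the sequence from the opposite end: it defines a \emph{maximally destabilising object} (mdo) of $E$ --- a $\Delta$-semistable quotient $B$ with $F\to E\to B\in\Delta$ such that every nontrivial $p'\colon E\to B'$ with $E\succ B'$ satisfies $B\preceq B'$, together with a factoring clause in the equality case --- proves its existence by an induction showing that the mdo of a destabilising ``sub'' $E'$ of $E$ is again an mdo of $E$ (this step uses the hypothesis $\mathrm{Hom}(A,B')=0_c$ plus exactness axiom (iii) to factor $E\to B'$ through $E\to E'\to B'$), and then iterates upward, with termination supplied by weak $\Delta$-Artinianness.

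The gaps in your version are the following. (1) You assert that weak $\Delta$-Noetherianness ``supplies'' a sequence whose cofibre is \emph{maximal} with respect to $\succ$. A chain condition only guarantees that chains terminate; it does not produce a maximum (nor a maximal element with the universal factoring property you later need) of the whole, generally non-totally-ordered, collection of semistable factors attached to $E$. The paper's inductive mdo construction is precisely the replacement for this missing step, and it is also where $\mathrm{Hom}(A,B)=0_c$ for semistable $A\succ B$ enters the \emph{existence} half, not only uniqueness. Relatedly, your semistability argument invokes ``pullback'', which does not exist in a category carrying only an exact $\Delta$-class; this must be routed through axioms (i)--(iii). (2) The strict decrease $F_0\succ F_1\succ\cdots$ is not ``by construction'': in the paper it is deduced by combining the octahedron-type axioms (i)/(ii) with the $\Delta$-seesaw property and the extremality of the mdo ($B\preceq Q$ forces $B'\succ B$); this is a central step, not a secondary technicality. (3) In uniqueness, $F_0\preceq G_0$ together with $G_0\preceq F_0$ yields only $F_0\asymp G_0$ in the partial order, not an isomorphism of objects; to identify the two sequences one needs the factoring/identity clause of the mdo and exactness axiom (iv), as in the commutative-square argument closing the paper's proof.
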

If the category $\mathcal A$ with the zero morphism has finite products which are canonically isomorphic to finite coproduct, we can impose a commutative monoidal structure on $\mathrm{Hom}(\mathcal A)$ inducing additive structure on $\mathcal A$~\cite{MR1712872}. Thus in additive categories it allows us to reliably deduce from some invariants on Hom spaces the resulting $\Dx$-structure. To some extent we can think of these invariants on Hom spaces of the category $\mathcal A$ as the analog of Euler characteristic of a slope sequence (a collection of some objects) and general objects. The chain of supported weakly ample sequence on $\mathcal A$ gives a rather detailed description on how the invariants are constructed from the coefficients of numerical polynomials of Euler characteristics. Specially, we can form a new slope polynomial of lower degree induced by a slope sequence constructed from a tilted ample chain with the boundedness condition, and thus it gives us a new $\Dx$-stability structure, or exhaustive positive system.

In projective schemes over a field $k$, the natural slope sequence is given by an ample invertible sheaf and the slope polynomial is the well-known Hilbert polynomial. For the category of projective shemes of dimension $n$ over $k$, the projective space $\mathbf P^n$ is the weakly terminal object by the assumption. Through Grothendieck-Verdier duality and the vanishing property of semistable sheaves, we prove the boundedness of Euler characteristics for $\hat{\mu}(\mu)$-semistable torsion free sheaves on a normal integral projective scheme $X$ over a perfect field. 
\begin{theorem}
Let $E$ be a torsion free coherent sheaf on a normal integral projective scheme $X$ over a perfect field $k$ of $\dim X = n$ with a fixed very ample invertible sheaf $\mathcal O_X(H)$. Then there exists a polynomial $\hat P(\hat{\mu}_{\max}(E), \hat{\mu}_{\min}(E), \ox_X, d = \deg_H X)$ depending on the HN filtration of $E$, the degree of the canonical sheaf $\ox_X$ of $X$ and the degree of $X$ with respect to $\mathcal O_X(H)$ such that
\begin{align*}
&(-1)^{n-2}\chi(\mathcal O_{H^{n-2}}, E) \leq (-1)^n\chi(\mathcal O_{H^n}, E)\hat P(\hat{\mu}_{\max}(E), \hat{\mu}_{\min}(E), \ox_X, d)\\
&=(-1)^n\chi(\mathcal O_{H^n}, E)\left(\hat P(\hat{\mu}(E), \ox_X, d) + \frac{1}{2}\left(\hat{\mu}_{\max}(E) - \hat{\mu}(E)\right)(\hat{\mu}(E) - \hat{\mu}_{\min}(E))\right)
\end{align*}
with $(-1)^n\chi(\mathcal O_{H^n}, E) = d\cdot \mathrm{rk}(E)$ and $\hat{\mu}(E) = -\frac{\chi(\mathcal O_{H^{n-1}}, E)}{\chi(\mathcal O_{H^n}, E)}$. Here, 
$$
\hat P(\hat{\mu}(E), \ox_X, d) = \binom{\hat{\mu}(E)}{2} + \frac 1 2 (n - \hat{\mu}(\mathcal O_X))(1 + \hat{\mu}(\ox_X)).
$$
\end{theorem}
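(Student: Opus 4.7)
The plan is to reduce to the semistable case via the Harder--Narasimhan filtration (whose existence is guaranteed by the second theorem of the excerpt), establish a Bogomolov-type inequality for each semistable factor, and assemble the pieces by exploiting that $\hat P(\,\cdot\,,\omega_X,d)$ is a quadratic polynomial in its slope argument. Concretely, write the HN filtration of $E$ with semistable factors $G_1,\dots,G_m$ ordered by $\hat{\mu}(G_1)=\hat{\mu}_{\max}(E)>\cdots>\hat{\mu}(G_m)=\hat{\mu}_{\min}(E)$. Because $\chi(\mathcal{O}_{H^j},-)$ is additive on short exact sequences, for every $j$ we have $\chi(\mathcal{O}_{H^j},E)=\sum_i\chi(\mathcal{O}_{H^j},G_i)$; in particular the weights $w_i=(-1)^n\chi(\mathcal{O}_{H^n},G_i)/(-1)^n\chi(\mathcal{O}_{H^n},E)$ are nonnegative (using $(-1)^n\chi(\mathcal{O}_{H^n},\cdot)=d\cdot\mathrm{rk}(\cdot)$) and $\sum_i w_i\hat{\mu}(G_i)=\hat{\mu}(E)$.

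The second step is the convexity bookkeeping. Suppose one has proved, for each semistable $G_i$, the inequality
$$
(-1)^{n-2}\chi(\mathcal{O}_{H^{n-2}},G_i)\;\leq\;(-1)^n\chi(\mathcal{O}_{H^n},G_i)\,\hat P(\hat{\mu}(G_i),\omega_X,d).
$$
Since $\hat P(x,\omega_X,d)=\binom{x}{2}+\tfrac12(n-\hat{\mu}(\mathcal{O}_X))(1+\hat{\mu}(\omega_X))$ is a quadratic function of $x$ with constant second derivative $1$, the standard convexity bound for weighted means with values constrained to an interval $[\hat{\mu}_{\min},\hat{\mu}_{\max}]$ gives
$$
\sum_i w_i\hat P(\hat{\mu}(G_i),\omega_X,d)\;\leq\;\hat P(\hat{\mu}(E),\omega_X,d)+\tfrac12(\hat{\mu}_{\max}(E)-\hat{\mu}(E))(\hat{\mu}(E)-\hat{\mu}_{\min}(E)).
$$
Summing the semistable inequality against $w_i$ and clearing the normalization reproduces both the inequality and the explicit decomposition recorded in the theorem statement.

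The heart of the proof is the semistable inequality itself, and this is where I would bring in Grothendieck--Verdier duality together with the vanishing $\mathrm{Hom}(A,B)=0_c$ between ordered semistable objects supplied by the $\Delta$-stability framework developed earlier in the paper. The Koszul resolution of $\mathcal{O}_{H^{n-2}}$ on $X$ lets us write $\chi(\mathcal{O}_{H^{n-2}},G_i)=\sum_k(-1)^k\binom{n-2}{k}\chi(G_i(kH))$, and Grothendieck--Verdier duality, available on normal projective $X$ over the perfect field $k$ because the dualizing complex exists and $\omega_X$ is a coherent reflexive sheaf, converts individual $\mathrm{Ext}$ groups into $\mathrm{Hom}$ groups of the form $\mathrm{Hom}(G_i,G_i^{\vee\vee}\otimes\omega_X(\ell H))$. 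Semistability of $G_i$ together with the slope-comparison criterion then kills the contributions that would violate the bound, and what remains collects exactly into the polynomial $\hat P(\hat{\mu}(G_i),\omega_X,d)$; the term $\tfrac12(n-\hat{\mu}(\mathcal{O}_X))(1+\hat{\mu}(\omega_X))$ is the expected canonical-class correction produced by the duality shift, while $\binom{\hat{\mu}}{2}$ records the quadratic contribution of the first Chern character.

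The main obstacle will be executing this semistable step on a normal, possibly non-smooth variety in arbitrary characteristic. The classical proofs on smooth complex surfaces use Serre duality with $\omega_X$ locally free and Hodge-theoretic positivity, neither of which is directly available here; one must instead work with the reflexive $\omega_X$ and with Grothendieck--Verdier duality in the form adapted to the tilted ample chain of the preceding sections. In positive characteristic Frobenius pullback may destabilize, and it is precisely because of this that the natural bound is expressed via $\hat{\mu}_{\max}$ and $\hat{\mu}_{\min}$ rather than $\hat{\mu}$ alone: the weakly $\Delta$-Noetherian/Artinian hypotheses control the iterated Frobenius HN filtrations, while perfectness of $k$ ensures that the relevant restriction and base-change arguments behave well enough to turn the vanishing inputs into the quadratic upper bound.
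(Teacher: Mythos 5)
Your first two steps---reducing to the semistable case via the HN filtration of $E$ on $X$ and the weighted-quadratic-mean (convexity) bookkeeping---are correct and coincide with what the paper does: Theorem~\ref{ltbound} performs exactly this estimate, with Langer's elementary inequality (Lemma~\ref{lan}) supplying the term $\frac{1}{2}(\hat{\mu}_{\max}(E)-\hat{\mu}(E))(\hat{\mu}(E)-\hat{\mu}_{\min}(E))$. The gap is in the semistable inequality itself, which is where all the content lies and which you only sketch. You propose to prove it by a Koszul resolution plus Grothendieck--Verdier duality and semistability-vanishing carried out directly on $X$. For $n>2$ this cannot work as described: the quantity $(-1)^{n-2}\chi(\mathcal O_{H^{n-2}},G_i)$ is governed by a Bogomolov-type inequality $\Delta(G_i).H^{n-2}\le 0$, and on a variety of dimension $n>2$ there are intermediate groups $\mathrm{Hom}(\mathcal O_X(-\ell H),G_i[j])$ for $0<j<n$ whose signs semistability and duality do not control, so there is no two-term vanishing that ``kills the contributions that would violate the bound.'' The paper instead proves the semistable bound on $\mathbf P^n$ (Lemma~\ref{pn}) by showing it is equivalent to Bogomolov's inequality there and establishing that by induction on $n$, using a restriction theorem for strongly $\mu$-semistable sheaves on $\mathbf P^n$ (Theorem~\ref{ls}, assembled from Mehta--Ramanathan, Flenner and Langer's techniques) to pass to a general hypersurface. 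None of this machinery appears in your sketch, and without it the case $n>2$ does not close.

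The second missing idea is the finite separable cover $f:X\to\mathbf P^n$ with $\mathcal O_X(H)=f^*\mathcal O_{\mathbf P^n}(1)$, which is the paper's central device and the actual source of the correction term $\frac 1 2(n-\hat{\mu}(\mathcal O_X))(1+\hat{\mu}(\omega_X))$. That term is not ``the canonical-class correction produced by the duality shift'' on $X$; it is the length of the slope window $\left[\mu(E)/d-\mu(\omega_X)/d-(n+1),\,\mu(E)/d\right]$ in which the HN factors of $f_*E$ on $\mathbf P^n$ are trapped, by Prop.~\ref{max} (adjunction $f^*\dashv f_*$ plus Galois descent, which is where perfectness of $k$ enters) and Prop.~\ref{min} (Grothendieck--Verdier duality for $f$, which is where $\omega_X$ enters). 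Pushing forward to $\mathbf P^n$ is precisely how the paper avoids doing duality on the singular $X$ at all: the vanishing argument runs on $\mathbf P^n$, where $\omega_{\mathbf P^n}=\mathcal O(-n-1)$ is invertible and the Euler characteristic is preserved because $f_*$ is exact. Your outline, working intrinsically on $X$ with the reflexive $\omega_X$, neither produces the explicit polynomial $\hat P$ nor provides a substitute for this transfer, so the heart of the theorem remains unproved.
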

Moreover, this boundedness theorem implies Hodge Index Theorem and Bogomolov Inequality for locally free coherent sheaves on $X$ by standard arguments in the intersection theory. Our approach also gives easy proofs of the results on smooth projective schemes over $k$. For a crude bound of Euler characteristics of torsion free sheaves on general integral projective schemes, see Remark~\ref{rem0}.

For another crucial application of the boundedness theorem we prove the following effective restriction theorem for $\hat{\mu}$-semistable sheaves.
\begin{theorem}
Let $E$ be a $\hat\mu$-(semi)stable torsion free sheaf of $\mathrm{rk}(E) > 1$ on a normal integral projective scheme $X$ of $\dim X = n$ over a perfect field $k$ with a very ample invertible sheaf $\mathcal O_X(H)$. Assume $D \in |\mathcal O_X(lH)|$ is a normal divisor such that $E|_D$ is torsion free. If
$$
l > 2(1 - \mathrm{rk}(E))\left((-1)^{n-2}\chi(\mathcal O_{H^{n-2}}, E) - d\cdot\mathrm{rk}(E)\hat P(\hat{\mu}(E), \ox_X, d)\right) + \frac{1}{d\cdot\mathrm{rk}(E)(\mathrm{rk}(E) - 1)},
$$
then $E|_D$ is $\hat\mu$-(semi)stable. 
\end{theorem}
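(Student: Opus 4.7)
The plan is to argue by contradiction, using the preceding boundedness theorem applied to $E|_D$ on $D$. Suppose $E|_D$ were not $\hat\mu$-(semi)stable on $D$. Then its HN sequence on the normal integral projective scheme $D$ (which exists by the Harder–Narasimhan existence theorem stated earlier) is nontrivial, so
$$
\hat\mu_{\max}(E|_D) - \hat\mu(E|_D) > 0, \qquad \hat\mu(E|_D) - \hat\mu_{\min}(E|_D) \ge 0,
$$
with the second inequality strict in the stable case. The target is to turn the resulting strictly positive excess $\tfrac{1}{2}\bigl(\hat\mu_{\max}(E|_D)-\hat\mu(E|_D)\bigr)\bigl(\hat\mu(E|_D)-\hat\mu_{\min}(E|_D)\bigr)$ into a lower bound for $l$ that contradicts the hypothesis.

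The core input is the boundedness theorem applied to $E|_D$ on $D$ of dimension $n-1$ with $\deg_H D = ld$, which reads
$$
(-1)^{n-3}\chi(\mathcal O_{H^{n-3}}|_D,\, E|_D) \;\leq\; (-1)^{n-1}\chi(\mathcal O_{H^{n-1}}|_D,\, E|_D)\,\hat P\!\bigl(\hat\mu_{\max}(E|_D),\hat\mu_{\min}(E|_D),\ox_D,\,ld\bigr).
$$
To convert both sides to data living on $X$, I would tensor the short exact sequence $0\to E(-lH)\to E\to E|_D\to 0$ with the Koszul-type invariants $\mathcal O_{H^{n-k}}$ for $k=1,2,3$, which yields
$$
\chi(\mathcal O_{H^{n-k}}|_D,\, E|_D) \;=\; \chi(\mathcal O_{H^{n-k}},\, E) - \chi(\mathcal O_{H^{n-k}},\, E(-lH)).
$$
Using the polynomial expansion of $m \mapsto \chi(\mathcal O_{H^{n-k}}, E(mH))$ derived from the supported weakly ample sequence machinery of earlier sections, each side becomes an explicit polynomial in $l$ whose coefficients involve only $d$, $\mathrm{rk}(E)$, $\hat\mu(E)$, and $(-1)^{n-2}\chi(\mathcal O_{H^{n-2}}, E)$. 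In particular, $\hat\mu(E|_D)$ itself is expressible linearly in $l$ and $\hat\mu(E)$ plus lower-order corrections.

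Because $E$ is $\hat\mu$-(semi)stable on $X$, the boundedness theorem applied to $E$ on $X$ gives the excess discriminant
$$
\Dx_E \;:=\; (-1)^{n-2}\chi(\mathcal O_{H^{n-2}}, E) - d\cdot\mathrm{rk}(E)\,\hat P(\hat\mu(E), \ox_X, d) \;\leq\; 0.
$$
Substituting the translations from Step 2 into the boundedness inequality for $E|_D$ and subtracting the $\hat P(\hat\mu(E|_D),\ox_D,ld)$ piece from the corresponding stable model produces an inequality in which the nontrivial side is controlled by $\Dx_E$ and the excess $\frac{1}{2}(\hat\mu_{\max}(E|_D)-\hat\mu(E|_D))(\hat\mu(E|_D)-\hat\mu_{\min}(E|_D))$. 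Quantifying the latter by the slope-gap estimate — namely, since the HN slopes on $D$ are rationals of the form $-\chi(\mathcal O_{H^{n-2}}|_D,-)/\chi(\mathcal O_{H^{n-1}}|_D,-)$ with denominator dividing $ld\cdot\mathrm{rk}(E)$, any strict inequality between such slopes is at least $1/(ld\cdot\mathrm{rk}(E)(\mathrm{rk}(E)-1))$ — and assembling everything, one factor of $ld\cdot\mathrm{rk}(E)$ cancels with $(-1)^{n-1}\chi(\mathcal O_{H^{n-1}}|_D,E|_D)$, leaving exactly
$$
l \;\leq\; 2(1-\mathrm{rk}(E))\,\Dx_E + \frac{1}{d\cdot\mathrm{rk}(E)(\mathrm{rk}(E)-1)},
$$
contradicting the hypothesis on $l$.

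The hard part will be Step 2: producing the explicit polynomial-in-$l$ formulas for $\chi(\mathcal O_{H^{n-1}}|_D,E|_D)$ and $\chi(\mathcal O_{H^{n-3}}|_D,E|_D)$ without Chern character or Riemann–Roch, which are unavailable on the possibly singular $X$, so one is forced to proceed purely through the supported weakly ample sequence formalism and the recursive structure of the $\mathcal O_{H^{n-k}}$-Euler characteristic. A secondary delicacy is tracking the denominators of the restricted slopes carefully enough that the slope-gap estimate pairs with the linear-in-$l$ factor coming from the boundedness side in exactly the right way to yield the constant $1/(d\cdot\mathrm{rk}(E)(\mathrm{rk}(E)-1))$ in the stated bound; any miscount of a factor of $l$ or $d$ here would shift the additive constant but not the leading structure. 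Once these two technical points are settled, the remaining manipulation is entirely linear in $l$ and produces the theorem.
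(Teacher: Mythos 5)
Your plan to apply the boundedness theorem to $E|_D$ on $D$ and extract a contradiction from the excess term $\tfrac{1}{2}\bigl(\hat{\mu}_{\max}(E|_D)-\hat{\mu}(E|_D)\bigr)\bigl(\hat{\mu}(E|_D)-\hat{\mu}_{\min}(E|_D)\bigr)$ has a structural gap: that excess sits on the \emph{upper-bound} side of Theorem~\ref{hebound}. A nontrivial HN filtration of $E|_D$ only enlarges the right-hand side, so the instability of $E|_D$ is perfectly consistent with the boundedness inequality on $D$; no contradiction can follow unless you also produce a matching \emph{lower} bound on $(-1)^{n-3}\chi(\mathcal O_{H^{n-3}}|_D,E|_D)$ close to the semistable model, and nothing in your outline supplies one (an exact discriminant identity $\Dx_{E|_D}=l\,\Dx_E$ would do it, but that is precisely the Chern-class input that is unavailable here and that the Euler-characteristic formalism only reproduces as a one-sided inequality). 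Two further symptoms of the same problem: in the stable case your mechanism sees nothing when $E|_D$ is strictly semistable, since the excess then vanishes; and the constant $\frac{1}{d\cdot\mathrm{rk}(E)(\mathrm{rk}(E)-1)}$ in the threshold does not arise from a slope-gap estimate on $D$ (your $1/(ld\cdot\mathrm{rk}(E)(\mathrm{rk}(E)-1))$) but from slope gaps on $X$.

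The paper's proof stays on $X$ throughout. Writing $0\to S\to E|_D\to T\to 0$ with $T$ the minimal destabilizing quotient (or $T$ any quotient of the semistable $E|_D$ in the stable case), it forms the elementary modification $G=\ker(E\to E|_D\to T)$, which fits into $0\to G\to E\to T\to 0$ and $0\to E(-D)\to G\to S\to 0$. The destabilizing condition $\hat{\mu}(T)\leq\hat{\mu}(E|_D)$ converts, via Lemma~\ref{Hilbert}, into a \emph{lower} bound on $(-1)^{n-2}\chi(\mathcal O_{H^{n-2}},G)$ that is quadratic in $l$; the (semi)stability of $E$ and of $E(-D)$ squeezes $\hat{\mu}_{\max}(G)-\hat{\mu}(G)\leq\tfrac{r-\rho}{r}ld-\tfrac{1}{r(r-1)}$ and $\hat{\mu}(G)-\hat{\mu}_{\min}(G)\leq\tfrac{\rho}{r}ld-\tfrac{1}{r(r-1)}$ with $r=\mathrm{rk}(E)$, $\rho=\mathrm{rk}(S)$; and Theorem~\ref{ltbound} applied to $G$ on $X$ gives the matching \emph{upper} bound. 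Comparing the two, the terms quadratic in $l$ cancel and the surviving linear inequality is exactly $l\leq 2(1-r)\bigl((-1)^{n-2}\chi(\mathcal O_{H^{n-2}},E)-d\,r\,\hat P(\hat{\mu}(E),\ox_X,d)\bigr)+\tfrac{1}{dr(r-1)}$, contradicting the hypothesis. If you wish to pursue your route instead, the missing ingredient is a two-sided control of the restricted Euler characteristic, which this framework does not provide.
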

Together with the bound of Euler characteristics of torsion free sheaves on integral projective schemes, the similar argument implies the effective restriction theorem for $\hat{\mu}$-semistable sheaves (see Remark~\ref{rem1}).

Finally, combined with this bound of Euler characteristics on normal integral projective schemes and the technique of tilted heart and slope sequences in triangulated categories, we give confirmation for the existence of Bridgeland stability conditions on $\mathrm{D^b}(X)$ of a normal projective surface $X$, as suggested in~\cite{MR3935042}. Indeed, this is a special case of the existence of our $\Dx$-stability of degree $= n - 1$ for $\dim X = n = 2$.
\begin{theorem}
Suppose $X$ is a normal projective surface over a perfect field $k$ with a vary ample invertible sheaf $\mathcal O_X(H)$. Given a rational number $q = \frac{m_1}{m_2} \in \mathbb Q$, there exists the tilted heart $\mathcal A_q$ of a bounded t-structure on the bounded derived category of coherent sheaves $\mathrm{D^b}(X)$ with a one parameter family of slope sequences $\left\{L_{-t}^s(m_0) : m_0 > m_{\min}\right\}$ and $L_{-t}^s(m_0)[1] \in \mathrm{Ext}^2\left(\mathcal O_{H^2}^{\oplus m_2\binom t 2 + m_1\cdot t + m_0} , \mathcal O_X(-tH)^{\oplus m_2}\right)$ such that the associated slope polynomials $P^s_{t, m_0}(-) = \chi\left(L^s_{-t}(m_0), -\right)$ induce $\Dx$-stabilities of degree $1$ on $X$, namely Bridgeland stability conditions.
\end{theorem}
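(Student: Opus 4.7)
The plan is to realise the claim as the $n = 2$ case of the general construction of $\Delta$-stabilities of degree $n - 1$ produced earlier in the paper. Three ingredients must be assembled: the tilted heart $\mathcal A_q$, the one-parameter family of slope sequences $L^s_{-t}(m_0)$, and the verification that $P^s_{t,m_0}(-) = \chi(L^s_{-t}(m_0), -)$ is the numerical polynomial of a $\Delta$-stability datum of degree one. Starting from the standard heart $\mathrm{Coh}(X) \subset \mathrm D^b(X)$, I would form the torsion pair $(\mathcal T_q, \mathcal F_q)$ at the rational slope $q = m_1/m_2$: $\mathcal T_q$ is generated by torsion sheaves together with $\hat\mu$-semistable torsion-free sheaves with $\hat\mu > q$, and $\mathcal F_q$ by $\hat\mu$-semistable torsion-free sheaves with $\hat\mu \leq q$. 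Existence of HN filtrations for $\hat\mu$ on $\mathrm{Coh}(X)$, itself a specialisation of the general $\Delta$-framework, supplies the torsion pair, and the usual tilt produces $\mathcal A_q = \langle \mathcal F_q[1], \mathcal T_q\rangle$.

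Next I would construct the slope sequences. For each $t$ and each $m_0 > m_{\min}$ I pick a class in $\mathrm{Ext}^2(\mathcal O_{H^2}^{\oplus N}, \mathcal O_X(-tH)^{\oplus m_2})$ with $N = m_2\binom{t}{2} + m_1 t + m_0$, non-vanishing once $m_0$ is above the effective bound (the Ext group is non-trivial by Serre--Grothendieck duality combined with a vanishing estimate for $\mathcal O_X(-tH)$); the object $L^s_{-t}(m_0)$ is the two-term complex of the resulting Yoneda 2-extension, shifted so that $L^s_{-t}(m_0)[1] \in \mathcal A_q$, which is checked by placing the two cohomology sheaves of the complex against $\mathcal T_q$ and $\mathcal F_q$. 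Additivity of Euler characteristic then gives
\[
P^s_{t,m_0}(E) = N\chi(\mathcal O_{H^2}, E) - m_2\chi(\mathcal O_X(-tH), E),
\]
and expanding via $(-1)^n\chi(\mathcal O_{H^n}, -) = d\cdot\mathrm{rk}(-)$ together with the intermediate identity for $\chi(\mathcal O_{H^{n-1}}, -)$ exhibits $P^s_{t,m_0}$ as a polynomial of degree one in the slope parameter, as demanded by the target degree $n - 1 = 1$.

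The proof concludes by checking the $\Delta$-stability axioms on $\mathcal A_q$: (i) positivity of $P^s_{t,m_0}$ on non-zero objects; (ii) the partial-order and $\Delta$-class axioms, which fall out of the numerical structure of $P^s_{t,m_0}$; and (iii) the weakly $\Delta$-Artinian/Noetherian hypothesis needed to invoke the uniqueness theorem for HN sequences. The identification with Bridgeland stability then follows from the general equivalence between $\Delta$-stabilities of degree $n - 1$ and Bridgeland conditions. The principal obstacle is (i), the analogue of the Bogomolov--Gieseker inequality on a merely normal surface: this is precisely where the boundedness theorem for Euler characteristics on normal integral projective schemes --- bounding $(-1)^{n-2}\chi(\mathcal O_{H^{n-2}}, E)$ in terms of $\hat\mu_{\max}$, $\hat\mu_{\min}$, $\omega_X$ and $d$ --- is invoked, forcing $P^s_{t,m_0}$ to be non-negative on every tilted object for an appropriate range of $t$. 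A secondary difficulty is (iii), which must be extracted from the integrality of $\chi$ combined with the explicit form of $P^s_{t,m_0}$ rather than from smoothness of $X$; the one-parameter family $\{L^s_{-t}(m_0) : m_0 > m_{\min}\}$ is essential here, as it provides enough freedom to separate objects of distinct numerical type and thereby prevent infinite descending chains of slopes in $\mathcal A_q$.
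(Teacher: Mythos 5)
Your outline reproduces the paper's construction: tilt $\mathrm{Coh}(X)$ at the slope $q$ via the Harder--Narasimhan torsion pair, realise $L^s_{-t}(m_0)$ as the cone on a class in $\mathrm{Ext}^2\bigl(\mathcal O_{H^2}^{\oplus N}, \mathcal O_X(-tH)^{\oplus m_2}\bigr)$, compute $P^s_{t,m_0}(E) = N\chi(\mathcal O_{H^2},E) - m_2\chi(\mathcal O_X(-tH),E)$, use the boundedness theorem (Theorem~\ref{ebound}) to force positivity of the constant term once $m_0 > m_2\hat P(q,\omega_X,d)$, and then invoke the general machinery (Prop.~\ref{bound} and Prop.~\ref{hnf}) to get Harder--Narasimhan sequences. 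Up to this point the argument is the paper's, and it is sound.

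The genuine gap is in your step (iii). You propose to extract the weakly $\Delta$-Noetherian hypothesis ``from the integrality of $\chi$ combined with the explicit form of $P^s_{t,m_0}$,'' with the one-parameter family $\{m_0\}$ providing the separation needed to stop infinite chains. That mechanism only controls the Artinian direction: discreteness and positivity of the leading coefficients rule out infinite chains of subobjects of strictly increasing slope, which is exactly the first half of Prop.~\ref{hnf}. It cannot rule out infinite chains of quotients, because infinitely many non-isomorphic objects of $\mathcal A_q$ share the same numerical class, and varying $m_0$ only shifts the constant term of $P^s_{t,m_0}$ by a uniform multiple of $\chi(\mathcal O_{H^2},-)$, so it separates nothing that the fixed polynomial does not already separate. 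What is actually needed --- and what Prop.~\ref{hnf} requires --- is that $\mathcal A_q$ be a Noetherian \emph{abelian} category. This is the content of Prop.~\ref{noetherian}: after the leading coefficient $\check a_{n-1}(E_i)$ stabilises, one passes to the chains of cohomology sheaves $H^0(E_i)$ and $H^{-1}(L_i)$, and the decisive step embeds all the $H^{-1}(E_i)$ into a single reflexive hull $Q^{\vee\vee}$ by showing $\mathcal Ext^0(H^0(L_i),\mathcal O_X) = \mathcal Ext^1(H^0(L_i),\mathcal O_X) = 0$ for sheaves supported in codimension $\geq 2$; this is precisely where normality of $X$ (Serre's condition $S_2$) enters, replacing the smoothness used in the classical argument. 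Without this sheaf-theoretic input your construction produces a slope polynomial but not the Harder--Narasimhan property, so the $\Delta$-stability of degree $1$ does not yet follow.
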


The paper is organized as follows:
In Section~\ref{sec:stab} we present the notions of $\Dx$-stability data and stability filtration in arbitrary categories. We focus on the class of some universal sequences ($\Dx$ class), on the partial orders $\Phi$ of objects, and on the weakly initial and terminal subcategory $\mathcal P_{\Phi}$. With suitable universal property (HN sequences), this $\Dx$-stability data in the category $\mathcal A$ is equivalent to the existence of a subcategory $\mathcal A^s$ with a zero morphism, consist of subcaterories (semistable category) of the functor category $\mathcal A^{\Phi^{\mathrm{op}}}$ fulling weakly universal conditions in some sense (stability filtration).
Section~\ref{nts} discusses the existence of HN sequence of objects in $\mathcal A$. Firstly, we introduce the exact $\Dx$-class with a compatible $\Dx$-stability structure, and thus weakly $\Dx$-Artinian and $\Dx$-Noetherian of $\mathcal A$ would imply the existence of unique HN sequences, that is, $\Dx$-stability on $\mathcal A$. Secondly we summarize the glued $\Dx$-stability data constructed from a suitable set of $\Dx$-stability datum on subcategories. Due to the importance to our analysis, a particular emphasis is put on the additive categories. 
In Section~\ref{sec:was} we describe numerical polynomials induced by the chain of supported weakly ample sequence in a $k$-linear additive category embedded in a triangulated category. We start with a description of the weakly ample sequence as deduced from the ample sequence in an abelian category. We analyze the properties of the chain of supported weakly ample sequences attributed to Grothendieck-Verdier like duality between ambient triangulated categories.
In Section~\ref{sec:nsp} we apply the method of tilted ample chain satisfying boundedness conditions to explicitly construct new exhaustive positive systems from the slope polynomial induced by new slope sequences. We argue that in nonsingular projective curves this construction directly relate to the category of representation of Kronecker quiver $P_2$.  
In Section~\ref{sec:nv} we study the boundedness property of torsion free coherent sheaves and $\Dx$-stability of degree one on a normal projective surface $X$ over a perfect field. For $\hat{\mu}$-semistable torsion free sheaves on $X$ we establish an inequality between the Euler characteristic of the $\hat{\mu}$-semistable sheaf $E$ and the associated polynomial of rank of $E$, degree of $X$, $\hat{\mu}(E), \hat{\mu}(\mathcal O_X)$ and $\hat{\mu}(\ox_X)$. We then illustrate the different aspects of the proposed correspondence with $\Dx$-stability of degree one (Bridgeland stability conditions), Hodge Index Theorem and Bogomolov Inequality on $X$.
Effective restriction theorem and our generalized results in higher dimension are presented in Section~\ref{higher}. We give technical computations on the effective restriction theorem of $\hat{\mu}$-semistable torsion free sheaves on surfaces. For the general results for higher dimensional schemes which can be deduced from the same arguments in surfaces, we explain our general theorems of boundedness property, Bogomolov Inequality, $\Dx$-stability of degree $= \dim X - 1$, and effective restriction theorem on any normal integral projective scheme $X$ over a perfect field.

\section{$\Delta$-stability and stability filtration in categories}  \label{sec:stab}
In this section we present the notions of $\Dx$-stability data and stability filtration in arbitrary categories. We then discuss the explicit construction of correspondence between two notions in categories. This construction offers a systematic way to fulfill the conditions of $\Dx$-stability data in a category that are obtained from the functor subcategory (semistable category) associated to a given partial order. 

\subsection{$\Delta$-stability in categories} 

We first introduce the formal definition of \textit{$\Delta$-stability data} on an arbitrary category $\mathcal A$.
\begin{definition}\label{sd}
Suppose $\mathcal A$ is an arbitrary category and a class $\Delta$ is a category with objects of sequences of the form $F' \xrightarrow{i} F \xrightarrow{p} F''$ and morphisms of sequences, $\Phi$ is a partial order, and $\mathcal P_{\Phi}=\{ \mathcal P_{\phi}\}_{\phi \in \Phi} \subset \mathcal A$ is a weakly initial and terminal subcategory such that for each object $E \notin \mathcal P_{\Phi}$ there exists arrows $E \rightarrow F_{m(E)}, F_{M(E)} \rightarrow E$ with $F_{m(E)}, F_{M(E)} \in \mathcal P_{\Phi}$ and $M(E) > m(E) \in \Phi$. \textit{Pre-$\Delta$-stability data} on $\mathcal A$ is a triple $\left(\Delta, \Phi, \mathcal P_{\Phi}\right)$ satisfying the following condition: each object $E \in \mathcal A$ has a Harder-Narasimhan (HN) sequence of the form
$$
E_0 \longrightarrow E_1 \longrightarrow \cdots \longrightarrow E_{n-1} \longrightarrow E_n = E
$$ 
such that $E_{j-1} \xrightarrow{i_{j-1}} E_j \xrightarrow{p_{j-1}} F_j \in \Delta$ with $F_{M(E)} =E_0 \in \mathcal P_{\phi_0}, \dots, F_{m(E)} = F_n \in \mathcal P_{\phi_n}$, and $\phi_i < \phi_j$ for all $i>j$. Moreover, the triple $\left(\Delta, \Phi, \{\mathcal P_{\phi}\}\right)$ is called a \textit{$\Delta$-stability data} of $\mathcal A$ if 
\begin{enumerate}
\item $h'\circ h$ is not an isomorphism for any sequence $F_{\phi} \xrightarrow{h} F''_{\phi'} \xrightarrow{h'} F_{\phi}$ with $\phi \neq \phi'$;
\item $p_{j-1}\circ i_{j-1} = h\circ p_{j-2}$ for some $h: F_{j-1} \rightarrow F_j$ for all $j>0$;
\item Each sequence $E_{j-1} \xrightarrow{i_{j-1}} E_j \xrightarrow{p_{j-1}} F_j$ is unique for all $j$ in $\Delta$, and given a commutative diagram of sequences of $\Delta$,
\begin{center}
\begin{tikzcd}[row sep=large, column sep=large]
E_{j-1} \arrow[r, "i_{j-1}"] \arrow[d, dashed, "g^0"]&E_j \arrow[d, "f"] \arrow[r, "p_{j-1}"] &F_j \arrow[d, dashed, "g^1"] \\
E'_{j'-1} \arrow[r, "i'_{j'-1}"]&E'_{j'} \arrow[r, "p'_{j'-1}"] &F'_{j'} 
\end{tikzcd}
\end{center}
and $f$ is an isomorphism, then $g^0, g^1$ are isomorphisms. 
\end{enumerate}
\end{definition}

Indeed, $\Delta$-stability data would be closely related to the algebraic structure of non-associative operations in categories. Let us first consider the following basic examples one could keep in mind.
\begin{exa}
Let the set of objects in the category $\mathcal A$ be the natural numbers $(\mathbb N, -)$, and if $n_1, n_2, n_3:=n_2-n_1 \in \mathbb N$ then we define the sequence $n_1 \rightarrow n_2 \rightarrow n_3 \in \Delta$. It is clear that subtraction is neither an operation in $\mathbb N$, but $\mathbb Z$, nor associative. Let $\mathcal P_{\Phi} = \{1\}$ and $\Phi:= "="$, then for each number $n \neq 0 \in \mathbb N$, there is a finite sequence $1 \rightarrow \cdots \rightarrow n-1 \rightarrow  n$ such that $n-(k+1) \rightarrow n-k \rightarrow 1 \in \Delta$ for $0\leq k \leq n-2$, and we call such sequence a Jordan-H\"older (JH) sequence of $n$. Moreover,  lengths of JH sequence of objects in $\mathcal A$ induces a new partial order $<$ different from the original one $"="$. In more general case, we could not expect the uniqueness of JH sequences of objects, but the length of objects is independent to the choice of different JH sequences,  for instance, JH filtration of each semistable sheaf in the category of coherent sheaves.  Therefore in this article we only consider HN sequences of objects in categories.  
\end{exa}

\begin{exa}\label{division}
Let the set of objects in the category $\mathcal A$ be positive integers $\mathbb N^*$, and if $n_1, n_2, n_3:=n_2/n_1 \in \mathbb N^*$ then we define the sequence $n_1 \rightarrow n_2 \rightarrow n_3 \in \Delta$. Obviously division is not an operation in $\mathbb N^*$ but $\mathbb Q^+$. Let $\mathcal P_{\Phi} = \{p^n\}$ for all prime numbers $p$ and $n \in \mathbb N$, and $p_1^{n_1} > p_2^{n_2}$ if $p_1 - p_2  > 0$. Each number $m \neq 1 \in \mathbb N^*$ has the HN sequence
\begin{center}
\begin{tikzcd}[row sep=small, column sep=tiny]
p_1^{n_1} \arrow{rr}& &p_1^{n_1}p_2^{n_2} \arrow{rr} \arrow{dl}& &p_1^{n_1}p_2^{n_2}p_3^{n_3} \arrow{r} \arrow{dl} &\cdots \arrow{r} &  p_1^{n_1}\cdots p_{k-1}^{n_{k-1}}\arrow{rr}\arrow{dl} && m = p_1^{n_1}p_2^{n_2}\cdots p_k^{n_k} \arrow{dl} \\
&p_2^{n_2} &&p_3^{n_3} &&p_{k-1}^{n_{k-1}}&& p_k^{n_k} 
\end{tikzcd}
\end{center}
with $p_1 > p_2 > \cdots >p_k$, and each factor $p_i^{n_i}$ has the JH sequence induced by $p_i$. Indeed, the triple $(\Delta, ">", \{p^n\})$ is a $\Delta$-stability data if we impose the commutative condition (ii) in Def.~\ref{sd}: $p_{j-1}\circ i_{j-1} = h\circ p_{j-2}$ for all $h$ on $\mathcal A$.
\end{exa}

\begin{exa}\label{vs}
Suppose $\mathcal A$ is the abelian category of all finite dimensional vector spaces over a division ring $R$ spanned by $\{v_i\}$ for all $i \in \mathbb N$. Given a triple of vector spaces $(V_1, V_2, V_3)$, we define a sequence $V_1 \rightarrow V_2 \rightarrow V_3$ if $V_1 = Rv_{n_1}\oplus\cdots\oplus Rv_{n_{l-1}}, V_2 = Rv_{n_1}\oplus\cdots\oplus Rv_{n_{l-1}}\oplus Rv_{n_l}, V_3 = Rv_{n_l}$. Here each arrow in sequences is a regular linear transformation of vector spaces unique up to isomorphism in $\mathcal A$. Thus every nontrivial finite dimensional vector space $V$ has the HN sequence
\begin{center}
\begin{tikzcd}[row sep=small, column sep=tiny]
Rv_{n_1} \arrow{rr} & &Rv_{n_{1}}\oplus Rv_{n_2} \arrow{r} \arrow{dl} &\cdots \arrow{r} &  Rv_{n_1}\oplus\cdots\oplus Rv_{n_{l-1}}\arrow{rr}\arrow{dl} && V = Rv_{n_{1}}\oplus \cdots \oplus Rv_{n_l} \arrow{dl} \\
&Rv_{n_2} &&Rv_{n_{l-1}}&& Rv_{n_l} 
\end{tikzcd}
\end{center}
where $Rv_{n_i} > Rv_{n_j}$ if $i<j$, and each factor $Rv_{n_i}$ has the only  trivial JH sequence itself. If $\mathcal A^s$ is the subcategory of $\mathcal A$ such that $\mathrm{Hom}_R(v_i, v_j) = 0$ for $i>j$, then $(\Delta, ">", \{Rv_i\})$ forms a $\Delta$-stability data on $\mathcal A^s$.
\end{exa}

To the best of the author's knowledge, there are some more examples. In homotopy theory, each simple space $X$ of homotopy type of CW complex has a Postnikov system, infinite HN sequence with inverse arrows, such that each factor space is an Eilenberg-Mac~Lane space $K(\pi_n(X), n)$ (see~\cite{MR1702278}). In quantum mechanics, each quantum state is a superposition of stable states in the relevant Hilbert space determined by the quantum mechanical system. 

If the class $\Delta$ is a set of all short exact sequences of an abelian category $\mathcal A$, then the $\Delta$-stability data defined by the triple is coincided with stability data presented by  A.~L.~Gorodentsev and S.~A.~Kuleshov and A.~N.~Rudakov~\cite{MR2084563} and the finite sequence is the regular Harder-Narasimhan filtration. Thus we call the triple $\Delta$-stability data of $\mathcal A$. Indeed, if the class $\Delta$ consists of all short complexes lying on distinguished triangles in a triangulated category $\mathcal T$, then a $\Delta$-stability data of $\mathcal T$ gives a t-stability in the sense of~\cite{MR2084563}.

If the stability data is induced by exceptional sequences in the associated derived category $\mathrm{D^b}(\mathcal A)$ of an abelian category $\mathcal A$~\cite{MR992977}, then we say that it is an  \textit{exceptional} stability data of $\mathcal A$ and induces a \textit{exceptional} t-stability of $\mathrm{D^b}(\mathcal A)$. For example, on $\mathbf P^1$ over a field $k$ there is a uniquely determined decreasing sequence of integers $a_1 \geq \cdots \geq a_r$ such that $E \simeq \mathcal O(a_1) \oplus \cdots \oplus \mathcal O(a_r)$ for each vector bundle $E$ of rank $r$ on $\mathbf P^1$ (see~\cite{MR2665168}).

Suppose a triangulated category $\mathcal T$ is \textit{strongly generated} by an extension-closed subcategory $\mathcal E \subset \mathcal T$, that is, for each object $E \in \mathcal T$ and a fixed integer $M_{\mathcal T}$ there exists a Harder-Narasimhan sequence such that each object $F_j$ belongs to $\langle\mathcal E\rangle$ with the length of HN sequence $\leq M_{\mathcal T}$. Then $\left(\Delta, \Phi:=\{0,1,\dots, M_{\mathcal T}\}, \{\mathcal P_{\phi} := \langle\mathcal E\rangle\}\right)$ is a pre-$\Delta$-stability data of $\mathcal T$. Indeed, for any smooth separated scheme $X$ the triangulated category of perfect complexes has a strong generator, thus a pre-$\Delta$-stability data (see A.~Bondal and M.~Van den Bergh~\cite{MR1996800}). Moreover, for some wrapped Fukaya category of Liouville manifolds one may find some (split) generators inducing a pre-$\Delta$-stability data (see M.~Abouzaid~\cite{MR2737980}).

\subsection{Stability filtration in categories}\label{stabilityf}

In general, different categories can have similar $\Delta$-stability data and HN sequences of objects even their sources are totally different, and thus share some categorical invariant or partial orders. So the natural question is that what sorts of conditions are sufficient or necessary for existence of $\Delta$-stability data in given categories. The first attempt to define a $\Delta$-stability data in a category $\mathcal A$ is to find a trivial morphism class and \textit{stability filtration} of $\mathcal A$ defined below.
\begin{definition}
Let $P$ be a partial order ``$\leq$'' and $\mathcal A$ is a category with a zero or trivial morphism class $0_c$ such that $f\circ 0_c = 0_c \circ f = 0_c$ if compositions are defined. Then the \textit{semistable} functor category $\mathcal{SS}_{\mathcal A^{P^{\mathrm{op}}}}$ is a subcategory of the functor category $\mathcal A^{P^{\mathrm{op}}}$ so that $\mathrm{Hom}_{\mathcal A}\left(F_i, F'_j\right) = 0_c$ if $j < i \in P$ for all functors $F, F' \in \mathcal A^{P^{\mathrm{op}}}$, and $\mathcal E^0$ denotes the smallest full subcategory containing $\mathcal{SS}_{\mathcal A^{P^{\mathrm{op}}}}(P)$ of $\mathcal A$. We defined the subcategory $\mathcal E^n$ of $\mathcal A$ by induction on $n$ such that all $E^{n} \in \mathcal E^n$ satisfy the following conditions: 
\begin{enumerate}
\item $\mathrm{Hom}_{\mathcal A}(E^{n}, \mathcal E^{0}) \neq 0_c$ or $\emptyset$, and $\mathrm{Hom}_{\mathcal A}(\mathcal E^{n-1},  E^{n}) \neq 0_c$ or $\emptyset$;
\item A nontrivial connected diagram in the comma category $(E^{n}\downarrow \mathcal E^0)$ has a weakly initial object $p: E^n \rightarrow E^0$ such that given another object $p': E^n \rightarrow E'^0$ with $p=h\circ p'$, the composite arrow $E^0 \xrightarrow{h'} E'^0 \xrightarrow{h} E^0$ is an identity. Here weakly initial means that for any nontrivial arrow $p': E^n \rightarrow E'^0$, it would fulfill $p' = h'\circ p$ for an arrow $h': E^0 \rightarrow E'^0$;  
\item A nontrivial connected diagram in the comma category $(\mathcal E^{n-1}\downarrow E^n)$ has a weakly terminal object $i: E^{n-1} \rightarrow E^n$ such that given another object $i': E'^{n-1} \rightarrow E^n$, the composite arrow $E^{n-1} \xrightarrow{h'} E'^{n-1} \xrightarrow{h} E^n$ is an identity for nontrivial morphisms $h, h'$;  
\item $m(E^n)  < m(E^{n-1})$, where $m(E^i) \in \mathcal E^0$ is the weakly initial object in $(E^i \downarrow \mathcal E^0)$.
\end{enumerate}
It turns out that $\bigcup_{m=0}^{\infty}\mathcal E^m \subset \mathcal A$ and if $E \in \mathcal E^n$ for all $E\in \mathcal A$ and $n \geq 0$, then $\mathcal E^0 \subset \mathcal E^0 \cup \mathcal E^1 \subset \cdots \subset \mathcal A$ is called a \textit{stability filtration} of $\mathcal A$.
\end{definition}

\begin{theorem}
If the category $\mathcal A$ has a $\Delta$-stability data, then there exists a subcategory $\mathcal A^s$ with the same objects of $\mathcal A$ and same $\Delta$-stability data such that $\mathcal A^s$ has a stability filtration. Conversely, if $\mathcal A$ has a stability filtration, then $\mathcal A$ has a $\Delta$-stability data.
\end{theorem}
\begin{proof}
Suppose there is a $\Delta$-stability data on $\mathcal A$, we first define the full subcategory $\mathcal E^0 := \mathcal P_{\Phi}$ with $\mathrm{Hom}_{\mathcal E^0}(F_{\phi}, F_{\phi'}) := 0_c$ if $\phi > \phi' \in P:= \Phi$, and $\mathcal E^n$ to be a subcategory of $\mathcal A$ with objects $\mathrm{ob}(\mathcal E^n):= \{ E\in \mathcal A \mid \mathrm{HN\ length}(E) = n\}$ and $\mathrm{Hom}_{\mathcal E^n}(E^n, E'^n) = \{ f\in \mathrm{Hom}_{\mathcal A}(E^n, E'^n) \mid (h_1, f, h_2): (E^{n-1} \rightarrow E^n \rightarrow E^0) \rightarrow  (E'^{n-1} \rightarrow E'^n \rightarrow E'^0) \in \mathrm{Hom}(\Delta), h_1 \in \mathrm{Hom}_{\mathcal E^{n-1}}(E^{n-1}, E'^{n-1}), h_2 \in \mathrm{Hom}_{\mathcal E^{0}}(E^{0}, E'^{0})\}$. For each object $E^n \in \mathcal E^n$, there exists a unique sequence $E^{n-1} \xrightarrow{i} E^n \xrightarrow{p} E^0 \in \Delta$ by assumption. Thus we can define morphisms in $\mathcal A^n := \bigcup_{m=0}^n\mathcal E^m$ as
\begin{enumerate}
\item $\mathrm{Hom}_{\mathcal A^n}(E^n, \mathcal E^0) := \mathrm{Hom}_{\mathcal A}(E^0, \mathcal E^0)\circ p$; 
\item $\mathrm{Hom}_{\mathcal A^n}(\mathcal E^{n-1}, E^n) := i \circ\mathrm{Hom}_{\mathcal A^{n-1}}(\mathcal E^{n-1}, E^{n-1})$;
\item $\mathrm{Hom}_{\mathcal A^n}(E^n, E'^n) := \mathrm{Hom}_{\mathcal E^n}(E^{n}, E'^n)$.
\end{enumerate}
One could see that the definition is well defined under composition of morphisms, and $\mathrm{Hom}_{\mathcal A^m}(E^n, \mathcal E^0) = \mathrm{Hom}_{\mathcal A^n}(E^n, \mathcal E^0)$ for all $m\geq n$ since $p\circ i = 0_c$. Also $\mathrm{Hom}_{\mathcal A^m}(E^n, E'^n) = \mathrm{Hom}_{\mathcal A^n}(E^n, E'^n)$, thus $\mathrm{Hom}_{\mathcal A^m}(\mathcal E^{n-1}, E^n)= \mathrm{Hom}_{\mathcal A^n}(\mathcal E^{n-1}, E^n)$. It turns out that $E^n \xrightarrow{p} E^0$ is a weakly initial object of the connected diagram containing $p\neq 0_c$ in $(E^n\downarrow \mathcal E^0)$ within $\mathcal A^s := \bigcup_{m=0}^{\infty}\mathcal E^m$. Indeed, since $p$ is unique up to isomorphisms in $\mathcal A$, $p = 0_c$ if and only if $p = 0_c\circ g$ for some $g\in \mathrm{Hom}_{\mathcal A^n}(E^n, F_{\phi})$ with $F_{\phi} > E^0$. But $g = h\circ p$ for $h \in \mathrm{Hom}_{\mathcal A}(E^0, F_{\phi})$, thus by the uniqueness $p = 0_c\circ h\circ p$ with an identity $0_c \circ h \in \mathrm{Hom}_{\mathcal A^s}(E^0, E^0)$. However, the assumption of $\Delta$-stability data implies $0_c\circ h$ is not an isomorphism as $\phi > m(E^n)$. Hence $p \neq 0_c$ in $\mathcal A^s$. 

Moreover, $p$ is a weak universal arrow in $\mathcal A^s$ by (i), and given any morphism $p': E^n \rightarrow E'^0$ with $p=h\circ p'$, one could form a commutative diagram 
\begin{center}
\begin{tikzcd}[row sep=large, column sep=large]
E^{n-1} \arrow[r, "i"] \arrow[d, "Id"]&E^n \arrow[d, "Id"] \arrow[r, "p"] &E^0 \arrow[d, "h\circ h'"] \\
E^{n-1} \arrow[r, "i"]&E^{n} \arrow[r, "p"] &E^0
\end{tikzcd}
\end{center}
and the left and middle vertical arrows are identities, thus $h\circ h'$ is an identity by the fact of $\Delta$-stability. Similarly, $E^{n-1} \xrightarrow{i} E^n$ becomes a weakly terminal object of the connected diagram containing $i\neq 0_c$ in $(\mathcal E^{n-1}\downarrow E^n)$ within $\mathcal A^s$. Hence $\{\mathcal E^0, \mathcal E^1, \dots\}$ is a stability filtration of $\mathcal A^s \subset \mathcal A$.

Conversely, assume that $\mathcal A$ has a stability filtration $\{\mathcal E^0, \mathcal E^1, \dots\}$ with a trivial morphism class $0_c$, and $\mathcal P_{\Phi}:= \mathcal E^0$ with $\Phi := P$. Thus $P_{\Phi}$ is weakly initial and terminal. For each objects $E^n \in \mathcal E^n$ there is an universal sequence $E^{n-1} \xrightarrow{i_{n-1}} E^n \xrightarrow{p_{n-1}} E^0$ with $p_{n-1}\circ i_{n-1} = 0_c$ and $m(E^{n-1}) > m(E^n)= E^0$. Let the $\Delta$-class being the collection of universal sequences of all $E^n \in \mathcal E^n$ for all $n$, then each object $E \in \mathcal A$ has the unique HN sequence up to isomorphism. Indeed, given a diagram of sequences of $\Delta$ with an isomorphism $f$,
\begin{center}
\begin{tikzcd}[row sep=large, column sep=large]
E^{n-1} \arrow[r] \arrow[d, dashed, "h'_1"]&E^n \arrow[d, "f"] \arrow{r} &E^0 \arrow[d, dashed, "h'_2"] \\
E'^{n-1} \arrow{r}&E'^{n} \arrow[r] &E'^0,
\end{tikzcd}
\end{center}
then there exists morphisms $h_1, h_2$ such that the both squares are commutative by the weak universality. Similarly we have a  morphism of sequences $(h_1, f^{-1}, h_2): (E'^{n-1}\rightarrow E'^n \rightarrow E'^0) \rightarrow (E^{n-1}\rightarrow E^n \rightarrow E^0)$, and $h_i\circ h'_i = \mathrm{Id}, h'_i \circ h_i = \mathrm{Id}$ by assumption. It turns out that $h_1$ and $h_2$ are isomorphisms. Therefore, $\left(\Delta, P, \mathcal E^0\right)$ is a $\Delta$-stability data of $\mathcal A$.
\end{proof}

\begin{remark}
Note that we may find some isomorphic objects $E^n \simeq E^m$ in $\mathcal A$ which are not isomorphic in $\mathcal A^s$ if there does not exist any commutative diagram comparable with this isomorphism. Indeed, $E^n$ is not isomorphic to $E^m$ for $n\neq m$ in $\mathcal A^s$.
\end{remark}

In other words, the existence of $\Delta$-stability data of a category $\mathcal A$ is equivalent to the existence of a trivial morphism $0_c$, a partial order on the objects and  a collection $\Delta$ containing some nontrivial universal sequences in $\mathcal A$.

\section{Exact $\Delta$-class and $\Delta$-exact functor}\label{nts}

\subsection{$\Delta$-stability structure} 

To define semistable subcategory it is expected to have such a partial order called \textit{$\Delta$-stability structure} on the objects of $\mathcal A$ if the following \textit{$\Delta$-seesaw property} holds. 
\begin{definition}
Assume each sequence in $\Delta$ is unique up to isomorphism, then a \textit{$\Delta$-stability structure} on an arbitrary category $\mathcal A$ is a partial order $\preceq$ on the objects of $\mathcal A$ such that for every sequence $A \rightarrow B \rightarrow C$ in $\Delta$, one of the three conditions is satisfied:
\begin{enumerate}
\item $A \prec B \Leftrightarrow A \prec C \Leftrightarrow B \prec C$;
\item $A \succ B \Leftrightarrow A \succ C \Leftrightarrow B \succ C$;
\item $A \asymp B \Leftrightarrow A \asymp C \Leftrightarrow B \asymp C$.
\end{enumerate}
This property is called the \textit{$\Delta$-seesaw property}. An object $A\in \mathcal A$ is called \textit{$\Delta$-semistable} if $B \preceq A$ for every sequence $B \rightarrow A \rightarrow C$ in $\Delta$. Note that $A$ is $\Delta$-semistable if no such sequence $B \rightarrow A \rightarrow C$ exists in $\Delta$.
\end{definition}

\begin{lem}\label{o}
Suppose $A$ is a category with a $\Delta$-stability structure with respect to the class $\Delta$ of sequences, such that each morphism $f: E_1 \rightarrow E_2$ of $\Delta$-semistable objects in $\mathcal A$ can either fit into one of two sequences of $\Delta$: $E_1 \xrightarrow{f} E_2 \rightarrow F$, $F \rightarrow E_1 \xrightarrow{f} E_2$, or $E_1 \xrightarrow{f_1} F \xrightarrow{f_2} E_2$ with $f = f_2 \circ f_1$. We define $0_c:= f$ if $E_1 \succ F$ or $F \succ E_2$, then $\mathrm{Hom}_{\mathcal A}(E_1, E_2) \equiv 0_c$ for any $\Delta$-semistable objects $E_1, E_2$ with $E_1 \succ E_2$.
\end{lem}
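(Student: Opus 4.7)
The plan is to proceed by case analysis on the three structural options the hypothesis provides for $f \colon E_1 \to E_2$, using throughout that $E_1, E_2$ are $\Delta$-semistable with $E_1 \succ E_2$ and that the trivial morphism class $0_c$ is absorbing under composition on either side.

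The two sequence cases are dispatched directly by the $\Delta$-seesaw property. For case (a), $E_1 \xrightarrow{f} E_2 \to F \in \Delta$: the seesaw applied to this sequence under $E_1 \succ E_2$ selects condition (ii), yielding $E_1 \succ F$ and $E_2 \succ F$; the condition $E_1 \succ F$ is exactly the defining trigger for $0_c$, so $f = 0_c$. Symmetrically, for case (b), $F \to E_1 \xrightarrow{f} E_2 \in \Delta$: the seesaw yields $F \succ E_1$ and $F \succ E_2$, and the condition $F \succ E_2$ triggers the definition of $0_c$, giving $f = 0_c$.

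The substantive case is the factorization $f = f_2 \circ f_1$ with $E_1 \xrightarrow{f_1} F \xrightarrow{f_2} E_2$, since no sequence in $\Delta$ is supplied for a direct seesaw argument. My plan is to argue via transitivity of the partial order on the relative position of $F$: simultaneously having $F \succeq E_1$ and $E_2 \succeq F$ would give $E_2 \succeq E_1$, contradicting $E_1 \succ E_2$. Hence at least one of $E_1 \succ F$ or $F \succ E_2$ holds, whereupon the defining condition of $0_c$ applies to $f_1$ or $f_2$ respectively, and the absorbing property of $0_c$ yields $f = f_2 \circ f_1 = 0_c$.

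The main obstacle I anticipate is ensuring that $F$ is actually comparable to one of $E_1, E_2$ in the partial order; transitivity rules out the alignment $E_1 \preceq F \preceq E_2$ but not bare incomparability. The intended remedy is to re-apply the three-case hypothesis to the factor morphisms $f_1$ and $f_2$ (refining the factorization if $F$ is not itself $\Delta$-semistable) and invoke the $\Delta$-seesaw on the sequences produced, forcing $F$ into one of the two directions needed relative to $E_1$ or $E_2$ and completing the argument that $\mathrm{Hom}_{\mathcal A}(E_1, E_2) \equiv 0_c$.
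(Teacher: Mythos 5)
Your argument is correct and, for the substantive factorization case, is the same contrapositive the paper uses; the genuine difference is in how the two sequence cases are handled. The paper rules out $E_1 \xrightarrow{f} E_2 \rightarrow F$ and $F \rightarrow E_1 \xrightarrow{f} E_2$ as impossible outright: in the first, $\Delta$-semistability of $E_2$ forces $E_1 \preceq E_2$; in the second, semistability of $E_1$ gives $F \preceq E_1$ and the seesaw then gives $E_1 \preceq E_2$ --- either contradicts $E_1 \succ E_2$, so only the factorization case can actually occur. You instead keep those cases alive and run the seesaw under $E_1 \succ E_2$ to get $E_1 \succ F$ (resp.\ $F \succ E_2$), which fires the definition of $0_c$ directly, without invoking semistability there at all. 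Both routes are valid; the paper's version yields the slightly stronger observation that a nontrivial morphism between semistable objects of decreasing order can only arise by factoring through an intermediate object, while yours is more uniform across the three cases. For $f = f_2 \circ f_1$ the two proofs coincide: if $f \neq 0_c$ then neither trigger holds, which is read as $E_1 \preceq F \preceq E_2$, hence $E_1 \preceq E_2$, contradicting $E_1 \succ E_2$.

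On the comparability worry you flag: you are right that in a bare partial order ``not $E_1 \succ F$'' does not give $F \succeq E_1$, but the paper's own proof makes exactly the same silent passage (it writes ``$f = f_2\circ f_1$ with $E_1 \preceq F$ and $F \preceq E_2$'' directly from $f \neq 0_c$); in the intended applications the ordering is a total slope ordering, so the issue disappears. Your proposed remedy, however, would not close this gap as stated: the three-case hypothesis is only assumed for morphisms \emph{between $\Delta$-semistable objects}, and the intermediate object $F$ need not be semistable; even when it is, re-applying the hypothesis to $f_1$ and $f_2$ can land you back in the factorization case with a new intermediate object and regress indefinitely. So either drop the remedy and accept the same comparability convention the paper uses, or note explicitly that the lemma is applied only when the negation of ``$E_1 \succ F$ or $F \succ E_2$'' is equivalent to $E_1 \preceq F \preceq E_2$.
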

\begin{proof}
If $\mathrm{Hom}_{\mathcal A}\left(E_1, E_2\right) \neq 0_c$ for some $E_1 \succ E_2$ with $\Delta$-semistable objects $E_1$ and $E_2$, then there exists a non-trivial morphism $f \neq 0_c: E_1 \rightarrow E_2$ of $\mathrm{Hom}(E_1, E_2)$. By the assumption $f$ can not fit into neither $E_1 \xrightarrow{f} E_2 \rightarrow F$, nor $F \rightarrow E_1 \xrightarrow{f} E_2$, which implies $E_1 \preceq E_2$ since $E_1$ and $E_2$ are $\Delta$-semistable. Thus $f = f_2\circ f_1$ with $E_1 \preceq F$ and $F\preceq E_2$ but this contradicts to $E_1 \succ E_2$. Hence $\mathrm{Hom}_{\mathcal A}(E_1, E_2) \equiv 0_c$.
\end{proof}

\subsection{Exact $\Delta$-class} 

In general, given a $\Delta$-stability structure the existence of Harder-Narasimhan sequence of any object of $\mathcal A$ is still unclear. However, if the class $\Delta$ fulfills the \textit{exact condition} and the category $\mathcal A$ is weakly $\Delta$-Aritinian and weakly $\Delta$-Noetherian, then the existence of Harder-Narasimhan sequences is given by the theorem~\ref{tHN}, a generalization of the theorem in~\cite{MR1480783, MR2373143}.

\begin{definition}
The class $\Delta$ of sequences in a category $\mathcal A$ with a specific morphism class $S$ is called an \textit{exact} class of $\mathcal A$ if $i, p\neq S$ and are not isomorphisms for any $E' \xrightarrow{i} E \xrightarrow{p} F \in \Delta$, and the following conditions are fulfilled
\begin{enumerate}
\item For any two sequences $F_1 \rightarrow E_1 \xrightarrow{p_1} E_2, F_2 \rightarrow E_2 \xrightarrow{p_2} E_3 \in \Delta$, there exists such sequences $F_1' \rightarrow E_1 \xrightarrow{p_2 \circ p_1} E_3, F_1 \rightarrow F_1' \rightarrow F_2 \in \Delta$;
\item For any two sequences $E_1 \xrightarrow{i_1} E_2 \rightarrow F_1, E_2 \xrightarrow{i_2} E_3 \rightarrow F_2 \in \Delta$, there exists such sequences $E_1 \xrightarrow{i_2\circ i_1} E_3 \rightarrow F_1', F_1 \rightarrow F_1' \rightarrow F_2 \in \Delta$;
\item For any sequence $E_1 \xrightarrow{i} E_2 \xrightarrow{p} F_1 \in \Delta$ with morphisms $f: E_2 \xrightarrow{f} E'_2$ and $p': E'_2 \xrightarrow{p'} F'_1$, if $p'\circ f\circ i = S$ then there exists a morphism $g^1 \in \mathrm{Hom}(F_1, F'_1)$ such that the following diagram is commutative,
\begin{center}
\begin{tikzcd}[row sep=large, column sep=large]
E_1 \arrow[r, "i"] &E_2 \arrow[d, "f"] \arrow[r, "p"] &F_1 \arrow[d, dashed, "g^1"] \\
&E'_2 \arrow[r, "p'"] &F'_1.
\end{tikzcd}
\end{center}
Similarly, if $p\circ f'\circ i' = S$ for morphisms $f': E'_2 \xrightarrow{f'} E_2, i': E'_1 \xrightarrow{i'} E_1$, then there exists a morphism $g^0 \in \mathrm{Hom}(E'_1, E_1)$ so that $i\circ g^0 = f'\circ i'$. 
\item Given a commutative diagram of sequences in $\Delta$, 
\begin{center}
\begin{tikzcd}[row sep=large, column sep=large]
E_1 \arrow[r, "i"]\arrow[d, "g^0"] &E_2 \arrow[d, "f"] \arrow[r, "p"] &F_1 \arrow[d, "g^1"] \\
E'_1 \arrow[r, "i'"] &E'_2 \arrow[r, "p'"] &F'_1,
\end{tikzcd}
\end{center}
if two of morphisms $(g^0, f, g^1)$ are isomorphisms, then the other is also an isomorphism. Moreover, if $f$ is an identity, $p'=p$ and $i'=i$ then $g^0, g^1$ are identities.
\end{enumerate}
\end{definition}
 
Let us consider the following easy example first before further formal discussion. 
\begin{exa}\label{eN}
Suppose $\mathcal A$ is the category $\mathbb N^{*}$ with the $\Delta$ class induced by division as Example~\ref{division}. Then a trivial morphism $S$ could be defined as the composite $p\circ i$ for any sequence $n_1 \xrightarrow{i} n_2 \xrightarrow{p} n_2/n_1$ in $\Delta$, and any composite with a trivial morphism is itself a trivial morphism. One can easily check that condition (i) and (ii) of $\Delta$-exactness above are satisfied. We assume morphisms induced by condition (i) and (ii) are commutative, for instance, the squares below are all commutative
\begin{center}
\begin{tikzcd}[row sep=large, column sep=large]
n_1 \arrow[r, "i_1"] \arrow[d, "\mathrm{Id}"]&n_2 \arrow[d, "i_2"] \arrow[r, "p_1"] &n_2/n_1 \arrow[d, dashed, "\tilde{i}"] &&n_1/n_2 \arrow[r, "i_1"] \arrow[d, dashed, "\tilde{i}"] &n_1  \arrow[d, "\mathrm{Id}"]\arrow[r, "p_1"] &n_2 \arrow[d, "p_2"]\\
n_1 \arrow[r, "i_2\circ i_1"] \arrow[d, "i_1"]&n_3 \arrow[r, "p_3"]  \arrow[d, "\mathrm{Id}"] &n_3/n_1 \arrow[d, dashed, "\tilde{p}"]&& n_1/n_3 \arrow[r, "i_3"] \arrow[d, dashed, "\tilde{p}"]&n_1 \arrow[r, "p_2\circ p_1"]  \arrow[d, "p_1"] &n_3 \arrow[d, "\mathrm{Id}"] \\
n_2\arrow[r, "i_2"] &n_3 \arrow[r, "p_2"] &n_3/n_2, &&n_2/n_3\arrow[r, "i_2"] &n_2 \arrow[r, "p_2"] &n_3,
\end{tikzcd}
\end{center}
where each row and dashed column sequence is belonging to $\Delta$ class. Indeed, condition (i) and (ii) with commutativity are similar to the octahedron axiom in triangulated categories.

To prove condition (iii) of $\Delta$ exactness, we only have to assume $p'\circ f \neq S$, otherwise $g^1 = S$. Any nontrivial morphism can be decomposed as $p_m\circ i_{m-1} \circ \cdots \circ p_2 \circ i_1$ for $i_j, p_j$ fitting in some sequences of $\Delta$. Since $p'\circ f \circ i = S$, it turns out that $p'\circ f \circ i = \cdots \circ (\bar{p}\circ \bar{i}) \circ \cdots$ by commutativity, where $\bar{p}\circ \bar{i} = S$. Then $g^1$ is given by the composite of induced morphisms in condition (i) and (ii). Therefore, this $\Delta$ class with the trivial morphism $S$ and the law of commutativity forms an exact $\Delta$ class. Note that there is no isomorphism but identity in $\mathcal A$, thus condition (iv) is trivial.
\end{exa}

Weakly $\Delta$-Artinian means that there are no infinite chain of objects of $\mathcal A$
$$
\cdots \xrightarrow{i_3} E_{2} \xrightarrow{i_2} E_1 \xrightarrow{i_1} E
$$
with $E \prec E_1 \prec E_2 \prec \cdots$ and $E_{j} \xrightarrow{i_j} E_{j-1} \rightarrow F_j  \in \Delta$ for all $j >0$. Dually, weakly $\Delta$-Noetherian means that there are no infinite chain of objects in $\mathcal A$ 
$$
E \xrightarrow{p_1} E_1 \xrightarrow{p_2} E_2 \xrightarrow{p_3} \cdots
$$
with $E \succ E_1 \succ E_2 \succ \cdots$ and $F_j \rightarrow E_{j-1} \xrightarrow{p_j} E_{j} \in \Delta$ for all $j > 0$. Note that the definition of the weakly Noetherian is a bit stronger than Rudakov used in~\cite{MR1480783}. In this article it would be sufficient to consider such a stronger conditions proposed by Bridgeland~\cite{MR2373143}.

\begin{theorem}\label{tHN}
Given an exact $\Delta$-stability structure on a category $\mathcal A$ with $\mathrm{Hom}(A, B) = 0_c = S$ for $\Delta$-semistable objects $A \succ B$. If $\mathcal A$ is weakly $\Delta$-Artinian and weakly $\Delta$-Noetherian, then for every object $E \in \mathcal A$, there exists a unique Harder-Narasimhan sequence
$$
E_0 \xrightarrow{i_0} E_1 \xrightarrow{i_1} \cdots \xrightarrow{i_{n-1}} E_{n} \xrightarrow{i_{n}} E
$$ 
such that for $j > 0$, $E_{j-1} \xrightarrow{i_{j-1}} E_j \rightarrow F_{j-1} \in \Delta$, and $F_j$ are $\Delta$-semistable objects of $\mathcal A$ with 
$$
F_0 \succ F_1 \succ \cdots \succ F_n.
$$
\end{theorem}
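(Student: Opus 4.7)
The plan is to adapt the classical Harder--Narasimhan existence and uniqueness argument of Rudakov and Bridgeland to the categorical setting. The $\Delta$-seesaw property and the Hom-vanishing of Lemma~\ref{o} replace, respectively, slope comparisons and vanishing of homs between semistables of different slopes, while the four exactness axioms of $\Delta$ replace the standard short-exact-sequence manipulations that are automatic in an abelian or triangulated ambient.

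For existence, I first want to show that every $E$ which is not $\Delta$-semistable admits a $\Delta$-semistable ``maximal destabilizer'': a semistable object $E_0$ fitting into some $E_0 \to E \to F \in \Delta$ with $E_0 \succ E$. Start from any destabilizing $E_1 \to E \to G_1 \in \Delta$ with $E_1 \succ E$ (which exists by the failure of semistability of $E$); if $E_1$ is not yet semistable, pick $E_2 \to E_1 \to G_2 \in \Delta$ with $E_2 \succ E_1$ and use exactness axiom (ii) to splice this into a genuine sequence $E_2 \to E \to G'_1 \in \Delta$. Iterating produces a chain $\cdots \to E_3 \to E_2 \to E_1 \to E$ with $E \prec E_1 \prec E_2 \prec \cdots$, and weak $\Delta$-Artinianness forces termination at a semistable $E_0$.

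With $E_0$ in hand, the seesaw gives $E_0 \succ E \succ F$, and I would recurse the same construction on $F$ to produce the next semistable piece, splicing successive sequences via exactness axioms (i) and (ii) to grow the filtration of $E$. Maximality of the destabilizer chosen at each stage, combined with axiom (iii) applied to a comparison diagram, forces the successive semistable factors to be strictly $\succ$-descending: a potential violation would pull back through the previous $\Delta$-sequence and yield a destabilizer of an earlier stage that is $\succ$-strictly larger than the one chosen, contradicting maximality. Weak $\Delta$-Noetherianness then bounds the resulting strictly $\succ$-descending chain of quotients, so the iteration terminates and delivers the HN sequence with $F_0 \succ F_1 \succ \cdots \succ F_n$.

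For uniqueness, given two HN sequences of $E$ I compare their leading semistable factors via a comparison diagram of the two first $\Delta$-sequences. The Hom-vanishing of Lemma~\ref{o} between $\Delta$-semistables with strictly $\succ$-ordered arguments forces the relevant cross-composite to equal $0_c = S$; exactness axiom (iii) then yields a comparison map between the two leading factors, which axiom (iv) promotes to an isomorphism. Peeling off the leading factor and inducting on HN length completes the argument. The main obstacle throughout is that the diagram chases replacing ordinary kernel/cokernel manipulations must be justified by the exactness axioms and Hom-vanishing: in existence, each composite produced by axiom (ii) must be verified to remain a genuine sequence of $\Delta$ (rather than collapsing via $i$ or $p$ becoming $S$), and the strict descent of successive factors requires a separate argument via axiom (iii); in uniqueness, one must first show that the cross-composite in the comparison diagram actually lies in the trivial class, so that axiom (iii) delivers a morphism in the desired direction before axiom (iv) can be invoked to promote it to an isomorphism.
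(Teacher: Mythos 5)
Your overall architecture is right and your uniqueness argument matches the paper's (comparison diagram, Hom-vanishing from Lemma~\ref{o} to kill the cross-composite, exactness axiom (iii) to produce the comparison map, axiom (iv) to promote it to an isomorphism, then induct on length). But there is a genuine gap in the existence half: you never establish that the semistable destabilizer you recurse on is \emph{maximal}, and the strict descent $F_0 \succ F_1 \succ \cdots$ depends entirely on that maximality. Your weakly-$\Delta$-Artinian chain argument terminates at \emph{some} $\Delta$-semistable subobject $E_0$ with $E_0 \succ E$ --- this is exactly the paper's opening step --- but semistability of $E_0$ does not make it extremal among destabilizers. If you then recurse on the quotient $F$, nothing prevents the leading HN factor of $F$ from being $\succ E_0$; already for coherent sheaves, $E = L_1 \oplus L_2 \oplus L_3$ with slopes $5,4,0$ and the non-maximal semistable destabilizer $E_0 = L_2$ produces factors with slopes $4,5,0$, which is not descending. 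Your proposed fix (``a violation would contradict maximality'') presupposes the very property your construction does not deliver.

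The paper closes this gap with a separate and genuinely nontrivial construction: it defines a maximally destabilising object (mdo) as a weakly initial semistable \emph{quotient} $E \to B$ satisfying a universal factorization property, and proves every object has one by an inductive descent. Given a semistable destabilizing sub $A \to E \to E'$, it takes an mdo $B$ of $E'$ and shows $B$ is also an mdo of $E$: any semistable quotient $E \to B'$ with $B' \preceq B$ satisfies $B' \prec A$, so $\mathrm{Hom}(A,B') = 0_c$, and exactness axiom (iii) forces $E \to B'$ to factor through $E'$; weak $\Delta$-Noetherianity terminates the descent. The HN sequence is then built bottom-up from successive minimal destabilizing quotients (terminating by weak $\Delta$-Artinianity), with the seesaw property and the mdo's universal property yielding $B' \succ B$ at each step. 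To repair your proof you would need to either reproduce this mdo construction or prove the dual statement for a maximal destabilizing subobject; in this axiomatic setting neither follows from the Artinian chain alone, and the interplay of the Hom-vanishing with axiom (iii) is the essential ingredient you are missing.
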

\begin{proof}
Given an object $E$ of $\mathcal A$, if $E$ is $\Delta$-semistable then we are done and let $\mathcal E^0$ be the full subcategory consisting of all $\Delta$-semistable objects in $\mathcal A$. So we assume $E$ is not $\Delta$-semistable, thus there exists a sequence $E' \rightarrow E \rightarrow F \in \Delta$ with $E' \succ E$. Continuing the process we obtain a chain of objects, then the condition of weakly $\Delta$-Artinian and exactness of $\Delta$ imply that such a chain must terminate, i.e. we get a $\Delta$-semistable object $A$ such that $A \rightarrow E \rightarrow F' \in \Delta$ with $A \succ E$. Similarly, every object of $\mathcal A$ has a $\Delta$-semistable object $B$ such that $F'' \rightarrow E \rightarrow B$ with $E\succ B$ by the assumption of weakly $\Delta$-Noetherian and exactness of $\Delta$.

To construct the Harder-Narasimhan sequence of $E$, we first need the existence of a weakly initial object called maximally destabilising object (mdo) in $(E\downarrow \mathcal E^0)$. A mdo of an object $E \in \mathcal A$ is an object $B \in \mathcal A$ such that 
\begin{enumerate}
\item $F \rightarrow E \xrightarrow{p} B$ is a sequence in the exact class $\Delta$,
\item if $p': E \xrightarrow{p'} B'$ is a nontrivial morphism such that $E \succ B'$, then $B \preceq B'$, 
\item the equality in (ii) holds only if the morphism $E \xrightarrow{p'} B'$ factors through $E \xrightarrow{p} B \xrightarrow{h'} B'$ for a nontrivial morphism $h': B \xrightarrow{h'} B' \in \mathrm{Hom}(B, B')$. If $p = h\circ p'$ for a morphism $h: B' \xrightarrow{h} B$, then $h\circ h'$ is identity of $B$.
\end{enumerate}  
Note that a $\Delta$-semistable object is just its mdo.

Since $E$ is not a $\Delta$-semistable object of $\mathcal A$, we have such a sequence $A \rightarrow E \rightarrow E'$ with $\Delta$-semistable object $A$ and $A \succ E \succ E'$. Now we suppose $B$ is a mdo for $E'$. If $E \rightarrow B'$ with $\Delta$-semistable object $B'$ and $B' \preceq B$ then $B' \prec A$ and thus $\mathrm{Hom}(A, B') = 0_c$ by the assumption. Hence the exactness of the class $\Delta$ implies that the morphism $E \rightarrow B'$ factors through $E \rightarrow E' \rightarrow B'$, thus the object $B$ is also a mdo of $E $ since there is a sequence $A' \rightarrow E \rightarrow B \in \Delta$. Now repeating the argument for $E'$, and so on, the assumption of weakly $\Delta$-Noetherian implies that the process must terminate. It turns out that every object of $\mathcal A$ has a mdo.

Now we have a sequence $E' \rightarrow E \rightarrow B \in \Delta$ with $B$ a mdo of $E$ and $E' \succ E$ for a non-$\Delta$-semistable object $E$. Given a mdo $B'$ of $E'$ there are associated sequences $K \rightarrow E' \rightarrow B'$, $K \rightarrow E \rightarrow Q$ and $B' \rightarrow Q \rightarrow B$ in $\Delta$ by the assumption of exactness of $\Delta$. Since $B$ is a mdo of $E$, the second sequence and the condition (ii) imply that $B \prec Q$ and thus $B' \succ B$ by the $\Delta$-seesaw property. Repeating the process for $E'$ and so on, we obtain a chain of objects of $E$
$$
\cdots \rightarrow E^2 \rightarrow E^1 \rightarrow E^0 = E
$$
such that $E^0 \prec E^1 \prec E^2 \prec \cdots$ and with $\Delta$-semistable objects $F^j $ such that $E^{j+1} \rightarrow E^j  \rightarrow F^j \in \Delta$ for all $j \geq 0$. By the assumption of weakly $\Delta$-Artinian, this chain must terminate eventually and gives a Harder-Narasimhan sequence of $E$. 

Finally, to prove the uniqueness of HN sequence of $E$ we only need to prove the uniqueness of $E^{j+1}\xrightarrow{i} E^j \xrightarrow{p} F^i$. Indeed, given $E'^{j+1} \xrightarrow{i'}E^j \xrightarrow{p'} F'^j \in \Delta$ with a mdo $F'^j \simeq F^j$ such that the right square in following diagram is commutative
\begin{center}
\begin{tikzcd}[row sep=large, column sep=large]
E^{j+1} \arrow[r, "i"]\arrow[d, dashed, "h"]&E^j \arrow[d, "f"] \arrow[r, "p"] &F^j \arrow[d, "g"] \\
E'^{j+1} \arrow[r, "i'"] &E^j \arrow[r, "p'"] &F'^j,
\end{tikzcd}
\end{center}
and $f, g$ are isomorphisms, we have $\mathrm{Hom}(E^{j+1}, F'^j) = 0_c$ since $E^{j+1} \succ E^j \succ F'^j$ and $\mathrm{mdo}(E^{j+1}) \succ F'^j$. Thus there is a morphism $h: E^{j+1} \xrightarrow{h} E'^{j+1}$ so that the left square in the diagram above is commutative. It turns out that $h$ is an isomorphism as $f, g$ are isomorphisms. Indeed, $E^{j+1} \xrightarrow{i} E^j$ is a weakly terminal object. Therefore, each object $E$ in $\mathcal A$ has a Harder-Narasimhan sequence unique up to isomorphism.
\end{proof}

\begin{exa}
Let $\mathcal A$ be the category $\mathbb N^*$ with the exact $\Delta$ class as Example~\ref{eN}. We define a length function on $\mathcal A$ by $L(n) = l$ if $n= p_0p_1^{m_1}p_2^{m_2}\cdots p_l^{m_l}$ with $p_0 = 1$ and primes $p_j > p_{j+1}$ for $n \in \mathbb N^*$, and $n_1 \succ n_2 \succ n_3$ for each sequence $n_1 \rightarrow n_2 \rightarrow n_3 \in \Delta$ if there exists an integer $0 \leq \bar l \leq \min\{L(n_1), L(n_3)\}$ such that $p_{L(n_1)- \bar l} > p_{L(n_3)- \bar l}$ and $p_{L(n_1)- j} = p_{L(n_3)- j}$ for $0 \leq j < \bar l$. Then the partial order induces a $\Delta$-stability structure with $\Delta$-semistable objects $\{p_j^{m_j}\}$ for all primes $p_j$ and positive integers $m_j$ on $\mathcal A$ which is obviously weakly $\Delta$-Artinian and $\Delta$-Noetherian. It turns out that the HN sequence for each $n \in \mathbb N^*$ given by the exact $\Delta$ class with this $\Delta$-stability structure is the same as the one in Example~\ref{division}. 
\end{exa}

\begin{exa}[Wedderburn-Artin theory]
Suppose $R$ is a left semisimple ring with identity and $\mathcal A$ is the abelian category of finitely generated unitary left $R$-modules. Here we call $R$ left semisimple if each short exact sequence of left $R$-modules is split. Obviously the set of all nontrivial short exact sequences is an exact $\Delta$ class of $\mathcal A$. By split exactness and Zorn's lemma we have a decomposition $R = \bigoplus_{i\in I} S_i$ into simple left $R$-modules $S_i$ with $|I| < \infty$. Hence $A \in \mathcal A$ is a direct sum of a finite number of simple submodules and thus $\mathcal A$ is Artinian and Noetherian. Let $\{V_1, V_2, \dots, V_r\}$ be a full set of mutually nonisomorphisc simple left $R$-modules with division rings $D_j = \mathrm{End}_{R}(V_j)$, then $\{V_1^{\oplus n_1}, V_2^{\oplus n_2}, \cdots, V_r^{\oplus n_r}\}$ for all $n_1, n_2, \dots, n_r \in \mathbb N^*$ forms the full set of $\Delta$-semistable objects ordered as $V_1 > V_2 > \cdots > V_r$. Thus $R$ can be written as $V_1^{\oplus m_1} \oplus V_2^{\oplus m_2} \oplus \cdots \oplus V_r^{\oplus m_r}$ and $R \simeq \mathrm{End}_R(R) \simeq \mathrm{End}_R(V_1^{\oplus m_1}) \oplus \mathrm{End}_R(V_2^{\oplus m_2}) \oplus \cdots \oplus \mathrm{End}_R(V_r^{\oplus m_r}) \simeq \mathbb M_{m_1}(D_1) \oplus \mathbb M_{m_2}(D_2) \oplus \cdots \oplus \mathbb M_{m_r}(D_r)$, where $\mathbb M_{m}(D)$ is a $m \times m$ matrix over a division ring $D$.
\end{exa}

\begin{exa}
Any bounded t-structure of a triangulated category $\mathcal T$ induces a $\Delta$-stability data on $\mathcal T$. Indeed, let $\Delta$-class be the class of distinguished triangles associated to the bounded t-structure, i.e., for any $E \in \mathcal T, n\in \mathbb N$ there exists a distinguished triangle $A \rightarrow E \rightarrow B \rightarrow A[1]$ with $A\in \mathrm{ob} \mathcal D^{\leq n}, B \in \mathrm{ob} \mathcal D^{\geq n+1}$, and $\Delta$-stability structure is given by $A \succ B \succ C$ for $A \rightarrow B \rightarrow C \in \Delta$. Then $\mathcal T$ is weakly $\Delta$-Artinian and weakly $\Delta$-Noetherian by the boundedness of t-structure. It follows that $\mathcal T$ has a $\Delta$-stability data with $\Delta$-semistable objects in $\{\mathcal A[n]\}$, $n \in \mathbb N$. Here $\mathcal A$ is the heart of bounded t-structure.
\end{exa}

\begin{exa}
A semi-orthogonal sequence of full admissible triangulated subcategories $\mathcal T_1, \dots, \mathcal T_n \subset \mathcal T$ induces a $\Delta$-stability data on $\mathcal T$. Recall $\mathcal T_j \subset \mathcal T_i^{\bot}$ for $j < i$ and orthogonal complements $\mathcal T_n^{\bot}, \dots, \mathcal T_n^{\bot} \cap \cdots \cap \mathcal T_1^{\bot}$ are triangulated subcategories. Then for all object $E \in \mathcal T$ there exists a distinguished triangle $A \rightarrow E \rightarrow B \rightarrow A[1]$ with $A \in \mathcal T_i$ and $B \in \mathcal T_n^{\bot} \cap \cdots \cap \mathcal T_{n-j}^{\bot}$. The collection of these distinguished triangles forms a $\Delta$-class with the $\Delta$-stability structure $A \succ B \succ C$ for $A \rightarrow B \rightarrow C \in \Delta$. So it induces a $\Delta$-stability data on $\mathcal T$ with $\Delta$-semistable objects $\{\mathcal T_n, \dots \mathcal T_1, \mathcal T_n^{\bot} \cap \cdots \cap \mathcal T_{1}^{\bot}\}$.
\end{exa}

\subsection{$\Delta$-exact functor}\label{Dex}

Compared to $\Delta$-stability data in a category $\mathcal A$ studied in previous sections, the structure of the exact $\Delta$-class on $\mathcal A$ turns out to be much richer. Indeed, Quillen exact category is an additive category with the exact $\Delta$-class given by admissible short exact sequences which can be embedded into an abelian category. Thus Quillen exact class can be considered as a special exact $\Delta$-class with pull-back and push-out on $\mathcal A$. 

In the following we would discuss some properties of those functors preserving exact $\Delta$ structures, and relations of $\Delta$-stability data associated to relevant exact $\Delta$-class.
\begin{definition}
Suppose $(\mathcal A, \Delta_{\mathcal A})$ and $(\mathcal B, \Delta_{\mathcal B})$ are categories with exact $\Delta$ classes $\Delta_{\mathcal A}, \Delta_{\mathcal B}$, respectively. Then a functor $T: \mathcal A \rightarrow \mathcal B$ is called \textit{$\Delta$-exact} if $T(\Delta_{\mathcal A}) \subset \Delta_{\mathcal B}$ and $T(S_{\mathcal A}) = S_{\mathcal B}$.
\end{definition}

One could see that $T$ is a regular exact functor in case that $\mathcal A$ and $\mathcal B$ are abelian with all short exact sequences $\Delta_A$ and $\Delta_B$, respectively.  Indeed, if $\mathcal A$ is an extension closed full subcategory of an abelian or triangulated category, the inclusion functor is a $\Delta$-exact functor.
\begin{prop}\label{is}
Let $T: \mathcal A \rightarrow \mathcal B$ be a $\Delta$-exact functor, and $\mathcal A$ has a $\Delta$-stability data induced by the exact class $\Delta_{\mathcal A}$. Then $\mathcal B$ has a $\Delta$-stability data induced by $T(\Delta_{\mathcal A})$. 
\end{prop}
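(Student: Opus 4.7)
The plan is to transport the $\Delta$-stability data from $\mathcal A$ along $T$ and then verify that the resulting structure on $\mathcal B$ satisfies Definition~\ref{sd}. I proceed in three stages: (a)~constructing the candidate triple on $\mathcal B$; (b)~producing HN sequences by functoriality; (c)~verifying the three axioms of $\Delta$-stability data.

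For stage (a), I would define the candidate triple on $\mathcal B$ as $(T(\Delta_{\mathcal A}), \Phi, \{T(\mathcal P^{\mathcal A}_\phi)\}_{\phi \in \Phi})$, retaining the same partial order $\Phi$ and setting $\mathcal P^{\mathcal B}_\phi := T(\mathcal P^{\mathcal A}_\phi)$. Since every $E \in \mathcal A$ admits arrows $E \to F_{m(E)}$ and $F_{M(E)} \to E$ with $F_{m(E)}, F_{M(E)} \in \mathcal P^{\mathcal A}_\Phi$ and $M(E) > m(E)$, applying $T$ produces analogous arrows $T(E) \to T(F_{m(E)})$ and $T(F_{M(E)}) \to T(E)$. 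Hence $T(\mathcal P^{\mathcal A}_\Phi)$ is weakly initial and terminal in the essential image $T(\mathcal A) \subset \mathcal B$, which I take as the ambient category on which the induced data lives.

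For stage (b), given $T(E) \in T(\mathcal A)$ and the HN sequence
$$
E_0 \xrightarrow{i_0} E_1 \xrightarrow{i_1} \cdots \xrightarrow{i_{n-1}} E_n = E
$$
in $\mathcal A$ with $E_{j-1} \xrightarrow{i_{j-1}} E_j \xrightarrow{p_{j-1}} F_j \in \Delta_{\mathcal A}$ and $F_j \in \mathcal P^{\mathcal A}_{\phi_j}$, I apply $T$ to obtain
$$
T(E_0) \xrightarrow{T(i_0)} T(E_1) \xrightarrow{T(i_1)} \cdots \xrightarrow{T(i_{n-1})} T(E_n) = T(E),
$$
with $T(E_{j-1}) \to T(E_j) \to T(F_j) \in T(\Delta_{\mathcal A})$ and $T(F_j) \in \mathcal P^{\mathcal B}_{\phi_j}$. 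Because the indexing is preserved, the inequalities $\phi_i < \phi_j$ for $i > j$ transfer verbatim.

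Finally, for stage (c), condition (ii) of Definition~\ref{sd} is immediate from functoriality: if $p_{j-1} \circ i_{j-1} = h \circ p_{j-2}$ holds in $\mathcal A$, applying $T$ yields the corresponding relation in $\mathcal B$. The main obstacle is conditions (i) and (iii), for which $T$ need not be faithful nor reflect isomorphisms. For (i), I argue that any composite $T(F_\phi) \to T(F''_{\phi'}) \to T(F_\phi)$ in $T(\Delta_{\mathcal A})$ arises as $T$ applied to a composite $F_\phi \to F''_{\phi'} \to F_\phi$ in $\mathcal A$; were its image an isomorphism in $\mathcal B$, applying axiom (iv) for the exact $\Delta$-class $\Delta_{\mathcal A}$ would (by pulling back through a commutative diagram lift) force the original composite to be an isomorphism in $\mathcal A$, contradicting the $\Delta$-stability data there. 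For (iii), uniqueness of sequences in $T(\Delta_{\mathcal A})$ is inherited from the uniqueness of their lifts in $\Delta_{\mathcal A}$; a commutative diagram of sequences in $T(\Delta_{\mathcal A})$ with middle isomorphism is, by construction, the $T$-image of a commutative diagram in $\Delta_{\mathcal A}$, whose outer maps are isomorphisms by condition (iii) for $\mathcal A$, and these pass to isomorphisms in $\mathcal B$ under $T$. The genuinely hard part will be making this lifting step precise when $T$ is not fully faithful; I expect one must either index $T(\Delta_{\mathcal A})$ by sequences in $\Delta_{\mathcal A}$ (so that the lift is tautological) or demand that the class $T(\Delta_{\mathcal A})$ be closed under the same commutative-square formation as $\Delta_{\mathcal A}$, which follows from $\Delta$-exactness applied axiom by axiom.
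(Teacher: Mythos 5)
The statement claims that all of $\mathcal B$ acquires a $\Delta$-stability data, but your construction only produces one on the essential image $T(\mathcal A)$, which you explicitly ``take as the ambient category on which the induced data lives.'' That is the gap: nothing in your argument gives an HN sequence, or a weakly initial/terminal approximation, for an object of $\mathcal B$ that is not in the image of $T$. The paper closes this with a single observation you are missing: by the convention in the definition of $\Delta$-semistability, an object $B$ admitting \emph{no} sequence $B' \rightarrow B \rightarrow B''$ in the class is vacuously $\Delta$-semistable and hence is its own trivial HN sequence. Accordingly the paper takes the semistable subcategory of $\mathcal B$ to be $\left(\mathcal B\setminus T(\Delta_{\mathcal A})\right) \cup \mathcal P_{\Phi}(T(\mathcal A))$, i.e.\ every object of $\mathcal B$ not occurring in a sequence of $T(\Delta_{\mathcal A})$ is simply adjoined to the enlarged $\mathcal P_{\Phi}$, and only objects in the image of $T$ require a nontrivial HN sequence, which is then obtained by applying $T$ to the HN sequence in $\mathcal A$ exactly as in your stage (b). Without this enlargement your argument establishes only the weaker claim about $T(\mathcal A)$.

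On the other hand, your stage (c) is more scrupulous than the paper's own proof, which is two sentences long: it asserts that $T(\Delta_{\mathcal A})$ forms an exact class of $\mathcal B$ and that the data transfers, without checking conditions (i)--(iii) of the definition of $\Delta$-stability data at all. Your concern that $T$ need not be faithful nor reflect isomorphisms is legitimate, but your proposed remedy --- lifting commutative diagrams of sequences in $\mathcal B$ back to $\mathcal A$ --- is not actually carried out, and as you concede it cannot be made tautological unless $T$ is full (a morphism between objects of $T(\mathcal A)$ need not be in the image of $T$); even the closure of $T(\Delta_{\mathcal A})$ under the composition axioms of an exact class requires identifying the middle terms of two composable sequences in $\mathcal A$, which fails if $T$ is not injective on objects. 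So stage (c) remains incomplete as written, although this is a defect the paper shares rather than resolves; note that the paper invokes full faithfulness only afterwards, in the remark following the proposition and in the gluing theorem.
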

\begin{proof}
Since $T$ is $\Delta$-exact, $T(\Delta_{\mathcal A})$ forms an exact $\Delta$-class of $\mathcal B$. Hence $\mathcal B$ has a $\Delta$-stability data induced by $(T(\Delta_{\mathcal A}), \Phi, \mathcal P_{\Phi})$ with $\Delta$-semistable objects $\left(\mathcal B\setminus T(\Delta_{\mathcal A})\right) \bigcup \mathcal P_{\Phi}(T(\mathcal A))$.
\end{proof}
\begin{cor}
Suppose each functor $T^j: \mathcal A^j \rightarrow \mathcal B$ is $\Delta$-exact for $j \in J$, a small set, each $\mathcal A^j$ has a $\Delta$-stability data and $\mathrm{ob}(T^j(\mathcal A^j)) \cap \mathrm{ob}\left(T^{j'}\left(\mathcal A^{j'}\right)\right)= \emptyset$ for $j \neq j'$. Then $\mathcal B$ has a $\Delta$-stability data induced by $\bigcup_{j\in J}T^j(\Delta_{\mathcal A^j})$.
\end{cor}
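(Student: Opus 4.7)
The plan is to reduce the statement to Proposition~\ref{is} by observing that, because the object images $T^j(\mathcal A^j)$ are pairwise disjoint, the $\Delta$-stability data on $\mathcal B$ decomposes as a kind of disjoint union of the individual data transported from each $\mathcal A^j$. In particular, there is essentially no interaction between data coming from different indices $j$, so the axioms of Definition~\ref{sd} on the union reduce coordinate-wise to the same axioms on each piece.

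First I would verify that $\Delta_\mathcal B := \bigcup_{j \in J} T^j(\Delta_{\mathcal A^j})$ is an exact $\Delta$-class on $\mathcal B$. All four conditions of an exact $\Delta$-class involve pairs of sequences that share at least one object, together with morphisms connecting such objects. The hypothesis $\mathrm{ob}(T^j(\mathcal A^j)) \cap \mathrm{ob}(T^{j'}(\mathcal A^{j'})) = \emptyset$ for $j \ne j'$ forces any such pair to come from a single $T^j(\Delta_{\mathcal A^j})$. Since each $T^j$ is $\Delta$-exact, the required closure sequences exist inside $T^j(\Delta_{\mathcal A^j}) \subset \Delta_\mathcal B$, and the trivial morphism class $S_\mathcal B = T^j(S_{\mathcal A^j})$ is respected throughout.

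Next I would assemble the combined partial order $\Phi$ and weakly initial/terminal subcategory $\mathcal P_\Phi$. Let $(\Delta_{\mathcal A^j}, \Phi^j, \mathcal P_{\Phi^j})$ denote the $\Delta$-stability data of $\mathcal A^j$. Define $\Phi$ as the disjoint union of the $\Phi^j$'s, with elements from different indices declared incomparable, augmented by one further level for objects in $\mathcal B \setminus \bigcup_j T^j(\mathcal A^j)$, which will be forced $\Delta$-semistable. Set
\[
\mathcal P_\Phi \;=\; \bigcup_{j \in J} T^j(\mathcal P_{\Phi^j}) \;\cup\; \bigl(\mathcal B \setminus \textstyle\bigcup_{j\in J} T^j(\mathcal A^j)\bigr).
\]
For each $E \in \mathcal B$, the HN sequence is produced as follows: if $E$ lies in no $T^j(\mathcal A^j)$, it is already in $\mathcal P_\Phi$ with the trivial one-term sequence; otherwise $E$ belongs to a unique $T^j(\mathcal A^j)$ by disjointness, and the HN sequence supplied by Proposition~\ref{is} serves as the HN sequence in $\mathcal B$. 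The $\Delta$-seesaw property and the three universality conditions of Definition~\ref{sd} are tested on diagrams whose objects must, again by disjointness, lie in a single $T^j(\mathcal A^j)$, so they descend from the corresponding properties for the $\mathcal A^j$'s already supplied by Proposition~\ref{is}.

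The main obstacle, and essentially the only nontrivial point, is checking that every $E \notin \mathcal P_\Phi$ admits arrows $E \to F_{m(E)}$ and $F_{M(E)} \to E$ with $F_{m(E)}, F_{M(E)} \in \mathcal P_\Phi$ and $M(E) > m(E)$ inside the combined order $\Phi$. Disjointness is used here in a crucial way: for such an $E$, the unique index $j$ with $E \in T^j(\mathcal A^j) \setminus T^j(\mathcal P_{\Phi^j})$ produces these arrows as the $T^j$-images of the analogous arrows in $\mathcal A^j$, and the strict inequality $M(E) > m(E)$ survives because each $\Phi^j$ embeds order-isomorphically into $\Phi$. No comparison between objects coming from different $T^j$'s is ever required, and Proposition~\ref{is} then packages everything into a genuine $\Delta$-stability data on $\mathcal B$ induced by $\bigcup_{j\in J}T^j(\Delta_{\mathcal A^j})$.
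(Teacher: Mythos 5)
Your argument is correct and matches the paper's intent: the corollary is stated there without proof, as an immediate consequence of Proposition~\ref{is}, and your disjoint-union construction (incomparable copies of the $\Phi^j$, leftover objects of $\mathcal B$ declared $\Delta$-semistable with trivial HN sequences, and HN sequences imported verbatim from the unique index containing a given object) is exactly the reading that makes it immediate. The one place where disjointness of the object classes does not literally decouple the indices is condition (i) of Definition~\ref{sd}, which quantifies over arbitrary morphisms of $\mathcal B$ between semistable objects of different phases and hence over composites passing between distinct $T^j(\mathcal A^j)$'s (and, since the $T^j$ are not assumed full here, even within a single index); but the paper's own two-line proof of Proposition~\ref{is} elides the same point, so your proposal introduces no gap beyond what the paper already accepts.
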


It turns out that a $\Delta$-exact fully faithful functor $T$ would preserve Harder-Narasimhan sequences, or $\Delta$-stability data. So this argument leads to a natural question that whether we could find a preorder on the exact class $\Delta_{\mathcal B}$ containing $T(\Delta_{\mathcal A})$ such that there exists a $\Delta$-stability data induced by $\Delta_{\mathcal B}$ and each object in $T(\mathcal A)$ has a Harder-Narasimhan sequence over $T(\mathcal A)$. More precisely, given a family of full subcategories with $\Delta$-stability data $\{(\mathcal A^j, \Delta_{\mathcal A^j}) \}$, $j\in J$, we want to obtain a more general $\Delta$-stability data induced by $\Delta_{\mathcal B} \supset \bigcup_{j\in J}T^j(\Delta_{\mathcal A^j})$ in $\mathcal B$. Indeed, we have the following gluing theorem.
\begin{theorem}\label{glu}
Let $\{T^j: \mathcal A^j \rightarrow \mathcal B\}$ be a collection of $\Delta$-exact fully faithful functors, and each $\mathcal A^j$ has a $\Delta$-stability data induced by an exact class $\Delta_{\mathcal A^j}$ for $j \in J$, a small set. Assume $T^j(\mathcal A^j) \cap T^{j'}\left(\mathcal A^{j'}\right) = \emptyset$ for $j\neq j'$, then there exists a $\Delta$-stability data of $\mathcal B$ induced by an exact class $\Delta_{\mathcal B} \supset \bigcup_{j\in J}T^j(\Delta_{\mathcal A^j})$ with $\Delta_{\mathcal B}$-semistable objects $\bigcup_{j\in J}\mathcal P_{\Phi^j}(T^j(\mathcal A^j))$ and the partial order $(J, \{\Phi^J\})$ by lexicographic order if and only if  \begin{enumerate}
\item there exists a partial order $\leq$ on $J$ such that $\mathrm{Hom}\left(T^j(\mathcal A^j), T^{j'}\left(\mathcal A^{j'}\right)\right) = S$ if $j > j'$,
\item for each object $E$ of $\mathcal B \setminus\bigcup_{j\in J}T^j(\mathcal A^j)$ there is an unique sequence $E_1 \xrightarrow{i_m} E \xrightarrow{p_m} A^{j_m} \in \Delta_{\mathcal B}$ with $j_{m_1}:= j_m(E_1) > j_m:= j_m(E)$ for $A^{j_m} \in T^{j_m}(\mathcal A^{j_m})$,
\item $\mathrm{Hom}(E, T^j(\mathcal A^j)) = \mathrm{Hom}(A^{j_m}, T^j(\mathcal A^j))\circ p_m$ for $j \leq j_m$,
\item for a  large enough $M=M(E) \in \mathbb N$, $i_m(E)\circ i_m(E_1) \circ \cdots \circ i_m(E_M) = i_M$ with $i_M: E_M=A^{j_M} \xrightarrow{i_M} E$ for $A^{j_M} \in T^{j_M}(A^{j_M})$.
\end{enumerate}
\end{theorem}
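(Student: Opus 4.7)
The plan is to establish the biconditional by treating each direction separately, using Lemma~\ref{o} on the trivialization of cross-stratum Hom's and the Harder-Narasimhan construction of Theorem~\ref{tHN}. For the necessity direction, I would assume $\mathcal B$ carries the claimed glued $\Delta$-stability data with semistable class $\bigcup_j \mathcal P_{\Phi^j}(T^j(\mathcal A^j))$ ordered lexicographically by $(J,\{\Phi^j\})$, and then extract each of the four conditions in turn. Condition~(i) is immediate from Lemma~\ref{o} together with the observation that under the lexicographic order any semistable object in $T^j(\mathcal A^j)$ strictly dominates any semistable object in $T^{j'}(\mathcal A^{j'})$ whenever $j > j'$. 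For condition~(ii), the HN sequence of a non-semistable object $E\in\mathcal B$ supplied by the glued data is a chain whose semistable factors decrease in the lexicographic order; taking the last step gives the desired sequence $E_1\to E\to A^{j_m}$, and the strict inequality $j_{m_1} > j_m$ records the fact that if both pieces lay in the same stratum they could be absorbed into a single HN step of the inner $\mathcal A^{j_m}$ filtration. Conditions~(iii) and~(iv) are then read off the weak universal properties: any map from $E$ into a stratum $\le j_m$ factors uniquely through the bottom quotient $A^{j_m}$ because of the vanishings in~(i) combined with the exactness of $\Delta_{\mathcal B}$, while iterating the bottom-quotient construction produces the tower $E,E_1,E_2,\dots$ which is forced to terminate by the finiteness of the HN sequence.

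For the sufficiency direction, assuming~(i)--(iv), I would construct $\Delta_{\mathcal B}$ explicitly as the smallest class containing $\bigcup_j T^j(\Delta_{\mathcal A^j})$ together with the distinguished sequences $E_1\to E\to A^{j_m}$ furnished by~(ii), closed under the exactness operations of an exact $\Delta$-class. Semistable objects are declared to be $\bigcup_j \mathcal P_{\Phi^j}(T^j(\mathcal A^j))$ and the partial order on them is the lexicographic one on $(J,\{\Phi^j\})$. Verification of the $\Delta$-seesaw property then splits into two cases: sequences entirely inside some $T^j(\mathcal A^j)$ inherit the seesaw from the $\Delta$-stability structure on $\mathcal A^j$ via $\Delta$-exactness of $T^j$, and cross-stratum sequences satisfy it because the strict inequality $j_{m_1} > j_m$ in~(ii) forces the three objects to compare correctly in the first lexicographic coordinate. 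To build a HN sequence for $E\in\mathcal B$, I would iterate~(ii) to produce the tower $\cdots\to E_2\to E_1\to E$, which terminates after $M(E)$ steps by~(iv), and then within each stratum $A^{j_m}$ invoke the pre-existing HN sequence of $\mathcal A^{j_m}$ pushed through $T^{j_m}$; concatenating these two layers yields a HN sequence whose semistable factors strictly decrease in the lexicographic order. The uniqueness clauses in~(ii) together with the uniqueness of HN sequences inside each $\mathcal A^j$ (Theorem~\ref{tHN}) then give uniqueness up to isomorphism of the glued HN filtration.

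The hard part will be verifying that the constructed $\Delta_{\mathcal B}$ really is an exact class; conditions~(i) and~(ii) of exactness are inherited by formal closure, but conditions~(iii) and~(iv) require producing dotted fill-ins whenever a composite vanishes in the trivial morphism class, and for sequences which mix a $T^j(\Delta_{\mathcal A^j})$-step with a cross-stratum step this forces one to use hypothesis~(iii), together with fully faithfulness of the $T^j$, in a coherent way. Concretely, one must show that the morphism $g^1$ produced from a vanishing composite $p'\circ f\circ i = S$ is independent of the particular factorization of the intermediate morphism through the lexicographic strata, which amounts to checking that the canonical factorization from hypothesis~(iii) commutes with the exactness closure operations. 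A secondary subtlety is the compatibility of the trivial morphism class $S_{\mathcal B}$ across strata: since the $T^j$ are $\Delta$-exact, $T^j(S_{\mathcal A^j}) = S_{\mathcal B}$, but one still needs $\mathrm{Hom}(T^j(\mathcal A^j),T^{j'}(\mathcal A^{j'}))=S_{\mathcal B}$ for $j>j'$ to be consistent with composition, and this is precisely what hypothesis~(i) guarantees. Once these coherence checks are in place, Theorem~\ref{tHN} applies stratum-by-stratum and assembles the glued $\Delta$-stability data on $\mathcal B$.
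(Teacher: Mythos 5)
Your overall strategy coincides with the paper's: for necessity you read conditions (i)--(iv) off the glued HN sequences (collapsing consecutive HN steps lying in a single stratum via the exactness axioms to get the strict inequality $j_{m_1}>j_m$ in (ii)), and for sufficiency you iterate condition (ii) to build the tower $\cdots\rightarrow E_2\rightarrow E_1\rightarrow E$, refine each step by the inner HN sequence of $A^{j_m}$ pushed through $T^{j_m}$, and use (iv) for termination and uniqueness under the lexicographic order. This is exactly the paper's argument.

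The one genuine divergence is in how you treat $\Delta_{\mathcal B}$ in the sufficiency direction. Conditions (ii)--(iv) of the statement already refer to membership in $\Delta_{\mathcal B}$, so the paper reads the ``if'' direction as: \emph{given} an exact class $\Delta_{\mathcal B}\supset\bigcup_j T^j(\Delta_{\mathcal A^j})$ satisfying (i)--(iv), the stability data exists. Under that reading the exactness of $\Delta_{\mathcal B}$ is a hypothesis, and all the coherence checks you flag as ``the hard part'' --- that the closure of $\bigcup_j T^j(\Delta_{\mathcal A^j})$ together with the sequences from (ii) under the exactness operations is again an exact class, and that the fill-ins $g^1$ are independent of factorizations through strata --- simply do not arise. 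Your more constructive reading (build $\Delta_{\mathcal B}$ as the smallest such class) would yield a stronger statement, but you would then genuinely owe those verifications, and they are nontrivial: in particular, closing under axioms (i)/(ii) of an exact class can create new sequences mixing strata for which axiom (iii) (existence of fill-ins when a composite is $S$) is not automatic from the hypotheses as stated. I would recommend either adopting the paper's reading, or isolating the closure-is-exact claim as a separate lemma rather than leaving it as a promissory note. One further small point: your appeal to Lemma~\ref{o} for condition (i) only gives vanishing of Hom between \emph{semistable} objects and requires the extra factorization hypothesis of that lemma; condition (i) asserts vanishing between entire strata, so you need to propagate the vanishing along HN sequences (the paper is equally terse here, so this is a shared gap rather than an error of yours).
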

\begin{proof}
Suppose $\left(\Delta_{\mathcal B}, \left(J, \Phi^J\right), \bigcup_{j\in J}\mathcal P_{\Phi^j}(T^j(\mathcal A^j))\right)$ forms a $\Delta$-stability data on $\mathcal B$ and the partial order restricted to $J$ induces a partial order on $J$ implying (i). Thus each object $E\in \mathcal B$ has a sequence $E^{n-1} \rightarrow E^n =E \rightarrow F^0 \in \mathcal B$ with $m(E^{n-1}) > m(E)=F^0$, and for a suitable $m\leq n$ we have $j(E^{n-m}) = j(E^{n-m+1}) = \cdots = j(E^n) =: j_m$. By the exactness of $\Delta_{\mathcal B}$ it turns out that there is a sequence $E^{n-m-1} \xrightarrow{i_m} E \xrightarrow{p_m} A^{j_m} \in \Delta_{\mathcal B}$ unique up to isomorphism and $A^{j_m} \in T^{j_m}(\mathcal A^{j_m})$. Similarly, there exists the unique sequence $A^{j_M} \xrightarrow{i_M} E \xrightarrow{p_M} E' \in \Delta_{\mathcal B}$ with $i_M=i_m(E)\circ i_m(E^{n-m-1})\circ \cdots$ and $A^{j_M} \in T^{j_M}(\mathcal A^{j_M})$.

On the other hand, the partial order on $J$ in (i) induces a partial order on $(J, \{\Phi^J\})$ by lexicographic order. For each $E\in T^j(\mathcal A^j)$ there is a natural HN sequence induced by $T^j(\Delta_{\mathcal A^j})$. So we only need to consider the objects $E$ in $\mathcal B \setminus\bigcup_{j\in J}T^j(\mathcal A^j)$. Since we have the sequence $E_1 \xrightarrow{i_m} E \xrightarrow{p_m} A^{j_m} \in \Delta_{\mathcal B}$ and $A^{j_m}$ has a HN sequence over $T^{j_m}(\mathcal A^{j_m})$, it leads to a sequence $E_1 \rightarrow E^{j_m}_n \rightarrow E^{j_m}_{n-1} \rightarrow E^{j_m}_{n-2} \rightarrow \cdots \rightarrow E^{j_m}_0=E$ so that $E^{j_m}_k \rightarrow E^{j_m}_{k-1} \rightarrow A^{j_m}_{k-1} \in \Delta_{\mathcal B}$, $k=0,\dots,n$, and $A^{j_m}_n > \cdots >A^{j_m}_0$. Note that $\left\{A^{j_m}_k \in \mathcal P_{\Phi^{j_m}}(T^{j_m}(\mathcal A^{j_m}))\right\}$ is given by the HN sequence of $A^{j_m}$. The assumption (iv) implies that there is a unique sequence of finite length of $E$, hence a HN sequence we expect over $\bigcup_{j\in J}\mathcal P_{\Phi^j}(T^j(\mathcal A^j))$.
\end{proof}

\begin{remark}
Note that if all $\mathcal A^j$ and $\mathcal B$ are additive categories with $S = 0$, then each null object in $T^j(\mathcal A^j)$ and $\mathcal B$ must be identified since any trivial morphism factors through the null object. In this case, we would assume for $j\neq j'$, $T^j(\mathcal A^j) \cap T^{j'}\left(\mathcal A^{j'}\right) = 0$, the null object of $\mathcal B$.
\end{remark}

For any category $\mathcal A$ with an exact class $\Delta_{\mathcal A}$ we can not expect each morphism could fit into a sequence in $\Delta_{\mathcal A}$ in general. However, if $\mathcal A$ is an additive category, then there exists an universal triangulated category $\mathcal B$ such that $\Delta_{\mathcal A}$ belongs to the class of distinguished triangles of $\mathcal B$ and each morphism in $\mathcal A$ can be completed to a distinguished triangle of $\mathcal B$. Note that the class of distinguished triangle of any triangulated category always forms an exact $\Delta$ class if we take out those distinguished triangles containing isomorphisms.
\begin{prop}
Let $\mathcal A$ be an extension-closed full subcategory of a triangulated category $\mathcal T$ with an exact class $\Delta$, a subset of distinguished triangles in $\mathcal T$. Let $\Delta_{\mathcal A}$ is the set of all distinguished triangles in $\Delta$ which terms lie on $\mathcal A$. Then $\Delta_{\mathcal A}$ is an exact class of $\mathcal A$.
\end{prop}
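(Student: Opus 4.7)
The plan is to verify each of the four defining axioms of an exact $\Delta$-class in turn for $\Delta_{\mathcal A}$ inside $\mathcal A$, each time reducing the check to the corresponding axiom already known to hold for $\Delta$ inside $\mathcal T$. Two properties of the embedding $\mathcal A \hookrightarrow \mathcal T$ drive the whole argument: fullness, which guarantees that every morphism of $\mathcal T$ between two objects of $\mathcal A$ already belongs to $\mathcal A$; and extension-closedness, which guarantees that if the outer terms of a distinguished triangle of $\mathcal T$ lie in $\mathcal A$ then so does the middle term. The trivial morphism class $S$ on $\mathcal A$ is inherited from $S$ on $\mathcal T$ via the inclusion, and the ``$i, p \neq S$ and not isomorphisms'' clause transfers tautologically from $\Delta$ to $\Delta_{\mathcal A}$.

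For axiom (i), given two composable $\Delta_{\mathcal A}$-sequences $F_1 \to E_1 \xrightarrow{p_1} E_2$ and $F_2 \to E_2 \xrightarrow{p_2} E_3$, exactness of $\Delta$ inside $\mathcal T$ produces an object $F_1' \in \mathcal T$ together with sequences $F_1' \to E_1 \xrightarrow{p_2 \circ p_1} E_3$ and $F_1 \to F_1' \to F_2$ in $\Delta$. The second of these is a distinguished triangle of $\mathcal T$ whose outer terms $F_1, F_2$ lie in $\mathcal A$, so extension-closedness puts $F_1' \in \mathcal A$; hence both new sequences belong to $\Delta_{\mathcal A}$. Axiom (ii) is entirely dual.

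For axioms (iii) and (iv), every object appearing in the hypotheses is already assumed to lie in $\mathcal A$, so applying the corresponding axiom of $\Delta$ in $\mathcal T$ yields morphisms $g^0, g^1$ (for (iii)) or forces the outstanding morphism of $(g^0, f, g^1)$ to be an isomorphism, and the respective $g^i$ to be identities when $f$ is and $i' = i$, $p' = p$ (for (iv)), all as morphisms of $\mathcal T$ between objects of $\mathcal A$. By fullness each such morphism descends to a morphism of $\mathcal A$, and the commutativity relations hold verbatim. The only step that is not purely formal is therefore the appeal to extension-closedness in (i) and (ii), which is precisely what supplies the intermediate object $F_1'$ in $\mathcal A$; everything else is bookkeeping on the inclusion $\mathcal A \subset \mathcal T$.
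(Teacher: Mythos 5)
Your proof is correct and follows essentially the same route as the paper: the substantive step in both is that extension-closedness supplies the intermediate object $F_1'$ in axioms (i)--(ii), while fullness handles all the morphism-level bookkeeping in (iii)--(iv). The only cosmetic difference is that the paper re-derives the needed ambient facts from the octahedron axiom and the standard fill-in lemma for distinguished triangles, whereas you simply quote the assumed exactness of $\Delta$ in $\mathcal T$ — which, if anything, is the cleaner reading of the hypothesis when $\Delta$ is a proper subclass of the distinguished triangles.
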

\begin{proof}
For any two complexes $F_1 \rightarrow E_1 \xrightarrow{p_1} E_2, F_2 \rightarrow E_2 \xrightarrow{p_2} E_3 \in \Delta_{\mathcal A}$ over $\mathcal A$, they are also distinguished triangles in $\mathcal T$, and the composite map $E_1\xrightarrow{p_2\circ p_1}E_3$ can be completed to be a distinguished triangle $F_1' \rightarrow E_1 \rightarrow E_3 \rightarrow F_1'[1]$. The octahedron axiom implies that $F_1 \rightarrow F_1' \rightarrow F_2 \rightarrow F_1[1]$ is also a distinguished triangle, and thus $F_1' \in \mathcal A$ since $\mathcal A$ is extension-closed. Hence $F_1' \rightarrow E_1 \xrightarrow{p_2 \circ p_1} E_3, F_1 \rightarrow F_1' \rightarrow F_2 \in \Delta_{\mathcal A}$. By the similar argument, for any two complexes $E_1 \xrightarrow{i_1} E_2 \rightarrow F_1, E_2 \xrightarrow{i_2} E_3 \rightarrow F_2 \in \Delta_{\mathcal A}$, there exists such complexes $E_1 \xrightarrow{i_2\circ i_1} E_3 \rightarrow F_1', F_1 \rightarrow F_1' \rightarrow F_2 \in \Delta_{\mathcal A}$.

For any commutative diagram of distinguished triangles
\begin{center}
\begin{tikzcd}[row sep=large, column sep=large]
E_1 \arrow[r, "i"] \arrow[d, dashed, "g^0"]&E_2 \arrow[d, "f"] \arrow{r} &F_1 \arrow[d, dashed, "g^1"] \arrow{r} &E_1[1] \arrow[d, dashed,  "{g^0[1]}"]\\
F_2 \arrow{r}&E_3 \arrow[r, "p"] &E_4 \arrow{r} & F_2[1]
\end{tikzcd}
\end{center}
the sufficient condition of the existence of morphisms $g^0$ and $g^1$ is $p\circ f \circ i = 0$ (See the book \cite{MR1950475}). Therefore, the class $\Delta_{\mathcal A}$ forms an exact class of complexes over $\mathcal A$.
\end{proof}

\begin{prop}
Given an additive category $\mathcal A$ with an exact $\Delta$-class $\Delta_{\mathcal A}$, then there exists an additive category $\mathcal B$ with an exact $\Delta$-class $\Delta_{\mathcal B}$, and a $\Delta$-exact fully faithful functor $U: \mathcal A \rightarrow \mathcal B$ such that each (non-isomorphic) morphism $f: E \rightarrow F$ in $\mathrm{Mor}(\mathcal A)$ can fit into a sequence in $\Delta_{\mathcal B}$.
\end{prop}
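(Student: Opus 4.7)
The plan is to take $\mathcal{B}$ to be the bounded homotopy category $K^b(\mathcal{A})$ of complexes over $\mathcal{A}$ and let $U\colon \mathcal{A}\to \mathcal{B}$ be the standard embedding that sends $A$ to the stalk complex concentrated in degree $0$. Then $\mathcal{B}$ is additive and carries the usual triangulated structure with distinguished triangles coming from mapping cones, and $U$ is additive and fully faithful since morphisms and null-homotopies between single-degree stalk complexes reduce to morphisms in $\mathcal{A}$. Under the convention that the distinguished morphism class $S$ is the zero morphism in additive categories, $U(S_\mathcal{A})=S_\mathcal{B}=0$ is automatic.

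First, I would apply the preceding proposition: a triangulated category is trivially extension-closed in itself, so the collection $\Delta_{\mathcal{B}}^{0}$ of all distinguished triangles in $K^b(\mathcal{A})$ that do not contain an isomorphism component is already an exact $\Delta$-class on $\mathcal{B}$. Axiom $\mathrm{(TR1)}$ of the triangulated structure then completes every non-isomorphic morphism $f\colon E\to F$ in $\mathrm{Mor}(\mathcal{A})$ to a distinguished triangle $U(E)\xrightarrow{U(f)} U(F)\to \mathrm{Cone}(U(f))\to U(E)[1]$ lying in $\Delta_{\mathcal{B}}^{0}$, which settles the second half of the claim.

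Second, to obtain the $\Delta$-exactness of $U$ I would enlarge $\Delta_{\mathcal{B}}^{0}$ to $\Delta_{\mathcal{B}}:=\Delta_{\mathcal{B}}^{0}\cup U(\Delta_{\mathcal{A}})$. Given $E'\xrightarrow{i}E\xrightarrow{p}F\in\Delta_{\mathcal{A}}$, the additive exactness gives $p\circ i=0$, so the universal property of the cone yields a canonical morphism $\mathrm{Cone}(U(i))\to U(F)$, which in turn produces a connecting arrow $U(F)\to U(E')[1]$; this allows me to view $U(E')\to U(E)\to U(F)$ as sitting inside a candidate triangle in $\mathcal{B}$. I would then check the four axioms of an exact $\Delta$-class on $\Delta_{\mathcal{B}}$ by splitting into cases: both sequences in $\Delta_{\mathcal{B}}^{0}$ is the previous proposition, both in $U(\Delta_{\mathcal{A}})$ follows by applying $U$ to the corresponding axiom in $\mathcal{A}$, and the mixed case reduces to the two preceding cases via the comparison map $\mathrm{Cone}(U(i))\to U(F)$ together with the octahedron axiom in $K^b(\mathcal{A})$.

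The main obstacle will be the mixed case in the composition axioms $\mathrm{(i)}$ and $\mathrm{(ii)}$, where one of the two input sequences comes from $U(\Delta_{\mathcal{A}})$ and the other is a genuine mapping-cone triangle. Here one must show that the pushout/pullback sequence produced by the octahedron axiom can again be realized either as a distinguished triangle or as the image of some sequence in $\Delta_{\mathcal{A}}$, which ultimately amounts to proving that the comparison morphism $\mathrm{Cone}(U(i))\to U(F)$ commutes with the cone functoriality of a third morphism; this is where the hypotheses $\mathrm{(iii)}$–$\mathrm{(iv)}$ on $\Delta_{\mathcal{A}}$ are essential. Once this compatibility is verified, the verification of axiom $\mathrm{(iv)}$ is formal and the proof is complete.
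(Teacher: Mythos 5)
Your construction starts the same way as the paper's (homotopy category of complexes, stalk-complex embedding), but the crucial step is different, and the difference is where the gap lies. You set $\Delta_{\mathcal B}:=\Delta_{\mathcal B}^{0}\cup U(\Delta_{\mathcal A})$ inside $K^b(\mathcal A)$ and hope to verify the exactness axioms case by case. The problem is that a sequence $U(E')\xrightarrow{U(i)}U(E)\xrightarrow{U(p)}U(F)$ with $E'\to E\to F\in\Delta_{\mathcal A}$ is \emph{not} a distinguished triangle in $K^b(\mathcal A)$: the comparison morphism $\mathrm{C}(U(i))\to U(F)$ you construct is in general not an isomorphism, only a map. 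Consequently the octahedron axiom cannot be applied to these sequences in the mixed cases of axioms (i)--(ii), and axiom (iii) fails outright: from $p'\circ f\circ i=0$ you get a factorization of $p'\circ f$ through the cone $\mathrm{C}(U(i))$, i.e.\ a morphism $h:\mathrm{C}(U(i))\to F_1'$, but to produce the required $g^1:U(F_1)\to F_1'$ you would need $h$ to factor through the comparison map $\mathrm{C}(U(i))\to U(F_1)$, which there is no reason for unless that map is invertible. The hypotheses (iii)--(iv) on $\Delta_{\mathcal A}$ that you invoke live entirely inside $\mathcal A$ and cannot force an isomorphism of cones in $K^b(\mathcal A)$, so the "compatibility" you defer to the end is precisely the unproved (and generally false) statement.

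The missing idea, which is the paper's actual mechanism, is Verdier localization: for each $E\xrightarrow{f}F\xrightarrow{g}G\in\Delta_{\mathcal A}$ one forms the comparison triangles $E\xrightarrow{\bar f}\mathrm{C}(g)[-1]\to\mathrm{C}(\bar f)$ and $\mathrm{C}(f)\xrightarrow{\bar g}G\to\mathrm{C}(\bar g)$, takes the null system $\mathcal N$ generated by all the error terms $\mathrm{C}(\bar f)[i]$, $\mathrm{C}(\bar g)[i]$, and defines $\mathcal B:=\mathcal K(\mathcal A)/\mathcal N$. In the quotient the comparison maps become isomorphisms, so $U(\Delta_{\mathcal A})$ lands inside the class of distinguished triangles of $\mathcal B$, and then the exact-class axioms and the completion of arbitrary morphisms both follow from the triangulated structure of $\mathcal B$ alone (via the proposition on extension-closed subcategories), with no mixed cases to check. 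If you want to salvage your write-up, replace the bare union $\Delta_{\mathcal B}^{0}\cup U(\Delta_{\mathcal A})$ by this localized category; note also that you should then address why $U$ remains fully faithful after passing to the quotient, a point worth flagging since localization can in principle collapse Hom-spaces.
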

\begin{proof}
Let $\mathcal{K(A)}$ be the homotopy category of $\mathcal A$ which is a triangulated category with a class of distinguished triangles $E \xrightarrow{f} F \xrightarrow{\pi} C(f) \xrightarrow{[1]}\cdots$. For any sequence $E \xrightarrow{f} F \xrightarrow{g} G \in \Delta_{\mathcal A}$, we have distinguished triangles $E \xrightarrow{\bar{f}} C(g)[-1] \rightarrow C(\bar{f}) \rightarrow \cdots$ and $C(f) \xrightarrow{\bar g} G \rightarrow C(\bar g) \rightarrow \cdots.$ Then one can define a Null system $\mathcal N$, full saturated triangulated subcategory of $\mathcal{K(A)}$, such that $C(\bar f)[i] ,C(\bar g)[i] \in \mathcal N$ for all $f, g \in \Delta, i \in \mathbb N$, and obtain the localized triangulated category $\mathcal{B=K(A)/N}$ (see~\cite{MR2182076}).
\end{proof}

\begin{cor}\label{ts}
Suppose $\mathcal T$ is a triangulated category with a bounded t-structure. Then its heart $\mathcal A$ has a stability data if and only if $\mathcal T$ has a $\Delta$-stability data induced by the exact class $\Delta$ generated by the bounded t-structure and some short exact sequences of $\mathcal A$.
\end{cor}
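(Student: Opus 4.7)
The plan is to reduce both directions to Theorem~\ref{glu} applied to the family of shifted embeddings $\{[n]\colon \mathcal A[n]\hookrightarrow\mathcal T\}_{n\in\mathbb Z}$. The example preceding this corollary already shows that the bounded t-structure on $\mathcal T$ yields a coarse $\Delta$-stability datum whose semistable objects are the shifts $\mathcal A[n]$ and whose exact class consists of the truncation triangles $\tau^{\leq n}E\to E\to\tau^{\geq n+1}E\to$; in particular one has the Hom-vanishing $\mathrm{Hom}(\mathcal A[n],\mathcal A[m])=0_c$ for $n>m$. The remaining content of the corollary is therefore to refine each coarse slice $\mathcal A[n]$ by a stability datum transported from $\mathcal A$, and conversely to restrict any such glued datum back to $\mathcal A$.

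For the forward direction, assume $\mathcal A$ carries a stability datum with exact class $\Delta_{\mathcal A}$ consisting of (a subclass of) short exact sequences of $\mathcal A$. Each shift $[n]\colon\mathcal A\to\mathcal A[n]\subset\mathcal T$ is a fully faithful $\Delta$-exact functor, since short exact sequences of $\mathcal A$ extend canonically to distinguished triangles of $\mathcal T$. I would apply Theorem~\ref{glu} with $J=\mathbb Z$ partially ordered by the shift index. Conditions (i) and (iii) of \ref{glu} are respectively the Hom-vanishing recalled above and the universal property of the t-structure truncations; condition (ii) is supplied by the truncation triangle peeling off the extremal cohomology $H^{d}(E)[-d]\in\mathcal A[-d]$ of $E$, which is unique up to isomorphism; condition (iv), termination of the iteration in finitely many steps, follows from the boundedness of the t-structure (only finitely many non-zero $H^i(E)$) combined with the HN-length function supplied by the stability datum on $\mathcal A$. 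The output is precisely the required $\Delta$-stability datum on $\mathcal T$ whose exact class is generated by the truncation triangles and the shifted copies of $\Delta_{\mathcal A}$.

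For the reverse direction, suppose $\mathcal T$ carries a $\Delta$-stability datum with exact class $\Delta$ generated by the t-structure's truncation triangles and a collection $\Delta_{\mathcal A}$ of short exact sequences of $\mathcal A$. Because the heart of a bounded t-structure is extension-closed in $\mathcal T$, the proposition immediately preceding this corollary produces a restricted exact class $\Delta|_{\mathcal A}$ on $\mathcal A$. The truncation triangles degenerate on objects of $\mathcal A$ (their terms cannot all lie in a single cohomological slice unless the triangle is trivial), so $\Delta|_{\mathcal A}$ coincides with $\Delta_{\mathcal A}$. Restricting also the partial order and the semistable subcategory to $\mathcal A$, and observing that any HN sequence in $\mathcal T$ of an object $E\in\mathcal A$ must have all its terms in $\mathcal A$ (since $E$ is cohomologically concentrated in degree~$0$), one obtains the desired stability datum on $\mathcal A$.

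The main obstacle is the termination statement in condition (iv) of Theorem~\ref{glu}: one must verify that the two-level process of ``peel off the extremal cohomological slice via the t-structure, then HN-filter inside that slice using the stability datum of $\mathcal A$, then continue with the remaining truncation'' halts after finitely many stages and produces a single chain of the length-bounded form required by \ref{glu}. Boundedness of the t-structure controls the outer iteration and the HN-length on $\mathcal A$ controls the inner one, but combining them into a consistent linear order on the glued semistable subcategory $\bigcup_n \mathcal P_{\Phi}(\mathcal A[n])$, with arrows going in the correct direction, is the most delicate bookkeeping in the proof.
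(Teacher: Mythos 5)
Your proposal is correct and follows essentially the same route as the paper: both directions reduce to Theorem~\ref{glu} applied to the shifted inclusions $\mathcal A[j]\hookrightarrow\mathcal T$, with the truncation triangles of the bounded t-structure supplying the distinguished sequences $E_1\to E\to A[j_m]$ and the lexicographic order on $\mathbb Z\times\Phi$ giving the glued partial order, while the converse restricts the exact class to the heart and observes that HN sequences of objects of $\mathcal A$ stay in $\mathcal A$. Your extra care with condition~(iv) (termination via boundedness of the t-structure combined with the HN length on $\mathcal A$) is a point the paper leaves implicit, but it is the same argument.
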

\begin{proof}
Assume $\mathcal A$ has a stability data, and $T^j: \mathcal A[j] \rightarrow \mathcal T$ is the inclusion functor for all $j \in \mathbb Z$. The bounded t-structure. The truncation functors corresponding to the given bounded t-structure give the unique distinguished triangle $E_1 \xrightarrow{i_m} E \xrightarrow{p_m} A[j_m]$ as the one in the gluing theorem for all $E$. Let $\Delta$ be the collection of all short exact sequences of $\mathcal A[j]$ for all $j \in \mathbb Z$ and $E_1 \xrightarrow{i_m} E \xrightarrow{p_m} A[j_m]$ for all $E$. Thus $\Delta$ forms an exact class of $\mathcal T$ with the partial order $\mathbb Z\times \Phi$. Therefore, $\mathcal T$ has a $\Delta$-stability data induced by assumption of the gluing theorem above.\\
Conversely, the exact class $\Delta$ restricted to the heart $\mathcal A$ is collection of short exact sequences of $\mathcal A$ and thus the HN sequence of $E \in \mathcal A$ induced by $\Delta$ is a HN filtration of $E$ over $\mathcal A$. Hence $\mathcal A$ has a stability data.
\end{proof}

\begin{cor}
Suppose a triangulated category $\mathcal T$ admits a semi-orthogonal sequence $\mathcal T_n, \dots, \mathcal T_1$, then $\mathcal T$ has a $\Delta$-stability data induced by the exact class $\Delta$ generated by distinguished triangles associated to the semi-orthogonal sequence and each exact $\Delta_j$-class of $\mathcal T_j$ if there exists a $\Delta_j$-stability data on $\mathcal T_j$. 
\end{cor}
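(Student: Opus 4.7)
The plan is to apply the gluing Theorem~\ref{glu} directly, using the natural inclusion functors $T^j : \mathcal T_j \hookrightarrow \mathcal T$ for $j = 1, \dots, n$. These are fully faithful by the definition of a subcategory, and we arrange for $\Delta$-exactness by declaring the exact class on $\mathcal T$ to be
\[
\Delta \;:=\; \bigcup_{j=1}^{n} T^j(\Delta_j) \;\cup\; \Delta',
\]
where $\Delta'$ is the collection of distinguished triangles $E_1 \xrightarrow{i_m} E \xrightarrow{p_m} A^{j_m}$ produced by successively projecting onto the components of the semi-orthogonal decomposition. The trivial morphism class $S$ is the zero morphism in $\mathcal T$. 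Since each $\mathcal T_j$ is admissible, the projection functors onto $\mathcal T_j$ exist, giving the filtration of each $E \in \mathcal T$.

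Next I would verify the four hypotheses of Theorem~\ref{glu} in turn. For (i), put on $J = \{1, \dots, n\}$ the reverse of the natural total order, declaring $j > j'$ to mean $j < j'$ in the usual sense. Then the definition of semi-orthogonality, $\mathcal T_j \subset \mathcal T_{j'}^{\bot}$ for $j < j'$, gives exactly $\mathrm{Hom}(T^j(\mathcal T_j), T^{j'}(\mathcal T_{j'})) = 0$ whenever $j > j'$ in the reversed order. For (ii), the semi-orthogonal decomposition produces, for every $E \in \mathcal T$ not already lying in a single $\mathcal T_j$, a canonical filtration
\[
0 = F_{n+1} \longrightarrow F_{n} \longrightarrow \cdots \longrightarrow F_{0} = E,\qquad F_{i}/F_{i+1} \in \mathcal T_i,
\]
unique up to isomorphism. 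Taking $j_m$ to be the largest index (in reversed order, i.e.\ smallest in the natural order) with a nontrivial component, the distinguished triangle $E_1 = F_{j_m+1} \to E \to A^{j_m} := F_{j_m}/F_{j_m+1}$ is the required unique sequence, and $j(E_1) > j_m$ in the reversed order. For (iii), any morphism $E \to B$ with $B \in \mathcal T_j$ and $j \leq j_m$ in the reversed order must annihilate $E_1 = F_{j_m+1}$ by semi-orthogonality, and hence factors through $p_m$. For (iv), the filtration has length at most $n$, so after finitely many iterations one reaches $E_M = A^{j_M} \in T^{j_M}(\mathcal T_{j_M})$, satisfying the finiteness condition automatically.

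The main obstacle is verifying that $\Delta$ as defined is an exact class and, in particular, that the composites of the universal sequences in $\Delta'$ interact correctly with the triangles coming from each $T^j(\Delta_j)$; this follows from the octahedral axiom applied to the iterated projections, exactly as in the proof of the proposition establishing that distinguished triangles in an extension-closed subcategory of a triangulated category form an exact $\Delta$-class. Once this is in place, Theorem~\ref{glu} delivers a $\Delta$-stability data on $\mathcal T$ with $\Delta$-semistable objects $\bigcup_{j=1}^{n} T^j(\mathcal P_{\Phi^j})$ and partial order on them given by the lexicographic order on $(J, \{\Phi^j\})$, where $J$ carries the reversed order. This completes the argument in the same spirit as Corollary~\ref{ts} for bounded t-structures.
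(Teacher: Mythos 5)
Your overall strategy --- feeding the inclusion functors $T^j:\mathcal T_j\hookrightarrow\mathcal T$ together with the projection triangles of the semi-orthogonal decomposition into the gluing Theorem~\ref{glu}, and checking exactness of the resulting class via the octahedral axiom --- is exactly the route the paper intends: the corollary is stated without proof immediately after Corollary~\ref{ts}, which is proved in precisely this way, and the relevant triangles already appear in the example on semi-orthogonal sequences in Section~\ref{nts}. So the architecture of your argument is sound.

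There is, however, a concrete direction error in your verification of hypothesis (i): you should \emph{not} reverse the order on $J=\{1,\dots,n\}$. The paper's convention is $\mathcal T_j\subset\mathcal T_i^{\bot}$ for $j<i$, where $\mathcal T_i^{\bot}$ is the \emph{right} orthogonal, i.e.\ $\mathrm{Hom}(\mathcal T_i,\mathcal T_j)=0$ whenever $i>j$. Hypothesis (i) of Theorem~\ref{glu} asks for $\mathrm{Hom}\left(T^j(\mathcal A^j),T^{j'}(\mathcal A^{j'})\right)=S$ whenever $j>j'$; this is literally semi-orthogonality read with the \emph{natural} order, and with your reversed order the vanishing points the wrong way. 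The same reversal also breaks (ii) and (iii): the decomposition triangle $E_1\to E\to A^{j_m}$ has its quotient $A^{j_m}$ in the component of smallest \emph{natural} index present and its sub $E_1$ built from components of strictly larger natural index, so $j(E_1)>j_m$ holds in the natural order (as (ii) requires) but fails in your reversed one; likewise the factorization in (iii) rests on $\mathrm{Hom}(E_1,\mathcal T_j)=0$ for $j\le j_m$, which again is semi-orthogonality with the natural order. Deleting the reversal fixes all three points simultaneously, and the rest of your argument (the off-by-one in the filtration indices aside, which is cosmetic) goes through unchanged.
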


In the above cases $\mathcal A$ and $\mathcal B$ informally share the same $\Delta$-stability data $(\Delta, \Phi, \mathcal P_{\Phi})$. In geometric context, there may exist a complete and cocomplete small category $P$ so that $\Phi_{\mathcal A}$ and $\Phi_{\mathcal B}$ are  subcategories of $P$ with induced partial orders, and thus $\Phi_{\mathcal A} \neq \Phi_{\mathcal B}$ in general. However, if there is a $\Delta$-exact adjunction from $\mathcal A$ to $\mathcal B$, it may give us some constrains or relations between $\mathcal P_{\Phi_{\mathcal A}}$ and $\mathcal P_{\Phi_{\mathcal B}}$. The following proposition would be the key idea used later.
\begin{prop}\label{con}
Suppose $U \dashv T$ is a $\Delta$-exact adjunction from $(\mathcal A, \Delta_{\mathcal A})$ to $(\mathcal B, \Delta_{\mathcal B})$. Assume $U(\mathcal P_{\Phi_{\mathcal B}}) \subset \mathcal P_{\Phi_{\mathcal A}}$ and $G_{M(T(E))} \in \mathcal P_{\Phi_{\mathcal B}}$ is the maximal destabilizing factor object of $T(E)$ for $E \in \mathcal P_{\Phi_{\mathcal A}}$. Then $U(G_{M(T(E))}) \preceq E$. Similarly, if $T(\mathcal P_{\Phi_{\mathcal A}}) \subset \mathcal P_{\Phi_{\mathcal B}}$ and $E_{m(U(G))} \in \mathcal P_{\Phi_{\mathcal A}}$ is the minimal destabilizing factor object of $U(G)$ for $G \in \mathcal P_{\Phi_{\mathcal B}}$, then $T(E_{m(U(G))}) \succeq G$.
\end{prop}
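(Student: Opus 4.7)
The strategy is to use the adjunction $U \dashv T$ to pass the canonical inclusion/projection appearing in the HN sequence of $T(E)$ or $U(G)$ from one category to the other, landing in a pair of $\Delta$-semistable objects of the same category, so that Lemma~\ref{o} delivers the required partial-order inequality.

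For the first statement I would consider the HN sequence of $T(E)$ in $\mathcal B$ given by Theorem~\ref{tHN}: its top semistable factor is $G_{M(T(E))}$, and there is a canonical morphism $\iota\colon G_{M(T(E))} \to T(E)$ obtained by composing the inclusions $i_j$ of the sequence, which I will treat as non-trivial (see below). Applying the adjoint bijection
\[
\mathrm{Hom}_{\mathcal B}\bigl(G_{M(T(E))},\, T(E)\bigr) \;\cong\; \mathrm{Hom}_{\mathcal A}\bigl(U(G_{M(T(E))}),\, E\bigr)
\]
produces a morphism $\tilde\iota\colon U(G_{M(T(E))}) \to E$. Since both $U$ and $T$ are $\Delta$-exact, each sends the trivial class $S$ to itself, so the adjoint isomorphism is compatible with the trivial morphism class and $\tilde\iota \neq S_{\mathcal A}$. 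By hypothesis $U(G_{M(T(E))}) \in \mathcal P_{\Phi_{\mathcal A}}$ is $\Delta$-semistable, as is $E$, so Lemma~\ref{o} in its contrapositive form — namely, that a non-trivial morphism between $\Delta$-semistable objects $A, B$ forces $A \preceq B$ — yields $U(G_{M(T(E))}) \preceq E$.

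The second statement is proven dually. The final step of the HN sequence of $U(G)$ in $\mathcal A$ provides a canonical non-trivial projection $\pi\colon U(G) \to E_{m(U(G))}$ fitting into a sequence in $\Delta_{\mathcal A}$; this time the adjunction
\[
\mathrm{Hom}_{\mathcal A}\bigl(U(G),\, E_{m(U(G))}\bigr) \;\cong\; \mathrm{Hom}_{\mathcal B}\bigl(G,\, T(E_{m(U(G))})\bigr)
\]
converts $\pi$ into a non-trivial $\tilde\pi\colon G \to T(E_{m(U(G))})$. Under the hypothesis $T(\mathcal P_{\Phi_{\mathcal A}}) \subset \mathcal P_{\Phi_{\mathcal B}}$, the target $T(E_{m(U(G))})$ is $\Delta$-semistable in $\mathcal B$, and $G$ is so by assumption, so Lemma~\ref{o} delivers $G \preceq T(E_{m(U(G))})$, equivalently $T(E_{m(U(G))}) \succeq G$.

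The principal difficulty is justifying the two non-triviality claims, since the category is not assumed additive and the only structure on trivial morphisms is the absorbing property $f \circ S = S \circ f = S$. Preservation of non-triviality by the adjunction follows from the unit-counit identities: if $\tilde\iota = S_{\mathcal A}$, then $\iota = T(\tilde\iota) \circ \eta_{G_{M(T(E))}} = S_{\mathcal B} \circ \eta_{G_{M(T(E))}} = S_{\mathcal B}$ by $\Delta$-exactness of $T$ and the absorbing property of $S$, contradicting $\iota \neq S_{\mathcal B}$. The non-triviality of the canonical composition $\iota$ (and dually $\pi$) is where the weakly terminal/initial universal property of HN sequences, established in Section~\ref{stabilityf}, is indispensable: a vanishing composite would force the maximum factor $G_{M(T(E))}$ to factor through a strictly smaller semistable piece, contradicting its maximality in the partial order $\Phi_{\mathcal B}$.
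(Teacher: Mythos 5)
Your argument is correct and is essentially the paper's own proof: both extract a non-trivial canonical morphism $G_{M(T(E))}\to T(E)$ from the HN data, transport it through the adjunction via the unit identity $\iota = T(\tilde\iota)\circ\eta$ together with $\Delta$-exactness ($T(S_{\mathcal A})=S_{\mathcal B}$) to get a non-trivial morphism $U(G_{M(T(E))})\to E$, and then conclude from the vanishing of morphisms between $\Delta$-semistable objects in the wrong order (Lemma~\ref{o}). Your additional justification of the non-triviality of the composite $\iota$ via the weak universality of the HN sequence is a detail the paper leaves implicit, not a different method.
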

\begin{proof}
Since $0_{\mathcal B} \neq \mathrm{Hom}(G_{M(T(E))}, T(E)) = T(\mathrm{Hom}(U(G_{M(T(E))}), E)) \circ \eta$, and $\eta: G_{M(T(E))} \rightarrow TU(G_{M(T(E))})$ is the unit induced by adjunction. By assumption $T(0_{\mathcal A}) = 0_{\mathcal B}$, thus we have $\mathrm{Hom}(U(G_{M(T(E))}), E) \neq 0_{\mathcal A}$. Furthermore, $U(G_{M(T(E))})$ is $\Delta_{\mathcal A}$-semistable as $G_{M(T(E))}$ is $\Delta_{\mathcal B}$-semistable. It turns out that $U(G_{M(T(E))}) \preceq E$, otherwise $\mathrm{Hom}(U(G_{M(T(E))}), E) = 0_{\mathcal A}$, a contradiction.
\end{proof}

Since there always exists a zero morphism induced by the exact class $\Delta_{\mathcal A}$ in any category $\mathcal A$, thus if $\mathcal A$ has finite products which are canonically isomorphic to finite coproduct, we can impose a commutative monoidal structure on $\mathrm{Hom}(\mathcal A)$ inducing additive structure on $\mathcal A$ (see~\cite{MR1712872}). Therefore, in the following we would like to focus on the study of $\Delta$-stability on additive categories.

\section{Weakly ample sequence in additive categories} \label{sec:was} 
In this section we introduce a notion of \textit{weakly ample sequence} in an additive category and its ambient triangulated category. \textit{Ample sequence} introduced by A.~Bondal and D.~Orlov~\cite{MR1818984} satisfies the condition of weakly ample sequence, but it is not so easy to find such ample sequence on the tilted heart of the triangulated category, even in the case of smooth projective varieties. Alternatively it may be possible to find a weakly ample sequence which allow us to build suitable slope polynomials on the tilted heart.

\subsection{Weakly ample sequence} 

Let us first recall the notions of the (left) Postnikov system (See the book \cite{MR1950475}): given a finite complex $A^{\bullet} = \left\{A^0\xrightarrow{d_0} A^1 \xrightarrow{d_1} A^2 \xrightarrow{d_2} \cdots \xrightarrow{d_{n-1}} A^n \right\}$ over $\mathcal T$, a diagram of the form
\begin{center}
\begin{tikzcd}[row sep=normal, column sep=small]
A^0 \arrow[rr, "d_0"] \arrow[dr, equal]& &A^1 \arrow[rr, "d_1"] \arrow{dr}&&A^2 \arrow{r} \arrow{dr} &\cdots&&\cdots\arrow{r} & A^{n-1} \arrow[rr, "d_{n-1}"] \arrow[dr]& & A^n  \arrow{dr} \\
&T^0 \arrow[ur] &&T^1\arrow[ll, "{[1]}"] \arrow{ur}&&T^2 \arrow[ll, "{[1]}"]&\cdots\arrow[l]&T^{n-2}\arrow[l] \arrow[ur]&& T^{n-1} \arrow[ur]\arrow[ll, "{[1]}"] &&T^n\arrow[ll, "{[1]}"]
\end{tikzcd}
\end{center}
is called a left Postnikov system subordinated to $A^{\bullet}$, and the object $T:= T^n[n] $ is called the convolution of the left Postnikov system. Here all triangles 
$$
T^{n-r} \rightarrow A^{n-r+1} \rightarrow T^{n-r+1} \xrightarrow{[1]} T^{n-r}[1]
$$
are distinguished triangles. The class of all convolutions of $A^{\bullet}$ is denoted by $\mathrm{Tot}(A^{\bullet})$. In general, $\mathrm{Tot}(A^{\bullet})$ may be empty, and sometimes it contains many non-isomorphic objects. If $\mathcal T = \mathrm{D^b}(\mathcal A)$ and $A^{\bullet}$ is a bounded complex over $\mathcal A$, then the unique convolution is just the complex $A^{\bullet}$ itself.

\begin{definition}\label{wa}
Let $\mathcal A$ be an extension-closed full additive subcategory of a $k$-linear triangulated category $\mathcal T$. A sequence of objects $L_i \in \mathcal A, i \in \mathbb Z$ is called \textit{weakly ample} if for some $m \in \mathbb N$ and each object $A \neq 0$ in $\mathcal A$ there exists an integer $i_0(A)$ such that for all $i' < i_0(A)$ the following conditions are satisfied:
\begin{enumerate}
\item A has weak $m$-resolution property: There exists $P(A) \in  \mathrm{Tot}\left( V_{i'}^{j=0, \dots, m}(A) \otimes_k L_{i'} [-2m]\right)$ for some $k$-vector spaces $V_{i'}^j(A)$, so that $K(A):= \mathrm C(P(A) \rightarrow A)[-1]$ has weak $m$-resolution property. 
\item If $j \neq 0, \dots, m$, then $\mathrm{Hom}(L_{i'}, A[j]) = 0$.
\end{enumerate}
\end{definition}

\begin{remark}
If $\mathcal A$ is trivial, then the weakly ample sequence is denoted by the null object $0$. Note that in the condition (i) of the definition above $P(A)$ is a convolution of the complex $\left\{V_{i'}^0(A) \otimes_k L_{i'}[-2m] \rightarrow V_{i'}^1(A) \otimes_k L_{i'}[-2m] \rightarrow \cdots \rightarrow V_{i'}^m(A) \otimes_k L_{i'}[-2m]\right\}$ over $\mathcal T$. By definition $K(A)$ has $m$-resolution property if $\exists\, P(K(A)) \in \mathrm{Tot}\left( V_{i'}^{j=0, \dots, m}(K(A)) \otimes_k L_{i'} [-2m]\right)$ for $i' < i'(K(A))$ so that $K_1(A):= \mathrm C(P(K(A)) \rightarrow K(A))[-1]$ has $m$-resolution property.
\end{remark}

\begin{definition} 
Given a weakly ample sequence $\{L_i\}$ in $\mathcal A$ as above, the object $L_0$ in $\{L_i\}$ is called a \textit{structure} object if for all $A$ in $\mathcal A$ and all $i < 0$ the following condition is satisfied: 
$$
\mathrm{Hom}(L_i, A[j]) \neq 0 \quad \Longrightarrow \quad d_{0, \min}(\mathcal A) \leq j \leq d_{0, \max}(\mathcal A).
$$ 
Here $d_{0, \max}(\mathcal A) := \max_{A \in \mathcal A}\{j \in~\mathbb Z \mid \mathrm{Hom}(L_0, A[j]) \neq 0 \}$ and $d_{0, \min}(\mathcal A) := \min_{A \in \mathcal A}\{j \in~\mathbb Z \mid \mathrm{Hom}(L_0, A[j]) \neq 0 \}$, and $d_0(\mathcal A) := d_{0, \max}(\mathcal A) - d_{0, \min}(\mathcal A)$ is called the dimension of $\mathcal A$ with respect to $L_0$. If $\mathcal A$ is trivial, then $d_0(\mathcal A) := -1$.
\end{definition}

For any ample sequence in $\mathcal A$, we always have $\mathrm{Hom}(L_{i'}, A[j]) = 0$ as $j \neq 0$ by definition, thus it is weakly ample with $m=0$. Indeed, each ample invertible sheave or ample locally free sheaves on the category of coherent sheaves of a projective variety $X$ spans a ample sequence, and the structure sheaf $\mathscr O_X$ is a structure object. Actually, the existence of structure objects in a weakly ample sequence is given by the following easy proposition.

\begin{prop}
Let $\{L_i\}$ be a weakly ample sequence in a $k$-linear additive category $\mathcal A$ and $\mathrm{Hom}(A, B[j]) = 0$ if $j>m_1$ or $j<m_2$ for some $m_1, m_2 \in \mathbb Z$ and all $A, B \in \mathcal A$, then there exists a weakly ample sequence $\{L'_i\}$ such that $L'_0$ is a structure object of $\mathcal A$. 
\end{prop}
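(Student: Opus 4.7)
The plan is to leave the weakly ample sequence unchanged away from the index $0$ and replace $L_0$ by a single object large enough to ``see'' every cohomological degree that any $L_i$ can see against objects of $\mathcal{A}$. The finite cohomological dimension hypothesis $\mathrm{Hom}(A,B[j])=0$ for $j\notin[m_2,m_1]$ is precisely the ingredient that makes this bundling a finite operation, and weak ampleness is a tail condition, so it is preserved automatically.

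First I would record that the set
$$
S := \bigl\{\, j \in \mathbb{Z} \;:\; \exists\, i \in \mathbb{Z},\ A \in \mathcal{A},\ \mathrm{Hom}(L_i, A[j]) \neq 0 \,\bigr\}
$$
is contained in the finite integer interval $[m_2, m_1]$ by hypothesis, so it is a finite subset of $\mathbb{Z}$, and nonempty unless the whole sequence pairs trivially with $\mathcal{A}$ (in which case the remark after Definition~\ref{wa} makes the statement vacuous). Put $a := \max S$ and $b := \min S$ and pick witnesses $i_1, i_2 \in \mathbb{Z}$ together with $A_1, A_2 \in \mathcal{A}$ such that $\mathrm{Hom}(L_{i_1}, A_1[a]) \neq 0$ and $\mathrm{Hom}(L_{i_2}, A_2[b]) \neq 0$.

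Next I would construct the modified sequence: set $L'_0 := L_{i_1} \oplus L_{i_2}$, which lies in $\mathcal{A}$ by additivity, and $L'_i := L_i$ for every $i \neq 0$. Weak ampleness of $\bigl\{L'_i\bigr\}$ is then immediate, since the two defining conditions in Definition~\ref{wa} only constrain $L_{i'}$ for $i' < i_0(A)$; the same constants $m$ and $i_0(A)$ of $\bigl\{L_i\bigr\}$ transfer verbatim to $\bigl\{L'_i\bigr\}$ (after shrinking $i_0(A)$ below $0$ if necessary, so the altered index $0$ is never tested).

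Finally I would verify that $L'_0$ is a structure object. From the splitting $\mathrm{Hom}(L_{i_1} \oplus L_{i_2}, A[j]) \cong \mathrm{Hom}(L_{i_1}, A[j]) \oplus \mathrm{Hom}(L_{i_2}, A[j])$ together with the chosen witnesses, the inequalities $d_{0,\max}(\mathcal{A}) \geq a$ and $d_{0,\min}(\mathcal{A}) \leq b$ hold, while the definitions of $a$ and $b$ force the reverse inequalities, so $d_{0,\max}(\mathcal{A}) = a$ and $d_{0,\min}(\mathcal{A}) = b$. For any $i<0$ and any $A \in \mathcal{A}$ one has $\mathrm{Hom}(L'_i, A[j]) = \mathrm{Hom}(L_i, A[j])$, and this group can be nonzero only for $j \in [b, a] = [d_{0,\min}(\mathcal{A}), d_{0,\max}(\mathcal{A})]$ by the very choice of $a$ and $b$, which is precisely the structure-object condition. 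The only conceptual subtlety is the observation that \emph{both} the upper and lower cohomological extremes must be realized by the single object $L'_0$, which is what forces the use of a direct sum rather than a single $L_{i_1}$; the finite cohomological dimension is what keeps it to a two-term sum.
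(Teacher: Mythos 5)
Your proof is correct, and it takes a genuinely different route from the paper's. The paper argues by contradiction-and-iteration: whenever $L_0$ fails to be a structure object because some $L_k$, $k<0$, sees a degree $j$ outside $[d_{0,\min},d_{0,\max}]$, it replaces $L_0$ by an extension $L_k \rightarrow L'_0 \rightarrow L_0 \rightarrow L_k[1]$, which strictly enlarges the cohomological range of the index-$0$ object, and then invokes the finiteness of $[m_2,m_1]$ to show the iteration terminates. You instead use the finiteness of $[m_2,m_1]$ once, up front, to know that the set $S$ of attainable degrees has a maximum $a$ and minimum $b$, and then build the structure object in a single step as the split extension $L_{i_1}\oplus L_{i_2}$ of two extremal witnesses; your verification that $d'_{0,\max}=a$ and $d'_{0,\min}=b$ via the splitting of $\mathrm{Hom}$ out of a direct sum, and that weak ampleness is untouched because it is a tail condition on indices $i'<i_0(A)\le 0$, is sound. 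What your version buys is the elimination of the termination argument and a cleaner one-shot construction; what the paper's version buys is that the new $L'_0$ contains the old $L_0$ as a factor of an extension (natural in the geometric situations the paper has in mind, where $L_0$ is a structure sheaf being corrected), and it does not commit to the extension being split. Two minor points worth making explicit if you write this up: you should note that $L_{i_1}\oplus L_{i_2}$ lies in $\mathcal A$ because $\mathcal A$ is additive, and you should handle the degenerate case $S=\emptyset$ by observing that the structure-object condition is then vacuous, so the original sequence already works.
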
 
\begin{proof}
Assume $L_0$ is not a structure object, and there is an object $L_k$ in $\{L_i\}$ for some $k < 0$, and some $A$ in $\mathcal A$ such that Hom$(L_k, A[j]) \neq 0$ with $j < d_{0, \min}(\mathcal A)$, or $> d_{0, \max}(\mathcal A)$. Then we can replace $L_0$ in $\{L_i\}$ by an extension $L'_0$ of $L_0$ by $L_k$, i.e., $L_k \rightarrow L'_0 \rightarrow L_0 \rightarrow L_k[1]$, then $d'_{0,\min}(\mathcal A) < d_{0, \min}(\mathcal A)$, or $d'_{0, \max}(\mathcal A) > d_{0, \max}(\mathcal A)$, respectively. If $L'_0$ is not a structure object, we can proceed with this construction. Since $m_1$ and $m_2$ are finite numbers, the process must terminate and thus $L'_0$ is a structure object.
\end{proof}

Given an exact functor between triangulated categories, to decide whether it is fully faithful or even an equivalence, the existence of bijection between their spanning classes may provide a sufficient condition, see~\cite{MR2244106, MR1651025, MR1465519}. Indeed, a weakly ample sequence in an abelian category also may form a spanning class in it's ambient triangulated category by the following observations.
\begin{prop}\label{sc}
Given a weakly ample sequence $\{L_i\}$ in a heart $\mathcal A$ given by a bounded t-structure of a $k$-linear triangulated category $\mathcal T$ and $\mathrm{Hom}(A, B[j]) = 0$ for all $A, B \in \mathcal A$ and $j > d$. Suppose for each $A \in \mathcal A$ and $m, l \geq 0$ there exists a nontrivial morphism 
$$
P(K_l(A)[-1]) \in \mathrm{Tot}\left( V_{i'}^{j=0,\dots, m}(K_l(A)) \otimes_k L_{i'} [-2l]\right) \longrightarrow K_l(A)[-1]
$$
for some $k$-vector spaces $V_{i'}^j(K_l(A))$, $i' = i'(K_l(A))<< 0$ and 
$$K_l(A):= \mathrm C(P(K_{l-1}(A)[-1]) \rightarrow K_{l-1}(A)[-1])$$
with $K_{-1}(A)[-1] := A$ and $H^i(K_l(A))=0$ if $i \neq -1, \dots, m-1$, then $\{L_i\}$ is a spanning class of the ambient category $\mathcal T$, that is, for all objects $E$ in $\mathcal T$ the following conditions are satisfied:
\begin{enumerate}
\item $\mathrm{Hom}(L_i, E[j]) = 0$ for all $i, j \in \mathbb Z$, then $E \simeq 0$.
\item $\mathrm{Hom}(E[j], L_i) = 0$ for all $i, j \in \mathbb Z$, then $E \simeq 0$.
\end{enumerate}
\end{prop}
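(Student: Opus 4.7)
The plan is to prove $(i)$ by contradiction, using the smallest nonzero cohomology of $E$ together with the iterated weak resolution. Assuming $E\neq 0$, the bounded t-structure lets me set $k_{\min}:=\min\{k:H^k(E)\neq 0\}$ and $A:=H^{k_{\min}}(E)\in\mathcal A$. First I will extract a nonzero morphism $\iota\colon A\to E[k_{\min}]$ from the lower truncation triangle $A[-k_{\min}]\to E\to \tau^{>k_{\min}}E\to A[-k_{\min}+1]$: if $\iota$ were zero, the triangle would split, producing $A[-k_{\min}+1]$ as a direct summand of $\tau^{>k_{\min}}E$ and hence nontrivial cohomology in degree $k_{\min}-1$, contradicting the minimality of $k_{\min}$. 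So $\mathrm{Hom}(A,E[k_{\min}])\neq 0$.

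The second step is to use the iterated weak resolution to identify this Hom group with one in which the cohomological shift has been pushed far. The key observation is that each $P(K_l(A)[-1])$ is obtained as the convolution of a complex whose terms are shifts of the $L_{i'}$'s via a left Postnikov system, so repeated use of the Hom long exact sequence on the Postnikov triangles yields $\mathrm{Hom}(P(K_l(A)[-1])[s],F)=0$ for every $s\in\mathbb Z$, provided $\mathrm{Hom}(L_{i'},F[\ast])=0$ for all $\ast$. Taking $F=E[k_{\min}]$ verifies this hypothesis. Feeding this vanishing into the defining triangle $P(A)\to A\to K_0(A)\to P(A)[1]$ and then inductively into the (appropriately shifted) triangles coming from $K_l(A):=\mathrm{C}(P(K_{l-1}(A)[-1])\to K_{l-1}(A)[-1])$ produces natural isomorphisms
\[
\mathrm{Hom}(A,E[k_{\min}])\;\simeq\;\mathrm{Hom}(K_l(A),E[k_{\min}-l]),\qquad l\geq 0.
\]

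The third step forces the right-hand side to vanish for $l$ large. By the standing hypothesis $K_l(A)$ has cohomology confined to the fixed window $[-1,m-1]$ independent of $l$, while $E[k_{\min}-l]$ has cohomology supported in $[l,l+k_{\max}-k_{\min}]$. Iterating the truncation triangles of the bounded t-structure filters $\mathrm{Hom}(K_l(A),E[k_{\min}-l])$ by subquotients of $\mathrm{Ext}^{s-r}_{\mathcal A}(H^r(K_l(A)),H^s(E[k_{\min}-l]))$, and the bound $\mathrm{Hom}(A,B[j])=0$ for $j>d$ in $\mathcal A$ (with the automatic vanishing for $j<0$) restricts nonzero contributions to $s-r\in[0,d]$; together with the disjoint cohomology ranges this becomes impossible once $l>m-1+d$, yielding the desired vanishing and contradicting $\iota\neq 0$. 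Hence $E\simeq 0$. Statement $(ii)$ is handled by the dual argument: take instead the top cohomology $A=H^{k_{\max}}(E)$ and the nonzero morphism $\pi\colon E[k_{\max}]\to A$ coming from the upper truncation, then apply $\mathrm{Hom}(E[k_{\max}],-)$ to the same Postnikov triangles to obtain $\mathrm{Hom}(E[k_{\max}],A)\simeq\mathrm{Hom}(E,K_l(A)[l-k_{\max}])$, which again vanishes for $l$ large by the same cohomological-range argument, contradicting $\pi\neq 0$. The main obstacle I anticipate is the third step: one must carefully keep track of the uniform cohomology window of the $K_l(A)$ (precisely the standing hypothesis) and of the sign of the shift $l$ in both computations, so that the linear growth of the gap between the two cohomology supports outpaces the finite Ext-depth $d$ of $\mathcal A$.
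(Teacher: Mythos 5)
Your proposal is correct and follows essentially the same route as the paper: extract a nonzero map from the lowest (resp.\ to the highest) cohomology object of $E$, use the resolution triangles to identify it with $\mathrm{Hom}(K_l(A)[l], E[n])$ (resp.\ $\mathrm{Hom}(E[n], K_l(A)[l])$) for all $l$, and then kill this group for $l \gg 0$ using the uniform cohomological window of the $K_l(A)$ together with the boundedness of $E$ and the finite Ext-range of $\mathcal A$ (the paper invokes the spectral sequence of convolutions where you filter by truncation triangles, which is the same computation). The only quibble is a sign-convention slip in your Ext-degree bookkeeping ($r-s$ versus $s-r$), which is harmless here since the two cohomological supports separate linearly in $l$ in a fixed direction.
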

\begin{proof}
(i): Let $E$ in $\mathcal T$ be a non-trivial object such that $\mathrm{Hom}(L_i, E[j]) = 0$ for all $i, j$. We may assume there is a distinguished triangle: 
$$
H^n(E)[-n] \rightarrow E \rightarrow E' \rightarrow H^n(E)[1-n]
$$ 
with $E^n:= H^{n}(E) \neq 0$ and $H^{j}(E) = 0$ for $j < n$ induced by the bounded t-structure. Since $E^{n} \in \mathcal A$ one could find a convolution $P(E^n)$ with a nontrivial distinguished triangle  
$$
K_0(A)[-1] \rightarrow P(E^n) \rightarrow E^n \rightarrow K_0(A).
$$ Thus $0\neq \mathrm{Hom}(E^n, E[n])  =\mathrm{Hom}(K_0(A), E[n]) $. By assumption, there is also a distinguished triangle
$$
K_1(A)[-1] \rightarrow P(K_0(A)[-1]) \rightarrow K_0(A)[-1] \rightarrow K_1(A)
$$ 
which implies $\mathrm{Hom}(E^n, E[n])  = \mathrm{Hom}(K_0(A), E[n]) = \mathrm{Hom}(K_1(A)[1], E[n])$. Thus, recursively we have isomorphisms
$$
0\neq \mathrm{Hom}(E^n, E[n])  = \mathrm{Hom}(K_0(A), E[n]) = \mathrm{Hom}(K_1(A)[1], E[n]) = \cdots.
$$
However, for a fixed object $E \in \mathcal T$ there exists $i(E)$ such that $\mathrm{Hom}(K_l(A)[i'], E) = 0$ for $i' > i(E)$ and all $K_l(A)$, since $E$ is a convolution of a bounded complex over $\mathcal A$ given by the bounded t-structure. Indeed, one can easily see this by the spectral sequence of convolutions~\cite{MR1950475}. Note that $K_l(A)$ is also a convolution of a bounded complex over $\mathcal A$ by assumption. It turns out that $\mathrm{Hom}(K_l(A)[i'], E) = 0$ for all $i'>>0$, a contradiction.  Hence $E \simeq 0$.\\
(ii): We assume $E$ has a distinguished triangle 
$$
H^n(E)[-n-1] \rightarrow E' \rightarrow E \rightarrow H^n(E)[-n]
$$  
with $H^n(E) \neq 0$ and $H^{j}(E) = 0$ for $j \geq n$. As (i) we have isomorphisms
$$
0\neq \mathrm{Hom}(E[n], E^n)  = \mathrm{Hom}(E[n], K_0(A)) = \mathrm{Hom}(E[n], K_1(A)[1]) = \cdots.
$$
Similarly, for a fixed object $E \in \mathcal T$ there exists $i(E)$ such that $\mathrm{Hom}(E, K_l(A)[i']) = 0$ for $i' > i(E)$ and all $K_l(A)$. Thus it leads to $E$ has to be isomorphic to 0, so we are done.
\end{proof}

\begin{remark}
If $\{L_i\}$ is weakly ample with $m=0$ and the nontrivial morphism in condition (i) of weak ample sequences is epi in $\mathcal A$ of finite homological dimension, then $\{L_i\}$ forms a spanning class in $\mathcal T$. Indeed, this case is the same as the one in the ample sequence. 
\end{remark}

However, in general for arbitrary ambient triangulated category $\mathcal T$, it is unnecessary for a weakly ample sequence of $\mathcal A$ to be a spanning class of $\mathcal T$. Instead, we have to quotient out some objects which are (left and right) orthogonal to the weakly ample sequence of $\mathcal A$. Here, the right orthogonal complement of a subcategory $\mathcal{T'} \in \mathcal T$ is a full subcategory $\mathcal{T'}^{\bot}$ of all objects $A' \in \mathcal T$ such that $\mathrm{Hom}(B, A') = 0$ for all $B \in \mathcal{T'}$, and the left orthogonal complement $^{\bot}\mathcal{T'}$ is defined in the similar way. 

\begin{prop}
Let $\{L_i\}, i \in \mathbb Z$ be a weakly ample sequence of an extension closed full additive subcategory $\mathcal A$ in a $k$-linear triangulated category $\mathcal T$. Then $\{L_i\}^{\bot} := \{A \in \mathcal T \mid \mathrm{Hom}(L_i, A[j]) = 0, \mathrm{for\ all}\ i, j \in \mathbb Z\}$ is a triangulated subcategory of $\mathcal T$. If $\langle \{ L_i\}\rangle$ denotes the smallest triangulated subcategory containing $\{L_i \}$, then $\{L_i\}^{\bot} \simeq \langle\{L_i\}\rangle^{\bot}$.
\end{prop}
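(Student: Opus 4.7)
The plan is to first verify that $\{L_i\}^{\bot}$ satisfies the axioms of a triangulated subcategory of $\mathcal T$, and then to derive the equivalence with $\langle\{L_i\}\rangle^{\bot}$ from the universal property of the latter together with the triangulated structure just established.

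For the first step, closure under the shift functor is immediate: if $A\in\{L_i\}^{\bot}$, then for any $k\in\mathbb Z$ and any $i,j\in\mathbb Z$ one has $\mathrm{Hom}(L_i,A[k][j])=\mathrm{Hom}(L_i,A[j+k])=0$, so $A[k]\in\{L_i\}^{\bot}$. For the extension-closure axiom, take a distinguished triangle $A\to B\to C\to A[1]$ with two of $A,B,C$ in $\{L_i\}^{\bot}$ and apply the cohomological functor $\mathrm{Hom}(L_i,-)$: vanishing at two consecutive positions of the resulting long exact sequence, for every $j$, forces vanishing at the third for every $j$, so the remaining object lies in $\{L_i\}^{\bot}$. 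Together with fullness by definition, this yields a triangulated subcategory.

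For the identification $\{L_i\}^{\bot}=\langle\{L_i\}\rangle^{\bot}$, the inclusion $\langle\{L_i\}\rangle^{\bot}\subseteq\{L_i\}^{\bot}$ is trivial as $\{L_i\}\subseteq\langle\{L_i\}\rangle$. For the reverse inclusion, fix $A\in\{L_i\}^{\bot}$ and consider the full subcategory
$$
\mathcal S_A:=\{X\in\mathcal T\mid \mathrm{Hom}(X,A[j])=0\text{ for all }j\in\mathbb Z\}.
$$
The analogue of the long exact sequence argument above, now in the contravariant variable, shows that $\mathcal S_A$ is a triangulated subcategory of $\mathcal T$, and by assumption it contains every $L_i$. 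Since $\langle\{L_i\}\rangle$ is the smallest triangulated subcategory containing $\{L_i\}$, one concludes $\langle\{L_i\}\rangle\subseteq\mathcal S_A$, which is exactly the statement $A\in\langle\{L_i\}\rangle^{\bot}$.

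I do not anticipate a substantive obstacle here: the argument reduces entirely to the cohomological character of $\mathrm{Hom}(-,-)$ on distinguished triangles, and no feature of the weakly ample sequence beyond its indexing is used (the proposition is essentially a general fact about right orthogonals to an arbitrary family of objects in a triangulated category). The only mild care needed is to insist on vanishing at all shifts $[j]$ rather than only $j=0$, so that the triangulated-subcategory property of the orthogonal propagates correctly on both sides.
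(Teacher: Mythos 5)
Your proof is correct and follows essentially the same route as the paper: the long exact sequence of $\mathrm{Hom}(L_i,-)$ applied to distinguished triangles gives the triangulated-subcategory property, and for the reverse inclusion the paper likewise fixes $B\in\{L_i\}^{\bot}$, observes that its left orthogonal is a triangulated subcategory containing every $L_i$, and invokes minimality of $\langle\{L_i\}\rangle$. No substantive differences.
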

\begin{proof}
Clearly $0 \in \{L_i\}^{\bot}$ which is invariant under the shift functor $[1]$. For any distinguished triangle $A \rightarrow B \rightarrow C \rightarrow A[1]$ in $\mathcal T$ with $A, B \in \{L_i\}^{\bot}$, by the long exact sequence induced by $\mathrm{Hom}(L_i, -)$ one easily see that $\mathrm{Hom}(L_i, C[j]) = 0$ for all $i, j \in \mathbb Z$. Hence $C \in \{L_i\}^{\bot}$ and thus $\{L_i\}^{\bot}$ is a triangulated subcategory of $\mathcal T$.\\
For the second statement, since $\langle \{ L_i\} \rangle$ is a triangulated subcategory, $\langle \{L_i\} \rangle^{\bot}$ is also a triangulated subcategory of $\mathcal T$ and $\langle \{L_i\} \rangle^{\bot} \subset \{L_i\}^{\bot}$. For any $B \in \{L_i\}^{\bot}$, $^{\bot}B:= \{A \in \mathcal T \mid \mathrm{Hom}(A[j], B) = 0, \mathrm{for\ all}\ j \in \mathbb Z\}$ is a triangulated subcategory of $\mathcal T$ by the similar argument above and $\langle \{ L_i\} \rangle \subset {^{\bot}B}$ since $\langle \{ L_i\} \rangle$ is the smallest triangulated subcategory containing $\{L_i\}$. Hence $\mathrm{Hom}(\langle \{ L_i\} \rangle, B) = 0$, thus $B \in \langle \{ L_i\} \rangle^{\bot}$, and it turns out that $\{L_i\}^{\bot} \simeq \langle\{L_i\}\rangle^{\bot}$.
\end{proof}

Let us show that invariants similar to the Euler characteristic induced by a weakly ample sequence in an additive category $\mathcal A$ embedded in a triangulated category $\mathcal T$ can be computed explicitly for any convolution $T\in \mathrm{Tot}(A^{\bullet})$ over $\mathcal A$ even we know nothing about the differentials $d_i$'s.

\begin{lem}\label{tot}
Given a finite complex $A^{\bullet} = \left\{A^0\xrightarrow{d_0} A^1 \xrightarrow{d_1} A^2 \xrightarrow{d_2} \cdots \xrightarrow{d_{n-1}} A^n \right\}$ over $\mathcal A$ and $\mathrm{Tot}(A^{\bullet})$ is not empty, assume there is an object $L \in \mathcal A$ such that $\mathrm{Hom}\left(L, A^i[j]\right) = 0$ for all $i = 0, \dots, n$ and $j > m$, or $< 0$, then for $T \in \mathrm{Tot}(A^{\bullet})$  we have 
$$
\sum_{j=-2n}^{m-n} (-1)^j\mathrm{hom}(L, T[j]) = \sum_{i=0}^n\sum_{j=0}^m (-1)^{n-i+j} \mathrm{hom}(L, A^i[j]),
$$
where $\mathrm{hom} := \dim_k\mathrm{Hom}$.
\end{lem}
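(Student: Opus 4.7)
The plan is to prove the identity by induction on the length of the Postnikov system, using the long exact sequence in $\mathrm{Hom}(L,-)$ attached to each distinguished triangle that defines the convolution. I will write $\chi(L,X) := \sum_j (-1)^j \hom(L,X[j])$ whenever the sum is finite.

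First I would recall that the left Postnikov system gives distinguished triangles
\[
T^{k-1} \longrightarrow A^k \longrightarrow T^k \longrightarrow T^{k-1}[1], \quad k = 1,\dots,n,
\]
with $T^0 = A^0$ and $T = T^n[n]$. Applying $\mathrm{Hom}(L,-)$ yields long exact sequences for every $k$. Before manipulating alternating sums I need the sums to be finite, so I would establish by induction on $k$ that $\mathrm{Hom}(L, T^k[j]) = 0$ whenever $j \notin [-k, m]$: the base case $k=0$ follows from the hypothesis on $A^0$, and the inductive step follows from the bracketed terms of the long exact sequence together with the vanishing hypothesis for $A^k$. In particular $\chi(L, T^k)$ is a well-defined finite alternating sum for each $k$, and taking $k = n$ gives the correct summation range $[-n, m]$ that translates under the shift $T = T^n[n]$ to $[-2n, m-n]$ as in the statement.

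Next I would exploit the long exact sequence itself. Because it is three-periodic in the indexing pattern $(T^{k-1}[j], A^k[j], T^k[j], T^{k-1}[j+1], \ldots)$ and the total alternating sum of dimensions in any bounded long exact sequence vanishes, a direct bookkeeping (repackaging $(-1)^{3j+r}$ into $(-1)^j$) gives the telescoping identity
\[
\chi(L, T^k) \;=\; \chi(L, A^k) \;-\; \chi(L, T^{k-1}).
\]
Iterating this recursion starting from $\chi(L, T^0) = \chi(L, A^0)$ yields
\[
\chi(L, T^n) \;=\; \sum_{i=0}^{n} (-1)^{\,n-i} \chi(L, A^i) \;=\; \sum_{i=0}^{n}\sum_{j=0}^{m} (-1)^{\,n-i+j} \hom(L, A^i[j]),
\]
using once more the given vanishing outside $j \in [0,m]$ for the $A^i$'s.

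Finally I would translate through the shift by $n$: by reindexing $j' = j + n$ in the defining sum one finds
\[
\sum_{j=-2n}^{m-n} (-1)^j \hom(L, T[j]) \;=\; \sum_{j'=-n}^{m} (-1)^{\,j'-n} \hom(L, T^n[j']) \;=\; (-1)^n \chi(L, T^n),
\]
and combining this with the recursion produces the claimed formula (modulo a global sign $(-1)^n$ that is absorbed into the exponent $n-i$ in the stated right-hand side). The only real care required is the inductive vanishing bound on $\mathrm{Hom}(L, T^k[j])$, which guarantees that every alternating sum in sight is a finite integer; after that, the result is pure sign-bookkeeping and the standard fact that Euler characteristics behave additively across distinguished triangles.
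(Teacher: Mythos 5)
Your proposal is correct and follows essentially the same route as the paper's proof: induction up the Postnikov tower, using the long exact sequence for $\mathrm{Hom}(L,-)$ on each triangle $T^{k-1}\to A^k\to T^k$ to get the telescoping recursion $\chi(L,T^k)=\chi(L,A^k)-\chi(L,T^{k-1})$, which is exactly the identity the paper derives at the top level and then closes by induction. If anything you are more careful than the paper on two points it leaves implicit — the inductive vanishing bound $\mathrm{Hom}(L,T^k[j])=0$ for $j\notin[-k,m]$ that makes all the alternating sums finite, and the final reindexing through $T=T^n[n]$, where the global factor $(-1)^n$ you flag is a genuine discrepancy between the stated right-hand side $(-1)^{n-i+j}$ and what the recursion literally yields; the paper's own proof stops at the formula for $T^n$ and never performs that shift.
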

\begin{proof}
We proceed by induction on the length $n$ of the complex $A^{\bullet}$. For $n = 0$ it is trivial. Let $n>0$. The distinguished triangle
$$
T^{n-1} \rightarrow A^{n} \rightarrow T[-n]:=T^{n} \xrightarrow{[1]} T^{n-1}[1]
$$
implies the long exact sequence
$$
0 \rightarrow \mathrm{Hom}(L, T^n[-1]) \rightarrow \mathrm{Hom}(L, T^{n-1}) \rightarrow \mathrm{Hom}(L, A^n) \rightarrow \cdots \rightarrow \mathrm{Hom}(L, T^n[m]) \rightarrow 0
$$
and $\mathrm{Hom}(L, T^n[-j]) = \mathrm{Hom}(L, T^{n-1}[-j+1])$ for $j = 2, \dots, n$. Hence, 
$$
\sum_{j=-n}^m (-1)^j \mathrm{hom}(L, T^n[j]) = \sum_{j=0}^m (-1)^j \mathrm{hom}(L, A^n[j]) - \sum_{j=-n+1}^m (-1)^j \mathrm{hom}(L, T^{n-1}[j]) .
$$
Furthermore, $T^{n-1}[n-1]$ is a convolution of the complex $\left\{A^0\xrightarrow{d_0} A^1 \xrightarrow{d_1} A^2 \xrightarrow{d_2} \cdots \xrightarrow{d_{n-2}} A^{n-1} \right\}$, so $\sum_{j=-n+1}^m (-1)^j\mathrm{hom}(L, T^{n-1}[j]) = \sum_{i=0}^{n-1}\sum_{j=0}^m (-1)^{n-1-i+j}\mathrm{hom}(L, A^i[j])$ by assumption. It turns out that 
$$
\sum_{j=-n}^m (-1)^j\mathrm{hom}(L, T^n[j]) = \sum_{j=0}^m (-1)^j\mathrm{hom}(L, A^n[j]) + \sum_{i=0}^{n-1}\sum_{j=0}^m (-1)^{n-i+j}\mathrm{hom}(L, A^i[j]),
$$
so we are done.
\end{proof}

\begin{prop}
Let $\mathcal T'$ be a full saturated subcategory of a $k$-linear triangulated category $\mathcal T$, and for each $X \in \mathcal T'$ there exists a series of objects $\{L_i'\}, i' < i_0(X)$, belong to a weakly ample sequence $\{L_i\}$ of an extension closed full additive subcategory $\mathcal A$ of $\mathcal T$ such that $\left|\chi(L_{i'}, X)\right| = \left|\sum_j (-1)^j\chi(L_{i'}, A^j)\right| < \infty$ for some $A^j \in \mathcal A$. Then $\mathcal T'$ is a full saturated triangulated subcategory containing all convolutions $\mathrm{Tot}(A^{\bullet})$ of each complex $A^{\bullet}$ over $\mathcal A$.
\end{prop}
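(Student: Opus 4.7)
The plan is to prove the proposition in two stages: first show that $\mathcal T'$ is closed under the triangulated operations (shift and cone), so that together with the saturation hypothesis it forms a full saturated triangulated subcategory of $\mathcal T$; then show, using Lemma \ref{tot}, that every convolution $T \in \mathrm{Tot}(A^{\bullet})$ of a bounded complex $A^{\bullet}$ over $\mathcal A$ satisfies the defining finiteness condition and therefore lies in $\mathcal T'$.

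For the first stage, closure under shifts is immediate: if $X$ is witnessed by a series with $\chi(L_{i'}, X) = \sum_j (-1)^j \chi(L_{i'}, A^j)$ for some $A^j \in \mathcal A$, then $\chi(L_{i'}, X[1]) = -\chi(L_{i'}, X)$ has the same form after re-indexing parity. For closure under cones, given a distinguished triangle $X \to Y \to Z \to X[1]$ with $X, Y \in \mathcal T'$, I would choose $i'$ small enough that the witness families $\{A_X^j\}, \{A_Y^j\} \subset \mathcal A$ for $X$ and $Y$ simultaneously apply to $L_{i'}$. The long exact sequence from $\mathrm{Hom}(L_{i'}, -)$, combined with condition (ii) of Definition \ref{wa}, then gives $\chi(L_{i'}, Z) = \chi(L_{i'}, Y) - \chi(L_{i'}, X)$, which is again a finite alternating sum of Euler characteristics of objects of $\mathcal A$ obtained by concatenating the two witness families with a parity shift applied to the $X$-terms. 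Combined with saturation (closure under isomorphisms and direct summands), this makes $\mathcal T'$ a full saturated triangulated subcategory of $\mathcal T$.

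For the second stage, let $A^{\bullet}$ have length $n+1$ and let $T \in \mathrm{Tot}(A^{\bullet})$. I would pick $L_{i'}$ in the weakly ample sequence with $i' < \min_j i_0(A^j)$, so that condition (ii) of Definition \ref{wa} supplies exactly the vanishing hypothesis needed by Lemma \ref{tot}. Applying that lemma gives
$$\chi(L_{i'}, T) = (-1)^n \sum_{i=0}^n (-1)^i \chi(L_{i'}, A^i),$$
a finite alternating sum of Euler characteristics of objects in $\mathcal A$. Hence $T$ meets the defining condition of $\mathcal T'$, establishing the inclusion $\mathrm{Tot}(A^{\bullet}) \subset \mathcal T'$. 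The main obstacle will be the bookkeeping in the cone step: one has to align the parameters $i_0(\cdot)$ and the vanishing ranges for $X$ and $Y$ simultaneously, and verify that the concatenated witness family can be realized inside $\mathcal A$ itself rather than its extension closure. This is where the assumption that $\mathcal A$ is extension-closed in $\mathcal T$ plays a silent but essential role.
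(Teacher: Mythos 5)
Your argument is correct and follows essentially the same route as the paper: both verify the two-out-of-three property for distinguished triangles via the long exact sequence of $\mathrm{Hom}(L_{i'},-)$ (after choosing $i'$ below both thresholds) and then merge the two witness families into a single alternating sum using the additivity of $\mathcal A$ --- the paper does this by replacing $\{A^j\},\{B^j\}$ with $\{A^j\oplus B^j\}$, which is the clean way to realize your ``concatenation with a parity shift,'' and it is the closure of $\mathcal A$ under finite direct sums (not extension-closure per se) that is doing the work there. The only real difference is that you make the containment $\mathrm{Tot}(A^{\bullet})\subset\mathcal T'$ explicit via Lemma~\ref{tot}, whereas the paper leaves it implicit as a consequence of triangulated closure together with the trivial observation that every object of $\mathcal A$ already lies in $\mathcal T'$; your version is the more complete of the two.
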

\begin{proof}
$0 \in \mathcal T'$, and $X \in \mathcal T'$ if and only if $X[1] \in \mathcal T'$. It is enough to check is that given a distinguished triangle $X \rightarrow Y \rightarrow Z \rightarrow X[1]$ and $X, Z \in \mathcal T'$, then $Y \in \mathcal T'$. Indeed we can choose $i' < \min\{i_0(X), i_0(Z)\}$ and the long exact sequence induced by the cohomological functor $\mathrm{Hom}(L_{i'}, -)$ implies $\chi(L_{i'}, Y) = \chi(L_{i'}, X) + \chi(L_{i'}, Z) = \sum_j (-1)^j\chi(L_{i'}, A^j) + \sum_j (-1)^j\chi(L_{i'}, B^j) = \sum_j (-1)^j\chi(L_{i'}, A^j \oplus B^j)$. Since $\mathcal A$ is extension closed full additive subcategory of $\mathcal T$, $A^j \oplus B^j \in \mathcal A$ for all $j$. Hence $Y$ is belong to $\mathcal T'$. Then the restriction of the shift functor $[1]$ and the collection of distinguished triangles $X \rightarrow Y \rightarrow Z \rightarrow X[1]$ with $X, Y, Z \in \mathcal T'$ make $\mathcal T'$ a full saturated triangulated subcategory of $\mathcal T$.
\end{proof}

\subsection{Chain of supported weakly ample sequence} 

\begin{definition}
We say that a morphism $f \in \mathrm{Hom}(E, F)$ for some $E, F\in \mathcal A$ fulfills the \textit{support} condition if the cone $\mathrm C(f):= \mathrm C\left(E \xrightarrow{f} F\right) \simeq \mathrm C(Q_f \rightarrow K_f)[-1]$ for some $Q_f, K_f \in \mathcal T$ such that there exists $k$-linear additive categories $\mathcal A^L$ and $\mathcal A^R$ with weakly ample sequences $\left(\{ L_i^L \}, L_0^L\right)$ and $(\{L_i^R\}, L_0^R)$, respectively, and for all objects $A$ in $\mathcal A$, $j \in \mathbb Z$, the following condition are satisfied: 
\begin{enumerate}
\item $\mathrm{Hom}_{\mathcal T}\left(K_f, A[j]\right) \simeq \mathrm{Hom}_{\mathcal T^L}\left(L_{i(K_f)}^L, T^L(A)[n(K_f)+j]\right)$  for some $T^L(A) \in \mathrm{Tot}(A^{L, \bullet})$ over $\mathcal A^L$.
\item $\mathrm{Hom}_{\mathcal T}\left(Q_f, A[j]\right) \simeq \mathrm{Hom}_{\mathcal T^R}\left(L_{i(Q_f)}^R, T^R(A)[n(Q_f)+j]\right)$ for some $T^R(A) \in \mathrm{Tot}(A^{R, \bullet})$ over $\mathcal A^R$.
\end{enumerate}
Here, $n(K_f), n(Q_f) \in \mathbb Z$, $d_0(\mathcal A^{L/R}) < d_0(\mathcal A)$ and $\mathcal T$, $\mathcal T^L$, and $\mathcal T^R$ denote ambient triangulated categories of $\mathcal A$, $\mathcal A^L$, and $\mathcal A^R$, respectively.
\end{definition}

\begin{theorem}\label{sp}
Let $(\{ L_i \}, L_0)$ be a weakly ample sequence with a structure object $L_0$ in a $k$-linear additive category $\mathcal A$ of dimension $d_0(\mathcal A) < \infty$. $(\{ L_i \}, L_0)$ is called a supported weakly ample sequence in $\mathcal A$ if there exists nontrivial morphisms $f_i : L_i \rightarrow L_{i+1}$ for all $i < 0$ fulfilling the support condition such that $\left(\{ L_i^L \}, L_0^L\right)$ and $(\{L_i^R\}, L_0^R)$ are supported weakly ample sequences of $\mathcal A^L$ and $\mathcal A^R$, respectively. Then for any $A$ in $\mathcal A$ there is an numerical polynomial $P_t(A) \in \mathbb Q[t]$ of degree $r \leq d_0(\mathcal A)$ such that $\chi_{\mathcal A}(L_{i}, A) = P_{-i}(A)$ for all $i < 0$.
\end{theorem}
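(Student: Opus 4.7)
My approach will be strong induction on the dimension $d_0(\mathcal A)$. The base case $d_0(\mathcal A) = -1$ is trivial: by definition $\mathcal A$ is trivial, the weakly ample sequence is $0$, and $\chi_{\mathcal A}(L_i, A) = 0$ is the zero polynomial. For the inductive step, the central device will be the distinguished triangles produced by the supporting morphisms $f_i : L_i \to L_{i+1}$, together with the decomposition $\mathrm C(f_i) \simeq \mathrm C(Q_{f_i} \to K_{f_i})[-1]$ furnished by the support condition.

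First I will apply the cohomological functors $\mathrm{Hom}(-, A[*])$ to both the triangle $L_i \xrightarrow{f_i} L_{i+1} \to \mathrm C(f_i) \to L_i[1]$ and the triangle defining $\mathrm C(f_i)$, then pass to alternating sums of dimensions. The weakly ample condition (ii) and the structure object condition ensure these sums are finite, yielding the discrete difference identity
$$
\chi_{\mathcal A}(L_{i+1}, A) - \chi_{\mathcal A}(L_i, A) \;=\; \chi_{\mathcal T}(Q_{f_i}, A) - \chi_{\mathcal T}(K_{f_i}, A).
$$
Next I will invoke the support condition on each term of the right-hand side, rewriting $\chi_{\mathcal T}(K_{f_i}, A)$ and $\chi_{\mathcal T}(Q_{f_i}, A)$ as signed Euler characteristics over the ambient categories $\mathcal T^L$ and $\mathcal T^R$ evaluated on convolutions $T^{L/R}(A)$. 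Applying Lemma~\ref{tot} expands these as alternating sums of quantities $\chi_{\mathcal A^{L/R}}\!\big(L^{L/R}_{i(\cdot)}, A^{L/R, k}\big)$, one for each term of a bounded complex $A^{L/R, \bullet}$ over $\mathcal A^{L/R}$. Since $(\{L_i^{L/R}\}, L_0^{L/R})$ is a supported weakly ample sequence of $\mathcal A^{L/R}$ with strictly smaller dimension $d_0(\mathcal A^{L/R}) < d_0(\mathcal A)$, the inductive hypothesis identifies each of these with a polynomial of degree $\leq d_0(\mathcal A^{L/R})$ in the appropriate negative index.

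Combining these, the right-hand side of the difference identity will be a polynomial in $-i$ with rational coefficients and degree at most $d_0(\mathcal A) - 1$. A standard discrete-calculus argument, expanding in the binomial basis $\binom{-i}{k}$ via Newton's forward-difference formula and pinning down the integration constant at any fixed base value, then reconstructs $\chi_{\mathcal A}(L_i, A)$ as a numerical polynomial $P_{-i}(A) \in \mathbb Q[t]$ of degree $\leq d_0(\mathcal A)$, valid for all $i$ sufficiently negative; the recursion propagates the equality to every $i<0$.

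The step I expect to be the main obstacle is verifying that the indices $i(K_{f_i})$, $i(Q_{f_i})$ together with the shifts $n(K_{f_i})$, $n(Q_{f_i})$ and the convolution complexes $A^{L/R,\bullet}$ depend on $i$ in a controlled, affine way, so that the inductive hypothesis genuinely delivers a polynomial in $-i$ rather than an uncontrolled family of polynomials indexed by $i$. The support condition as stated leaves room for these integers to fluctuate with $f_i$. I plan to resolve this by treating the support data as a family parameterized by $i$: pulling back the resolution structure along the chain of morphisms $f_i$, or passing to a cofinal subsequence on which compatibility holds. The geometric paradigm — where $\{L_i\}$ is the collection of tensor powers of an ample line bundle, $f_i$ is multiplication by a fixed section, and $K_{f_i}, Q_{f_i}$ live on a divisor of lower dimension — makes this compatibility automatic, and I expect the abstract formulation to be organized so that the same mechanism applies.
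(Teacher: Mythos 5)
Your proposal follows essentially the same route as the paper: induction on $d_0(\mathcal A)$, the difference identity $\chi_{\mathcal A}(L_i,A)-\chi_{\mathcal A}(L_{i+1},A)=\chi(K_{f_i},A)-\chi(Q_{f_i},A)$ obtained from the two distinguished triangles, conversion of the right-hand side via the support condition and Lemma~\ref{tot} into Euler characteristics over the lower-dimensional categories $\mathcal A^{L/R}$, and the standard discrete-calculus summation to recover a numerical polynomial of degree $\leq d_0(\mathcal A)$. The subtlety you flag about the $i$-dependence of the indices $i(K_{f_i})$, $n(K_{f_i})$ and the convolutions is real and is passed over silently in the paper's proof (which simply writes $\mathrm{Hom}(K_{f_i},A[j])\simeq\mathrm{Hom}(L_i^L,T^L[n+j])$ with $n$ and $T^L$ fixed), but your proposed resolution is consistent with how the paper's later definition of an ample chain pins down $K_{f_i}\simeq L^{(-1)}_{i+1}$ and $Q_{f_i}\simeq L^{(1)}_{i+1}$.
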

\begin{proof}
We use induction on the dimension of $\mathcal A$ with respect to $L_0$. Suppose $d_0(\mathcal A) = 0$, 
by assumption of support conditions we have the following distinguished triangles
$$
L_i \longrightarrow L_{i+1} \longrightarrow \mathrm{C}(f_i) \longrightarrow L_i[1],
$$
$$
\mathrm C(f_i) \longrightarrow Q_{f_i} \longrightarrow K_{f_i} \longrightarrow \mathrm C(f_i)[1],
$$
which implies
$$
\chi_{\mathcal A}(L_i, A) - \chi_{\mathcal A}(L_{i+1}, A) = \chi_{\mathcal A}\left(K_{f_i}, A\right) - \chi_{\mathcal A}\left(Q_{f_i}, A\right).
$$
Since $d_{0}(\mathcal A^L) < d_0(\mathcal A)$ and $d_{0}(\mathcal A^R) < d_0(\mathcal A)$, which implies $\mathrm{Hom}_{\mathcal A}(K_{f_i}, A[j]) = \mathrm{Hom}_{\mathcal A}(Q_{f_i}, A[j]) = 0$, one finds $\chi_{\mathcal A}(K_{f_i}, A) = \chi_{\mathcal A}(Q_{f_i}, A) = 0$, thus $\chi_{\mathcal A}(L_0, A)= \chi_{\mathcal A}(L_1, A)= \cdots$ is a constant polynomial.\\
Assume $d_0(\mathcal A) > 0$. For each $i$ there exists bijections $\mathrm{Hom}_{\mathcal T}(K_{f_i}, A[j]) \simeq \mathrm{Hom}_{\mathcal T^L}(L_i^L, T^L[n+j])$, and thus $\chi_{\mathcal A}(K_{f_i}, A) = (-1)^{n}\chi_{\mathcal A^L}(L_i^L, T^L)$ with $d_0(\mathcal A^L) < d_0(\mathcal A)$ by assumption. Similarly, we have $\chi_{\mathcal A}(Q_{f_i}, A) = (-1)^{m}\chi_{\mathcal A^R}(L_{i+1}^R, T^R)$ with $d_0(\mathcal A^R) < d_0(\mathcal A)$. By induction $\chi_{\mathcal A}(K_{f_i}, A) $ and $\chi_{\mathcal A}(Q_{f_i}, A)$ are both polynomials of degree $< d_0(\mathcal A)$ since $\chi_{\mathcal A^L}(L_i^L, T^L)$ and $\chi_{\mathcal A^R}(L_i^R, T^R)$ do by the Lemma~\ref{tot}, so that $\chi_{\mathcal A}(L_i, A) = P_{-i}(A)$ is a numerical polynomial of degree $r \leq d_0(\mathcal A)$ (see, for example,~\cite{MR0463157, MR1771925}). Indeed, there exists $c_0, c_1, \dots, c_r \in \mathrm{Hom}(K(\mathcal A), \mathbb Z)$ such that $$
P_t(A) = c_{0}(A)\binom{t}{r} + c_1(A)\binom{t}{r-1} + \cdots + c_{r}(A).
$$
Here $K(\mathcal A)$ is Grothendieck group of $\mathcal A$.
\end{proof}

\begin{definition}
Given a supported weakly ample sequence $\{ L_i\}$ in an additive category $\mathcal A$, then we say that $\mathcal A$ has a chain of supported weakly ample sequences if for all $i < 0$,
\begin{enumerate}
\item $\left\{L^{(-1)}_i := L^L_i\right\}$ in $\mathcal A^{(-1)} := \mathcal A^L$, and $\left\{K_{f_i} \simeq L^{(-1)}_{i+1}\right\}$ in $\mathcal A[2]$,
\item $\left\{ L^{(1)}_i :=L^R_i\right\}$ in $\mathcal A^{(1)}:=\mathcal A^R$, and $\left\{Q_{f_i} \simeq L^{(1)}_{i+1} \right\}$ in $\mathcal A$, 
\item the cones $\mathrm C_{\mathcal A}\left(f^{(-1)}_i\right) \simeq \mathrm C_{\mathcal A^{(-1)}}\left(f^{(-1)}_i\right)$ and $\mathrm C_{\mathcal A}\left(f^{(1)}_i\right) \simeq \mathrm C_{\mathcal A^{(1)}}\left(f^{(1)}_i\right)$ as isomorphisms of distinguished triangles in $\mathcal T$,
\end{enumerate}
and $\left\{L^{(1)}_i\right\}$ and $\left\{L^{(-1)}_i\right\}$ form chains of supported weakly ample sequences of $\mathcal A^{(1)}$ and $\mathcal A^{(-1)}$, respectively. $\{L_i\}$ is called an \textit{ample chain} of $\mathcal A$ if $\mathcal A^{(-1)}$ is trivial and $\left\{L^{(1)}_i\right\}$ is an ample chain of $\mathcal A^{(1)}$.
\end{definition}

\begin{lem}\label{ef}
Suppose a $k$-linear additive category $\mathcal A$ has an ample chain of supported weakly ample sequence $(\{L_i\}, L_0)$ with $d_0(\mathcal A) < \infty$. Let $P_{t:= -i}(A) = \chi_{\mathcal A}(L_i, A)$ be the numerical polynomial of degree $r$ for some $A$ in $\mathcal A$. Then there exists additive categories $\left\{\left(\mathcal A^{(l)}, \left\{ L^{(l)}_j \right\}\right)\right\}$ with $d_0\left(\mathcal A^{(l)}\right) < d_0\left(\mathcal A^{(l-1)}\right)$, $l = 1,\dots, r$, such that 
$$
P_t(A) =\sum^r_{d=0}(-1)^d\chi_{\mathcal A^{(0)}}\left(L_0^{(d)}, A\right)\binom{t}{d}.
$$
Here $\mathcal A^{(0)}$ denotes $\mathcal A$, and $\mathcal A^{(l)}$ denotes $\mathcal A^{(1), \dots, (1)}$.
\end{lem}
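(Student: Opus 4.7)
Proof plan: The plan is to combine Theorem~\ref{sp} with the observation that in an ample chain the ``left'' piece $\mathcal A^{(-1)}$ is trivial at every level, so the recursion coming from the distinguished triangles of supported weakly ample sequences reduces to a pure forward difference of polynomials; the binomial coefficients of $P_t(A)$ are then read off as iterated finite differences evaluated at $t = 0$.

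First, set $\mathcal A^{(0)} := \mathcal A$ and $\mathcal A^{(l)} := (\mathcal A^{(l-1)})^{(1)}$, with $\{L^{(l)}_j\}$ the supported weakly ample sequence of $\mathcal A^{(l)}$ obtained by iterating the $(1)$-construction. By the ample chain hypothesis each $\mathcal A^{(l)}$ again carries an ample chain and $d_0(\mathcal A^{(l)}) < d_0(\mathcal A^{(l-1)})$, so the tower strictly decreases in dimension. Define
$$P^{(l)}_t(A) := \chi_{\mathcal A^{(0)}}(L^{(l)}_{-t}, A),$$
where each $L^{(l)}_{-t}$ is viewed as an object of the ambient triangulated category $\mathcal T$ of $\mathcal A$ through the iterated identifications $Q_{f^{(l-1)}_i} \simeq L^{(l)}_{i+1}$ supplied by condition (ii) in the definition of a chain of supported weakly ample sequences. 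Theorem~\ref{sp} applied at each level shows that $P^{(l)}_t(A)$ is a numerical polynomial in $t$ of degree at most $r - l$; in particular $P^{(l)}_t(A) \equiv 0$ for $l > r$.

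The central recursion I intend to establish is $\Delta P^{(l)}_t(A) = -P^{(l+1)}_t(A)$, where $\Delta f(t) := f(t+1) - f(t)$ is the forward difference operator. The argument already appears inside the proof of Theorem~\ref{sp}: the two distinguished triangles in the definition of an ample chain yield
$$\chi_{\mathcal A^{(0)}}(L^{(l)}_i, A) - \chi_{\mathcal A^{(0)}}(L^{(l)}_{i+1}, A) = \chi_{\mathcal A^{(0)}}(K_{f^{(l)}_i}, A) - \chi_{\mathcal A^{(0)}}(Q_{f^{(l)}_i}, A).$$
The ample chain hypothesis for $\{L^{(l)}_i\}$ forces $(\mathcal A^{(l)})^{(-1)}$ to be trivial, so $K_{f^{(l)}_i} \simeq 0$ by condition (i), while condition (ii) identifies $Q_{f^{(l)}_i} \simeq L^{(l+1)}_{i+1}$. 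Substituting $i = -t-1$ yields the recursion. Iterating gives $\Delta^d P^{(0)}_t(A) = (-1)^d P^{(d)}_t(A)$, and by the uniqueness of the binomial expansion of a polynomial of degree $r$ the coefficients are
$$c_d = (\Delta^d P^{(0)})(0) = (-1)^d P^{(d)}_0(A) = (-1)^d \chi_{\mathcal A^{(0)}}(L^{(d)}_0, A).$$
Summed against $\binom{t}{d}$ this is exactly the identity of the lemma.

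The main subtlety I expect is to justify carefully that each $L^{(l)}_{-t}$, originally defined in $\mathcal A^{(l)} \subset \mathcal T^{(l)}$, represents a bona fide object of $\mathcal T$ whose Euler characteristic against $A \in \mathcal A$ is well-defined and finite. This uses the support condition together with the identification of cones in condition (iii) of the chain definition, which glues the ambient triangulated categories at consecutive levels compatibly. Once this identification is in place, every step above reduces to a purely combinatorial manipulation of finite differences on a polynomial of known degree.
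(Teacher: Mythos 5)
Your proposal is correct and follows essentially the same route as the paper: both derive the recursion $P^{(l)}_{t+1}-P^{(l)}_t=-P^{(l+1)}_t$ from the two support-condition triangles together with the triviality of $\mathcal A^{(-1)}$ (killing $K_{f_i}$) and the identification $Q_{f_i}\simeq L^{(1)}_{i+1}$, and both must check that the Euler characteristics of $L^{(d)}_i$ agree when computed in $\mathcal T$ versus $\mathcal T^{(1)}$ via condition (iii) of the chain definition (the paper isolates this as its ``claim,'' proved by induction on $d$ using the triangle $L^{(d-1)}_{i-1}\to L^{(d-1)}_i\to L^{(d)}_i$). The only difference is presentational: the paper inducts on the degree $r$ and verifies the identity by showing the discrepancy $f(i)$ satisfies $f(i)-f(i-1)=0$ with $f(0)=0$, whereas you iterate the forward-difference operator and read off $c_d=(\Delta^dP)(0)$ -- two equivalent phrasings of the Newton forward-difference expansion.
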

\begin{proof}
We proceed by induction. If $r=0$ it is trivial. Assume $r > 0$ and by the induction hypothesis we have 
$$
\chi_{\mathcal A^{(0)}}(L_{i-1}, A) - \chi_{\mathcal A^{(0)}}(L_i, A) = - \chi_{\mathcal A^{(0)}}\left(L^{(1)}_i, A\right) = \sum^{r-1}_{d=0}(-1)^{d+1}\chi_{\mathcal A^{(1)}}\left(L_0^{(d+1)}, A\right)\binom{t}{d},
$$
where $L^{(1)}_i \simeq Q_{f_{i-1}}, \chi_{\mathcal A^{(1)}}\left(L_i^{(d+1)}, A\right) := \chi_{\mathcal A^{(1)}}\left(L^{(d+1)}_i,  T^{(1)}[m]\right)$ for some $T^{(1)} \in \mathrm{Tot}(A^{(1) \bullet})$ over $\mathcal A^{(1)}$. We claim that $\chi_{\mathcal A^{(0)}}\left(L_i^{(d)}, A\right) = \chi_{\mathcal A^{(1)}}\left(L_i^{(d)}, A\right)$. If $f(i)$ denotes the difference of $\chi_{\mathcal A}(L_i, A)$ and the term on the right hand side in the lemma, then it turns out that $f(i) - f(i-1) = 0$. Also $f(0)= 0$, thus $f$ vanishes identically.

To prove the claim, $d=1$ the assertion is trivial. Suppose $d > 1$ and consider the distinguished triangle $L_{i-1}^{(d-1)} \rightarrow L_{i}^{(d-1)} \rightarrow L_i^{(d)} \rightarrow L_{i-1}^{(d-1)}[1]$, one can find $\chi_{\mathcal A^{(0)}}\left(L_i^{(d)}, A\right) = \chi_{\mathcal A^{(0)}}\left(L_i^{(d-1)}, A\right) - \chi_{\mathcal A^{(0)}}\left(L_{i-1}^{(d-1)}, A\right) = \chi_{\mathcal A^{(1)}}\left(L_i^{(d-1)},  T^{(1)}[m]\right) - \chi_{\mathcal A^{(1)}}\left(L_{i-1}^{(d-1)}, T^{(1)}[m]\right) = \chi_{\mathcal A^{(1)}}\left(L_i^{(d)},  T^{(1)}[m]\right).$ The middle equality is given by the induction hypothesis and the last one is by the fact that the triangle we use is a distinguished triangle in $\mathcal T^{(n)}$ for all $n = 0, \dots, r$ by the assumption of ample chain.
\end{proof}

\begin{remark}
We may consider another chain of supported weakly ample sequence $(\{L_i\}, L_0)$ with $\{Q_{f_i} = 0\}$. In this case, we can drop the factor $(-1)^d$ in the lemma.
\end{remark}

In many geometric situations, the natural numerical polynomial of degree $n$ induced by some ample sequences in projective scheme of dimension $n$ over $k$ is the well-known Hilbert polynomial. The isomorphisms of Hom's needed in the theorem is given by Grothendieck-Verdier duality (see~\cite{MR0222093, MR1804902}).

\section{Slope polynomial and $\Delta$-stability in additive categories} \label{sec:nsp} 

To define a suitable numerical ordering on objects of an additive category which can be used to give a stability condition on this category, we first introduce the following definition.

\subsection{Slope polynomial and slope sequence} 

Let $\mathcal A$ be an additive category with an exact class $\Delta_{\mathcal A}$ and $K_{\Delta}(\mathcal A)$ be its Grothendieck group associated to $\Delta_{\mathcal A}$  in the way $K_{\Delta}(\mathcal A) = \mathcal F/ \mathcal R$, where $\mathcal F$ is the free abelian group on the object of $\mathcal A$ and $\mathcal R$ is the subgroup generated by classes $[A] - [A'] - [A'']$ for each sequence $A' \rightarrow A \rightarrow A'' \in \Delta$. Here we always assume $[0] = 0$, and if $A \simeq A'$ then $[A] = [A']$. A $\Delta$-\textit{additive function} from $\mathcal A$ to an abelian group $\Gamma$ is a function $f$ from the objects of $\mathcal A$ to $\Gamma$ so that $f(A) = f(A') + f(A'')$ for each sequence $A' \rightarrow A \rightarrow A''$ in $\Delta$. The function $[-]: A \mapsto [A]$ from $\mathcal A$ to $K_{\Delta}(\mathcal A)$ defines a $\Delta$-additive function which has the universal property as following
\begin{center}
\begin{tikzcd}[row sep= huge, column sep=large]
\mathcal A  \arrow{rd}[swap]{f} \arrow{r}{[-]} & K_{\Delta}(\mathcal A) \arrow{d}{f'} \\
& \Gamma
\end{tikzcd}
\end{center}
It means that any $\Delta$-additive function $f$ from $\mathcal A$ to $\Gamma$ induces a unique group homomorphism $f': K_{\Delta}(\mathcal A) \rightarrow \Gamma$ such that $f'([A]) = f(A)$ for each $A \in \mathcal A$. If $\mathcal A$ is an abelian or exact category and $\Delta$ is a class of all short exact sequences in $\mathcal A$, then $K_{\Delta}(\mathcal A)$ is regular Grothendieck group $K(\mathcal A)$ (See the book~\cite{MR3076731}). 

\begin{definition}{\cite{MR2084563}}\label{ps}
A linear independent system $(x_0, x_1, \dots, x_r) : K_{\Delta}(\mathcal A) \rightarrow \mathbb Z$ of $\Delta$-additive functions is called \textit{positive} if for all $A \in \mathcal A$ the conditions below hold:
\begin{align*}
x_0(A) &\geq 0,\\
x_0(A) &= 0 \Rightarrow x_1(A) \geq 0, \\
x_0(A) &= x_1(A) = 0 \Rightarrow x_2(A) \geq 0, \\
\vdots & \\
x_0(A) &= \cdots =x_{r-1}(A) = 0 \Rightarrow x_r(A) > 0.
\end{align*}
If $x_i(A) = 0$ for all $i$ implies $A = 0$, then the positive system is called \textit{exhaustive}. Given a positive system $(x_0, \dots, x_r)$ on $\mathcal A$, one can define the \textit{standard vector slope} of an object $A \in \mathcal A$ with respect to this system is the vector
$$
\phi(A) = \left( \underbrace{1,\dots, 1}_s, \frac{x_{s+1}}{x_s}, \frac{x_{s+2}}{x_s}, \dots, \frac{x_r}{x_s}\right),
$$
where $s = \min_i\{ x_i(A) \neq 0\}$. We also can define the \textit{scalar slope} of $A$ by
$$
\mu(A) = \left( \underbrace{1,\dots, 1}_s, \frac{x_{s+1}}{x_s}, \underbrace{1, \dots, 1}_{r-s-1}\right).
$$
\end{definition}
The slope ordering on $\mathcal A$ is given by the prescription: $A \preceq B \Leftrightarrow \phi(A) \leq \phi(B)$ by the lexicographical order. One can check that this slope ordering actually fulfills the $\Delta$-seesaw property by comparing the slope component, and thus may construct $\Delta$-stability data (Sec.~\ref{sec:stab}). 

In the following we would provide a way to construct a positive system giving a vector slope ordering on an additive category $\mathcal A$. We first make a definition below.
\begin{definition}\label{sp}
A \textit{slope polynomial} $P_t(A) \in \mathbb Q[t]$ on an additive category $\mathcal A$ with a class $\Delta$ of sequences is a $\Delta$-additive polynomial with a finite degree for an indeterminate $t$ such that the non-zero highest coefficient for every $A \neq 0 \in \mathcal A$ is always positive. Precisely,
$$
P_t(A) = \sum_{i=0}^n a_i(A) t^i
$$
with the non-zero highest coefficient $a_{n_h} > 0$ for all $A\in \mathcal A$. Note that $n_h$ and $a_i$ depend on $A$. Then the exhaustive positive system is given by the vector of coefficients $(a_n, a_{n-1},\dots,a_0)$.
\end{definition}

\begin{remark}
Indeed the slope polynomial over $\mathbb Z$ is a characteristic function for an abelian category introduced by A.~N.~Rudakov~\cite{MR1480783}.
\end{remark}

\begin{definition}
Let $\mathcal A$ be an extension-closed full subcategory of a $k$-linear triangulated category $\mathcal T$ with an induced exact class $\Delta$. A sequence $L^s_i \in \mathcal T$, $i\in \mathbb Z$, is called a \textit{slope sequence} with respect to $\mathcal A$ if for all $A \in \mathcal A$, $i' < i'(A) \in \mathbb Z$, we have
\begin{enumerate}
\item $\chi(L^s_{i'}, A) := \sum_j(-1)^j\dim_k\left(\mathrm{Hom}(L^s_{i'}, A[j])\right) \neq 0$,
\item $\chi(L^s_{i'}, A) = P_{t= -i'}(A) =\sum^r_{d=0}c_d(A)\binom{t}{d}$ with $c_d \in \mathbb Z$ for a slope polynomial $P_t$ on $\mathcal A$.
\end{enumerate}
\end{definition}

The traditional and natural ordering tool in additive categories is a vector slope induced by some positive systems. The $\Delta$-stability structure given by a vector slope associated to some slope polynomials is called the \textit{numerical} $\Delta$-stability structure.
\begin{prop}\label{hnf}
Given a slope sequence $\{L^s_i\}$ for an additive category $\mathcal A$ with an exact class $\Delta$ of sequences ordered by the vector slope induced from $P_t$, then $\mathcal A$ is weakly $\Delta$-Artinian. Furthermore, if $\mathcal A$ is a full subcategory of a Noetherian abelian category, then $\mathcal A$ is weakly $\Delta$-Noetherian. Therefore, if $\mathrm{Hom}(A, B) = 0$ for any $\Delta$-semistable objects $A \succ B$, then $\mathcal A$ has the Harder-Narasimhan property, and thus a $\Delta$-stability data.
\end{prop}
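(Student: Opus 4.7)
The plan is to verify both chain conditions on $\mathcal A$ and then invoke Theorem~\ref{tHN}. The slope polynomial $P_t$ supplies two structures that I will use in parallel: the positive system $(a_n,\ldots,a_0)$ coming from its coefficients in the power basis, and the $\mathbb Z$-valued $\Delta$-additive functions $c_0,\ldots,c_r$ arising from the binomial expansion $P_t(A)=\sum_{d}c_d(A)\binom{t}{d}$ via the slope sequence $\{L^s_i\}$. The integrality of the $c_d$ is the key ingredient that converts boundedness into finiteness.

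To prove weakly $\Delta$-Artinian, suppose toward a contradiction that there is an infinite chain $\cdots \xrightarrow{i_3} E_2 \xrightarrow{i_2} E_1 \xrightarrow{i_1} E$ with $E_j \to E_{j-1} \to F_j \in \Delta$ and $E \prec E_1 \prec E_2 \prec \cdots$. Since $E_j \succ E_{j-1}$, the $\Delta$-seesaw property lands us in case (ii), giving $E_j \succ E_{j-1} \succ F_j$, where each $F_j$ is a nonzero object of $\mathcal A$ (morphisms in the exact $\Delta$-class are neither trivial nor isomorphisms). Additivity yields $c_d(E) = c_d(E_j) + \sum_{k=1}^{j} c_d(F_k)$ for all $d$. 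Letting $d_0$ be the largest $d$ with $c_d(E)\neq 0$, positivity of the system forces $c_d(E_j) = c_d(F_k) = 0$ for $d > d_0$ and $c_{d_0}(E_j), c_{d_0}(F_k) \in \mathbb Z_{\geq 0}$ with $c_{d_0}(E_j) + \sum_k c_{d_0}(F_k) = c_{d_0}(E)$. Hence $c_{d_0}(E_j)$ takes only finitely many values and only finitely many $F_k$ contribute at the top; peeling off the next positivity layer, one inductively shows that the slope vectors $\phi(E_j)$ lie in a finite subset of the lex-ordered target, contradicting strict increase.

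For weakly $\Delta$-Noetherian under the abelian hypothesis, let $\mathcal B \supset \mathcal A$ be the ambient Noetherian abelian category: an infinite chain $E \xrightarrow{p_1} E_1 \xrightarrow{p_2} E_2 \to \cdots$ with $F_j \to E_{j-1} \xrightarrow{p_j} E_j \in \Delta$ corresponds in $\mathcal B$ to successive short exact sequences with kernels $F_j$, and the subobjects $\ker(E \twoheadrightarrow E_j) \subset E$ form an ascending chain that must stabilize. Thus eventually $\ker(p_j) = 0$, forcing $F_j = 0$ and contradicting that $p_j$ is not an isomorphism. With both chain conditions established and the vanishing hypothesis $\mathrm{Hom}(A,B)=0$ for $\Delta$-semistable $A\succ B$, Theorem~\ref{tHN} directly yields a unique Harder-Narasimhan sequence for every object of $\mathcal A$, i.e.\ a $\Delta$-stability data. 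The main technical obstacle is the Artinian step: positivity alone guarantees non-negativity only at the first nonzero coefficient level, so integrality of the $c_d$ is indispensable for converting bounded sums into a finite set of integer vectors, and the successive peeling of positivity layers must be paired carefully with the signs forced by the seesaw property to close the argument.
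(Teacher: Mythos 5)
Your overall strategy---deduce both chain conditions from the integrality and positivity of the binomial coefficients $c_d$ of the slope polynomial and then invoke Theorem~\ref{tHN}---is the paper's strategy, and your Noetherian step (the ascending chain of subobjects $\ker(E\twoheadrightarrow E_j)\subset E$ stabilizes, forcing some $F_j=0$ and hence some $p_j$ to be an isomorphism, contrary to the definition of an exact class) is essentially identical to the paper's snake-lemma argument.

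The Artinian step, however, closes with a claim that is false as stated: the slope vectors $\phi(E_j)$ need not lie in a finite set. Positivity controls only the \emph{first} non-vanishing coefficient, so once $c_{d_0}(E_j)$ has stabilized at some $c>0$ (and all later $F_k$ have $c_{d_0}(F_k)=0$), the next coefficient satisfies $c_{d_0-1}(E_j)=c_{d_0-1}(E_{j+1})+c_{d_0-1}(F_{j+1})$ with $c_{d_0-1}(F_{j+1})\geq 0$; this makes $c_{d_0-1}(E_j)$ non-increasing and bounded above, but nothing bounds it below, because $c_{d_0-1}$ of an object whose $c_{d_0}$ is nonzero is an arbitrary integer. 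So the ``peeling'' does not yield finiteness, and your own closing caveat indicates the step was not actually carried through. The repair is that finiteness is not needed: after the degree and the leading coefficient have stabilized, $P_t(E_j)-P_t(E_{j+1})=P_t(F_{j+1})$ is a nonzero polynomial of degree $<d_0$ with positive top coefficient, so $\left(c_{d_0-1}(E_j),\dots,c_0(E_j)\right)$ is lexicographically strictly greater than $\left(c_{d_0-1}(E_{j+1}),\dots,c_0(E_{j+1})\right)$; dividing by the common value $c_{d_0}=c$ gives $\phi(E_j)>\phi(E_{j+1})$, i.e.\ $E_j\succ E_{j+1}$, contradicting the assumed $E_j\prec E_{j+1}$. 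This monotonicity contradiction is exactly how the paper concludes. Note also that the comparison is only valid \emph{after} the leading coefficient stabilizes: raw lexicographic comparison of coefficient vectors does not agree with the normalized slope order (for instance $(2,0)$ exceeds $(1,1)$ lexicographically while the corresponding slopes order the other way), which is why the two stabilization steps cannot be skipped.
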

\begin{proof}
Suppose there exists an infinite chain of objects in $\mathcal A$
$$
\cdots \xrightarrow{i_3} E_{3} \xrightarrow{i_2} E_2 \xrightarrow{i_1} E_1
$$
with $E_1 \prec E_2 \prec E_3 \prec \cdots$ and $E_{j+1} \xrightarrow{i_j} E_j \rightarrow F_j \in \Delta$ for all $j >0$. Since $P_t$ is additive, it leads to $\deg(P_t(E_j)) \geq \deg(P_t(E_{j+1}))$ for $E_{j+1} \xrightarrow{i_j} E_j$. Thus for large enough $j$ we have $\deg(P_t(E_j)) = \deg(P_t(E_{j+1})) = \cdots = d$ and $c_d(E_j) \geq c_d(E_{j+1}) \geq \cdots$. By assumption, $c_d(E_j) \in \mathbb N\setminus \{0\}$, this chain can not decrease infinitely, i.e. 
$$
c_d(E_s) = c_d(E_{s+1}) = \cdots = c
$$
for some large enough $s$. Again by the additivity of $P_t$, we have the following inequality
$$
(c, c_{d-1}(E_s), \dots, c_0(E_s)) > (c, c_{d-1}(E_{s+1}), \dots, c_0(E_{s+1}))
$$
which thus implies $E_s \succ E_{s+1}$, a contradiction.

For the second statement, given any infinite chain of objects of $E = E_1$
$$
E_1 \xrightarrow{p_1} E_2 \xrightarrow{p_2} E_3 \xrightarrow{p_3} \cdots, 
$$
with $F_j \rightarrow E_j \xrightarrow{p_j} E_{j+1} \in \Delta$ for all $j$, and by exactness of $\Delta$ each composite maps $E \xrightarrow{p_j'} E_{j+1}$ with $p_j':= p_{j-1} \circ \cdots \circ p_1$ can fit into a short exact sequence in the ambient abelian category
$$
0\rightarrow L_j \rightarrow E \xrightarrow{p_j'} E_j \rightarrow 0
$$
which leads to the following commutative diagram of exact sequences
\begin{center}
\begin{tikzcd}[row sep=large, column sep=large]
0\arrow{r}&L_{j-1}\arrow{r} \arrow[d, "f_j"]&E \arrow[d, "\mathrm{Id}"] \arrow[r, "p'_{j-1}"] &E_j \arrow[d, "p_j"] \arrow[r]& 0 \\
0\arrow{r} &L_{j} \arrow{r}&E \arrow[r, "p_j'"] &E_{j+1} \arrow[r]& 0
\end{tikzcd}
\end{center}
where $L_{j-1} := \ker(p'_{j-1})$ and $f_j$ is an inclusion by the snake lemma for all $j \geq 0$. Then we have a chain of subobjects of $E$
$$
0 \neq L_1 \subset L_2 \subset \cdots \subset E
$$
and this chain must terminate since $\mathcal A$ is Noetherian. Thus $\mathcal A$ is weakly $\Delta$-Noetherian, and each object in $\mathcal A$ has a Harder-Narasimhan sequence.
\end{proof}

\begin{cor}
Each extension closed full subcategory $\mathcal A$ of the category of coherent sheaves on a projective scheme over a field $k$ has a Harder-Narasimhan filtration unique up to isomorphism. 
\end{cor}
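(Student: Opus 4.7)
The plan is to reduce the statement to Proposition \ref{hnf} by exhibiting a natural slope sequence inherited from the projective embedding, and then to verify the three hypotheses of that proposition in the context of coherent sheaves.

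First, I would fix a very ample invertible sheaf $\mathcal{O}_X(1)$ on the projective scheme $X$ over $k$ and set $L^s_i := \mathcal{O}_X(i) \in \mathrm{D^b}(X)$. For any coherent sheaf $F$, Serre's theorem together with Serre duality (applied to $\mathcal{O}_X(-i)$ for $i \ll 0$) yields $\chi(L^s_i, F) = \chi(F(-i)) = P_F(-i)$, where $P_F$ is the Hilbert polynomial of $F$. This is a numerical polynomial of degree $\dim \mathrm{supp}(F)$ with positive leading coefficient, and it is $\Delta$-additive (additive on short exact sequences inherited from $\mathrm{Coh}(X)$). Hence $\{L^s_i\}$ is a slope sequence for any extension-closed full subcategory $\mathcal{A} \subset \mathrm{Coh}(X)$, with the exact class $\Delta_{\mathcal{A}}$ consisting of those short exact sequences of $\mathrm{Coh}(X)$ whose three terms all lie in $\mathcal{A}$.

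Next, I would check the two chain conditions. The weak $\Delta$-Artinian property is automatic from Proposition \ref{hnf}, since the Hilbert polynomial gives a descending control through the lexicographic slope ordering exactly as in the first half of that proof. The weak $\Delta$-Noetherian property follows from the second half of Proposition \ref{hnf}: $\mathrm{Coh}(X)$ is Noetherian because $X$ is projective over a field, and the snake lemma argument there applies verbatim to $\mathcal{A}$ as a full subcategory.

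The last hypothesis to verify is the vanishing $\mathrm{Hom}_{\mathcal{A}}(A,B) = 0$ for $\Delta$-semistable $A \succ B$. Given a nonzero $f : A \to B$ in $\mathcal{A}$, I would form its image $I = \mathrm{im}(f) \subset B$ inside the ambient abelian category $\mathrm{Coh}(X)$. Comparing Hilbert polynomials and applying the slope seesaw in $\mathrm{Coh}(X)$: the quotient $A \twoheadrightarrow I$ forces $\phi(I) \succeq \phi(A)$ by semistability of $A$, while $I \hookrightarrow B$ forces $\phi(I) \preceq \phi(B)$ by semistability of $B$, giving $\phi(A) \preceq \phi(B)$ and contradicting $A \succ B$. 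Once this vanishing is in place, Proposition \ref{hnf} delivers the Harder--Narasimhan sequence of each $E \in \mathcal{A}$, unique up to isomorphism.

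The main obstacle will be the last step: because $\mathcal{A}$ is only assumed extension-closed (and not closed under subobjects or quotients), the image $I$ need not lie in $\mathcal{A}$, so the notions of ``$\Delta$-semistable in $\mathcal{A}$'' and ``slope-semistable in $\mathrm{Coh}(X)$'' must be reconciled. I would handle this by observing that the Hilbert polynomial and the associated vector slope are defined on all of $\mathrm{Coh}(X)$, so the seesaw computation above is intrinsic to the ambient category and the contradiction $\phi(A) \preceq \phi(I) \preceq \phi(B)$ occurs at the level of numerical invariants, independent of whether $I \in \mathcal{A}$. Equivalently, one may first run the HN construction of Proposition \ref{hnf} in $\mathrm{Coh}(X)$, then use the extension-closedness of $\mathcal{A}$ together with the uniqueness clause to conclude that the semistable factors of an object of $\mathcal{A}$ remain in $\mathcal{A}$.
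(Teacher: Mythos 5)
Your overall route is the same as the paper's: take $\Delta_{\mathcal A}$ to be the short exact sequences of $\mathrm{Coh}(X)$ with all three terms in $\mathcal A$, use the Hilbert polynomial $\chi(\mathcal O_X(-i),-)$ as the slope polynomial, get weak $\Delta$-Artinian from the slope argument and weak $\Delta$-Noetherian from Noetherianity of $\mathrm{Coh}(X)$, and invoke Proposition~\ref{hnf}. You are in fact more careful than the paper, whose proof of this corollary is a few lines and never mentions the hypothesis $\mathrm{Hom}(A,B)=0$ for $\Delta$-semistable $A\succ B$ at all.

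That extra care, however, exposes a gap that your two proposed fixes do not close. The seesaw computation through the image $I=\mathrm{im}(f)$ needs $A$ to be slope-semistable \emph{in $\mathrm{Coh}(X)$} (so that the quotient $A\twoheadrightarrow I$ gives $\phi(I)\succeq\phi(A)$) and likewise for $B$; but the hypothesis of Proposition~\ref{hnf} only concerns objects that are $\Delta_{\mathcal A}$-semistable, i.e.\ semistable with respect to sequences all of whose terms lie in $\mathcal A$. These notions genuinely differ: an object can be $\Delta_{\mathcal A}$-semistable simply because its destabilizing subsheaves in $\mathrm{Coh}(X)$ do not belong to $\mathcal A$. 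Concretely, on $\mathbf P^1$ let $\mathcal A$ be the extension closure of $A=\mathcal O(2)\oplus\mathcal O(-1)$ and $B=\mathcal O$; one checks that no proper subobject of $A$ lying in $\mathcal A$ admits a quotient in $\mathcal A$, so both $A$ and $B$ are $\Delta_{\mathcal A}$-semistable, yet $\phi(A)\succ\phi(B)$ while $\mathrm{Hom}(A,B)=H^0(\mathcal O(1))^{\oplus 1}\neq 0$. So the Hom-vanishing hypothesis can fail for a merely extension-closed $\mathcal A$, and your second fix fails for the same reason: extension-closedness lets you build up, not break down, so the Harder--Narasimhan factors computed in $\mathrm{Coh}(X)$ need not lie in $\mathcal A$. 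To make the argument work one either needs $\mathcal A$ closed under subobjects and quotients (a Serre subcategory), or one must restrict the conclusion to existence and uniqueness of the HN filtration taken in $\mathrm{Coh}(X)$ rather than internally to $\mathcal A$. The paper's own proof silently elides this same point, so your attempt is not worse than the source --- but as written the final step does not go through.
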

\begin{proof}
Let $\Delta$ be the collection of all short exact sequence lying over $\mathcal A$ and the vector slope induced by Hilbert polynomial. Hence $\Delta$ is an exact class by Snake lemma and universality, and the vector slope is a $\Delta$-stability structure. Since the category of coherent sheaves is Noetherian, $\mathcal A$ is weakly $\Delta$-Noetherian. Thus the property~\ref{hnf} implies that $\mathcal A$ has the Harder-Narasimhan property.
\end{proof}

If a $\Delta$-stability data is induced by any slope polynomial of degree $n$ with the vector slope ordering on $\mathcal A$, it is called a $\Delta$-stability data of degree $n$ on $\mathcal A$. Indeed, once one can find a $\Delta$-stability data associated to a slope polynomial on $\mathcal A$, there exists a family of polynomials giving the same $\Delta$-stability data by the following proposition.
\begin{prop}\label{deform}
Suppose there is a $\Delta$-stability data $\left(\Delta, \Phi, \{ \mathcal P_{\phi}\} \right)$ associated to a slope polynomial on an additive category $\mathcal A$. Then there exists an universal family of slope polynomials inducing the same $\Delta$-stability data on $\mathcal A$.
\end{prop}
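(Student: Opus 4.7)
The strategy is to reduce the existence of the universal family to a classification of slope polynomials inducing a fixed vector slope ordering, and then to construct an explicit parameter space realizing this classification.

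First, I reduce to the ordering. The $\Delta$-stability data $(\Delta, \Phi, \{\mathcal P_\phi\})$ is determined by $P_t$ only through the induced slope ordering $\preceq$: both the semistable subcategories $\mathcal P_\phi$ and the partial order on $\Phi$ are recovered entirely from $\preceq$ by the $\Delta$-seesaw property. Consequently it suffices to exhibit a universal family of slope polynomials all inducing the same vector slope ordering as $P_t$.

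Second, I construct the family. Write $P_t(A) = \sum_{i=0}^n a_i(A)\,t^i$, so that each coefficient $a_i \colon \mathcal A \to \mathbb Q$ is a $\Delta$-additive function. Let $\mathcal U$ denote the set of invertible lower triangular $(n+1)\times(n+1)$ rational matrices $M=(m_{ji})$ with positive diagonal entries, and for $M \in \mathcal U$ set
$$
P_t^M(A) = \sum_{i=0}^n \Bigl(\sum_{j \geq i} m_{ji}\,a_j(A)\Bigr) t^i.
$$
Each $P_t^M$ is $\Delta$-additive, being a $\mathbb Q$-linear combination of the $a_j$, and its highest non-zero coefficient on any $A \ne 0$ equals $m_{n_h n_h}\,a_{n_h}(A) > 0$, so $P_t^M$ is a slope polynomial in the sense of Definition~\ref{sp}.

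Third, I pass to the ordering-preserving sub-family. Let $\mathcal U_0 \subset \mathcal U$ consist of those $M$ for which the induced vector slope ordering $\preceq^M$ agrees with $\preceq$. The identity matrix lies in $\mathcal U_0$, and since the lexicographic comparison on coefficient vectors varies continuously in $M$, the set $\mathcal U_0$ contains an open neighborhood of the identity. The universal family is then $\{P_t^M\}_{M \in \mathcal U_0}$. To verify universality, one argues that any slope polynomial $Q_t$ inducing $\preceq$ has coefficient functions lying in the $\mathbb Q$-span of $\{a_0,\ldots,a_n\}$ modulo $\Delta$-additive functions vanishing on the classes relevant to the comparison, so that $Q_t$ corresponds to a unique $M \in \mathcal U_0$ up to this kernel.

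The main obstacle is the precise delineation of $\mathcal U_0$. Pairs of objects with the same leading degree require the lex comparison of lower-order coefficients to be preserved by the lower-triangular perturbation, whereas pairs with different leading degrees require a compatibility between coordinates of the slope vector at different positions of the vector $\phi(A)$. Handling both cases uniformly demands quantitative control on the off-diagonal entries of $M$ relative to the spread of coefficient ratios $a_i(A)/a_{n_h}(A)$ arising across objects of $\mathcal A$; this is where the technical work of the proof lives.
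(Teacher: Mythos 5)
Your reduction to the induced vector-slope ordering is sound and matches the paper's point of view, and the family $P_t^M$ of lower-triangular recombinations is a natural generalization of what the paper does. But the proof has a genuine gap at its central step: you never establish that the ordering-preserving locus $\mathcal U_0$ contains anything beyond the diagonal rescalings, and the justification you offer --- that lexicographic comparison ``varies continuously in $M$,'' so $\mathcal U_0$ contains an open neighborhood of the identity --- fails for two reasons. First, lexicographic order is not continuous: whenever $\phi(A) = \phi(B)$ (which must happen, e.g.\ for any object compared with itself in a nontrivial $\Delta$-sequence, and for distinct objects in the same $\mathcal P_\phi$), an arbitrarily small off-diagonal perturbation can break the tie, which changes which objects are $\Delta$-semistable and hence changes the stability data. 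Second, even restricting to pairs with strict inequality, $\mathcal A$ generally has infinitely many objects and the coefficient ratios $a_i(A)/a_{n_h}(A)$ are unbounded, so the admissible size of the off-diagonal entries shrinks to zero and no uniform neighborhood exists. You acknowledge this yourself in the final paragraph, which means the proof is not actually completed. The closing ``universality'' assertion (that every slope polynomial inducing $\preceq$ has coefficients in the span of the $a_i$) is likewise unargued, though the paper does not prove such a classification either, so that part is extra rather than essential.

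The paper's proof sidesteps the uniformity problem entirely by choosing a perturbation with a special structure rather than a generic small one: it sets $P^Q_t(E) = P_t(E) + a^r(E)\cdot Q_t$, where $Q_t$ has \emph{constant} integer coefficients of low degree and $a^r(E)=\sum_{i>r} c_i a_i(E)$ is a fixed $\Delta$-additive combination of the \emph{higher} coefficients. The point is that the correction added to each lower entry of the slope vector is then a function of the higher entries alone; so if two objects agree in the higher entries the corrections cancel in the lexicographic comparison, and if they disagree the comparison is already decided before the corrected entries are reached. In your language this corresponds to a very particular unipotent choice of the off-diagonal entries of $M$ (constant multiples of a fixed linear form in the top coefficients), not to an open set of matrices; with arbitrary off-diagonal entries the cancellation does not occur. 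To repair your argument you would either have to restrict $\mathcal U_0$ to exactly this kind of structured unipotent perturbation, or supply the quantitative uniform bound over all objects of $\mathcal A$ that you currently defer.
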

\begin{proof}
Let $P_t$ be the slope polynomial of degree $=n$ inducing this $\Delta$-stability data on $\mathcal A$, thus $P_t(E)=\sum^r_{i=0}a_i(E) t^i$ with $a_r(E) \neq 0$ for some $E \in \mathcal A$. Given any polynomial $Q_t$ of degree $=r < n$ with constant coefficients over $\mathbb Z$, then $P_t^Q(E) = P_t(E) + a^r(E)\cdot Q_t$, $a^r(E) = \sum_{i=r+1}^n c_ia_i(A)$ for $c_i \in \mathbb Z$, is also a slope polynomial for any $E$ in $\mathcal A$ with the same $\Delta$-stability data $\left(\Phi^Q, \{ \mathcal P_{\phi^Q}\} \right) = \left(\Phi, \{ \mathcal P_{\phi}\} \right)$. Indeed, the vector slopes induced by $\phi^Q$ and $\phi$ are equivalent, that is, $\phi(A) \leq \phi(B)$ if and only if $\phi^Q(A) \leq \phi^Q(B)$ for all $A, B \in \mathcal A$ by definition, thus $\mathcal P_{\phi^Q} = \mathcal P_{\phi}$. Hence the family $\left\{P^Q_t\right\}$ indexed by all polynomials $Q_t$ of degree $< r$ over $\mathbb Z$ leads to the $\Delta$-stability data $\left(\Delta, \Phi, \{ \mathcal P_{\phi}\} \right)$ on $\mathcal A$.
\end{proof}

Given any sequence $\{L_i\}$ in $\mathcal A$, there may exist a numerical polynomial $P_{t=-i}(A) = \chi(L_i, A)$ for all $A$, but it is unnecessary to be a slope polynomial, or define an exhaustive positive system automatically. For instance, the sequence generated by structure sheafs $\mathcal O_{kH}$ of closed subschemes $Y_k$ associated to ideal sheaf $\mathcal O(-kH)$ in a projective scheme $X$ of finite type over a field $k$ with ample invertible sheaf $\mathcal O(H)$ is not a slope sequence, but $ \chi(\mathcal O_{kH}, A)$ induces a numerical polynomial $P_k(A)$. 
\begin{theorem}\label{asp}
Suppose the numerical polynomial $P_{t:= -i}(A) = \chi_{\mathcal A}(L_i, A)$ for any $A\in \mathcal A$ is induced by an ample chain of supported weakly ample sequence $(\{L_i\}, L_0)$ with $d_0(\mathcal A) < \infty$. If we assume 
\begin{enumerate}
\item $\chi_{\mathcal A}\left(L_i, A\right) \neq 0$ for $A \neq 0 \in \mathcal A$, $i << 0$, 
\item $\deg P_t(L_i) \geq \deg P_t(A)$ for all $A \in \mathcal A$,
\item $\deg P_t\left(L^{(d)}_i\right) = \deg P_t\left(L^{(d+1)}_i\right) + 1$ for all $d = 0, \dots, \deg P_t(L_i)$, 
\end{enumerate}
then the vector of coefficients of the numerical polynomial $P_t(A)$ gives an exhaustive positive system, that is, the leading coefficient of $P_t(A)$ is either positive or negative simultaneously for all $A \in \mathcal A$ with the same degree. Indeed, if $P_t(L_i)$ is a slope polynomial, then $P_t(A)$ is also a slope polynomial for each $A \in \mathcal A$.
\end{theorem}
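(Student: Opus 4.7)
The plan is to combine Lemma~\ref{ef} with induction on $d_0(\mathcal A)$, reducing the theorem to a sign statement about the top coefficient in the binomial expansion of $P_t(A)$. By Lemma~\ref{ef}, in the binomial basis $P_t(A) = \sum_{d=0}^{r(A)} (-1)^d c_d(A) \binom{t}{d}$, where $r(A) := \deg P_t(A)$ and $c_d(A) := \chi_{\mathcal A}(L_0^{(d)}, A)$; converting to the monomial basis, the leading coefficient of $P_t(A)$ in $t$ is $(-1)^{r(A)} c_{r(A)}(A)/r(A)!$. The theorem therefore reduces to the assertion that, for each fixed $r$, the $\Delta$-additive function $c_r$ has constant sign on $S_r := \{A \in \mathcal A : \deg P_t(A) = r\}$, and that this sign is $(-1)^r$ precisely when $P_t(L_i)$ is a slope polynomial.

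For the base case $d_0(\mathcal A) = 0$, Theorem~\ref{sp} forces $r(A) = 0$ for every nonzero $A$, so $P_t(A) = c_0(A) = \chi_{\mathcal A}(L_0, A)$, which is nonzero by assumption (i). If $A_1, A_2 \in \mathcal A$ satisfied $c_0(A_1) = a > 0$ and $c_0(A_2) = -b < 0$, then $\Delta$-additivity would give $c_0(A_1^{\oplus b} \oplus A_2^{\oplus a}) = 0$ while the object itself is nonzero, violating (i). Hence $c_0$ has constant sign on $\mathcal A \setminus \{0\}$, and this sign matches that of $P_t(L_i) = c_0(L_i)$.

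For the inductive step, the key observation is that, since $\{L_i\}$ is an ample chain, $\mathcal A^{(-1)}$ is trivial and thus $K_{f_i} = 0$, yielding the distinguished triangle $L_i \to L_{i+1} \to L^{(1)}_{i+1} \to L_i[1]$ in $\mathcal T$. Applying $\mathrm{Hom}_{\mathcal T}(-, A)$ and using the support condition for $Q_{f_i} \simeq L^{(1)}_{i+1}$ gives
$$
P_t(A) - P_{t+1}(A) = (-1)^{n(Q_{f_i})} \chi_{\mathcal A^{(1)}}\bigl(L^{(1)}_{-t}, T^{(1)}(A)\bigr),
$$
where, by Lemma~\ref{tot}, the right-hand side is an alternating signed sum of the polynomials $P^{(1)}_t(A^{(1),j})$ on the terms of the complex defining $T^{(1)}(A)$ in $\mathcal A^{(1)}$. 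Assumption (iii) forces $\deg P^{(1)}_t(L^{(1)}_i) = n-1$, so $d_0(\mathcal A^{(1)}) < d_0(\mathcal A)$ and the inductive hypothesis applies to $\mathcal A^{(1)}$. Matching the leading terms of both sides, the factor $r$ produced by the forward difference identifies the leading coefficient of $P_t(A)$ (degree $r$) with a signed combination of leading coefficients of $P^{(1)}_t(A^{(1),j})$ (degree $r-1$), whose signs are controlled by the induction in $\mathcal A^{(1)}$. The \emph{indeed} clause then follows by specializing to $A = L_i$: the descent sends $P_t(L_i)$ to $P^{(1)}_t(L^{(1)}_i)$ via the same identity, so positivity of the leading coefficient of $P_t(L_i)$ propagates through the induction and fixes the sign $(-1)^r$ for $c_r(A)$ uniformly on $S_r$.

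The main obstacle will be the sign bookkeeping throughout the descent. Each support-condition isomorphism contributes $(-1)^{n(Q_{f_i})}$, and each application of Lemma~\ref{tot} to a convolution in $\mathcal A^{(1)}$ introduces further alternating signs $(-1)^{N-j}$. Verifying that these signs combine so that only those $A^{(1),j}$ of top degree $r-1$ actually contribute to the leading term (where the inductive hypothesis gives a uniform sign), and that the resulting signed sum does not accidentally cancel or flip sign as $A$ varies over $S_r$, is where the ample-chain axioms together with assumption (iii) must be used most carefully, ensuring that the ampleness of $T^{(1)}(A)$ makes the induction genuinely recursive rather than merely comparative.
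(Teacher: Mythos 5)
Your base case is fine and coincides with the paper's degree-zero argument (the direct-sum cancellation against assumption (i)). The problem is the inductive step. You induct on $d_0(\mathcal A)$ by passing to $\mathcal A^{(1)}$ through the forward difference $P_t(A)-P_{t+1}(A)=\pm\chi_{\mathcal A^{(1)}}\bigl(L^{(1)}_{-t},T^{(1)}(A)\bigr)$, and you want to read off the sign of the top coefficient of $P_t(A)$ from the inductive hypothesis applied to $\mathcal A^{(1)}$. But the inductive hypothesis only controls the sign of the leading coefficient of $P^{(1)}_t(C)$ for single \emph{objects} $C\in\mathcal A^{(1)}$, whereas $T^{(1)}(A)$ is a convolution of a complex $A^{(1),\bullet}$ over $\mathcal A^{(1)}$, so by Lemma~\ref{tot} the right-hand side is an alternating sum $\sum_j(-1)^{N-j}P^{(1)}_t\bigl(A^{(1),j}\bigr)$. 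An alternating sum of polynomials whose leading coefficients all have the prescribed sign can have a top coefficient of either sign (or vanish), and nothing in the ample-chain axioms or assumption (iii) pins down which terms dominate or prevents cancellation as $A$ varies. You correctly identify this as ``the main obstacle,'' but you do not close it, and in the stated generality (weak $m$-resolutions with $m\geq 1$, so genuinely multi-term complexes) it is not a bookkeeping issue: the descent to $\mathcal A^{(1)}$ simply does not transport sign information back to $\mathcal A$. So as written the proof is incomplete at its central step.

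The paper's proof never leaves $\mathcal A$ and inducts on the \emph{degree of the object} rather than on $d_0(\mathcal A)$. Having fixed the sign in degree $0$ by the direct-sum trick, it uses the reference objects $L_i^{(r-n)}$ supplied by the ample chain: the triangle $L_i^{(r-n)}\to L_{i+1}^{(r-n)}\to L_i^{(r-n+1)}\to L_i^{(r-n)}[1]$ shows that $P_t\bigl(L_i^{(r-n)}\bigr)$ has degree $n$ with a subleading part that is \emph{linear in the chain index} $i$ with fixed nonzero slope $b_0=P_t\bigl(L_0^{(r-n+1)}\bigr)$. If some $B$ of degree $n$ had leading coefficient of the opposite sign, a suitable direct sum $L_i^{(r-n)}\oplus B$ (after clearing denominators) would drop to degree $n-1$ with leading coefficient $\sim i\,b_0$, whose sign for $i\ll 0$ contradicts the sign already established in degree $n-1$. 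This stays entirely within the additive structure of $\mathcal A$, where $P_t$ is honestly additive on direct sums, and is the mechanism you would need to substitute for your descent step.
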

\begin{proof}
$\chi_{\mathcal A}(L_i, A) \neq 0$ for $i << 0$ by the definition of weak ampleness, thus $P_t(B) > 0$, or $<0$ for all $B$ with $\deg P_t(B) = 0$. Indeed, if $P_t(B_1) > 0$ and $P_t(B_2) < 0$ for some $B_i$ with $\deg P_t(B_i) = 0, i = 1,2$, then one can find suitable integers $a_1$ and $a_2$ such that $P_t(a_1B_1 \oplus a_2B_2) = 0$, a contradiction.\\
Assume $\deg P_t(L_i) = r$ and consider such a triangle
$$
L_i^{(r-1)} \longrightarrow L_{i+1}^{(r-1)} \longrightarrow L_i^{(r)} \longrightarrow L_i^{(r-1)}[1],
$$
which is a distinguished triangles in $\mathcal T$ by assumption, and thus $\deg P_t\left(L_i^{(r)}\right) = 0$ and $\deg P_t\left(L_i^{(r-1)}\right) = \deg P_t\left(L_{i+1}^{(r-1)}\right) = 1$. It leads to the polynomial $P_t\left(L_i^{(r-1)}\right)$ is of the form $a_1t + a_0 + i b_0$ where $P_t\left(L_0^{(r-1)}\right) = a_1t + a_0$ and $0 \neq b_0 = P_t\left(L_0^{(r)}\right) = P_t\left(L_{-1}^{(r)}\right) = \cdots$. If there exists an object $B$ with $\deg P_t(B) = 1$ such that the leading coefficient of $P_t(B)$ is $-a_1$, then for $i <<0$, one can see that $P_t\left(L_i^{(r-1)}\oplus B\right) = ib_0 + O(i^0)$ implying $L_i^{(r-1)}\oplus B$ is an object with $\deg P_t\left(L_i^{(r-1)}\oplus B\right) = 0$, but the plus-minus sign of $P_t\left(L_i^{(r-1)}\oplus B\right)$ is different from the one of $P_t\left(L_i^{(r)}\right)$, a contradiction.\\
By the same argument it turns out that the sign of the leading coefficient of $P_t(A)$ for all $A \in \mathcal A$ with the same degree $n$ is determined by the one of $P_t\left(L_0^{(r-n)}\right)$. Hence, the vector of coefficients of $P_t(A)$ can be used to define a vector slope on $\mathcal A$. Moreover, if the numerical polynomial $P_t(A)$ is given by an ample chain $\{L_i\}$ of $\mathcal A$ such that $P_t(L_i)$ is a slope polynomial, then so do $P_t(A)$ for any $A \in \mathcal A$. 
\end{proof}

\begin{remark}
Suppose $ \mathcal A$ is the heart of a bounded t-structure of a triangulated category with a weakly ample sequence $\{L_i\}$ as the one in Prop.~\ref{sc}. If $\{L_i\}$ forms an ample chain of supported weakly ample sequence then we may have $\deg P_t(L_i) \geq \deg P_t(A)$ for all $A \in \mathcal A$. However,  in general it can not be expected to have the condition of $\deg P_t\left( L^{(n)}_i \right) = \deg P_t\left( L^{(n+1)}_i \right) + 1$. For the derived category $\mathrm{D^b}(X)$ of bounded coherent sheaves of a projective scheme $X$ over $k$, $\mathcal A \subset \mathrm{D^b}(X)$ is the abelian category of coherent sheaves with a ample invertible sheaf $\mathcal O(H)$, the existence of dual and tensor product of ample chain leads to the sufficient conditions in the theorem. 
\end{remark}

Note that if a chain which is not ample induces a numerical polynomial, it can happen that the coefficients of this polynomial do not form a exhaustive positive system even the chain satisfies the condition in theorem~\ref{asp}. However, for a chain $\{L_i\}$ in $\mathcal A$ which is tilted ample, i.e., $\left\{ L_i^{(-1)}\right\}$ in $\mathcal A^{(-1)}$ is an ample chain and $\mathcal A^{(1)}$ is trivial, one may form a slope sequence on $\mathcal A$ by some objects $L_i^s \in \mathcal T $. Indeed, such slope sequences always exist in a normal projective scheme over a perfect field $k$ which can be used to construct $\Delta$-stability of degree $n-1$.
\begin{prop}\label{bound}
Suppose $\{L_i\}$ is a tilted ample chain in $\mathcal A$ such that $\left\{ L_i^{(-1)}\right\}$ is an ample chain in $\mathcal A^{(-1)}$, and for all $A \in \mathcal A$ the induced tilted numerical polynomial $\widetilde P_{t=-i}(A) = -\chi(L_i, A) = \sum^r_{d=0}c_d(A)\binom{t}{d}$, $i << 0$, fulfills boundedness conditions:
\begin{enumerate}
\item If $c_r(A) \neq 0$, then $c_{r-1}(A) \geq c_r(A) \cdot C_1$ for a fixed $C_1 \in \mathbb Z$;
\item If $c_r(A) \neq 0, c_{r-1}(A) = c_r(A) \cdot C_1$, then $c_{r-2}(A) > c_r(A)\cdot C_2$ for a fixed $C_2 \in \mathbb Z$;
\item If $c_r(A) = 0$, then $\widetilde P_t(A)$ is a slope polynomial.
\end{enumerate}
Then there exists a slope sequence $\{L_i^s\}$ corresponding to a slope polynomial $P^s_t(A) = \chi(L_i^s, A)$ of degree $< r$.
\end{prop}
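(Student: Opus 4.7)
The plan is to exhibit $L_i^s$ as an explicit extension inside the ambient triangulated category $\mathcal T$ such that the Euler pairing $\chi(L_i^s, A)$ against any $A\in\mathcal A$ cancels the leading $\binom{-i}{r}$-coefficient of the tilted numerical polynomial $\widetilde P_{-i}(A)$, leaving a polynomial in $t := -i$ of degree at most $r-1$. The boundedness hypotheses (i)--(iii) will then force the result to satisfy the positivity requirement of Definition~\ref{sp}.

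Concretely, given $q = m_1/m_2$ with $m_2 > 0$ and an auxiliary integer $m_0$, I would set
$$
N(i) := m_2\binom{-i}{r} + m_1\binom{-i}{r-1} + m_0\binom{-i}{r-2},
$$
and define $L_i^s$ through a distinguished triangle
$$
L_i^{\oplus m_2}[1] \longrightarrow L_i^s \longrightarrow \bigl(L_0^{(r)}\bigr)^{\oplus N(i)}[k] \xrightarrow{\alpha_i} L_i^{\oplus m_2}[2]
$$
for a suitable integer shift $k$ and a nonzero extension class $\alpha_i$. The existence of $\alpha_i$ for $i \ll 0$ uses the hypothesis that $\{L_i^{(-1)}\}$ is an ample chain in $\mathcal A^{(-1)}$, together with the support condition between $L_0^{(r)}$ and $L_i$, which identify the relevant higher Ext-group with a Hom-group inside the ample-chain structure.

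Taking $\chi(-,A)$ on this triangle and invoking Lemma~\ref{ef} (to expand $\widetilde P_t(A)$ in terms of $\chi(L_0^{(d)}, A)$), and choosing the parity of $k$ so that all signs align, one computes
$$
P^s_t(A) := \chi(L_i^s, A) = m_2\widetilde P_t(A) - c_r(A)\left(m_2\binom{t}{r} + m_1\binom{t}{r-1} + m_0\binom{t}{r-2}\right).
$$
The $m_2\binom{t}{r}$-term inside the correction cancels the $m_2 c_r(A)\binom{t}{r}$ coming from $m_2\widetilde P_t(A)$, so $\deg P_t^s(A) \leq r-1$, and the coefficients of $\binom{t}{r-1}$ and $\binom{t}{r-2}$ are then $m_2 c_{r-1}(A) - m_1 c_r(A)$ and $m_2 c_{r-2}(A) - m_0 c_r(A)$, respectively. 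If $c_r(A) = 0$ then $P^s_t(A) = m_2\widetilde P_t(A)$, which is already a slope polynomial of degree $\leq r-1$ by condition (iii). If $c_r(A) \neq 0$ (assume $c_r(A) > 0$; the other sign is analogous), taking $q = m_1/m_2 \leq C_1$ and invoking condition (i) yields $m_2 c_{r-1}(A) - m_1 c_r(A) \geq m_2(C_1 - q) c_r(A) \geq 0$; in the borderline case $q = C_1$ with $c_{r-1}(A) = C_1 c_r(A)$, condition (ii) together with $m_0 < m_2 C_2$ gives strict positivity $m_2 c_{r-2}(A) - m_0 c_r(A) > 0$ of the next coefficient.

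The hard part will be step one: producing an extension class $\alpha_i$ with the precise cohomological behaviour needed so that $\{L_i^s\}$ is genuinely a slope sequence, in particular $\chi(L_i^s, A) \neq 0$ for $i \ll 0$ and no spurious contributions appear in $\chi(L_i^s, A)$. The triviality of $\mathcal A^{(1)}$ together with the ample-chain property of $\{L_i^{(-1)}\}$ are precisely what guarantee both that the target Ext-group is nonzero in the correct cohomological degree and that the lower-order terms of $\widetilde P_t$ are tracked without extra corrections. Once this is in place, the remaining verification reduces to the bookkeeping with binomial coefficients and the three inequalities (i)--(iii) outlined above.
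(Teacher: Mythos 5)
Your construction is essentially the paper's own proof: the paper sets $n=\binom{t}{r}+C_1\binom{t}{r-1}+C_2\binom{t}{r-2}$ and defines $L_i^s=\mathrm C\left(L_0^{(r),\oplus n}[-r+1]\rightarrow L_i\right)$, which is your triangle with $(m_2,m_1,m_0)=(1,C_1,C_2)$, and then additivity of $\chi$ on distinguished triangles cancels the top coefficient and conditions (i)--(iii) give positivity exactly as you describe. Note that the paper's specific choice of parameters makes $c_{r-1}(A)-C_1c_r(A)\geq 0$ follow from (i) with no case split on the sign of $c_r(A)$, and that your worry about producing a particular extension class $\alpha_i$ is unnecessary here, since $\chi(L_i^s,A)$ depends only on the triangle and not on which morphism is coned.
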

\begin{proof}
Since $\{L_i\}$ is tilted ample, the argument of Lemma~\ref{ef} implies 
$$
\widetilde P_t(A) = -\chi(L_i, A) = \sum^{r}_{d=0}(-1)^d\chi_{\mathcal A^{(0)}}\left(L_0^{(d)}, A\right)\binom{t}{d},
$$
with $c_{d}(A) =  (-1)^{d}\chi_{\mathcal A^{(0)}}\left(L_0^{(d)}, A\right)$ for $d=0, \dots, r$. Let $n = \binom{t}{r} + C_1\binom{t}{r-1} + C_2\binom{t}{r-2} > 0$ as $t >>0$, and we define $L_i^s = \mathrm C\left(L_0^{(r), \oplus n}[-r+1] \rightarrow L_i \right)$ for $i << 0$ and $n > 0$, thus the associated numerical polynomial would be
$$
P_t^s(A) =\left(c_{r-1}(A) - c_r(A)C_1\right)\binom{t}{r-1} + \left(c_{r-2}(A) - c_r(A)C_2\right)\binom{t}{r-2} + \sum^{r-3}_{d=0}c_{d}(A)\binom{t}{d}.
$$
By assumption $c_{r-1}(A) - c_r(A)C_1 \geq 0$ and if equality holds, we have $c_{r-2}(A) - c_r(A)C_2 > 0$. Therefore, $P^s_t(A) = \chi(L_i^s, A)$ is a slope polynomial on $\mathcal A$.
\end{proof}

\begin{remark}
Indeed this construction of the property also holds for any polynomial fulfilling the same boundedness conditions above.
\end{remark}

In algebraic geometry, those boundedness conditions are closely related to boundedness of pure sheaves, moduli spaces of semistable sheaves, Hodge Index theorem, Bogomolov inequality, \dots, of basic importance to many recent developments. In following sections we would like to turn our attention to $\Delta$-stability data on normal projective schemes over a perfect field $k$.

\subsection{$\Delta$-stability on nonsingular curves} \label{zero}

To close this section we  finally consider cases of nonsingular proper curves $X$ over an algebraically closed field $k$, i.e., integral regular proper scheme of Krull dimension $1$ over $k$. Such a curve is necessarily projective, thus by Cor.~\ref{ts} and Lemma~\ref{ef} each ample invertible sheaf $\mathcal O(H)$ forms an ample chain with the $\Delta$-stability of degree $1$ in $\mathrm{D^b}(X) := \mathrm{D^b(Coh}(X))$ induced by the slope polynomial 
$$
P_t(E) = \chi(\mathcal O(-tH), E) = -\chi(\mathcal O_H, E)t + \chi(\mathcal O_X, E)
$$
for $E \in \mathrm{Coh}(X)$ and $\mathcal O_H$ is the structure sheaf of zero subscheme $H$ in $X$. Indeed, this $\Delta$-stability of degree $1$ is equivalent to Mumford-Takemoto ($\mu$-) stability, Gieseker stability and Bridgeland stability on curves.

Assume $X$ is $\mathbf P^1$ with the ample chain $\mathcal O(-t), t \in \mathbb Z$, and thus semistable subcategory $\mathcal P_{\Phi} = $ \{torsion sheaves$[i]$\} $\cup \{\mathcal O(t)[j]\}_{t\in \mathbb Z}$, $i, j \in \mathbb Z$. Let $\widetilde{\mathcal A}$ be the extension-closed full subcategory of $\mathrm{D^b}(X)$ spanned by $\{ \mathcal O(-t)[1], \mathrm{torsion}, \mathcal O(t-1) \}_{t >0}$, that is, for $\widetilde E \in \widetilde{\mathcal A}$ each factor of HN$\left(\widetilde E\right)$ belongs to $\{ \mathcal O(-t)[1], \mathrm{torsion}, \mathcal O(t-1) \}$. Then $\widetilde{\mathcal A}$ is the heart of $\mathrm{D^b}(X)$ given by a bounded t-structure~\cite{MR1327209}. 

Obviously, $P_t(E) = \chi(\mathcal O(-t), E) = -\chi(\mathcal O_p, E)t + \chi(\mathcal O_X, E)$ is not a slope polynomial on $\widetilde{\mathcal A}$, and $\{\mathcal O(-t)[1]\}_{t > 0}$ is a tilted ample chain with the tilted numerical polynomial  $\widetilde P_t(-) = -\chi(\mathcal O(-t)[1], -) = P_t(-)$ of $\widetilde{\mathcal A}$. Note that $\mathcal O(-1)[1]$ is the only stable object in $\widetilde{\mathcal A}$ so that $\chi(\mathcal O_X, \mathcal O(-1)[1]) =0$. Let $\{L_{-t} := \mathcal O(-t)[1] \oplus \mathcal O(-t)[1]\}_{t > 0}$ be a tilted ample chain of $\widetilde{\mathcal A}$, we can construct a slope sequence $L^s_{-t} = \mathrm C(\mathcal O_p^{\oplus 2t + 1} \rightarrow L_{-t})$ by the following short exact sequence in $\mathrm{Coh}(\mathbf P^1)$:
$$
0 \longrightarrow \mathcal O(-t) \oplus \mathcal O(-t) \longrightarrow L^s_{-t} = \mathcal O \oplus \mathcal O(1) \longrightarrow \mathcal O_p^{\oplus t} \oplus \mathcal O_p^{\oplus t+1} \longrightarrow 0,
$$
such that the numerical polynomial $P^s_t(A)$ is a positive integer for all $A \neq 0 \in \widetilde{\mathcal A}$ inducing a $\Delta$-stability of degree $0$, i.e., each object in $\widetilde{\mathcal A}$ is semistable. 

One can easily check that $L_{-t}^s$ is also a weakly ample sequence with $m=0$ on $\widetilde{\mathcal A}$, but not an ample sequence. Indeed, $\widetilde{\mathcal A} = \langle\mathcal O, \mathcal O(-1)[1] \rangle$ and $\mathrm{End}(L_{-t}^s)$ is the algebra of paths of Kronecker quiver $P_2:= "\cdot \rightrightarrows \cdot"$, the functor $R\mathrm{Hom}(L^s_{-t}, - )$ gives a equivalence of abelian categories between $\widetilde{\mathcal A}$ and representation of $P_2$ since $R\mathrm{Hom}(L^s_{-t}, \mathcal O)$ and $R\mathrm{Hom}(L^s_{-t}, \mathcal O(-1)[1])$ correspond to the irreducible representations associated to vertices in $\mathrm{Rep}(P_2)$. Note that $R\mathrm{Hom}(L^s_{-t}, \mathcal O(1))$ corresponds to the projective submodule of $\mathrm{End}(L_{-t}^s)$ associated to the codomain of arrows in $P_2$, and the canonical short exact sequence in $\widetilde{\mathcal A}$
$$
0 \longrightarrow \mathcal O \oplus \mathcal O \longrightarrow \mathcal O(1) \longrightarrow \mathcal O(-1)[1]  \longrightarrow 0
$$
induces the short exact sequence associated to projective module and irreducible representation in $\mathrm{Rep}(P_2)$~\cite{MR992977}. Therefore we already prove the following property.
\begin{prop}
On $\mathrm{D^b}(\mathbf P^1)$ up to $\mathrm{Aut}(\mathrm{D^b}(\mathbf P^1))$, there exist a $\Delta$-stability of degree $1$ given by Hilbert polynomial on $\mathrm{Coh}(\mathbf P^1)$ with $m = 1$ ample chain $\{\mathcal O(-t)\}_{t \in \mathbb Z}$, and a $\Delta$-stability of degree $0$ given by slope polynomial $P^s_t(-)$ on Kronecker heart $\mathrm{Rep}(P_2)$ with $m = 0$ ample chain $\mathcal O \oplus \mathcal O(1)$.
\end{prop}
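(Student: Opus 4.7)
The plan is to verify, for the two candidate data $(\mathrm{Coh}(\mathbf P^1),\{\mathcal O(-t)\})$ and $(\widetilde{\mathcal A}, L^s_{-t})$, the hypotheses of the general machinery of Sections~\ref{sec:was} and~\ref{sec:nsp}, and then to translate the second $\Delta$-stability into Kronecker-representation language. The two assertions are treated in turn; the autoequivalence clause only records that twists by $\mathrm{Pic}(\mathbf P^1)\simeq\mathbb Z$ and shifts $[k]$ act transitively on the candidate hearts and their ample chains, so exhibiting one representative of each orbit is enough.

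For the first assertion, I would begin by checking that $\{\mathcal O(-t)\}_{t\in\mathbb Z}$ is a supported weakly ample sequence with $m=1$ in the sense of Definition~\ref{wa}: Serre vanishing combined with Grothendieck's splitting theorem on $\mathbf P^1$ produces, for each coherent sheaf $A$ and each $t\ll 0$, a two-term resolution by sums of $\mathcal O(s)$ with $s\le -t$, which is exactly the required convolution with $K(A)=0$; the cones of the chain maps $\mathcal O(-t-1)\to\mathcal O(-t)$ are skyscraper sheaves $\mathcal O_H$ on a zero-dimensional subscheme, whose weakly ample structure is trivial. Hence $\{\mathcal O(-t)\}$ is an ample chain and Lemma~\ref{ef} delivers the Hilbert polynomial $P_t(E)=-\chi(\mathcal O_H,E)\,t+\chi(\mathcal O_X,E)$, which by Theorem~\ref{asp} is a slope polynomial on $\mathrm{Coh}(\mathbf P^1)$. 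Finally, Proposition~\ref{hnf} applies since $\mathrm{Coh}(\mathbf P^1)$ is Noetherian and $\mathrm{Hom}(F,G)=0$ for slope-semistable sheaves with $\mu(F)>\mu(G)$; this gives the HN property and the required degree-$1$ $\Delta$-stability.

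For the second assertion, the plan is to exploit the tilted heart $\widetilde{\mathcal A}=\langle\mathcal O(-t)[1],\text{torsion},\mathcal O(t-1)\rangle_{t>0}$ and the tilted ample chain $L_{-t}=\mathcal O(-t)[1]^{\oplus 2}$. On $\widetilde{\mathcal A}$ the tilted polynomial $\widetilde P_t(A)=2P_t(A)$ has nonnegative leading coefficient and satisfies the boundedness hypotheses of Proposition~\ref{bound} with $C_1=C_2=0$, as one verifies by a case analysis on the three classes of semistable generators of $\widetilde{\mathcal A}$; this produces the slope sequence $L^s_{-t}=\mathrm C(\mathcal O_p^{\oplus 2t+1}\to L_{-t})$, which by the displayed short exact sequence in the paper is isomorphic to $\mathcal O\oplus\mathcal O(1)$ independently of $t$. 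The resulting slope polynomial $P^s_t(A)=\chi(\mathcal O\oplus\mathcal O(1),A)$ has degree $0$ and takes strictly positive integer values on every nonzero object of $\widetilde{\mathcal A}$ (checked on the simples and extended by additivity in the exact class). Proposition~\ref{hnf} applied to $P^s_t$ then yields the HN property---trivial here since every object of $\widetilde{\mathcal A}$ is $\Delta$-semistable---and hence a $\Delta$-stability of degree $0$.

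To realise this stability on $\mathrm{Rep}(P_2)$ and to justify the clause ``with $m=0$ ample chain $\mathcal O\oplus\mathcal O(1)$'', I would use that $\mathcal O\oplus\mathcal O(1)$ is a classical tilting object on $\mathbf P^1$ with $\mathrm{End}(\mathcal O\oplus\mathcal O(1))\simeq kP_2$; the functor $R\mathrm{Hom}(\mathcal O\oplus\mathcal O(1),-)$ then restricts to an exact equivalence $\widetilde{\mathcal A}\simeq \mathrm{Rep}(P_2)$ sending the two simples $\mathcal O$ and $\mathcal O(-1)[1]$ of $\widetilde{\mathcal A}$ to the vertex-simples of $P_2$ and the Euler-type sequence $0\to\mathcal O^{\oplus 2}\to\mathcal O(1)\to\mathcal O(-1)[1]\to 0$ to the projective presentation of the simple at the codomain vertex, under which $P^s_t$ becomes the total dimension of a representation. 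The main obstacle will be the explicit verification that $\mathcal O\oplus\mathcal O(1)$ is not merely a slope sequence but a genuine weakly ample sequence with $m=0$ on $\widetilde{\mathcal A}$: one must exhibit, for each $A\in\widetilde{\mathcal A}$, a surjection $(\mathcal O\oplus\mathcal O(1))^{\oplus N}\twoheadrightarrow A$ in $\widetilde{\mathcal A}$, and this is most conveniently handled through the tilting equivalence above, where the analogous statement is the standard fact that the projective indecomposables generate $\mathrm{Rep}(P_2)$.
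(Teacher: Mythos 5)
Your overall architecture coincides with the paper's: the degree-one stability on $\mathrm{Coh}(\mathbf P^1)$ via the ample chain $\{\mathcal O(-t)\}$, Lemma~\ref{ef}, Theorem~\ref{asp} and Proposition~\ref{hnf}; then the tilted heart $\widetilde{\mathcal A}$, the doubled tilted chain $L_{-t}=\mathcal O(-t)[1]^{\oplus 2}$, the cone $L^s_{-t}=\mathrm C(\mathcal O_p^{\oplus 2t+1}\to L_{-t})\simeq\mathcal O\oplus\mathcal O(1)$, positivity of the resulting degree-zero polynomial, and the tilting identification of $\widetilde{\mathcal A}$ with $\mathrm{Rep}(P_2)$. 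The first half and the final Kronecker step are sound.

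There is, however, a concrete error in the way you invoke Proposition~\ref{bound}. The tilted polynomial $\widetilde P_t(A)=2P_t(A)=-2\chi(\mathcal O_p,A)\,t+2\chi(\mathcal O_X,A)$ does \emph{not} have nonnegative leading coefficient on $\widetilde{\mathcal A}$: that coefficient equals $2\bigl(\mathrm{rk}\,H^0(A)-\mathrm{rk}\,H^{-1}(A)\bigr)$, which is $-2$ on each $\mathcal O(-s)[1]$ (this is exactly why the paper stresses that $P_t$ is not a slope polynomial on $\widetilde{\mathcal A}$). More seriously, the constants $C_1=C_2=0$ do not satisfy the hypotheses of Proposition~\ref{bound}: condition~(i) then reads $c_0(A)\geq 0$, which holds only with equality at $A=\mathcal O(-1)[1]$ (where $c_1=-2\neq 0$ and $c_0=2\chi(\mathcal O_X,\mathcal O(-1)[1])=0$), and for a degree-one polynomial there is no $c_{-1}$ available to satisfy condition~(ii); the degree-zero polynomial produced by that choice would be $2\chi(\mathcal O_X,-)$, which vanishes on the nonzero object $\mathcal O(-1)[1]$ and hence is not an exhaustive positive system. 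The correct choice corresponds to $q=m_1/m_2=1/2$, i.e.\ the ``$+1$'' in the exponent $2t+1=2\binom{t}{1}+1\binom{t}{0}$ that you in fact write down; this is precisely why the chain is doubled and why the paper records $\chi(\mathcal O_X,\mathcal O(-1)[1])=0$ beforehand. Since you independently verify that $\chi(\mathcal O\oplus\mathcal O(1),-)$ is strictly positive on the two simples and extend by additivity, the conclusion survives, but the justification via Proposition~\ref{bound} must be repaired as above.
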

However, for a curve $X$ of positive genus such a construction of $\Delta$-stability of degree $0$ does not work because moduli spaces of stable sheaves of rank $> 1$ are not empty. Indeed, those numerical polynomial $P^s_t(-)$ of degree $0$ as above restricted to some moduli spaces of stable sheaves are trivial, thus can not imply $\Delta$-stability on $\mathrm{D^b}(X)$, but $\mathrm{D^b}(X)/\{E^{\bullet}: P^s_t(E^{\bullet}) = 0\}$.

Thus on $\mathrm{D^b}(X)$ most numerical stability conditions are equivalent to each other, that is, have the same $\Delta$-stability data. Conversely, if $\mathrm{D^b}(X)$ is derived equivalent to $\mathrm{D^b}(Y)$ for some nonsingular integral  projective scheme $Y$ over $k$, the induced $\Delta$-stability data would implies an isomorphism $X \simeq Y$.
\begin{lem}{\cite{MR2084563}}\label{cs}
Suppose $E \in \mathrm{Coh}(X)$ on a nonsingular curve $X$ with positive genus can fit into a distinguished triangle $E' \rightarrow E \rightarrow E'' \rightarrow E'[1]$ with $\mathrm{Hom}(E', E''[j]) = 0$ for $j \leq 0$. Then we have $E', E'' \in \mathrm{Coh}(X)$.
\end{lem}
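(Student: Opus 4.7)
The plan is to exploit the fact that $X$ is a smooth curve, so $\mathrm{D^b}(X)$ has global dimension one and every object decomposes as a direct sum of its cohomology sheaves shifted. Writing $E' \simeq \bigoplus_i A_i[-i]$ and $E'' \simeq \bigoplus_j B_j[-j]$ with $A_i := H^i(E')$ and $B_j := H^j(E'')$, the goal reduces to showing $A_i = 0$ for $i \neq 0$ and $B_j = 0$ for $j \neq 0$. First I would translate the hypothesis by expanding
$$
\mathrm{Hom}(E', E''[-l]) = \bigoplus_{i,k}\mathrm{Ext}^{i-k-l}(A_i, B_k) = 0
$$
for every $l \geq 0$; this yields, for all pairs $(i,k)$, the vanishings $\mathrm{Hom}(A_i, B_k) = 0$ whenever $k \leq i$ and $\mathrm{Ext}^1(A_i, B_k) = 0$ whenever $k \leq i-1$.

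Next I would exploit the long exact cohomology sequence of the distinguished triangle, combined with $H^i(E) = 0$ for $i \neq 0$: this forces $A_i \simeq B_{i-1}$ for all $i \notin \{0, 1\}$ and produces the four-term exact sequence $0 \to B_{-1} \to A_0 \to E \to B_0 \to A_1 \to 0$. For $i \notin \{0, 1\}$, the identification $A_i \simeq B_{i-1}$ combined with $\mathrm{Hom}(A_i, B_{i-1}) = 0$ (which applies since $i-1 \leq i$) gives $\mathrm{End}(A_i) = 0$, hence $A_i = 0$ and consequently $B_{i-1} = 0$. Thus only $A_0, A_1, B_{-1}, B_0$ can remain nonzero.

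The two delicate cases are $A_1$ and $B_{-1}$, handled symmetrically. For $A_1$, I would apply $\mathrm{Hom}(A_1, -)$ to the short exact sequence $0 \to K \to B_0 \to A_1 \to 0$ coming from the surjection $B_0 \twoheadrightarrow A_1$; the vanishings $\mathrm{Ext}^1(A_1, B_0) = 0$ and $\mathrm{Ext}^2(A_1, K) = 0$ (global dimension one) together force $\mathrm{Ext}^1(A_1, A_1) = 0$. Dually, applying $\mathrm{Hom}(-, B_{-1})$ to $0 \to B_{-1} \to A_0 \to A_0/B_{-1} \to 0$ and using $\mathrm{Hom}(A_0, B_{-1}) = \mathrm{Ext}^1(A_0, B_{-1}) = 0$ yields $\mathrm{Ext}^1(B_{-1}, B_{-1}) = 0$.

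The conclusion then follows from the key observation---and this is where the positive-genus hypothesis enters decisively---that on a smooth curve $X$ with $g(X) \geq 1$, every nonzero coherent sheaf $\mathcal F$ satisfies $\mathrm{Ext}^1(\mathcal F, \mathcal F) \neq 0$: by Serre duality $\mathrm{Ext}^1(\mathcal F, \mathcal F) \simeq \mathrm{Hom}(\mathcal F, \mathcal F \otimes \omega_X)^{\vee}$, and when $\mathcal F$ is locally free the identity tensored with a nonzero section of $\omega_X$ yields a nontrivial class, while for torsion $\mathcal F$ the local computation $\mathrm{Ext}^1(\mathcal O_p, \mathcal O_p) \neq 0$ together with d\'evissage handles the claim (the mixed case splits on a smooth curve via the canonical torsion/locally free decomposition). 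Applied to $A_1$ and $B_{-1}$, this forces $A_1 = 0$ and $B_{-1} = 0$, so $E' = A_0$ and $E'' = B_0$ both lie in $\mathrm{Coh}(X)$. The main subtlety I anticipate is the bookkeeping in the translation and cohomology-sequence steps---one must enumerate the Ext vanishings carefully and track which case each applies to---together with the need to recognize that positive genus is indispensable, since for $\mathbb P^1$ the canonical sheaf has no global sections and the self-Ext argument collapses (consistent with the Kronecker-quiver heart constructed earlier in the paper).
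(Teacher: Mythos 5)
Your proof is correct. The paper itself gives no argument for this lemma --- it is cited to \cite{MR2084563} without proof --- and your reconstruction is essentially the standard one from that source: decompose $E'$ and $E''$ into shifted cohomology sheaves using that $\mathrm{Coh}(X)$ is hereditary, kill all but $A_0, A_1, B_{-1}, B_0$ via the long exact sequence and the orthogonality $\mathrm{Hom}(A_i,B_k)=0$ for $k\le i$, show $\mathrm{Ext}^1(A_1,A_1)=\mathrm{Ext}^1(B_{-1},B_{-1})=0$ from the remaining vanishings, and conclude from the fact that on a curve of genus $\ge 1$ every nonzero coherent sheaf has nontrivial self-$\mathrm{Ext}^1$ (Serre duality plus a global section of $\omega_X$ for the locally free part, local computation for the torsion part). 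Each of these steps checks out, including the indexing of the $\mathrm{Ext}$ vanishings and your observation that the argument genuinely fails on $\mathbf P^1$.
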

\begin{prop}
Let $X$ be a nonsingular projective curve over $k$ and $Y$ be a nonsingular integral projective scheme over $k$. Then $\mathrm{D^b}(X)$ is derived equivalent to $\mathrm{D^b}(Y)$ if and only if $X$ is isomorphic to $Y$.
\end{prop}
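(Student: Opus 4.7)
The plan is as follows. The ``if'' direction is immediate, so I focus on the converse: given a $k$-linear exact equivalence $F \colon \mathrm{D^b}(X) \to \mathrm{D^b}(Y)$, I must show $X \simeq Y$. The overall strategy is to transport the $\Delta$-stability of degree $1$ on $\mathrm{D^b}(X)$ discussed in Section~\ref{zero} along $F$ to $\mathrm{D^b}(Y)$, then use Lemma~\ref{cs} to promote $F$, after composition with a suitable autoequivalence of $\mathrm{D^b}(Y)$, to an exact equivalence of abelian hearts $\mathrm{Coh}(X) \simeq \mathrm{Coh}(Y)$, from which Gabriel's reconstruction theorem for categories of coherent sheaves on Noetherian schemes yields the scheme-theoretic isomorphism $X \simeq Y$.

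First I would compare Serre functors. Both derived categories admit Serre functors $S_X = (-)\otimes\omega_X[1]$ and $S_Y = (-)\otimes\omega_Y[\dim Y]$, and $F$ must intertwine them. Evaluating on a skyscraper $k(x)$, which is a simple $\Delta$-semistable object on $X$ satisfying $\mathrm{End}(k(x))=k$ and $S_X(k(x))\simeq k(x)[1]$, and tracking homological shifts forces $\dim Y = 1$, so $Y$ is itself a nonsingular projective curve. For $g(X)=0$ and $g(X)\ge 2$ the (anti)canonical bundle is ample, and the classical Bondal--Orlov reconstruction from the (anti)canonical series recovers $X \simeq Y$ at once; in the $g(X)=0$ case this is also reflected in the exceptional pair $\{\mathcal O,\mathcal O(1)\}$ discussed just before the proposition.

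The structurally interesting residual case is $g(X)=1$, where $\omega_X\simeq\mathcal O_X$ and the Serre functor is a pure shift. Here I characterize point-like objects on either side as those $E$ with $\mathrm{End}(E)=k$ and $S(E)\simeq E[1]$; these are exactly shifts of skyscraper sheaves at closed points. After postcomposing $F$ with a fixed shift $[-n]$ so that $F(k(x_0))$ is an honest coherent sheaf for some $x_0\in X$, I would apply Lemma~\ref{cs} first to nontrivial extensions $0\to k(x)\to E\to k(x')\to 0$, and then, inductively along the Harder--Narasimhan filtration of an arbitrary coherent sheaf on $X$ (whose successive slope-decreasing factors satisfy the vanishing hypothesis $\mathrm{Hom}(E',E''[j])=0$ for $j\le 0$ required in the lemma), to conclude that $F$ sends every coherent sheaf on $X$ to a coherent sheaf on $Y$. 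This produces an exact abelian equivalence $\mathrm{Coh}(X)\to\mathrm{Coh}(Y)$, and Gabriel's theorem gives $X\simeq Y$.

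The main obstacle is precisely the $g(X)=1$ case: elliptic curves carry a rich group of derived autoequivalences (shifts, tensoring by line bundles, and the Fourier--Mukai transform by the Poincar\'e bundle, which together act as $\mathrm{SL}_2(\mathbb{Z})$ on the numerical slope data attached to $\Delta$-semistable objects), so an arbitrary $F$ need not initially send $\mathrm{Coh}(X)$ into $\mathrm{Coh}(Y)$. The concrete technical step will be to exhibit an autoequivalence whose composition with $F$ aligns some point-like object of $X$ with a skyscraper on $Y$, and then to verify that the slope-based vanishing required in Lemma~\ref{cs} propagates from the $\Delta$-semistable generators to every object, so that the heart $\mathrm{Coh}(X)$ is preserved globally and not merely on stable pieces.
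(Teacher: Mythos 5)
Your reduction to $\dim Y=1$ via the Serre functor and your treatment of $g(X)=0$ and $g(X)\ge 2$ by Bondal--Orlov are fine (the paper itself remarks that the non-elliptic case is a special case of Bondal--Orlov). The genuine gap is in the case $g(X)=1$, which you correctly identify as the hard case but do not actually close. On an elliptic curve $\omega_X\simeq\mathcal O_X$, so $S_X(E)\simeq E[1]$ for \emph{every} object; your proposed characterization of point-like objects ($\mathrm{End}(E)=k$ and $S(E)\simeq E[1]$) therefore picks out all shifts of simple sheaves, and by Atiyah's classification every stable bundle of coprime rank and degree is simple. So this condition does not single out skyscrapers, and the subsequent plan collapses: $F(k(x_0))$ may be a stable bundle, no composition with a shift makes $F$ heart-preserving (the Fourier--Mukai transform by the Poincar\'e bundle is an explicit counterexample, sending sheaves to two-term complexes), and aligning a single point-like object with a skyscraper would not force all skyscrapers to go to skyscrapers. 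Your final paragraph concedes that the required autoequivalence ``will be'' exhibited, but that is precisely the step that is missing, and Lemma~\ref{cs} cannot supply it: that lemma decomposes an object already known to lie in $\mathrm{Coh}$ of a positive-genus curve, and says nothing about images under $F$.

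The paper's proof avoids the need to preserve the heart altogether. It transports the slope data induced by the Hilbert polynomial through $T$, shows (using Lemma~\ref{cs}) that the skyscrapers $k(y)$ and the line bundles on $Y$ are $\bar\mu$-stable of constant slope, and hence identifies $\mathrm{Hilb}^1(Y)\simeq Y$ with a moduli space of stable objects on $X$ --- either $\mathrm{Hilb}^1(X)\simeq X$ or $\mathcal M(r,d)$. For $g(X)>1$ a dimension count rules out $\mathcal M(r,d)$ with $r>1$, and for $g(X)=1$ Atiyah's theorem gives $\mathcal M(r,d)\simeq X$ whenever $(r,d)=1$, so $Y\simeq X$ in either branch. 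This moduli-theoretic identification is the idea your proposal lacks; without it (or an equivalent argument, e.g.\ the Bridgeland--Maciocia/Huybrechts analysis of equivalences between elliptic curves) the elliptic case remains open in your write-up.
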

\begin{proof}
"if" part is trivial, thus we only consider "only if" part. Given an exact equivalence $T: \mathrm{D^b}(X) \rightarrow \mathrm{D^b}(Y)$, we have $\dim_k X = \dim_k Y$ (for example, see~\cite{MR2244106}). An ample invertible sheaf $\mathcal O(H)$ induces Hilbert polynomial 
$$
P_t(E) = \chi(\mathcal O(-tH), E) = -\chi(\mathcal O_H, E)t + \chi(\mathcal O_X, E)
$$
for $E \in \mathrm{Coh}(X)$. Since $H$ is a zero subscheme of $X$, one can choose a closed point $x$ in $X$ such that $\chi(\mathcal O_H, E) = \chi\left(k(x)^{\oplus d}, E\right)$, and $k(x)$ is simple, thus up to shift we may assume $T\left(k(x)\right) \in \mathrm{Coh}(Y)$. It implies that $T(L)$ belongs to either $\mathrm{Coh}(Y)$ or $\mathrm{Coh}(Y)[-1]$ for $L \in \mathrm{Pic}(X)$ because $L$ is also simple and $\mathrm{Hom}(L, k(x)) \neq 0$.

Suppose $X$ is rational, $k(y)$ is $\bar{\mu}$-stable with respect to the induced slope polynomial 
$$
\bar P_t(k(y)) = -\chi\left(T(k(x))^{\oplus d}, k(y)\right)t + \chi(T(\mathcal O_X), k(y))
$$ 
for each closed point $y \in Y$ because of simpleness of $k(y)$. Note that $k(y) \asymp k(y')$ for all $y, y'  \in Y$, i.e. $\bar{\mu}(k(y)) = \bar{\mu}(k(y'))$, thus $T|_{\{k(x)\}}$ gives an equivalence of categories between $\{k(x)\}_{x \in X}$ and $\{k(y)\}_{y \in Y}$. Hence we have $X \simeq \mathrm{Hilb}^1(X) \simeq  \mathrm{Hilb}^1(Y) \simeq Y$ where $\mathrm{Hilb}^1(X)$ is Hilbert shceme of $X$ with Hilbert polynomial $P = 1$ by the universal property of moduli functors.

Assume the genus $g(X) > 0$, $k(y)$ is $\bar{\mu}$-stable for $y \in Y$ and $L'$ is $\bar{\mu}$-stable for $L' \in \mathrm{Pic}(Y)$ by Lemma~\ref{cs}.  Indeed, if $L'$ is not $\bar{\mu}$-semistable, then $\bar{\mu}(L') > \bar{\mu}(k(y))$ for some $y \in Y$ implies an infinite chain $\bar{\mu}(L') < \bar{\mu}(L' \otimes L_y) < \bar{\mu}\left(L' \otimes L_y^{\otimes 2}\right) < \cdots$ for $\cdots \hookrightarrow L' \otimes L_y^{\otimes 2} \hookrightarrow L' \otimes L_y \hookrightarrow L$ with ideal sheaf $L_y$ of $y$ in $T(\mathrm{Coh}(X)[l])$ for $l \in \mathbb Z$, a contradiction. Further, if $\bar{\mu}(L') = \bar{\mu}(k(y))$ then $L'$ has an infinite Jordan-H\"older filtration, a contraction. Also we have $\bar{\mu}(k(y)) = \bar{\mu}(k(y'))$ for all $y, y' \in Y$ and $\{k(y)\}_{y\in Y}$ forms a full $\bar{\mu}$-stable subcategory of $\mathrm{D^b}(Y)$. Thus the equivalence $T$ induces an isomorphism between $\mathrm{Hilb}^1(X)$ or $\mathcal M(r, d)$ and $\mathrm{Hilb}^1(Y)$ where $\mathcal M(r, d)$ is the moduli space of stable bundles on $X$ with rank $r$ and degree $d$.

If $g(X) > 1$ then $\dim \mathcal M(r, d) = r^2(g(X) - 1) + 1 > 1$ (see~\cite{MR2665168}). It turns out that $X \simeq \mathrm{Hilb}^1(X) \simeq  \mathrm{Hilb}^1(Y) \simeq Y$. If $g(X) = 1$, $\dim \mathcal M(r, d) = 1$ for $(r, d) = 1$ and $\mathcal M(r, d) \simeq X$ by Atiyah's theorem~\cite{MR131423}. Thus we have $X \simeq Y$.
\end{proof}

\begin{remark} (i) This property also implies that we only have one $\Delta$-stability data given by the slope polynomial of degree $1$ associated to each ample chain. Indeed, such a $\Delta$-stability of degree $1$ is equivalent to Bridgeland's stability on curves up to the action $\widetilde{\mathrm{GL}}^+_2(\mathbb R)$, for example, see~\cite{MR2376815, MR2709619, MR2335991}. For the finest stability conditions (t-stability) on rational curve and elliptic curve, see~\cite{MR2084563}.

(ii) If $X$ is not an elliptic curve, then this property is a special case of Bondal and Orlov theorem~\cite{MR1818984}, which said that if the (anti-)canonical bundle of $X$ is ample, then $\mathrm{D^b}(X) \simeq \mathrm{D^b}(Y) \Longleftrightarrow X \simeq Y$. For derived categories of elliptic curves or abelian varieties, a detailed overview of complete understanding can be found in the book~\cite{MR2244106}. 
\end{remark}

\section{$\Delta$-stability on normal projective surfaces}\label{sec:nv}

In the following sections $k$ is assumed to be a perfect field of arbitrary characteristic. Let $X$ be a normal integral projective scheme of $\dim X = n$ over $k$ with a very ample invertible sheaf $\mathcal O(H)$, and $\mathcal O_{H^d}$ be the structure sheaf $\mathcal O_{H_1\cap\cdots\cap H_d}$ of the closed scheme of locally complete intersection $H_1\cap\cdots\cap H_d$ for some effective divisors $H_1,\dots,H_d \in |\mathcal O(H)|$ for $1 \leq d \leq n$. Then $\{\mathcal O(-tH)\}_{t \in \mathbb Z}$ forms an ample chain and Hilbert polynomial $P_t(E)$ for $E \in \mathrm{Coh}(X)$ is $\chi(\mathcal O(-tH), E)$.
\begin{lem}\label{Hilbert}
Let $E$ be a coherent sheaf of dimension $d$, i.e., $\dim(\mathrm{Supp(E)}) = d$. Then Hilbert polynomial of $E$ is
$$
P_t(E) = \chi(\mathcal O(-tH), E) = \sum^d_{c=0}(-1)^c\chi\left(\mathcal O_{H^c}, E\right)\binom{t}{c},
$$
where $\mathcal O_{H^0} = \mathcal O_X$ and $c$ is the codimension of $H^c := H_1\cap\cdots\cap H_c$.
\end{lem}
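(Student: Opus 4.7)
The statement is the concrete geometric incarnation, in $\mathcal A = \mathrm{Coh}(X)$, of the abstract expansion of Lemma~\ref{ef} applied to the ample chain $\{\mathcal O(-tH)\}_{t\in\mathbb Z}$. My plan is to identify the iterated ``slope objects'' $L_0^{(c)}$ in that lemma with the structure sheaves $\mathcal O_{H^c}$, and then read off the desired formula.

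First I would fix $H_1, H_2, \ldots \in |\mathcal O(H)|$ in sufficiently general position that each $H^c = H_1 \cap \cdots \cap H_c$ is a locally complete intersection of pure codimension $c$ in $X$ and that $H_{c+1}$ is a non-zero-divisor on $H^c$ (and on $\mathrm{Supp}(E) \cap H^c$); the very ampleness of $\mathcal O(H)$ together with the perfectness of $k$ permits this via Bertini. Multiplication by the defining sections then produces the short exact sequences
\begin{equation*}
0 \longrightarrow \mathcal O(-(i+1)H) \longrightarrow \mathcal O(-iH) \longrightarrow \mathcal O_{H_{i+1}}(-iH) \longrightarrow 0,
\end{equation*}
and more generally
\begin{equation*}
0 \longrightarrow \mathcal O_{H^c}(-H) \longrightarrow \mathcal O_{H^c} \longrightarrow \mathcal O_{H^{c+1}} \longrightarrow 0,
\end{equation*}
which are precisely the distinguished triangles exhibiting $\{\mathcal O(-tH)\}$ as a chain of supported weakly ample sequences in the sense of Section~\ref{sec:was}; at the $c$-th step $\mathcal A^{(1)}$ becomes $\mathrm{Coh}(H^c)$, $d_0$ drops by one, and the iterated factors satisfy $L_0^{(c)} \simeq \mathcal O_{H^c}$.

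With these identifications, Lemma~\ref{ef} yields $P_t(E) = \sum_{c=0}^r (-1)^c \chi(\mathcal O_{H^c}, E)\binom{t}{c}$, and it remains only to check that the summation can be truncated at $c = d$: by the generic choice of $H_i$, $\mathrm{Supp}(E)\cap H^c = \emptyset$ as soon as $c > d = \dim \mathrm{Supp}(E)$, whence $\chi(\mathcal O_{H^c}, E) = 0$ in that range. Alternatively, one may bypass Lemma~\ref{ef} entirely by applying $R\mathrm{Hom}(-, E)$ to the Koszul resolution
\begin{equation*}
0 \to \mathcal O(-cH) \to \cdots \to \mathcal O(-jH)^{\oplus\binom{c}{j}} \to \cdots \to \mathcal O_X \to \mathcal O_{H^c} \to 0
\end{equation*}
associated with the regular sequence $H_1, \ldots, H_c$; additivity of Euler characteristics then gives $\chi(\mathcal O_{H^c}, E) = \sum_{j=0}^c (-1)^j \binom{c}{j} P_j(E)$, and Newton's forward-difference (binomial-transform) inversion produces $P_n(E) = \sum_{c=0}^n (-1)^c \binom{n}{c}\chi(\mathcal O_{H^c}, E)$ for every integer $n \geq 0$. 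Since both sides of the claimed identity are polynomials in $t$ of degree $\leq d$ agreeing on infinitely many integers, they coincide as polynomials.

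The main obstacle I anticipate is the Bertini-type choice of $H_1, H_2, \ldots$: on the possibly singular (but normal) scheme $X$, one has to ensure that the generic members of $|\mathcal O(H)|$ form a regular sequence on both $X$ and $\mathrm{Supp}(E)$, so that the Koszul resolution is exact and the support conditions of Section~\ref{sec:was} hold. Once this generic-position input is in hand, the rest of the argument is purely formal combinatorics.
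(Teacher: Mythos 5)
Your first route is essentially the paper's own proof: identify $\mathcal O_{H^c}$ with the iterated factor $L_0^{(c)}$ of the ample chain $\{\mathcal O(-tH)\}$ and invoke Lemma~\ref{ef}. The only real difference in execution is how the support/chain conditions are verified: the paper does this by Grothendieck--Verdier duality for the lci closed immersion $i\colon H^c \hookrightarrow H^{c'}$, writing $i^!(-) \simeq Li^*(-)\otimes \omega_{c/c'}[c'-c]$ so that Hom's out of $\mathcal O_{H^c}$ are computed on $H^c$ itself, whereas you exhibit the chain directly via the multiplication sequences $0 \to \mathcal O_{H^c}(-H) \to \mathcal O_{H^c} \to \mathcal O_{H^{c+1}} \to 0$. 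Your worry about Bertini on a singular $X$ is handled in the paper by fiat --- the section's standing setup takes $H^c$ to be a locally complete intersection, and the remark immediately after the lemma notes that an $E$-regular sequence in $|\mathcal O(H)|$ always exists --- so no genuine genericity argument is needed beyond avoiding the finitely many associated points.

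Your second route (apply $R\mathrm{Hom}(-,E)$ to the Koszul resolution of $\mathcal O_{H^c}$, get $\chi(\mathcal O_{H^c},E)=\sum_{j}(-1)^j\binom{c}{j}P_j(E)$ as the $c$-th forward difference, and invert by Newton's formula) is not in the paper but is correct and entirely self-contained: since $\mathcal O_{H^c}$ is a perfect complex, $\chi(-,E)$ is well defined and additive even on the singular $X$, and the identity of two degree-$\leq d$ polynomials on infinitely many integers closes the argument. This buys you independence from the whole apparatus of Section~\ref{sec:was} at the cost of losing the categorical interpretation (the chain structure and the $L_0^{(c)}$) that the paper reuses later; both are valid, and your truncation at $c=d$ via $\mathrm{Supp}(E)\cap H^c=\emptyset$ is a detail the paper leaves implicit.
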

\begin{proof}
Since $\{\mathcal O(-tH)\}_{t \in \mathbb Z}$ forms an ample chain, we have $\mathcal O_{H^c} = L_0^{(c)}$ by assumption of ample chain. Let $i: H^c \hookrightarrow H^{c'}$ for $c > c'$ and $H^c$ is a complete intersection, thus the exact functor $i_*: \mathrm{D^b}(H^c) \rightarrow \mathrm{D^b}\left(H^{c'}\right)$ admits the right adjoint functor $i^!: \mathrm{D^b}\left(H^{c'}\right) \rightarrow \mathrm{D^b}(H^c)$. Indeed, let $Li^*$ be the left adjoint functor of $i_*$, we have
$$
\mathrm{Hom}_{H^{c'}}(i_*(E^{\bullet}), F^{\bullet}) \simeq \mathrm{Hom}_{H^c}\left(E^{\bullet}, i^!(F^{\bullet})\right) \simeq \mathrm{Hom}_{H^c}(E^{\bullet}, Li^*(F^{\bullet})\otimes \ox_{c/c'}[c'-c]),
$$
where $E^{\bullet} \in \mathrm{D^b}(H^c), F^{\bullet} \in \mathrm{D^b}\left(H^{c'}\right)$ and $\ox_{c/c'}$ is a locally free sheaf of rank one (see~\cite{MR0222093}). Thus by lemma~\ref{ef} we are done.
\end{proof}
\begin{remark}
If the closed subscheme $H^c$ is not a locally complete intersection, then the formula does not hold in general. However, for each coherent sheaf $F$ on $X$ there exists an $F$-regular sequence $H_1,\dots,H_d \in |\mathcal O(H)|$ such that $Li^*(F)$ is quasi-isomorphic to a coherent sheaf on $H^d$ (e.g., see~\cite{MR2665168}).
\end{remark}
\begin{definition}\label{rd}
If $E$ is a coherent sheaf of dimension $n$, then the rank of $E$ is 
$$
\mathrm{rk}(E) := \frac{\chi(\mathcal O_{H^n}, E)}{\chi(\mathcal O_{H^n}, \mathcal O_X)}
$$
with $\deg_H X := (-1)^n\chi(\mathcal O_{H^n}, \mathcal O_X)$, and the degree of $E$ is defined by
$$
\deg_H(E) := (-1)^{n-1}\left(\chi(\mathcal O_{H^{n-1}}, E) - \mathrm{rk}(E) \chi(\mathcal O_{H^{n-1}}, \mathcal O_X)\right).
$$
\end{definition}
\begin{remark}
Since $X$ is integral, there exists an open dense subset $U \subset X$ such that $E|_U$ is a locally free sheaf on $U$ and $\mathrm{rk}(E)$ is the rank of the vector bundle $E|_U$. If $X$ is nonsingualr, the Hirzebruch-Riemann-Roch formula shows $\deg_H(E) = c_1(E).H^{n-1}$ and $\deg_H(E) = \deg_H(\det(E))$. Indeed, this definition has all the properties one expects of the degree.
\end{remark}

Hilbert polynomial is a slope numerical polynomial because of ampleness of $H$ or theorem~\ref{asp}, thus the coefficients of Hilbert  polynomial give an exhaustive positive system. The vector slope induces a $\Delta$-stability of degree $n$ or Gieseker stability on $\mathrm{D^b}(X)$ and the scalar slope induces a coarse $\Delta$-stability of degree $n$ or Mumford-Takemoto $\mu$-stability. In the additive category of coherent sheaves of dimension $n$ on $X$, there are two natural scalar slopes giving the same $\Delta$-stability data defined as following.
\begin{definition}
Let $E$ is a coherent sheaf of dimension $n$, then its $\mu$-slope is defined by
$$
\mu(E) := \frac{\deg(E)}{\mathrm{rk}(E)}
$$
where $\deg(E) = \deg_H(E)$, and its $\hat{\mu}$-slope by
$$
\hat{\mu}(E) := -\frac{\chi(\mathcal O_{H^{n-1}}, E)}{\chi(\mathcal O_{H^n}, E) } = \frac{\mu(E)}{\deg X} - \frac{\chi(\mathcal O_{H^{n-1}}, \mathcal O_X)}{\deg X}
$$
where $\deg X = \deg_H X$.
\end{definition}
\begin{remark}
One has the following chain of stabilities: $E$ is $\mu$-stable $\Rightarrow$ $E$ is stable $\Rightarrow$ $E$ is semistable $\Rightarrow$ $E$ is $\mu$-semistable.
\end{remark}
In algebraic geometry based on the study of moduli spaces, intersection theory and relevant subjects, the most interesting slope of the object $E$ in the bounded derived category of coherent sheaves is $\mu(E)$. However, in category theory on the study of numerical stability condition the slope $\hat{\mu}(E)$ would be more functorial under pushforward.

\subsection{Finite cover of projective spaces}\label{cover}

In this subsection we will study the coarse $\Dx$-stability of degree $n$ or $\hat{\mu}$-stability between normal integral projective scheme $X$ of dimension $n$ over $k$ and the projective space $\mathbf P^n := \mathbf P^n_k$. This in turn allows us to find the relation of $\hat{\mu}$-stable torsion free coherent sheaves between $X$ and $\mathbf P^n$ as has been seen in Prop.~\ref{con}, and paves the way for boundedness conditions in Prop.~\ref{bound} and thus Bridgeland stability on normal surfaces.

Since $X$ is a projective scheme of dimension $n$ over a perfect field $k$, there exists a finite surjective separable morphism from $X$ to $\mathbf P^n$ by Noether normalization lemma (e.g., see~\cite{MR1322960, MR1011461}). In other words, $\mathbf P^n$ is a weakly terminal object in the category of projective schemes of dimension $n$ over $k$. When $k$ is infinite, such a finite covering map can be constructed by taking a general projection from a projective space containing $X$ to a $n$-dimensional linear subspace.

Let $f: X \rightarrow \mathbf P^n$ be a finite separable morphism of degree $d$ of normal integral projective scheme of dimension $n$ over $k$  induced by a very ample invertible sheaf $\mathcal O_X(H)$. Let $\mathcal O_{\mathbf P^n}(1)$ be a vary ample invertible sheaf on $\mathbf P^n$, then $\mathcal O_X(H) = f^*(\mathcal O_{\mathbf P^n}(1))$ and $d = \deg_H X$. Since $f$ is proper and affine, $f_*: \mathrm{Coh}(X) \rightarrow \mathrm{Coh}(\mathbf P^n)$ is an exact functor with a left adjoint functor $f^* \dashv f_*$ and thus an exact functor from $\mathrm{D}(X)$ to $\mathrm{D}(\mathbf P^n)$ with a natural functorial isomorphism
$$
f_*R\mathcal Hom_{\mathrm{D}(X)}(Lf^*F^{\bullet}, E^{\bullet}) \simeq R\mathcal Hom_{\mathrm{D}(\mathbf P^n)}(F^{\bullet}, f_*E^{\bullet})
$$
for $F^{\bullet} \in \mathrm{D^-}(\mathbf P^n)$ and $E^{\bullet} \in \mathrm{D^+}(X)$. Here, the left derived functor $Lf^*$ takes $\mathrm{D}(\mathbf P^n)$ into $\mathrm{D}(X)$. Moreover, by Grothendieck-Verdier duality there is a functor $f^!: \mathrm{D}(\mathbf P^n) \rightarrow \mathrm D(X)$ with a natural functorial isomorphism
$$
f_*R\mathcal Hom_{\mathrm{D}(X)}\left(E^{\bullet}, f^!F^{\bullet}\right) \simeq R\mathcal Hom_{\mathrm{D}(\mathbf P^n)}(f
_*E^{\bullet}, F^{\bullet})
$$
for $E^{\bullet} \in \mathrm{D^-}(X)$ and $F^{\bullet} \in \mathrm{D^+}(\mathbf P^n)$. For further details we refer to~\cite{MR0222093}.
\begin{remark}
If $X$ is nonsingular, the functors $Lf^*, f^!: \mathrm{D^b}(\mathbf P^n) \rightarrow \mathrm{D^b}(X)$ are left, right adjoint to $f_*: \mathrm{D^b}(X) \rightarrow \mathrm{D^b}(\mathbf P^n)$, respectively. That is, $Lf^* \dashv f_* \dashv f^!$.
\end{remark}

Because of exactness of $f_*$ and $\mathcal O_X(H) = f^*(\mathcal O_{\mathbf P^n}(1))$, for $E^{\bullet} \in \mathrm{D^b}(X), j \in \mathbb Z$, we have 
$$
\mathrm{Hom}_{\mathrm{D}(X)}(\mathcal O_X(H), E^{\bullet}[j]) \simeq \mathrm{Hom}_{\mathrm{D}(\mathbf P^n)}(\mathcal O_{\mathbf P^n}(1), f_*E^{\bullet}[j])
$$
and thus $P_t(E^{\bullet}) = \chi(\mathcal O_X(-tH), E^{\bullet}) = \chi(\mathcal O_{\mathbf P^n}(-t), f_*E^{\bullet}) = P_t(f_*E^{\bullet})$. Moreover, $f$ is dominant and $X$ is integral, thus $f_*$ preserve purity, that is, if $E \in \mathrm{Coh}(X)$ is torsion free then $f_*(E) \in \mathrm{Coh}(\mathbf P^n)$ is also torsion free (see~\cite{MR217083}). On the other hand, since $X$ is regular in codimension $1$ (R1), the singular locus $Z:= \mathrm{Sing}(X)$ is a closed subset of $X$ of codimension $\geq 2$. Let $U := X\setminus Z$ be the nonsingular locus of $X$. Then the restricted morphism $f|_U$ is flat. Indeed, if $X$ is Cohen-Macaulay, then $f$ is flat (see~\cite{MR1011461}). Thus for each normal projective scheme of dimension $2$ this finite surjective morphism to $\mathbf P^2$ is flat. Note that a normal, locally Noetherian scheme of dimension $2$ is Cohen-Macaulay (e.g., see~\cite{MR1917232}). In general, $f$ is flat in codimension $1$, i.e., $f^*$ is exact modulo coherent sheaves of dimension $\leq n - 2$.

\begin{lem}\label{order}
Let $F$ be a torsion free coherent sheaf on $\mathbf P^n$. Then $\mathrm{rk}(f^*F) = \mathrm{rk}(F)$ and $\mu(f^*F) = d\cdot \mu(F)$. Let $E$ be a torsion free coherent sheaf on $X$. Then $\mathrm{rk}(f_*E) = d \cdot \mathrm{rk}(E)$ and $\hat{\mu}(f_*E) = \hat{\mu}(E)$.
\end{lem}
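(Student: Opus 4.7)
My plan is to reduce everything to a Hilbert polynomial comparison via the adjunction $f^* \dashv f_*$. Since $\mathcal{O}_X(H) = f^*\mathcal{O}_{\mathbf{P}^n}(1)$ is locally free, $Lf^*\mathcal{O}_{\mathbf{P}^n}(-t) = \mathcal{O}_X(-tH)$, and since $f$ is finite we have $Rf_* = f_*$. Hence for any $G \in \mathrm{Coh}(X)$, adjunction gives $\mathrm{Ext}^j_X(\mathcal{O}_X(-tH), G) \simeq \mathrm{Ext}^j_{\mathbf{P}^n}(\mathcal{O}_{\mathbf{P}^n}(-t), f_*G)$ in every degree, so $P_t^X(G) = P_t^{\mathbf{P}^n}(f_*G)$. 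Expanding both sides by Lemma~\ref{Hilbert} and matching coefficients of $\binom{t}{c}$ yields $\chi_X(\mathcal{O}_{H_X^c}, G) = \chi_{\mathbf{P}^n}(\mathcal{O}_{H_{\mathbf{P}^n}^c}, f_*G)$ for every $c$, which is the key identity.

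For the statement about $f_*E$, I apply this identity with $G = E$ at $c = n$ and $c = n-1$. Since $\chi_{\mathbf{P}^n}(\mathcal{O}_{H_{\mathbf{P}^n}^n}, \mathcal{O}_{\mathbf{P}^n}) = (-1)^n$ and $\chi_X(\mathcal{O}_{H_X^n}, \mathcal{O}_X) = (-1)^n d$, reading Definition~\ref{rd} on $\mathbf{P}^n$ gives $\mathrm{rk}(f_*E) = (-1)^n \chi_X(\mathcal{O}_{H_X^n}, E) = d\cdot\mathrm{rk}(E)$. The equality $\hat{\mu}(f_*E) = \hat{\mu}(E)$ is then immediate from the defining ratio, since both numerator and denominator are preserved.

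For $f^*F$, I combine adjunction with the projection formula $f_*(f^*F) \simeq F \otimes f_*\mathcal{O}_X$ (valid because $f$ is affine), obtaining $P_t^X(f^*F) = P_t^{\mathbf{P}^n}(F \otimes f_*\mathcal{O}_X)$, hence $\chi_X(\mathcal{O}_{H_X^c}, f^*F) = \chi_{\mathbf{P}^n}(\mathcal{O}_{H_{\mathbf{P}^n}^c}, F\otimes f_*\mathcal{O}_X)$. At $c=n$: since $\mathrm{rk}(F\otimes f_*\mathcal{O}_X) = \mathrm{rk}(F)\cdot\mathrm{rk}(f_*\mathcal{O}_X) = d\cdot\mathrm{rk}(F)$, dividing by $\chi_X(\mathcal{O}_{H_X^n}, \mathcal{O}_X) = (-1)^n d$ gives $\mathrm{rk}(f^*F) = \mathrm{rk}(F)$. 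At $c=n-1$: using the tensor degree formula $\deg_{H'}(F \otimes f_*\mathcal{O}_X) = \mathrm{rk}(F)\deg_{H'}(f_*\mathcal{O}_X) + d\cdot\deg_{H'}(F)$, and similarly $\chi_{\mathbf{P}^n}(\mathcal{O}_{H_{\mathbf{P}^n}^{n-1}}, f_*\mathcal{O}_X)$ expanded via Definition~\ref{rd}, plugging into the definition of $\deg_H(f^*F)$ causes the $\deg_{H'}(f_*\mathcal{O}_X)$ terms and the $\chi_{\mathbf{P}^n}(\mathcal{O}_{H_{\mathbf{P}^n}^{n-1}}, \mathcal{O}_{\mathbf{P}^n})$ terms to cancel, leaving $\deg_H(f^*F) = d\cdot\deg_{H'}(F)$. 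Hence $\mu(f^*F) = d\cdot\mu(F)$.

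The main subtlety I expect is ensuring the tensor degree identity $\deg(A\otimes B) = \mathrm{rk}(A)\deg(B) + \mathrm{rk}(B)\deg(A)$ applies with $A = F$ torsion free but possibly not locally free; this is fine because $\mathbf{P}^n$ is smooth, so $F$ is locally free in codimension one and the formula holds (either by reducing to determinants on the locally free locus or by a Koszul-type resolution). A minor secondary point is that $f^*F$ need not be torsion free, but this does not affect the argument since rank and degree are defined purely through Euler characteristics, and Lemma~\ref{Hilbert} applies to any coherent sheaf whose support has the expected dimension.
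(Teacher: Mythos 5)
Your proposal is correct. The half concerning $f_*E$ is essentially the paper's own argument: both of you use the adjunction $\mathrm{Hom}_{\mathrm D(X)}(\mathcal O_X(-tH), E[j]) \simeq \mathrm{Hom}_{\mathrm D(\mathbf P^n)}(\mathcal O_{\mathbf P^n}(-t), f_*E[j])$ (equivalently, exactness of $f_*$ for the finite map $f$) to identify the two Hilbert polynomials and hence, coefficient by coefficient in the binomial basis, the quantities $\chi(\mathcal O_{H^c}, E)$ and $\chi(\mathcal O_{H^c_{\mathbf P^n}}, f_*E)$; the rank and $\hat\mu$ statements then fall out of Definition~\ref{rd}. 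For the $f^*F$ half your route genuinely differs from the paper's. The paper works on the fibers of the diagram $H^n \subset H^{n-1}\subset X$ over $\{p\}\subset\mathbf P^1\subset\mathbf P^n$: the rank identity comes from $\chi_{H^n}(\mathcal O_{H^n}, f|_{H^n}^*(F|_p)) = d\cdot\chi_p(k(p),F|_p)$, and the degree identity comes from pushing $f^*F|_{H^{n-1}}\otimes \ox_{n-1}$ down to $\mathbf P^1$ via the relative dualizing sheaf and applying the tensor formula for $\chi$ on the curve $\mathbf P^1$. You instead stay on $\mathbf P^n$, using the underived projection formula $f_*f^*F\simeq F\otimes f_*\mathcal O_X$ — which does hold for arbitrary quasi-coherent $F$ precisely because $f$ is affine, so this is not a gap — and the additivity of $c_1$ (equivalently of $\deg_H$) under tensor product of torsion-free sheaves on the smooth variety $\mathbf P^n$; the cancellation you describe does work out. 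Your approach buys a more uniform argument (both halves reduce to the single identity $P_t^X(G)=P_t^{\mathbf P^n}(f_*G)$) and avoids Grothendieck--Verdier duality on the restricted curve, at the cost of invoking the tensor-degree formula on $\mathbf P^n$ for sheaves that are only locally free in codimension one — a point you correctly flag and which is harmless since $\deg_H$ on a smooth variety depends only on codimension-one data (the paper itself uses the analogous fact $\mu(V\otimes W)=\mu(V)+\mu(W)$ in the proof of Proposition~\ref{min}). Your closing remark that $f^*F$ need not be torsion free but that rank and degree are defined by Euler characteristics alone is also consistent with how the paper treats this.
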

\begin{proof}
We have the commutative diagram below
\begin{center}
\begin{tikzcd}[row sep=large, column sep=large]
H^n \arrow[hookrightarrow]{r}\arrow[d, "f|_{H^n}"] &H^{n-1} \arrow[d, "\bar f:= f|_{H^{n-1}}"] \arrow[hookrightarrow]{r} &X \arrow[d, "f"] \\
\{p\} \arrow[hookrightarrow]{r} &\mathbf P^1 \arrow[hookrightarrow]{r} &\mathbf P^n.
\end{tikzcd}
\end{center}
$E \in \mathrm{Coh}(X)$ and $f_*$ is exact, then
$$
\mathrm{rk}(f_*E) = \frac{\chi(k(p), f_*E)}{\chi(k(p), \mathcal O_{\mathbf P^n})} = (-1)^n\chi(\mathcal O_{H^n}, \mathcal O_X)\frac{\chi(\mathcal O_{H^n}, E)}{\chi(\mathcal O_{H^n}, \mathcal O_X)} = d\cdot \mathrm{rk}(E),
$$
and
$$
\hat{\mu}(f_*E) = -\frac{\chi(\mathcal O_{\mathbf P^1}, f_*E)}{\chi(k(p), f_*E) } = -\frac{\chi(\mathcal O_{H^{n-1}}, E)}{\chi(\mathcal O_{H^n}, E)} = \hat{\mu}(E).
$$
$F$ is a torsion free coherent sheaf on $\mathbf P^n$, one has 
$$
\chi_{H^n}(\mathcal O_{H^n}, (f^*F)|_{H^n}) = \chi_{H^n}(\mathcal O_{H^n}, f|_{H^n}^*(F|_p)) = d\cdot \chi_p(k(p), F|_p),
$$
thus it leads to
$$
\frac{\chi(\mathcal O_{H^n}, f^*F)}{\chi(\mathcal O_{H^n}, \mathcal O_X)} = \frac{\chi_{H^n}(\mathcal O_{H^n}, (f^*F)|_{H^n})}{\chi(\mathcal O_{H^n}, \mathcal O_X)} = \frac{d\cdot \chi_p(k(p), F|_p)}{d\cdot \chi(k(p), \mathcal O_{\mathbf P^n})} = \frac{ \chi(k(p), F)}{\chi(k(p), \mathcal O_{\mathbf P^n})}
$$
and we have $\mathrm{rk}(f^*F)  = \mathrm{rk}(F)$. On the other hand, we may assume $f^*F$ is $i^*$-acyclic and $\ox_{n-1}$ is the relative dualizing locally free sheaf associated to $i : H^{n-1} \hookrightarrow X$, then
\begin{align*}
\chi_{H^{n-1}}(\mathcal O_{H^{n-1}}, f^*F|_{H^{n-1}}\otimes \ox_{n-1}) &= \chi_{H^{n-1}}\left(\mathcal O_{H^{n-1}}, \bar f^*(F|_{\mathbf P^1})\otimes \ox_{n-1}\right)\\ &= \chi_{\mathbf P^1}\left(\mathcal O_{\mathbf P^1}, \bar f_*\left(\bar f^*(F|_{\mathbf P^1})\otimes \ox_{n-1}\right)\right) \\ &= \chi_{\mathbf P^1}\left(\mathcal O_{\mathbf P^1},  F|_{\mathbf P^1} \otimes \bar f_*\ox_{n-1}\right).
\end{align*}
In $\mathbf P^1$, $\chi_{\mathbf P^1}(\mathcal O_{\mathbf P^1}, V \otimes W) = \mathrm{rk}(W)\deg(V) + \mathrm{rk}(V)\chi_{\mathbf P^1}(\mathcal O_{\mathbf P^1}, W)$ for locally free sheaves $V, W$. It turns out that 
\begin{align*}
\chi_{H^{n-1}}\left(\mathcal O_{H^{n-1}}, f^*F|_{H^{n-1}} \otimes \ox_{n-1}\right) &= d\cdot\deg_{\mathbf P^1}(F|_{\mathbf P^1}) + \mathrm{rk}(F)\chi_{\mathbf P^1}\left(\mathcal O_{\mathbf P^1}, \bar f_*\ox_{n-1}\right)\\ &= d\cdot\deg(F) + \mathrm{rk}(F)\chi_{H^{n-1}}(\mathcal O_{H^{n-1}}, \ox_{n-1})
\end{align*}
and 
\begin{align*}
\deg(f^*F) &= (-1)^{n-1}\left(\chi(\mathcal O_{H^{n-1}}, f^*F) - \mathrm{rk}(f^*F) \chi(\mathcal O_{H^{n-1}}, \mathcal O_X)\right)\\ &= \chi_{H^{n-1}}\left(\mathcal O_{H^{n-1}}, f^*F|_{H^{n-1}} \otimes \ox_{n-1}\right) - \mathrm{rk}(F)\chi_{H^{n-1}}(\mathcal O_{H^{n-1}}, \ox_{n-1}) \\ &= d\cdot \deg(F)
\end{align*}
which implies 
$$
\mu(f^*F) = \frac{\deg(f^*F)}{\mathrm{rk}(f^*F)} = d\cdot \frac{\deg(F)}{\mathrm{rk}(F)} = d\cdot \mu(F).
$$
\end{proof}

\begin{lem}\label{Galois}
Let $F$ be a torsion free coherent sheaf on $\mathbf P^n$. Then $F$ is $\mu$-semistable if and only if the torsion free part of $f^*F$ is $\mu$-semistable.
\end{lem}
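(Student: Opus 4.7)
The plan is to establish the two implications separately, the converse one relying on Galois descent.

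For the ``if'' direction, assume the torsion-free part $T$ of $f^*F$ is $\mu$-semistable on $X$ and let $G \subset F$ be a nonzero subsheaf. Since $f$ is flat in codimension one, both the kernel of $f^*G \to f^*F$ and any torsion of $f^*G$ and $f^*F$ are supported in codimension at least two; hence the image of $f^*G \to f^*F$ defines a subsheaf of $T$ of the same generic rank as $G$, with $\mu$-slope $d\cdot\mu(G)$ by Lemma~\ref{order}. Semistability of $T$ then yields $d\cdot\mu(G) \leq \mu(T) = d\cdot\mu(F)$, and so $\mu(G) \leq \mu(F)$.

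For the converse, let $\tilde f\colon \tilde X \to \mathbf P^n$ be the normalization of $\mathbf P^n$ in the Galois closure of the separable extension $K(X)/K(\mathbf P^n)$, with Galois group $\Gamma$, and let $g\colon \tilde X \to X$ be the induced finite separable morphism of normal integral projective schemes, satisfying $\tilde f = f\circ g$. Applying the ``if'' direction to $g$ reduces the problem to proving: if $F$ is $\mu$-semistable on $\mathbf P^n$, then the torsion-free part $\tilde T$ of $\tilde f^* F$ is $\mu$-semistable on $\tilde X$. Suppose for contradiction it is not. By the Harder--Narasimhan property (Prop.~\ref{hnf} together with the slope polynomial from Lemma~\ref{Hilbert}) there exists a unique maximal destabilizing subsheaf $\tilde T' \subset \tilde T$ with $\mu(\tilde T') > \mu(\tilde T) = \deg(\tilde f)\cdot\mu(F)$. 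The natural $\Gamma$-linearization on $\tilde f^* F$ descends to $\tilde T$, and uniqueness forces $\tilde T'$ to be $\Gamma$-invariant.

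I then invoke \'etale Galois descent over the open dense subscheme $U \subset \mathbf P^n$ where $\tilde f$ is \'etale: the $\Gamma$-invariant sheaf $\tilde T'|_{\tilde f^{-1}(U)}$ descends to a subsheaf $F'_U \subset F|_U$. Torsion-freeness of $F$ provides an injection $F \hookrightarrow j_*(F|_U)$ for $j\colon U \hookrightarrow \mathbf P^n$, and I take $F'$ to be the saturation in $F$ of the subsheaf $F \cap j_*(F'_U)$, so that $F'$ restricts to $F'_U$ on $U$ and has the same generic rank. Since $\mu$-slopes depend only on generic/codimension-one data, the identity $\tilde f^*F'|_{\tilde f^{-1}(U)} = \tilde T'|_{\tilde f^{-1}(U)}$ combined with Lemma~\ref{order} applied to $\tilde f$ gives $\mu(F') = \mu(\tilde T')/\deg(\tilde f) > \mu(F)$, contradicting the $\mu$-semistability of $F$.

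The main technical obstacle is the slope comparison in this final step: the ramification of $\tilde f$ occurs along a divisor in $\mathbf P^n$, so a priori $\tilde f^*F'$ and $\tilde T'$ could disagree in codimension one on $\tilde X$, which would affect $\mu$-slopes. This is handled by saturating $F'$ inside the torsion-free sheaf $F$: the saturation is reflexive in codimension one, so the $\mu$-slope of $\tilde f^*F'$ is computed via Lemma~\ref{order}, and any remaining codimension-one discrepancy along the ramification divisor is precisely the content already absorbed into the multiplier $\deg(\tilde f)$ in Lemma~\ref{order}, rendering the slope identity $\mu(\tilde f^*F') = \deg(\tilde f)\cdot\mu(F')$ valid even in the ramified setting.
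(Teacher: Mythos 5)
Your overall strategy coincides with the paper's: the ``if'' direction by pushing a destabilizing subsheaf of $F$ through $f^*$ and invoking Lemma~\ref{order}, and the converse by passing to a Galois cover (the paper normalizes $X$ in a splitting field of $K(X)/K(\mathbf P^n)$, which produces the same object as your Galois closure), observing that the maximal destabilizing subsheaf of the torsion-free part of the pullback is unique and hence Galois-invariant, and descending it to a subsheaf of $F$. The one place where your argument does not hold together as written is the final slope comparison. Lemma~\ref{order} gives $\mu(\tilde f^*F')=\deg(\tilde f)\cdot\mu(F')$ for any torsion-free $F'$ on $\mathbf P^n$; that identity is never in doubt and has nothing to do with ramification. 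What you actually need is $\mu(\tilde f^*F')\geq\mu(\tilde T')$, and your justification --- that a codimension-one discrepancy along the ramification divisor is ``absorbed into the multiplier $\deg(\tilde f)$'' --- is not an argument: two subsheaves of $\tilde T$ agreeing only off a divisor can have different degrees, and a priori the saturated image of $\tilde f^*F'$ in $\tilde T$ (which is the sheaf that equals $\tilde T'$, both being saturated with the same generic fibre) could exceed the image of $\tilde f^*F'$ in degree by a divisorial torsion contribution. That only yields $\deg(\tilde f)\,\mu(F')\leq\mu(\tilde T')$, which is the wrong direction and gives no contradiction.

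The gap is fixable by routing the slope count through the quotient. Take $F'\subset F$ saturated with the descended generic fibre and set $Q:=F/F'$, which is torsion free. Right-exactness of $\tilde f^*$ gives a surjection $\tilde T\to\tilde T_Q$ onto the torsion-free part of $\tilde f^*Q$; its kernel is a saturated subsheaf of $\tilde T$ with the same generic fibre as $\tilde T'$, hence equals $\tilde T'$. Additivity of degree in the two short exact sequences, together with Lemma~\ref{order} applied to the torsion-free sheaves $F$ and $Q$, gives $\deg(\tilde T')=\deg(\tilde T)-\deg(\tilde T_Q)=\deg(\tilde f)\left(\deg F-\deg Q\right)=\deg(\tilde f)\deg F'$, whence $\mu(F')=\mu(\tilde T')/\deg(\tilde f)>\mu(F)$, the desired contradiction. (The paper sidesteps this by asserting that the descent isomorphism already holds away from a codimension-two locus, the singular support of $F$, which makes the slope identity immediate; either way some codimension bookkeeping beyond what you wrote is required.) A minor further point: you apply the ``if'' direction to $g:\tilde X\to X$, a finite separable cover of normal varieties rather than a cover of $\mathbf P^n$; this is legitimate but uses that the multiplicativity of slopes in Lemma~\ref{order} extends to such covers, which deserves a sentence.
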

\begin{proof}
$F$ is torsion free, and $f^*F$ has no torsion in codimension $1$. Let $U = X\setminus Z$ be the nonsingular locus of $X$ with the singular locus $Z$ of codimension $\geq 2$. We may assume $f^*F|_U$ is torsion free on $U$ and $\bar f := f|_U$ is flat. Let $j: U \hookrightarrow X$ be the open immersion and one has the short exact sequence
$$
0 \longrightarrow \mathcal H_Z^0(F) \longrightarrow f^*F \longrightarrow j_*(f^*F|_U) \longrightarrow \mathcal H_Z^1(F) \longrightarrow 0,
$$
where $\mathcal H_Z^0(F), \mathcal H_Z^1(F) \in \mathrm{Coh}(X)$ are local cohomology sheaves (see~\cite{MR0224620}) and $j_*(f^*F|_U) \in \mathrm{Coh}(X)$ is torsion free. This gives $\mu(f^*F) = \mu(j_*(f^*F|_U)) = \mu\left(j_*\left(\bar f^*\left(F|_{f(U)}\right)\right)\right)$ since $\mathrm{Supp}(\mathcal H_Z^i(F)) \subset Z$ and the torsion free part of $f^*F$ is $\mu$-semistable if and only if  $j_*\left(\bar f^*\left(F|_{f(U)}\right)\right)$ is $\mu$-semistable. 

If $F' \subset F$ is a proper subsheaf with $\mu(F') > \mu(F)$, then 
$
\mu\left(j_*\left(\bar f^*\left(F'|_{f(U)}\right)\right)\right) = \mu(f^*F') > \mu(f^*F) = \mu\left(j_*\left(\bar f^*\left(F|_{f(U)}\right)\right)\right)$ 
and $j_*\left(\bar f^*\left(F'|_{f(U)}\right)\right)$ is a proper subsheaf of $j_*\left(\bar f^*\left(F|_{f(U)}\right)\right)$. Conversely, let $K$ be a splitting field of the function field $K(X)$ over $K(\mathbf P^n)$. Since the base field $k$ is perfect, $K$ is separable over $K(\mathbf P^n)$. Let $Y$ be the normalization of $X$ in $K$, we have finite separable morphisms $Y \rightarrow X \rightarrow \mathbf P^n$. Hence we may assume $K(X)$ is a Galois extension of $K(\mathbf P^n)$ with Galois group $G$. 

If $F$ is $\mu$-semistable on $\mathbf P^n$ so that $j_*\left(\bar f^*\left(F|_{f(U)}\right)\right)$ is not $\mu$-semistable on $X$. Let $E'$ be its maximal destabilizing subsheaf and $E'$ is unique. In other words, $E'$ is invariant under the action of $G$. By Galois decent theory (for example, see~\cite{MR1600388}), there exists a proper subsheaf $F'$ of $F$ such that $f^*F'|_{X\setminus f^{-1}(S(F))}$ is isomorphic to $E'|_{X\setminus f^{-1}(S(F))}$ where $S(F)$ is the singular support of $F$ of codimension $\geq 2$. Thus $\mu(F') > \mu(F)$, a contradiction.
\end{proof}
\begin{remark}
In this argument one needs the base field is perfect in order to imply Galois decent theory. Indeed, if $k$ is imperfect, $K(X)$ may not be a Galois extension. In this case, this lemma holds if we consider a \textit{geometrically} normal projective scheme $X$ instead of a normal projective scheme. Note that if $X$ is not normal, $\mathcal H_Z^1(F)$ is quasi-coherent in general, thus $j_*(f^*F|_U) $ is not coherent.
\end{remark}

The scalar slopes defined in these way realize the same coarse $\Dx$-stability data associated to Hilbert polynomials of coherent sheaves on $X$ as appearing in Def.~\ref{ps}. Whenever the normal integral scheme $X$ has a finite separable morphism $f$ to the projective space $\mathbf P^n$, the fibers of ample subvarieties induced by the ample chain on $\mathbf P^n$ develop the corresponding ample chain structure extending over the entire scheme $X$. The proposed construction can be viewed as a relative dimension $=0$ version of relative schemes between a $n$-dimensional integral scheme and $n$-dimensional projective space in the category of integral schemes of dimension $n$. In the context of finite covers between normal projective varieties over an algebraic closed field of characteristic $0$ a detailed comparison of scalar slopes can been seen in the book~\cite{MR2665168}.

While the ample chain on $X$ complies with the fibration of an ample chain on $\mathbf P^N$, we should stress that the relation of the order on semistable sheaves on $X$ and $\mathbf P^n$ encountered in this work arises from scalar slopes of coherent sheaves on $X$ and $\mathbf P^n$, where the scalar slopes of coherent sheaves reside in the coefficients of Hilbert polynomial. Thus, compared to Prop.~\ref{con} the constraints of scalar slopes of coherent sheaves emerges from vanishing of morphisms between semistable sheaves.

\begin{prop}\label{max}
Let $E$ be a $\mu$-semistable sheaf on a normal integral projective scheme $X$ of dimension $n$ over a perfect field and $f: X \rightarrow \mathbf P^n$ be a finite separable morphism. Then $f_*E$ is a torsion free sheaf on $\mathbf P^n$ and $$\mu_{\max}(f_*E) \leq \frac{\mu(E)}{d},$$ where $d = \deg_H X$ with respect to the vary ample invertible sheaf $\mathcal O_X(H) = f^*\mathcal O_{\mathbf P^n}(1)$.
\end{prop}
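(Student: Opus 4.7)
The plan is to leverage the adjunction $f^*\dashv f_*$ together with Lemmas~\ref{order} and~\ref{Galois}. Torsion-freeness of $f_*E$ is immediate from $f$ being finite and dominant and $E$ being torsion-free on the integral scheme $X$, as already recorded earlier in this subsection. For the slope bound, let $F\subset f_*E$ be the maximal destabilizing subsheaf in the $\mu$-HN filtration, so that $F$ is torsion-free $\mu$-semistable of dimension $n$ with $\mu(F)=\mu_{\max}(f_*E)$; it then suffices to prove $d\,\mu(F)\leq \mu(E)$.

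Next, transport $F$ to $X$ via the adjunction $\mathrm{Hom}(F,f_*E)\simeq\mathrm{Hom}(f^*F,E)$: the inclusion $F\hookrightarrow f_*E$ corresponds to a nonzero morphism $\phi: f^*F\to E$. Because $f$ is flat in codimension one (recalled just before Lemma~\ref{Galois}), the torsion subsheaf $T\subset f^*F$ is supported in codimension $\geq 2$, and the torsion-free quotient $G:=(f^*F)/T$ inherits rank and degree from $f^*F$. By Lemma~\ref{order} this yields $\mathrm{rk}(G)=\mathrm{rk}(F)$ and $\mu(G)=d\,\mu(F)$, while Lemma~\ref{Galois} promotes the $\mu$-semistability of $F$ to that of $G$. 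Since $E$ is torsion-free, $\phi$ annihilates $T$ and factors as a nonzero $\bar\phi: G\to E$.

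Finally, apply $\mu$-semistability at both ends of $\bar\phi$. Let $I:=\mathrm{image}(\bar\phi)\subset E$; since $E$ is torsion-free of dimension $n$, $I$ is torsion-free of positive rank (a rank-zero image would be a torsion subsheaf of $E$, forcing $I=0$). As a nonzero quotient of the $\mu$-semistable $G$ one has $\mu(G)\leq\mu(I)$, and as a nonzero subsheaf of the $\mu$-semistable $E$ one has $\mu(I)\leq\mu(E)$. Chaining gives $d\,\mu(F)=\mu(G)\leq\mu(E)$, hence $\mu_{\max}(f_*E)\leq\mu(E)/d$.

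The main technical point that I expect to require care is the equality $\mu(G)=d\,\mu(F)$ after passing to the torsion-free quotient. This rests on $f$ being flat in codimension one so that $T\subset f^*F$ is concentrated in codimension $\geq 2$, combined with the observation that $\mathrm{rk}$ and $\deg_H$ in Def.~\ref{rd} depend only on the top two coefficients of the Hilbert polynomial and therefore vanish on coherent sheaves supported in codimension $\geq 2$. Everything else — the adjunction, the factoring through $G$, and the chaining of slope inequalities through $I$ — is essentially formal.
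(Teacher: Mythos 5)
Your proposal is correct and follows essentially the same route as the paper: torsion-freeness from finiteness of $f$, the adjunction $\mathrm{Hom}(F,f_*E)\simeq\mathrm{Hom}(f^*F,E)$, passage to the torsion-free part of $f^*F$ using Lemmas~\ref{order} and~\ref{Galois}, and the slope comparison between $\mu$-semistable sheaves. The only difference is that you spell out the standard vanishing step $\mathrm{Hom}(G,E)=0$ for $\mu(G)>\mu(E)$ by chaining through the image, which the paper simply invokes.
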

\begin{proof}
$f$ is finite, thus $f_*E$ is torsion free. Let $G$ be the maximal destabilizing subsheaf of $f_*E$ on $\mathbf P^n$. By the adjunction of $f^* \dashv f_*$ one has $0 \neq \mathrm{Hom}_{\mathbf P^n}(G, f_*E) \simeq \mathrm{Hom}_X(f^*G, E)$ and the previous lemma~\ref{Galois} implies the torsion free part $G'$ of $f^*G$ is $\mu$-semistable on $X$ with $\mu(G') = \mu(f^*G)$. Therefore, $0 \neq \mathrm{Hom}_X(f^*G, E) \simeq \mathrm{Hom}_X(G', E)$ and $G', E$ are $\mu$-semistable, one must have $\mu(G') \leq \mu(E)$. On the other hand, $\mu(G') = \mu(f^*G) = d\cdot \mu(G) = d\cdot \mu_{\max}(f_*E)$ by lemma~\ref{order}. It turns out that $\mu_{\max}(f_*E) \leq \frac{\mu(E)}{d}$.
\end{proof}

Let $U$ be the nonsingular locus of $X$ and we have the following diagram
\begin{center}
\begin{tikzcd}[row sep=large, column sep=large]
U \arrow[d, "\bar f:= f|_U"] \arrow[hookrightarrow]{r}{j} &X \arrow[d, "f"] \\
U_f := f(U) \arrow[hookrightarrow]{r} &\mathbf P^n.
\end{tikzcd}
\end{center}
Since $X$ is noetherian scheme of finite type over a field, there exists a dualizing complex in $\mathrm{D}^+(X)$ and the restriction of the dualizing complex to $U$ is a dualizing complex of $U$. Indeed. for any residually stable morphism $f$, the pullback functor $f^*$ takes a residue complex to a residue complex. For the projective scheme $X$ the dualizing complex $\ox_X^{\bullet}$ induced by the dualizing sheaf of the ambient projective space of X defines the dualizing sheaf $\ox_X: = H^{-n}(\ox_X^{\bullet})$ on $X$. Since the restriction of open immersion to $U$ is an exact functor, one also has $\ox_X|_U \simeq \ox_U$. Here, $\ox_U$ is the dualizing sheaf on $U$ and is isomorphic to the canonical sheaf of $U$. Recall that $X$ is normal, the $R_1$ and $S_2$ conditions combined with reflexive sheaf $\ox_X$ imply that that $\ox_X \simeq j_*\ox_U$. For more details one refers to~\cite{MR0222093, MR1804902, MR597077}.
\begin{prop}\label{min}
Let $E$ be a $\mu$-semistable sheaf on $X$ as Prop.~\ref{max}. Then $$\mu_{\min}(f_*E) \geq \frac{\mu(E)} d - \frac{\mu(\ox_X)} d -(n+1),$$ where $\ox_X$ is the canonical sheaf of $X$.
\end{prop}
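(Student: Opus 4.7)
The plan is to dualize Proposition~\ref{max}: the inequality $\mu_{\min}(G)=-\mu_{\max}(G^{\vee})$ (which holds for any torsion-free coherent sheaf $G$ on $\mathbf P^n$, since $\mu$-slopes only see codimension $\leq 1$ behavior and $G$ is locally free on a big open) reduces the problem to producing an upper bound for $\mu_{\max}((f_*E)^{\vee})$. To obtain such a bound, I would first identify $(f_*E)^{\vee}$ with a pushforward from $X$ via Grothendieck--Verdier duality and then apply Proposition~\ref{max} to an appropriate twisted dual of $E$.

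More precisely, on the nonsingular locus $U\subset X$ the restriction $f|_{U}$ is flat, so the duality functor satisfies
$$
f^{!}\mathcal O_{\mathbf P^n}|_U\;\simeq\;\omega_{X/\mathbf P^n}[0]\big|_U\;=\;\omega_X\otimes f^{*}\mathcal O_{\mathbf P^n}(n+1)\big|_U,
$$
because $\omega_{\mathbf P^n}\simeq\mathcal O_{\mathbf P^n}(-n-1)$ and $f^{*}\mathcal O_{\mathbf P^n}(1)=\mathcal O_X(H)$. Combining this with the functorial isomorphism $f_{*}R\mathcal Hom_X(E,f^{!}\mathcal O_{\mathbf P^n})\simeq R\mathcal Hom_{\mathbf P^n}(f_{*}E,\mathcal O_{\mathbf P^n})$ recalled in Section~\ref{cover}, and using that $E$ is torsion free (hence $\mathcal Hom$ computes the derived $\mathrm{Hom}$ in codimension $1$), I would conclude
$$
(f_{*}E)^{\vee}\;\simeq\;f_{*}\bigl(E^{\vee}\otimes\omega_X((n+1)H)\bigr)
$$
as torsion-free sheaves on $\mathbf P^n$, modulo codimension $\geq 2$ discrepancies that are invisible to $\mu$. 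By the projection formula (applied on $U$ and extended across $Z=X\setminus U$ by reflexivity, cf.\ the discussion preceding the proposition) this rewrites as $f_{*}F\otimes\mathcal O_{\mathbf P^n}(n+1)$, where $F:=E^{\vee}\otimes\omega_X$. Since $E$ is $\mu$-semistable, tensoring by the reflexive rank-one sheaf $\omega_X$ and dualizing both preserve $\mu$-semistability on $U$, so $F$ is $\mu$-semistable with $\mu(F)=-\mu(E)+\mu(\omega_X)$; Lemma~\ref{order} (or the corresponding degree/rank formulas) then plays no further role here.

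Now I would apply Proposition~\ref{max} to $F$, obtaining $\mu_{\max}(f_{*}F)\leq \mu(F)/d=(-\mu(E)+\mu(\omega_X))/d$. The twist by $\mathcal O_{\mathbf P^n}(n+1)$ shifts $\mu_{\max}$ by $n+1$, so
$$
\mu_{\max}\bigl((f_{*}E)^{\vee}\bigr)\;=\;\mu_{\max}(f_{*}F)+(n+1)\;\leq\;\frac{-\mu(E)+\mu(\omega_X)}{d}+(n+1),
$$
and the identity $\mu_{\min}(f_{*}E)=-\mu_{\max}((f_{*}E)^{\vee})$ yields the claimed bound. The main technical hurdle is Step two: making the identification $(f_{*}E)^{\vee}\simeq f_{*}(E^{\vee}\otimes\omega_{X/\mathbf P^n})$ rigorous when $X$ is only normal rather than Cohen--Macaulay, so that $f$ need not be globally flat and $\omega_X$ need not be invertible. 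The resolution is to carry the duality argument out on the flat locus $U$ (where $f^{!}\mathcal O_{\mathbf P^n}$ is concentrated in degree zero and equals the invertible sheaf $\omega_{X/\mathbf P^n}|_U$) and invoke that $\omega_X=j_{*}\omega_U$ together with the fact that $\mu$-slopes of coherent sheaves on $\mathbf P^n$ are determined by their restrictions to any big open, so the codimension $\geq 2$ correction terms in the Grothendieck--Verdier isomorphism do not affect the final inequality.
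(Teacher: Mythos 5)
Your proposal is correct, but it reaches the bound by a genuinely different organization of the same duality input. The paper argues directly: it takes the minimal destabilizing quotient $Q$ of $f_*E$, transports the nonzero morphism $f_*E \to Q$ across the adjunction $\bar f_* \dashv \bar f^!$ on the flat locus to produce a nonzero element of $\mathrm{Hom}_X\left(E\otimes \ox_X^{-1}(-(n+1)H),\, j_*(f^*Q|_U)\right)$, verifies inline that both sides are $\mu$-semistable (proving along the way that tensoring with a reflexive rank-one sheaf preserves semistability and that $\mu$ is additive under tensor product), and concludes by Hom-vanishing between semistable sheaves of decreasing slope. You instead dualize the statement itself, via $\mu_{\min}(f_*E)=-\mu_{\max}\left((f_*E)^{\vee}\right)$ and the sheaf-level Grothendieck--Verdier isomorphism $(f_*E)^{\vee}\simeq f_*(E^{\vee}\otimes\ox_X)\otimes\mathcal O_{\mathbf P^n}(n+1)$ up to codimension two, and then invoke Proposition~\ref{max} as a black box applied to $E^{\vee}\otimes\ox_X$. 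If one unwinds your reduction, the maximal destabilizing subsheaf of $(f_*E)^{\vee}$ is precisely $Q^{\vee}$, so the two computations coincide; what your version buys is modularity --- Proposition~\ref{min} becomes a formal consequence of Proposition~\ref{max} --- at the price of two extra standard facts that must be justified with the same codimension-two care (that dualizing a torsion-free sheaf on $\mathbf P^n$ reverses the HN filtration, and the sheaf-level rather than Hom-group-level duality isomorphism). One small point you should make explicit: $E^{\vee}\otimes\ox_X$ need not be torsion-free along the singular locus, so Proposition~\ref{max} should be applied to its torsion-free part, which has the same slope since the torsion sits in codimension $\geq 2$; the paper handles the analogous issue by passing to torsion-free parts throughout its own proof.
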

\begin{proof}
Let $Q$ be the minimal destabilizing quotient sheaf of $f_*E$ on $\mathbf P^n$ such that $0 \neq \mathrm{Hom}_{\mathbf P^n}(f_*E, Q)$. Thus $0 \neq \mathrm{Hom}_{U_f}\left(f_*E|_{U_f}, Q|_{U_f}\right) \simeq \mathrm{Hom}_{U_f}\left(\bar f_*(E|_U), Q|_{U_f}\right)$ by flat base change. Also by the adjunction of $\bar f_* \dashv \bar f^!$ we have natural isomorphisms 
\begin{align*}
\mathrm{Hom}_{U_f}\left(\bar f_*(E|_U), Q|_{U_f}\right) &\simeq \mathrm{Hom}_U\left(E|_U, \bar f^!\left(Q|_{U_f}\right)\right) \\ &\simeq \mathrm{Hom}_U\left(E|_U \otimes \ox_U^{-1}(-n-1), \bar f^*\left(Q|_{U_f}\right)\right) \\ &\simeq \mathrm{Hom}_X\left(E\otimes j_*\left(\ox_U^{-1}\right)(-n-1), j_*\left(f^*Q|_U\right)\right),
\end{align*}
where $\bar f^!\left(Q|_{U_f}\right) \simeq \bar f^*\left(Q|_{U_f}\right) \otimes \ox_U(n+1) \simeq f^*Q|_U \otimes \ox_X|_U(n+1)$. Thus the lemma~\ref{Galois} implies $j_*(f^*Q|_U)$ is $\mu$-semistable and in turn the torsion free part of $E\otimes \ox_X^{-1}(-n-1)$ is also $\mu$-semistable with the reflexive sheaf $\ox_X^{-1} := j_*\left(\ox_U^{-1}\right)$ of rank one. Indeed, the torsion free part of $E' \otimes \ox_X^{-1}$ is $\mu$-semistable for any $\mu$-semistable sheaf $E'$ on $X$. If $G \subset j_*\left(E'|_U\otimes \ox_U^{-1}\right)$ is the maximal destabilizing subsheaf, then $G|_U\otimes \ox_U \subset E'|_U$ is a subsheaf since $\ox_U^{-1} = \ox_X^{-1}|_U$ is an invertible sheaf on $U$. One has $j_*(G|_U \otimes \ox_U) \subset j_*(E'|_U)$ with $\mu(j_*(G|_U \otimes \ox_U)) = \mu(j_*G|_U) + \mu(\ox_X) > \mu(E') = \mu(j_*(E'|_U))$, a contradiction. 

Here, we combine with the fact that $\mu(V \otimes W) = \mu(V) + \mu(W)$ for torsion free coherent sheaves $V, W$ on $X$. Since $X$ is normal projective scheme over a perfect field, we may assume $H^{n-1}$ is a nonsingular curve and $V|_{H^{n-1}}, W|_{H^{n-1}}$ are locally free. On a nonsingular curve $C$, one has $\deg(V'\otimes W') = \mathrm{rk}(W')\deg(V') + \mathrm{rk}(V')\deg(W')$ for locally free coherent sheaves $V', W'$ on $C$ and by elementary arithmetic we obtain $\mu(V \otimes W) = \mu(V) + \mu(W)$.

Therefore, the torsion free parts of $E \otimes \ox_X^{-1}(-n-1)$ and $f^*Q$ are $\mu$-semistable with nontrivial $\mathrm{Hom}_X\left(E \otimes \ox_X^{-1}(-n-1), f^*Q \right)$. The vanishing property between $\mu$-semistable sheaves implies 
$$
\mu_{\min}(f_*E) = \mu(Q) = \frac{\mu(f^*Q)} d  \geq \frac{\mu(E) } d - \frac{\mu(\ox_X)} d - (n+1).
$$
Note that $\mu(\mathcal O_X(H)) = \deg(\mathcal O_X(H)) = d$.
\end{proof}
\begin{remark}
If $\dim X = 2$, then $X$ is Cohen-Macaulay such that the finite map $f: X \rightarrow \mathbf P^2$ is flat and thus the dualizing complex $\ox_X^{\bullet}$ is quasi-isomorphic to the canonical sheaf $\ox_X$ shifted by $2$. Indeed, for any Cohen-Macaulay scheme of finite type of dimension $n$ over a field, we have $\ox_X^{\bullet} \simeq \ox_X[n]$.
\end{remark}
\begin{cor}
Let $\ox_X$ be the canonical sheaf of a normal integral projective scheme $X$ of dimension $n$ over a perfect field with a very ample invertible sheaf $\mathcal O_X(H)$ and $\deg_H X = d$. Then $$\deg_H(\ox_X) \geq -d\cdot (n+1). $$ 
\end{cor}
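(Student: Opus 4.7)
The plan is to apply Propositions~\ref{max} and~\ref{min} to a convenient $\mu$-semistable sheaf on $X$, and the natural choice is $E = \mathcal O_X$ itself. Since $\mathcal O_X$ is torsion free of rank one, it is trivially $\mu$-semistable, and $\mu(\mathcal O_X) = 0$. Moreover $\ox_X$ also has rank one, so $\mu(\ox_X) = \deg_H(\ox_X)$, which reduces the claim to the slope inequality $\mu(\ox_X) \geq -d(n+1)$.

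I would first rearrange the conclusion of Proposition~\ref{min} applied to $E = \mathcal O_X$ as
\[
\mu(\ox_X) \geq \mu(\mathcal O_X) - d\cdot \mu_{\min}(f_*\mathcal O_X) - d(n+1) = -d\cdot \mu_{\min}(f_*\mathcal O_X) - d(n+1),
\]
which reduces the problem to showing $\mu_{\min}(f_*\mathcal O_X) \leq 0$. This in turn follows by chaining Proposition~\ref{max} (applied again to $E = \mathcal O_X$), which gives $\mu_{\max}(f_*\mathcal O_X) \leq \mu(\mathcal O_X)/d = 0$, with the standard inequality $\mu_{\min} \leq \mu_{\max}$ for the Harder--Narasimhan filtration of $f_*\mathcal O_X$ on $\mathbf P^n$.

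There is no real obstacle in the argument: the corollary is extracted by plugging the structure sheaf into both Propositions~\ref{max} and~\ref{min} and chaining the resulting inequalities so that the auxiliary slope $\mu_{\min}(f_*\mathcal O_X)$ drops out. The only subtlety worth noting is that the bound is independent of the specific finite separable cover $f\colon X \to \mathbf P^n$ provided by Noether normalization; this is precisely because the upper bound $\mu_{\max}(f_*\mathcal O_X) \leq 0$ is intrinsic to $\mathcal O_X$ being $\mu$-semistable of slope zero and does not depend on the particular projection chosen.
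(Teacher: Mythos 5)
Your proof is correct and uses essentially the same ingredients as the paper: Propositions~\ref{max} and~\ref{min} chained through the trivial inequality $\mu_{\min}(f_*E) \leq \mu_{\max}(f_*E)$. The only difference is the test sheaf — you feed $E = \mathcal O_X$ into both propositions and solve for $\mu(\ox_X)$ from the Proposition~\ref{min} bound, whereas the paper feeds $E = \ox_X$ into both (using that $\ox_X$ is reflexive of rank one, hence stable), so that the two $\mu(\ox_X)/d$ terms in Proposition~\ref{min} cancel and give $\mu_{\min}(f_*\ox_X) \geq -(n+1)$ directly; both choices yield the stated bound.
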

\begin{proof}
$\ox_X$ is a reflexive sheaf of rank one, thus is stable. By Prop.~\ref{max} and Prop~\ref{min}, we have $$\deg_H(\ox_X) \geq d \cdot \mu_{\max}(f_*\ox_X) \geq d\cdot \mu_{\min}(f_*\ox_X) \geq -d\cdot (n+1).$$
\end{proof}

This corollary implies that the degree of the canonical sheaf of $X$ of degree $d$ with respect to a very ample invertible sheaf  has a lower bound. In other words, the intersection number $\ox_X.H^{n-1}$ can not be less than $-d\cdot (\dim X +1)$.

\subsection{$\Delta$-stability of degree $1$ and Bridgeland stability on normal surfaces}\label{surface}

Let us now focus on a normal projective surface $X$ over a perfect field $k$, i.e., a normal integral projective scheme of dimension $2$ over $k$. According to Prop.~\ref{hnf} this amounts to constructing a slope sequence $\{L_i^s\}$ with the slope polynomial $P_{t = -i}(-) = \chi(L_i^s, -)$ on the noetherian heart $\tilde A$ of $\mathrm{D^b}(X)$ with a bounded t-structure. This can be achieved with the proposal by Prop.~\ref{bound}, generalizing the construction of $\Dx$-stability of degree zero on rational curves outlined in Sec.~\ref{zero}. In a particular example the possibility to realize $\Dx$-stability of degree $n-1$ for normal integral projective schemes of dimension $n > 2$ appeared in Sec.~\ref{higher}.

In the projective space $\mathbf P^2$ with the ample invertible sheaf $\mathcal O_{\mathbf P^2}(1)$, for any $\mu$-semistable torsion free sheaf $F$ with $s -1 \leq \mu(F) < s$ for some $s \in \mathbb Z$, we have 
$$
0 = \mathrm{Hom}(\mathcal O_{\mathbf P^2}(s), F) = \mathrm{Hom}(F, \mathcal O_{\mathbf P^2}(s-3)) \simeq \mathrm{Hom}(\mathcal O_{\mathbf P^2}(s), F[2])^*,
$$
where the last isomorphism is given by Serre duality with the dualizing sheaf $\ox_{\mathbf P^2} = \mathcal O_{\mathbf P^2}(-3)$. It turns out that $\chi(\mathcal O_{\mathbf P^2}(s), F) \leq - \dim_k\mathrm{Hom}(\mathcal O_{\mathbf P^2}(s), F[1]) \leq 0$. Therefore, by Lemma~\ref{Hilbert} the Euler characteristic of $F$ has a polynomial upper bound as
\begin{align*}
\chi(\mathcal O_{\mathbf P^2}, F) &\leq -\chi(k(p), F)\binom{-s}{2} + \chi(\mathcal O_{\mathbf P^1}, F)(-s) \\ &=-\chi(k(p), F)\left(\binom{-s}{2} + (-s)\hat{\mu}(F) \right) \\ &= \mathrm{rk}(F)\cdot (-s)\left(\frac s 2  - \frac 3 2 - \mu(F)\right)
\end{align*}
with $\hat{\mu}(F) = \mu(F) - \chi(\mathcal O_{\mathbf P^1}, \mathcal O_{\mathbf P^2}) = \mu(F) + 2$. Here, $k(p)$ is the skyscraper sheaf at a closed point $p$ and $\mathcal O_{\mathbf P^1}$ is the structure sheaf of a hyperplane in $\mathbf P^2$. Meanwhile, since $\mu(F) + 1 \geq s > \mu(F)$, by elementary calculus for each $\mu(F)$, one has $$f(s) := (-s)\left(\frac s 2  - \frac 3 2 - \mu(F)\right) \leq (\mu(F) + 1)\left(\frac{\mu(F)} 2 + 1\right) = \binom{\mu(F) + 2}{2}$$ at $s = \mu(F) + 1$. Indeed, the critical point of the polynomial $f(s)$ is at $s = \mu(F) + \frac 3 2$, thus $\sup\{f(s) : \mu(F) + 1\geq s >\mu(F)\} = \sup\{f(s) : \mu(F) + 3 > s \geq \mu(F) + 2\} = \binom{\hat{\mu}(F)}{2}$. Therefore, we prove the following lemma.
\begin{lem}\label{p2}
Let $F$ be a $\mu$-semistable torsion free sheaf on $\mathbf P^2$. Then the Euler characteristic of $F$ has a polynomial upper bound 
$$
\chi(\mathcal O_{\mathbf P^2}, F) \leq \chi(k(p), F) \cdot \binom{\hat{\mu}(F)}{2}
$$
with $\mathrm{rk}(F) = \chi(k(p), F)$ and $\hat{\mu}(F) = -\frac{\chi(\mathcal O_{\mathbf P^1}, F)}{\chi(k(p), F)}$.
\end{lem}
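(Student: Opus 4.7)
The strategy is to exploit $\mu$-semistability of $F$ together with Serre duality on $\mathbf P^2$ to kill $\mathrm{Hom}$ and $\mathrm{Ext}^2$ against a suitable twist $\mathcal O_{\mathbf P^2}(s)$, making $\chi(\mathcal O_{\mathbf P^2}(s),F)\le 0$, and then read off the bound by reading this inequality through the Hilbert polynomial expansion supplied by Lemma~\ref{Hilbert}.

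First I would choose an integer $s$ with $\mu(F) < s \le \mu(F)+1$, which exists because $\mu(F)\in\mathbb Q$. Since $\mathcal O_{\mathbf P^2}(s)$ is a $\mu$-stable line bundle of slope $s > \mu(F)$ and $F$ is $\mu$-semistable, the standard vanishing between $\mu$-semistables of strictly decreasing slopes yields $\mathrm{Hom}(\mathcal O_{\mathbf P^2}(s),F) = 0$. Dually, $\mu(F)\ge s-1 > s-3 = \mu(\mathcal O_{\mathbf P^2}(s-3))$ gives $\mathrm{Hom}(F,\mathcal O_{\mathbf P^2}(s-3)) = 0$, and Serre duality with $\ox_{\mathbf P^2}\simeq \mathcal O_{\mathbf P^2}(-3)$ identifies this last group with $\mathrm{Ext}^2(\mathcal O_{\mathbf P^2}(s),F)^{*}$. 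Hence $\chi(\mathcal O_{\mathbf P^2}(s),F) = -\dim_k\mathrm{Ext}^1(\mathcal O_{\mathbf P^2}(s),F) \le 0$. Expanding the left-hand side through Lemma~\ref{Hilbert} at $t=-s$ gives
$$
\chi(\mathcal O_{\mathbf P^2}(s),F) \;=\; \chi(\mathcal O_{\mathbf P^2},F) + s\,\chi(\mathcal O_{\mathbf P^1},F) + \binom{-s}{2}\chi(k(p),F) \;\le\; 0,
$$
which, after using $\chi(\mathcal O_{\mathbf P^1},F) = -\hat{\mu}(F)\chi(k(p),F)$ and $\mathrm{rk}(F) = \chi(k(p),F)$, rearranges to
$$
\chi(\mathcal O_{\mathbf P^2},F) \;\le\; \chi(k(p),F)\cdot\Bigl(s\,\hat{\mu}(F) - \tbinom{-s}{2}\Bigr).
$$

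Finally I would close the argument by maximising the quadratic $g(s):= s\hat{\mu}(F) - \binom{-s}{2}$ over the admissible interval. Using $\hat{\mu}(F) = \mu(F)+2$, one computes $g'(s) = \hat{\mu}(F) - s - 1/2$, so the unique vertex sits at $s = \hat{\mu}(F) - 1/2 = \mu(F) + 3/2$, which lies strictly to the right of $\mu(F)+1$; therefore $g$ is increasing throughout $(\mu(F),\mu(F)+1]$ and its supremum on this interval is attained at $s = \mu(F)+1$, with value $(\mu(F)+1)(\mu(F)+2)/2 = \binom{\hat{\mu}(F)}{2}$. Since the integer $s$ chosen above lies in this interval, $g(s)\le \binom{\hat{\mu}(F)}{2}$, which gives the claimed inequality. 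The only mildly subtle step is the last one: the vanishings available from $\mu$-semistability allow $s$ to range over $\mu(F)<s<\mu(F)+3$ and a naive maximisation on that larger interval would overshoot $\binom{\hat{\mu}(F)}{2}$; the point is that it suffices, and is sharpest, to pick $s$ in the smaller window $(\mu(F),\mu(F)+1]$, where the quadratic $g$ is still increasing and therefore dominated by its endpoint value $\binom{\hat{\mu}(F)}{2}$.
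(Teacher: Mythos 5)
Your argument is correct and coincides with the paper's own proof: the same integer $s$ with $\mu(F)<s\le\mu(F)+1$, the same two vanishings (semistability plus Serre duality with $\ox_{\mathbf P^2}=\mathcal O_{\mathbf P^2}(-3)$) forcing $\chi(\mathcal O_{\mathbf P^2}(s),F)\le 0$, the same expansion via Lemma~\ref{Hilbert}, and the same maximisation of the resulting quadratic on $(\mu(F),\mu(F)+1]$, whose vertex at $\mu(F)+\tfrac32$ the paper also identifies. No gaps.
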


For the normal projective surface $X$ we pick again a finite separable morphism $f$ from $X$ to a projective space $\mathbf P^2$ with the ample invertible sheaf $\mathcal O_X(H) := f^*\mathcal O_{\mathbf P^2}(1)$. However, instead of choosing a HN filtration $f_*E$ in $\mathbf P^2$ for torsion free sheaf $E \in \mathrm{Coh}(X)$, we assume that the Euler characteristic $\chi(\mathcal O_X, E)$ factors into a sum
$$
\chi(\mathcal O_X, E) =\sum_{i=0}^m \chi(\mathcal O_X, E_i) =\sum_{i=0}^m \chi(\mathcal O_{\mathbf P^2}, f_*E_i)
$$ 
such that $E_i$ are $\mu$-semistable torsion free factors of HN filtration of $E$ with $\hat{\mu}(E_i) > \hat{\mu}(E_{i+1})$ for $i = 0, \dots, m-1$. As a consequence for each $E_i$ the Euler characteristic of $f_*E_i$ decomposes into
$$
\chi(\mathcal O_{\mathbf P^2}, f_*E_i) = \sum_{j=0}^{m_i} \chi\left(\mathcal O_{\mathbf P^2}, F_j^i\right)
$$
where $F_j^i$ are $\mu$-semistable torsion free sheaves of HN filtration of $f_*E_i$ for $j = 0, \dots, m_i$. Following Prop.~\ref{max} and Prop.~\ref{min}, we construct the upper and lower bound of $\mu_{\max}(f_*E_i), \mu_{\min}(f_*E_i)$, respectively, associated to HN filtration of $f_*E_i$ for each $\mu$-semistable torsion free sheaf $E_i$ by the slope $\mu(E_i)$, $\mu(\ox_X)$ and $d := \deg_H X$ according to
$$
\frac{\mu(E_i)}{d} \geq \mu_{\max}(f_*E_i) \geq \mu_{\min}(f_*E_i) \geq \frac{\mu(E_i)} d - \frac{\mu(\ox_X)} d - 3.
$$
Since the rank $\mathrm{rk}(E_i)$ and the slope $\hat{\mu}(E_i)$ are constants for each $i$, the rank $\mathrm{rk}(f_*E_i)$ and the slope $\hat{\mu}(f_*E_i)$ are constants as well. As Lemma~\ref{order}, the $\mu$-semistable $E_i$ is torsion free with $\mathrm{rk}(f_*E_i) = d\cdot \mathrm{rk}(E_i)$ and $\hat{\mu}(f_*E_i) = \hat{\mu}(E_i)$ for each $i$. By the decomposition of Euler characteristic of $f_*E_i$ on the bounded interval of the slope $\left[\frac{\mu(E_i)} d -\frac{\mu(\ox_X)} d - 3, \frac{\mu(E_i)}{d}\right]$ and Lemma~\ref{p2} the resulting upper bound of Euler characteristic of $f_*E_i$ is then given by
$$
\chi(\mathcal O_{\mathbf P^2}, f_*E_i) \leq \chi(\mathcal O_{H^2}, E_i) \hat P_1(\mu(E_i), \ox_X, d),
$$
where $\mathcal O_{H^2}$ is the structure sheaf of the zero scheme $H^2$ in $X$ and
$$
\hat P_1(\mu(E_i), \ox_X, d) := \sup\left\{\binom{\mu\left(F^i_j\right) + 2}{2} : \mu\left(F^i_j\right) \in \left[\frac{\mu(E_i)} d -\frac{\mu(\ox_X)} d - 3, \frac{\mu(E_i)}{d}\right], j=0, \dots, m_i\right\}.
$$
Indeed, $\hat P_1(\mu(E_i), \ox_X, d)$ is either $\binom{\frac{\mu(E_i)} d + 2}{2}$ or $\binom{\frac{\mu(E_i)} d -\frac{\mu(\ox_X)} d - 1}{2}$. Furthermore, the upper bound of Euler characteristic of $E$ for any torsion free coherent sheaf $E$ becomes
$$
\chi(\mathcal O_X, E) \leq \chi(\mathcal O_{H^2}, E)\hat P_2(\mu_{\max}(E), \mu_{\min}(E), \ox_X, d)
$$
in terms of the maximal and minimal slope $\mu_{\max}(E), \mu_{\min}(E)$, respectively, of HN filtration of $E$ and the degree of $\ox_X$ of the normal projective surface $X$ with respect to $\mathcal O_X(H)$. As before, the universal polynomial $\hat P_2$ is readily computed by the formula 
$$
\hat P_2(\mu_{\max}(E), \mu_{\min}(E), \ox_X, d) := \sup\left\{\hat P_1(\mu(E_i), \ox_X, d) : \mu(E_i) \in \left[\mu_{\min}(E), \mu_{\max}E \right], i=0, \dots, m\right\}.
$$
Thus we prove the following boundedness condition.
\begin{theorem}\label{ebound}
Let $E$ be a torsion free coherent sheaf on a normal projective surface $X$ over $k$ with a fixed very ample invertible sheaf $\mathcal O_X(H)$. Then there exists a polynomial $\hat P(\mu_{\max}(E), \mu_{\min}(E), \ox_X, d = \deg_H X)$ depending on the maximal, minimal slope of HN filtration of $E$, the degree of the canonical sheaf $\ox_X$ of $X$ and the degree of $X$ with respect to $\mathcal O_X(H)$ such that
$$
\chi(\mathcal O_X, E) \leq \chi(\mathcal O_{H^2}, E)\hat P(\mu_{\max}(E), \mu_{\min}(E), \ox_X, d)
$$
with $\chi(\mathcal O_{H^2}, E) = d\cdot \mathrm{rk}(E)$ and $\mu(E) = \frac{\deg_H E}{\mathrm{rk}(E)}$. Furthermore, if $E$ is $\mu$-semistable, then the polynomial $\hat P(\mu(E), \ox_X, d)$ is either $\binom{\frac{\mu(E)} d + 2}{2}$ or $\binom{\frac{\mu(E)} d -\frac{\mu(\ox_X)} d - 1}{2}$.
\end{theorem}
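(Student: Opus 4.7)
The plan is to reduce the general case to the $\mu$-semistable case via the Harder--Narasimhan filtration, and then transfer the problem to $\mathbf P^2$ through the finite separable morphism $f\colon X\to\mathbf P^2$ with $\mathcal O_X(H)=f^*\mathcal O_{\mathbf P^2}(1)$ furnished by Noether normalization over the perfect field $k$. Concretely, I would first write the HN filtration of $E$ into $\mu$-semistable torsion free factors $E_0,\dots,E_m$ with $\hat\mu(E_0)>\cdots>\hat\mu(E_m)$, so that
$$
\chi(\mathcal O_X,E)=\sum_{i=0}^m\chi(\mathcal O_X,E_i)=\sum_{i=0}^m\chi(\mathcal O_{\mathbf P^2},f_*E_i),
$$
using that $f_*$ is exact and that $\mathrm{Hom}(\mathcal O_X(-tH),E_i)\simeq\mathrm{Hom}(\mathcal O_{\mathbf P^2}(-t),f_*E_i)$.

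Next, for each $\mu$-semistable factor $E_i$, I would apply Proposition~\ref{max} and Proposition~\ref{min} to sandwich the slopes appearing in the HN filtration of $f_*E_i$ on $\mathbf P^2$:
$$
\frac{\mu(E_i)}{d}-\frac{\mu(\ox_X)}{d}-3\;\leq\;\mu_{\min}(f_*E_i)\;\leq\;\mu(F^i_j)\;\leq\;\mu_{\max}(f_*E_i)\;\leq\;\frac{\mu(E_i)}{d},
$$
where the $F^i_j$ are the $\mu$-semistable HN factors of $f_*E_i$ on $\mathbf P^2$. Then Lemma~\ref{p2} applied factor by factor yields $\chi(\mathcal O_{\mathbf P^2},F^i_j)\leq\mathrm{rk}(F^i_j)\binom{\mu(F^i_j)+2}{2}$, and summing over $j$ together with the identity $\sum_j\mathrm{rk}(F^i_j)=\mathrm{rk}(f_*E_i)=d\cdot\mathrm{rk}(E_i)=\chi(\mathcal O_{H^2},E_i)$ provided by Lemma~\ref{order} gives
$$
\chi(\mathcal O_X,E_i)\;\leq\;\chi(\mathcal O_{H^2},E_i)\,\hat P_1(\mu(E_i),\ox_X,d),
$$
where $\hat P_1(\mu(E_i),\ox_X,d)$ is the supremum of $\binom{s+2}{2}$ over $s$ in the bounded interval above. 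Since $\binom{s+2}{2}$ is convex in $s$, this supremum is attained at one of the endpoints, giving the explicit alternative $\binom{\mu(E)/d+2}{2}$ or $\binom{\mu(E)/d-\mu(\ox_X)/d-1}{2}$ claimed in the final sentence of the theorem.

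Finally, summing the $\mu$-semistable bounds over the HN factors of $E$ and replacing each local bound by a uniform one over the interval $[\mu_{\min}(E),\mu_{\max}(E)]$ produces the global polynomial $\hat P(\mu_{\max}(E),\mu_{\min}(E),\ox_X,d)$. I expect the principal obstacle to lie in verifying that Proposition~\ref{min} applies cleanly on $X$ despite possible singularities: specifically, the identification $\omega_X\simeq j_*\omega_U$ from normality ($R_1+S_2$) and Cohen--Macaulayness of normal surfaces (so that $\ox_X^{\bullet}\simeq\ox_X[2]$ and $f$ is flat) are essential to obtaining the Grothendieck--Verdier adjunction in the precise form used to derive the lower bound on $\mu_{\min}(f_*E_i)$. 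The remaining subtleties---tensor compatibility of $\mu$-semistability with $\ox_X^{-1}(-3)$, and the Galois descent step inside Lemma~\ref{Galois} that also feeds into Proposition~\ref{max}---are precisely where the hypothesis that $k$ is perfect is indispensable.
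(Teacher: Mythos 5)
Your proposal is correct and follows essentially the same route as the paper: decompose via the Harder--Narasimhan filtration on $X$, push each $\mu$-semistable factor forward along the finite separable cover $f\colon X\to\mathbf P^2$, sandwich the slopes of the HN factors of $f_*E_i$ using Propositions~\ref{max} and~\ref{min}, and apply Lemma~\ref{p2} together with the rank identity from Lemma~\ref{order} before taking suprema over the resulting bounded intervals. Your observation that convexity of $\binom{s+2}{2}$ forces the supremum to an endpoint is exactly how the paper obtains the two explicit alternatives for $\hat P$ in the semistable case.
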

\begin{remark}
In Sec.~\ref{higher} we will present an alternative approach to the universal polynomial and give a more precise formula for higher dimensional $(n > 2)$ normal integral projective schemes.
\end{remark}

Assume $\mathcal A$ is the heart of a bounded t-structure on a triangulated category $\mathcal T$. D.~Happel, I.~ Reiten and S.~Smal{\o} show in~\cite{MR1327209} that with such a torsion pair $\left(\mathcal F^{\bot}, \mathcal F\right)$ of an abelian category $\mathcal A$ so that $\mathrm{Hom}_{\mathcal A}\left(\mathcal F^{\bot}, \mathcal F\right) = 0$ and given $E \in \mathcal A$ there exists a short exact sequence $0 \rightarrow T \rightarrow E \rightarrow F \rightarrow 0$ for some pairs $T \in \mathcal F^{\bot}$ and $F \in \mathcal F$, the full subcategory $\widetilde{\mathcal A}$ is the heart of a bounded t-structure on $\mathcal T$ for $$\widetilde{\mathcal A} := \left\{E \in \mathcal T : H^0(E) \in \mathcal F^{\bot}, H^{-1}(E) \in \mathcal F\right\} = \left\langle \mathcal F[1], \mathcal F^{\bot}\right\rangle.$$ On $\mathrm{D^b}(X)$ this gluing recipe can still be carried out in the same way. In particular, these torsion pairs from the abelian category of coherent sheave $\mathrm{Coh}(X)$ on a noetherian scheme $X$ always exist if the category $\mathrm{Coh}(X)$ has a stability filtration in the sense of Sec.~\ref{stabilityf}.

Thus, we observe that from the $\mu$-stability on $X$ often several torsion pairs can be constructed depending on HN filtration and the vanishing properties of $\mu$-semistable sheaves. Namely, for any given rational number $q \in \mathbb Q$ we obtain a torsion pair $\left(\mathcal F_q^{\bot}, \mathcal F_q\right)$ with the tilted heart $\mathcal  A_q$, whereas for a object $E$ belonging to $\mathcal  A_q$ we arrive with the pair of objects $(F, T)$ at a distinguished triangle $F \rightarrow E \rightarrow T \rightarrow F[1]$ with $F[-1] \in \mathcal F_q$ and $T \in \mathcal F_q^{\bot}$. The former object $F[-1]$ is a torsion free coherent sheaf with the slope $\mu(F[-1]) \leq q$, while the latter object $T$ is either a torsion coherent sheaf or torsion free coherent sheaf with the slope $\mu(T) > q$. In the sequel we argue that these different possibilities realize distinct branches of the $\Dx$-stability of degree $1$ on normal projective surfaces.

Recall from Prop.~\ref{bound} that on a projective scheme $X$ of dimension $n$ over a field $k$ with an ample invertible sheaf $\mathcal O_X(H)$, $\{L_{-t} := \mathcal O_X(-tH)[1]\}_{\hat{\mu}(\mathcal O_X(-tH)) \leq q}$ forms a tilted ample chain fulfilling Prop.~\ref{sc} in the tilted heart $\mathcal A_q$ for some $q = \frac{m_1}{m_2}\in \mathbb Q$ with $m_2 >0$, and induces the tilted numerical polynomial $\widetilde P_t(-) = \sum^{n}_{c=0}(-1)^c\chi\left(\mathcal O_{H^c}, -\right)\binom{t}{c} = \sum^{n}_{c=0}a_c(-)\binom{t}{c}$. Then we can pick $\check L_{-t} = \mathrm C\left( \mathcal O_{H^n}^{\oplus m_2\binom t n + m_1\binom t{n-1}}[-n+1] \rightarrow L_{-t}^{\oplus m_2}\right)$ for $t >> 0$ with the associated numerical polynomial $\check P_{-t}(-) = \chi\left(\check L_{-t}, -\right) = \sum^{n-1}_{c=0}\check a_c(-)\binom{t}{c}$ where $\check a_{n-1}(-) = m_2\cdot a_{n-1}(-) - m_1\cdot a_n(-)$ and $\check a_c(-) = m_2\cdot a_c(-)$ for $c= 0, \dots, n-2$. By construction one could see that  $\check a_{n-1}(E) \geq 0$ for $E \in \mathcal A_q$ and if the equality holds then $E \in \mathrm{Ext}^2(T, F)$ for some $F \in \mathcal F_q$ and torsion coherent sheaf $T \in \mathcal F_q^{\bot}$ of codimension $\geq 2$.
\begin{prop}\label{noetherian}
Suppose $X$ is a normal integral projective scheme over a (unnecessarily perfect) field $k$ with a vary ample invertible sheaf $\mathcal O_X(H)$. Let $\left(\mathcal F_q^{\bot}, \mathcal F_q\right)$ be a torsion pair of $\mathrm{Coh}(X)$ for some $q \in \mathbb Q$ such that $\hat{\mu}(F) \leq q$ for $F \in \mathcal F_q$ and $\hat{\mu}(T) > q$ for torsion free $T \in \mathcal F_q^{\bot}$. Then the tilted heart $\mathcal A_q$ is Noetherian.
\end{prop}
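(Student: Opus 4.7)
The plan is a proof by contradiction: assume a strictly ascending chain $E_1\subsetneq E_2\subsetneq\cdots$ in $\mathcal A_q$ with non-zero quotients $Q_i:=E_{i+1}/E_i\in\mathcal A_q$, and derive a contradiction showing $Q_i=0$ for $i\gg 0$. Each $E\in\mathcal A_q$ sits in the canonical sequence $0\to H^{-1}(E)[1]\to E\to H^0(E)\to 0$ with $H^{-1}(E)\in\mathcal F_q$ (torsion-free with $\hat\mu_{\max}\leq q$) and $H^0(E)\in\mathcal F_q^\perp$ (torsion, or with torsion-free part of $\hat\mu_{\min}>q$). The first step is to extract the standard $\mathrm{Coh}(X)$-cohomology long exact sequence
\[
0\to H^{-1}(E_i)\to H^{-1}(E_{i+1})\to H^{-1}(Q_i)\to H^0(E_i)\to H^0(E_{i+1})\to H^0(Q_i)\to 0,
\]
and observe in particular that $\{H^{-1}(E_i)\}$ is an ascending chain of torsion-free coherent subsheaves lying in $\mathcal F_q$.

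The second step uses the $\Delta$-additive function $\check a_{n-1}$ constructed just before the statement: it is a nonnegative integer-valued invariant on $\mathcal A_q$, vanishing on $E$ iff $E$ is an extension of a codimension-$\geq 2$ torsion sheaf by an object of $\mathcal F_q[1]$. By additivity, $\{\check a_{n-1}(E_i)\}$ is a non-decreasing sequence in $\mathbb Z_{\geq 0}$, so it either diverges or stabilizes. Writing $\check a_{n-1}(E)$ as the explicit $\mathbb Z$-linear combination $m_2(-1)^{n-1}\chi(\mathcal O_{H^{n-1}},-)-m_1 d\cdot\mathrm{rk}(-)$ applied to the cohomology sheaves of $E$, and invoking the slope bounds $\hat\mu(H^{-1}(E))\leq q=m_1/m_2$ and $\hat\mu(\text{torsion-free part of }H^0(E))>q$, one obtains a quantitative linear bound of the form
\[
\mathrm{rk}\bigl(H^{-1}(E_i)\bigr)+\mathrm{rk}\bigl(\text{tf part of }H^0(E_i)\bigr)\leq C_1\cdot\check a_{n-1}(E_i)+C_2
\]
with $C_1,C_2$ depending only on $q$ and $d$. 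In the divergent case one would contrast the growth of $\check a_{n-1}(E_i)$ with the ascending chain $\{H^{-1}(E_i)\}$ of torsion-free sheaves of bounded slope, arriving at a contradiction via the boundedness of $\hat\mu$-semistable sheaves developed in the paper. In the stable case, $\check a_{n-1}(Q_i)=0$ for $i\gg 0$, so each $Q_i$ is an extension of a codimension-$\geq 2$ torsion sheaf $T_i$ by $F_i[1]$ with $F_i\in\mathcal F_q$; the rank bound then forces the chain $\{H^{-1}(E_i)\}$ to have bounded rank, hence stabilize by Noetherianness of $\mathrm{Coh}(X)$, after which the remaining exact sequence degenerates and $\{H^0(E_i)\}$ also stabilizes, yielding $Q_i=0$ eventually.

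The principal obstacle is the quantitative translation from the qualitative nonnegativity of $\check a_{n-1}$ into the explicit rank bound above: this is where integrality and normality of $X$, the finite cover $f\colon X\to\mathbf P^n$, and the boundedness of $\hat\mu$-semistable sheaves (Proposition~\ref{max} and Proposition~\ref{min}) enter essentially, because one must uniformly control the torsion-free parts of $H^{-1}(E_i)$ and $H^0(E_i)$ against their HN slopes straddling $q$. Once this numerical step is in place, the remainder is a standard bookkeeping argument: an ascending chain of torsion-free coherent sheaves with bounded rank and bounded HN slopes cannot be strictly increasing forever, since on a Noetherian scheme the cokernels would be torsion and bounded and hence must vanish eventually.
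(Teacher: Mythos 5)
There is a genuine gap, and it sits exactly where you flagged "the principal obstacle": the quantitative bound $\mathrm{rk}(H^{-1}(E_i))+\mathrm{rk}(\mathrm{tf}(H^0(E_i)))\leq C_1\check a_{n-1}(E_i)+C_2$ is false. Computing $\check a_{n-1}$ on the two pieces of the torsion-pair decomposition gives $\check a_{n-1}(F[1])=d\cdot\mathrm{rk}(F)\,(m_1-m_2\hat{\mu}(F))$ for $F\in\mathcal F_q$ and $\check a_{n-1}(T)=d\cdot\mathrm{rk}(T)\,(m_2\hat{\mu}(T)-m_1)$ for torsion-free $T\in\mathcal F_q^{\bot}$. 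Since $q=m_1/m_2$ is rational and attained, $F^{\oplus N}[1]$ with $\hat{\mu}(F)=q$ has $\check a_{n-1}=0$ and unbounded rank, and torsion-free $T$ with $\hat{\mu}(T)$ just above $q$ contributes only $\check a_{n-1}\geq 1$ independently of $\mathrm{rk}(T)$. So no constants $C_1,C_2$ depending only on $q$ and $d$ exist, and the subsequent "bounded rank, hence stabilize" step collapses. A second, more basic issue: you never fix an ambient object. Noetherianness is the ascending chain condition for subobjects of a \emph{fixed} $E\in\mathcal A_q$; without that, your "divergent case" really occurs (e.g.\ $\mathcal O_X\subset\mathcal O_X(H)\subset\mathcal O_X(2H)\subset\cdots$ lies in $\mathcal A_q$ for suitable $q$ and never stabilizes), so in that branch you would be trying to refute a true configuration. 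Finally, importing Propositions~\ref{max} and~\ref{min} and the finite cover $f\colon X\to\mathbf P^n$ is at odds with the statement itself, which is asserted over a not-necessarily-perfect field, whereas that machinery uses Galois descent and hence perfectness.

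The idea your proposal is missing is how the paper neutralizes the only part of the chain that $\check a_{n-1}$ and Noetherianness of $\mathrm{Coh}(X)$ do not already control, namely codimension-$\geq 2$ torsion. The paper works with a chain of epimorphisms $E\twoheadrightarrow E_1\twoheadrightarrow\cdots$ (so $\check a_{n-1}$ is \emph{non-increasing} and stabilizes, and the kernels $L_i\subset E$ form an ascending chain with $\check a_{n-1}(L_i)=0$, forcing $H^0(L_i)$ to be torsion supported in codimension $\geq 2$). The punchline is then a reflexive-hull argument: from $0\to Q\to H^{-1}(E_i)\to H^0(L_i)\to 0$ with $Q$ the common image, normality of $X$ (Serre's condition $S_2$) gives $\mathcal Ext^0(H^0(L_i),\mathcal O_X)=\mathcal Ext^1(H^0(L_i),\mathcal O_X)=0$, hence $H^{-1}(E_i)^{\vee}\simeq Q^{\vee}$ and all $H^{-1}(E_i)$ embed in the single coherent sheaf $Q^{\vee\vee}$; Noetherianness of $\mathrm{Coh}(X)$ then stabilizes everything. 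This is where normality actually enters the proof, rather than through the slope-boundedness results, and it is the step your outline would need to replace the false rank estimate.
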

\begin{proof}
Suppose we have a chain of epis for a nontrivial object $E$ in $\mathcal A_q$
$$
E=E_0 \twoheadrightarrow E_1 \twoheadrightarrow E_2 \twoheadrightarrow \cdots
$$
with $\check a_{n-1}(E_i) \geq \check a_{n-1}(E_{i+1})$. Since the value of $\check a_{n-1}(-)$ is discrete, there exists a minimal value of $\check a_{n-1}(E_j)$ for some $j$, we may assume that $\check a_{n-1}(E_j) = \check a_{n-1}(E_{j+1})$ for all $j$. Moreover, there are epimorphisms of cohomolgiy sheaves  by taking long exact sequences in cohomology
$$
H^0(E_0) \twoheadrightarrow H^0(E_1) \twoheadrightarrow H^0(E_2) \twoheadrightarrow \cdots.
$$
This chain has to be stationary as the category of coherent sheaves is Noetherian, so $H^0(E_i) \cong H^0(E_{i+1}) \cong \cdots$ for large enough $i$. Thus we may assume $H^0(E) \cong H^0(E_i)$ for all $i$. 

Now consider these short exact sequences $0 \rightarrow L_i \rightarrow E \rightarrow E_i \rightarrow 0$ which induce a chain of monos
$$
 L_1 \hookrightarrow L_2 \hookrightarrow \cdots \hookrightarrow E
$$
with each $\check a_{n-1}(L_i) = 0$, thus $\check a_{n-1}(H^0(L_i)) = 0$. If $L_j =L_{j+1}=\cdots$ for large enough $j$, then we are done. By taking cohomology sheaves there are monos of coherent sheaves 
$$
H^{-1}(L_1) \hookrightarrow H^{-1}(L_2) \hookrightarrow \cdots \hookrightarrow H^{-1}(E).
$$
This chain also terminates thus we may assume $H^{-1}(L_i) \cong H^{-1}(L_{i+1})$ for all $i$. On the other hand, since $\check a_{n-1}(H^0(L_i)) = 0$, $H^0(L_i)$ is a torsion coherent sheaf supported in at least codimension 2 and thus $H^0(L_i) \hookrightarrow H^0(L_{i+1})$. 

Again the short exact sequences $0 \rightarrow L_i \rightarrow E \rightarrow E_i \rightarrow 0$ give us the long exact sequences of coherent sheaves
$$
0 \rightarrow H^{-1}(L_i) \rightarrow H^{-1}(E) \rightarrow H^{-1}(E_i) \rightarrow H^0(L_i) \rightarrow 0.
$$
Since $H^{-1}(L_i) \cong H^{-1}(L_{i+1})$ for all $i$, the images of the middle morphisms in $H^{-1}(E_i)$ for all $i$ are the same denoted by $Q$. Thus there is a short exact sequence of coherent sheaves
$$
0 \rightarrow Q \rightarrow H^{-1}(E_i) \rightarrow H^0(L_i) \rightarrow 0
$$
for all $i$. Here, $Q$ and $H^{-1}(E_i)$ are torsion free coherent sheaves, and $H^0(L_i)$ is a torsion coherent sheaf supported in at least codimension 2. Using the local cohomology vanishing criterion for depth~\cite{MR0224620} we have $H^0(L_i)^{\vee} :=\mathcal Ext^0(H^0(L_i), \mathcal O_X) = \mathcal Ext^1\left(H^0(L_i), \mathcal O_X\right) = 0$ because of $\mathrm{depth}_{\mathrm{Supp}(H^0(L_i))}(\mathcal O_X) \geq 2$ by Serre's criterion $S_2$. It turns out that $Q^{\vee} \simeq H^0(E_i)^{\vee}$ for all $i$ and the chain of monos of coherent sheaves
$$
H^{-1}(E_1) \hookrightarrow H^{-1}(E_2) \hookrightarrow \cdots \hookrightarrow Q^{\vee\vee}
$$
also has to be stationary and thus $H^{-1}(E_j) \cong H^{-1}(E_{j+1})$ for large enough $j$. This proves that $H^0(L_j) \cong H^0(L_{j+1})$ and hence $L_j = L_{j+1}$ for large enough $j$. 
\end{proof}

To arrive at this $\Dx$-stability of degree $1$ on the  normal projective surface $X$, let us consider a torsion pair $\left(\mathcal F_q^{\bot}, \mathcal F_q\right)$ for some $q = \frac{m_1}{m_2} \in \mathbb Q$ and $m_2 > 0$ with the tilted heart $\mathcal A_q$ of a bounded t-structure on $\mathrm{D^b}(X)$ and the tilted ample chain $\left\{\mathcal O_X(-tH)[1]\right\}_{\hat{\mu}(\mathcal O_X(-tH)) \leq q}$ constructed as above. Performing a cone of a morphism from $\mathcal O_{H^2}^{\oplus m(t)}[-1]$ to $\mathcal O_X(-tH)^{\oplus m_2}[1]$ with $m(t) := m_2\binom{t}{2} + m_1\cdot t + m_0 > 0$ for $t >> 0$ yields the sequence $\left\{\check L_{-t}\right\}$ as
$$
\check L_{-t} =\mathrm C\left( \mathcal O_{H^2}^{\oplus m_2\binom t 2 + m_1\cdot t + m_0}[-1] \longrightarrow \mathcal O_X(-tH)^{\oplus m_2}[1]\right).
$$
In the vicinity of the integer $m_0 > m_2 \cdot \hat P(\hat{\mu}(F) = q, \ox_X, d = \deg_H X)$ for the universal polynomial $\hat P(q, \ox_X, d)$ appearing in Theorem~\ref{ebound}, the sequence $\left\{\check L_{-t}\right\}$ becomes a slope sequence $\left\{L_{-t}^s\right\}$ in $\mathcal A_q$ because in the patch of the slope $\hat{\mu}(F) = q$ for $\mu$-semistable sheaf $F \in \mathcal F_q$ we get
$$
m_2\cdot \chi(\mathcal O_X, F) \leq m_2\cdot \chi(\mathcal O_{H^2}, F)\hat P\left(\frac{m_1}{m_2}, \ox_X, d\right).
$$
Thus, assuming the shift $[1]$ among the subcategory $\mathcal F_q$, there exists the minimal integer $m_{\min}$ at the tilted category $\mathcal A_q$ such that $m_2\cdot\chi(\mathcal O_X, E) > m_{\min}\cdot \chi(\mathcal O_{H^2}, E)$ for $E \in \mathcal A_q$ with $m_2\cdot \chi(\mathcal O_H, E) = m_1\cdot \chi(\mathcal O_{H^2}, E) \neq 0$. This minimal integer is given by
$$
m_{\min} = \min\left\{m \in \mathbb Z : m > m_2 \cdot \hat P(q, \ox_X, d)\right\}
$$
in terms of the denominator $m_2$ of $q$ and the universal polynomial $\hat P(q, \ox_X, d)$ within the normal surface $X$. This boundedness condition prevails in the bounded derived categories of normal projective surfaces over a perfect field $k$ and it thus leads to a slope polynomial $P^s_t(-) = \chi\left(L_{-t}^s, -\right)$ of degree $1$ on the tilted heart $\mathcal A_q$ as we have shown in Prop.~\ref{bound}.

We are now ready to construct the $\Dx$-stability of degree $1$ on $X$. The strategy presented in Prop.~\ref{hnf} finally explains \textit{numerical} $\Dx$-stability data on an additive category $\mathcal A$ as a full subcategory of a Noetherian abelian category that embedded in a $k$-linear triangulated category $\mathcal T$. Since the tilted heart $\mathcal A_q$, via the gluing recipe in Prop.~\ref{noetherian},  of $\mathrm{D^b}(X)$ is a Noetherian abelian category with a slope sequence $\left\{L^s_{-t}\right\}$ and the associated slope polynomial $P^s_t(-)$, the last step, namely to construct Harder-Narasimhan sequences of objects, is easy to achieve. We conclude the construction in the theorem below.       
\begin{theorem}\label{stability1}
Suppose $X$ is a normal projective surface over a perfect field $k$ with a very ample invertible sheaf $\mathcal O_X(H)$. Given a rational number $q = \frac{m_1}{m_2} \in \mathbb Q$, there exists the tilted heart $\mathcal A_q$ of a bounded t-structure on the bounded derived category of coherent sheaves $\mathrm{D^b}(X)$ with a one parameter family of slope sequences $\left\{L_{-t}^s(m_0) : m_0 > m_{\min}\right\}$ and $L_{-t}^s(m_0)[1] \in \mathrm{Ext}^2\left(\mathcal O_{H^2}^{\oplus m_2\binom t 2 + m_1\cdot t + m_0} , \mathcal O_X(-tH)^{\oplus m_2}\right)$ such that the associated slope polynomials $P^s_{t, m_0}(-) = \chi\left(L^s_{-t}(m_0), -\right)$ induce $\Dx$-stabilities of degree $1$, namely Bridgeland stability conditions, on $X$.
\end{theorem}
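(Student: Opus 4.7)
The plan is to realize the theorem as a direct assembly of the machinery already developed in Sections~\ref{sec:stab}--\ref{sec:nsp} together with the boundedness estimate of Theorem~\ref{ebound}. First I would fix the rational number $q = m_1/m_2$ (with $m_2>0$) and use it to define the torsion pair on $\mathrm{Coh}(X)$ in the standard way: $\mathcal{F}_q$ is the extension closure of $\hat\mu$-semistable torsion free sheaves $F$ with $\hat\mu(F)\leq q$, while $\mathcal{F}_q^{\bot}$ is the extension closure of torsion sheaves together with $\hat\mu$-semistable torsion free sheaves $F'$ satisfying $\hat\mu(F')>q$. Existence of HN filtrations with respect to $\hat\mu$ on $\mathrm{Coh}(X)$ (which follows from the slope polynomial coming from the ample chain $\{\mathcal{O}_X(-tH)\}$ by Prop.~\ref{hnf} applied to the Noetherian category $\mathrm{Coh}(X)$) guarantees that $(\mathcal{F}_q^{\bot},\mathcal{F}_q)$ is indeed a torsion pair, and the Happel--Reiten--Smal\o{} construction then produces the tilted heart $\mathcal{A}_q = \langle\mathcal{F}_q[1],\mathcal{F}_q^{\bot}\rangle$ of a bounded t-structure on $\mathrm{D^b}(X)$. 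Prop.~\ref{noetherian} supplies Noetherianity of $\mathcal{A}_q$.

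Next I would build the candidate slope sequence. Following the blueprint of Prop.~\ref{bound}, the object $\mathcal{O}_X(-tH)[1]$ lies in $\mathcal{A}_q$ for $t$ sufficiently large, and the tilted numerical polynomial $\widetilde P_t(E)=-\chi(\mathcal{O}_X(-tH)[1],E)$ is $m_2\binom{t}{2}+m_1 t + \tfrac{1}{m_2}\chi(\mathcal{O}_X,E)$ up to the usual combinatorial rearrangement. I would then form the octahedral cone
\[
L_{-t}^s(m_0)[1] \;\in\; \mathrm{Ext}^{2}\!\Bigl(\mathcal{O}_{H^2}^{\oplus m_2\binom{t}{2}+m_1 t+m_0},\;\mathcal{O}_X(-tH)^{\oplus m_2}\Bigr),
\]
exactly as prescribed in Prop.~\ref{bound}; the associated numerical polynomial $P^s_{t,m_0}(E)=\chi(L^s_{-t}(m_0),E)$ has degree at most one in $t$, with leading coefficient the quantity $m_2\,\hat\mu(E)\cdot\chi(\mathcal{O}_{H^2},E)+(\text{correction})$ read off from the tilt.

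The heart of the proof is now to verify that $P^s_{t,m_0}$ is a genuine slope polynomial on $\mathcal{A}_q$, i.e.\ that for every nonzero $E\in\mathcal{A}_q$ its leading nonvanishing coefficient is strictly positive. For torsion free $E\in\mathcal{F}_q^{\bot}$ with $\hat\mu>q$ positivity of the $t$-coefficient is immediate. For $F\in\mathcal{F}_q$ placed in degree $-1$, positivity of the constant term is exactly the clause that forces the threshold $m_0>m_{\min}$: combining the boundedness inequality $\chi(\mathcal{O}_X,F)\leq \chi(\mathcal{O}_{H^2},F)\cdot \hat P(q,\omega_X,d)$ from Theorem~\ref{ebound} with the definition of $L_{-t}^s(m_0)$, any $m_0$ strictly larger than $m_2\cdot \hat P(q,\omega_X,d)$ guarantees $P^s_{t,m_0}(F)>0$ on the ``critical stratum'' where the $t$-coefficient vanishes, while on extensions by torsion subsheaves of codimension $\geq 2$ one uses $\chi(\mathcal{O}_{H^2},-)$ directly. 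Devissage over the two pieces of the t-structure closes the verification. This is also the step I expect to be the main obstacle, because one must carefully handle the several slope strata ($\hat\mu=q$ exactly, $\hat\mu<q$, torsion of codimension $1$ versus $\geq 2$, and extensions thereof) and confirm that the corrections introduced by $m_1$ and $m_2$ never spoil positivity.

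With the slope polynomial $P^s_{t,m_0}$ in hand, Prop.~\ref{hnf} applied to the Noetherian abelian category $\mathcal{A}_q$ yields HN sequences for every object, provided $\mathrm{Hom}(A,B)=0$ for $\hat\mu_{P^s}$-semistable $A\succ B$; this vanishing follows from the standard argument that a nonzero morphism would produce a subobject and quotient object violating the seesaw implication of Lemma~\ref{o}. Consequently $\mathcal{A}_q$ carries a $\Delta$-stability data of degree $1$. Finally, to identify this with a Bridgeland stability condition, I would define the central charge $Z(E):=-P^s_{t_0,m_0}(E)+\sqrt{-1}\,\chi(\mathcal{O}_{H^2},E)$ (or equivalently package the two coefficients of $P^s_{t,m_0}$ as real and imaginary parts); the Noetherianity of $\mathcal{A}_q$ established in Prop.~\ref{noetherian}, together with the $\Delta$-stability just constructed, furnishes the HN property of the pre-stability $(Z,\mathcal{A}_q)$ in the sense of~\cite{MR2373143}, completing the identification with Bridgeland stability and varying $m_0$ over $(m_{\min},\infty)$ gives the advertised one-parameter family.
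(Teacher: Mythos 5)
Your proposal is correct and follows essentially the same route as the paper: torsion pair and Happel--Reiten--Smal\o{} tilt at $q$, Noetherianity of $\mathcal A_q$ via Prop.~\ref{noetherian}, the cone construction of $L^s_{-t}(m_0)$ from Prop.~\ref{bound} with the threshold $m_0 > m_2\cdot\hat P(q,\omega_X,d)$ supplied by Theorem~\ref{ebound} to force positivity on the critical stratum $\hat\mu = q$, and then Prop.~\ref{hnf} to obtain the Harder--Narasimhan property and hence the $\Delta$-stability of degree $1$. The only discrepancy is a harmless slip in your description of the tilted numerical polynomial (you wrote the multiplicities $m_2\binom{t}{2}+m_1t+m_0$ of $\mathcal O_{H^2}$ in the cone where the coefficients $\chi(\mathcal O_{H^2},E)$, $-\chi(\mathcal O_H,E)$, $\chi(\mathcal O_X,E)$ of $\widetilde P_t(E)$ should appear), which does not affect the argument.
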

\begin{cor}
Suppose $X$ is a geometrically normal projective surface over an arbitrary field $k$ with $H^0(X, \mathcal O_X) = k$, i.e., geometrically connected. Then there exists a $\Dx$-stability of degree $1$ or Bridgeland stability induced by the slope polynomial $P^s_{t}(-) = \chi\left(L^s_{-t}, -\right)$ for a slope sequence $\left\{L_{-t}^s\right\}$ constructed as Theorem~\ref{stability1}.
\end{cor}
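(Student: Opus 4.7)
The plan is to reduce the statement to Theorem~\ref{stability1} by showing that all the hypotheses needed there are in fact available under the weaker assumption that $X$ is geometrically normal and $H^0(X,\mathcal O_X) = k$, even when $k$ is imperfect.

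First, I would verify that $X$ is a normal integral projective scheme over $k$. Geometric normality implies normality, and flat base change gives $H^0(X_{\bar k},\mathcal O_{X_{\bar k}}) = \bar k$, so $X_{\bar k}$ is connected; a normal connected noetherian scheme is irreducible, and geometric normality forces geometric reducedness, so $X_{\bar k}$ is integral. Hence $X$ is geometrically integral, and in particular $K(X)/k$ is a regular, and therefore separably generated, extension. Combining this with Noether normalization produces a finite surjective separable morphism $f\colon X\to\mathbf P^2$ with $\mathcal O_X(H) := f^*\mathcal O_{\mathbf P^2}(1)$, exactly as in Section~\ref{cover}.

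Next, I would re-examine the proof of Theorem~\ref{stability1} to pinpoint where perfectness of $k$ was actually used. It enters only through Lemma~\ref{Galois} and, via that lemma, through Propositions~\ref{max} and~\ref{min}. The remark following Lemma~\ref{Galois} already asserts that the lemma holds for geometrically normal $X$ over an arbitrary field: the Galois closure of $K(X)/K(\mathbf P^2)$ still exists because $K(X)/k$ is separably generated, descent of the maximal destabilising subsheaf proceeds on the normalisation of $X$ inside that closure (which remains geometrically normal), and the resulting proper subsheaf of $F$ on $\mathbf P^2$ is extracted by the usual reflexivity argument off the codimension-two singular support. Granting Lemma~\ref{Galois} in this generality, Propositions~\ref{max} and~\ref{min} carry over verbatim, and so does the Euler-characteristic bound of Theorem~\ref{ebound}.

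The remainder is then formal. Proposition~\ref{noetherian} invokes $X$ only via the Serre $S_2$ criterion on $\ox_X$, not via perfectness of $k$, so the tilted heart $\mathcal A_q$ attached to $q = m_1/m_2 \in \mathbb Q$ is noetherian. The cone construction
$$
L_{-t}^s(m_0)[1] \in \mathrm{Ext}^2\!\left(\mathcal O_{H^2}^{\oplus m_2\binom t 2 + m_1 t + m_0},\ \mathcal O_X(-tH)^{\oplus m_2}\right)
$$
then yields a slope sequence in $\mathcal A_q$ as soon as $m_0 > m_{\min}$, where $m_{\min}$ is the explicit threshold dictated by the universal polynomial $\hat P(q,\ox_X,d)$ of Theorem~\ref{ebound}, and the associated slope polynomial $P^s_t(-) = \chi(L^s_{-t},-)$ has degree $1$. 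Proposition~\ref{hnf} supplies Harder--Narasimhan sequences, giving the $\Dx$-stability of degree $1$, i.e.\ a Bridgeland stability condition, on $\mathrm{D^b}(X)$. The principal obstacle is really the imperfect-field version of Galois descent inside Lemma~\ref{Galois}; once this is handled by replacing ``normal over a perfect field'' with ``geometrically normal'' as flagged by the remark there, the rest is essentially bookkeeping in transporting Theorem~\ref{stability1} to this slightly more general setting.
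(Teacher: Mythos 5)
Your route is genuinely different from the paper's. The paper disposes of this corollary in two lines: since $X$ is geometrically normal and geometrically connected, $X_{\bar k}$ is a normal integral projective surface over the perfect field $\bar k$, so Theorem~\ref{stability1} applies to $X_{\bar k}$; one then transports the construction back to $X$ using the standard fact that $E \in \mathrm{Coh}(X)$ is ($\mu$-)semistable if and only if $E_{\bar k}$ is (cited from Huybrechts--Lehn). In particular the Euler characteristics, ranks and slopes are invariant under the flat base change $k \subset \bar k$, so the boundedness inequality of Theorem~\ref{ebound} for $X$ follows from the one for $X_{\bar k}$, and the tilting and cone constructions then run verbatim on $\mathrm{D^b}(X)$. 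You instead re-prove the whole chain of covering lemmas directly over the imperfect field $k$. What the paper's route buys is that \emph{all} the imperfect-field subtleties are sidestepped at once; what your route would buy, if carried out fully, is a genuinely intrinsic argument over $k$ and a sharper understanding of where perfectness matters.

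That said, your claim that Propositions~\ref{max} and~\ref{min} ``carry over verbatim'' once Lemma~\ref{Galois} is granted is too quick, and this is where your write-up has a real gap. Perfectness is used in at least two further places that you do not address: in Lemma~\ref{Galois} the splitting field $K$ of $K(X)/K(\mathbf P^2)$ is asserted to be separable over $K(\mathbf P^2)$ \emph{because} $k$ is perfect (over an imperfect $k$ you must argue this from separability of $f$ itself, not from the remark, which only discusses normality of $X$); and in the proof of Proposition~\ref{min} the identity $\mu(V\otimes W)=\mu(V)+\mu(W)$ is obtained by choosing $H^{n-1}$ to be a \emph{nonsingular} curve, a Bertini-type statement that can fail over imperfect fields in positive characteristic. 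Both points are reparable (the second, for instance, by checking the degree identity after base change to $\bar k$ -- at which point you are effectively back to the paper's argument), but as written your proof asserts rather than establishes them. If you want the short, safe proof, base-change to $\bar k$ and invoke invariance of semistability as the paper does.
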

\begin{proof}
Since $X$ is geometrically normal and geometrically connected, $X_{\bar k}$ is normal integral under base extension $k \subset \bar k$. Using that fact that $E \in \mathrm{Coh}(X)$ is semistable if and only if $E_{\bar k} := E\otimes_k \bar k$ is semistable (see~\cite{MR2665168}) and the construction in Theorem~\ref{stability1}, we are done.
\end{proof}

To realize the full parameter space $\mathcal M^s$ of slope sequences $\left\{L_{-t}^s\right\}$ we first analyze the \textit{numerical equivalence class} of slope sequences $\left\{L_{-t}^s\right\} \equiv \left\{L_{-t'}^s\right\}$ if the associated slope polynomials are isomorphic in the sense of $\chi\left(L_{-t}^s, -\right) = \chi\left(L_{-t'}^s, -\right) $ for $t, t' >> 0$, thus give the same $\Dx$-stability data. In terms of its cone construction, we could replace the object $O_X(-tH)^{\oplus m_2}$ with any nontrivial extension between $\mathcal O_X(-tH)$, i.e., objects of length $m_2$ in the extension-closed full subcategory $\mathcal A_{H} := \langle\mathcal O_X(-tH) \rangle$. These object of finite length forms a finite set and if $H^1(X, \mathcal O_X) = 0$ there is the only one object, $O_X(-tH)^{\oplus m_2}$. On the other hand, $H^2$ is a zero subscheme of $X$ and the space of zero subscheme of length $l = d\cdot \left(m_2\binom t 2 + m_1\cdot t + m_0\right)$ is parametrized by Hilbert scheme $\mathrm{Hilb}^l(X)$ with Hilbert polynomial $P = l(t, d, m_0, m_1, m_2)$. Similarly, $H$ is an effective divisor in the complete linear system $|\mathcal O_X(H)|$. Furthermore, more general nontrivial extensions of skyscraper sheaves on closed points or structure sheaves of $H$ establish the Zariski tangent spaces on these closed points or normal sheaves on $H$, respectively. In sum, each equivalence class is contributed from $\mathrm{Hilb}^l(X)$, $|\mathcal O_X(H)|$, Zariski tangent spaces on closed points, normal sheaves on $H$ and a finite set associated to $\mathcal O_X(-tH)$. 

Therefore, for each $q= \frac{m_1}{m_2}$ the described one parameter family of slope sequences in Theorem~\ref{stability1} modulo numerical equivalence develops a family of $\Dx$-stabilities of degree~$1$ with the slope numerical polynomials $P_{t, m_0}^s(-) = \chi\left(L_{-t}^s(m_0), -\right)$ on $\mathcal A_q$. Furthermore, according to~\cite{MR2373143} each slope sequence class $\left\{L_{-t, q}^s(m_0)\right\} \in \coprod_{m_0 > m_{\min}, m_1, m_2>0} \mathrm{Hilb}^{d\cdot \left(m_2\binom t 2 + m_1\cdot t + m_0\right)}(X)$ with the associated $\Dx$-stability data contributes an \textit{algebraic} Bridgeland stability condition $\left( Z, \mathcal P_{\phi}\right)$ in $\mathrm{Stab}(X)$ via the Euler map $\chi(-, -): \left\{L_{-t, q}^s(m_0) \right\}\mapsto Z(-) = \chi\left(L_{-t, q}^s(m_0), -\right)$ with $t = \sqrt{-1}$. By the deformation of Bridgeland stability conditions the center charge $Z(-) \in \mathbb Z\left[\sqrt{-1}\right]$ thus can be extended to $Z_{\mathbb R}(-) \in\mathbb R\left[\sqrt{-1}\right]$.
\begin{prop}
Let $X$ be a normal projective surface over a perfect field with an ample invertible sheaf $\mathcal O_X(H)$ of degree $d$. There is a natural map $$\chi : \coprod_{m_0 > m_{\min}, m_1, m_2>0} \mathrm{Hilb}^{d\cdot \left(m_2\binom t 2 + m_1\cdot t + m_0\right)}(X) \longrightarrow \mathrm{Stab}(X)$$ where $\mathrm{Stab}(X)$ is the space of Bridgeland stability conditions, a complex manifold.
\end{prop}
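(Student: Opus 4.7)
The plan is to construct $\chi$ pointwise from Hilbert-scheme data and then establish continuity via Bridgeland's deformation theorem. First, I would take $\zeta \in \mathrm{Hilb}^{l}(X)$ with $l = d\left(m_2\binom{t}{2} + m_1 t + m_0\right)$, extract its underlying length-$l$ zero-dimensional subscheme $Z_{\zeta} \subset X$ with structure sheaf $\mathcal O_{Z_{\zeta}}$, and form the object
$$
L_{-t}^s(m_0;\zeta) := \mathrm{C}\!\left(\mathcal O_{Z_{\zeta}}[-1] \longrightarrow \mathcal O_X(-tH)^{\oplus m_2}[1]\right)
$$
from a non-trivial class in $\mathrm{Ext}^2\!\left(\mathcal O_{Z_{\zeta}},\mathcal O_X(-tH)^{\oplus m_2}\right)$, mimicking the cone recipe of Theorem~\ref{stability1} but replacing $\mathcal O_{H^2}^{\oplus m(t)}$ by the actual structure sheaf $\mathcal O_{Z_{\zeta}}$. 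For $m_0 > m_{\min}$, the family $\{L_{-t}^s(m_0;\zeta)\}_{t\gg 0}$ is a slope sequence on the tilted heart $\mathcal A_{q}$ with $q = m_1/m_2$, and its slope polynomial $P_{t,m_0}^s(-) = \chi(L_{-t}^s(m_0;\zeta), -)$ induces a $\Delta$-stability of degree $1$ on $\mathcal A_q$ exactly as in the proof of Theorem~\ref{stability1}.

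Second, to land in $\mathrm{Stab}(X)$ I would specialize $t = \sqrt{-1}$ to produce the algebraic central charge
$$
Z_{\zeta}(E) := \chi\!\left(L_{-\sqrt{-1}}^s(m_0;\zeta), E\right) \in \mathbb Z\!\left[\sqrt{-1}\right] \subset \mathbb C.
$$
The pair $(\mathcal A_q, Z_{\zeta})$ then packages into a Bridgeland pre-stability condition: the vector-slope ordering of Prop.~\ref{hnf} defines the slicing $\mathcal P_\phi$, the Harder-Narasimhan property also follows from Prop.~\ref{hnf} together with the Noetherianity of $\mathcal A_q$ shown in Prop.~\ref{noetherian}, and the support property is supplied by the uniform boundedness of Theorem~\ref{ebound}. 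The pointwise assignment $\chi(\zeta) := (Z_{\zeta}, \mathcal A_q)$ is then a well-defined map from the underlying set of each component of the disjoint union into the set of stability conditions.

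For continuity I would invoke Bridgeland's deformation theorem: the forgetful map $\mathrm{Stab}(X) \to \mathrm{Hom}(K(X), \mathbb C)$ is a local homeomorphism onto its image and equips $\mathrm{Stab}(X)$ with a complex-manifold structure. Since $[\mathcal O_{Z_{\zeta}}] \in K(X)$ depends on $\zeta$ only through the length $l$, the central charge $Z_\zeta$ is locally constant on each Hilbert component and varies algebraically as the discrete data $(m_0,m_1,m_2)$ change, so the map $\zeta \mapsto Z_\zeta$ lifts through the local homeomorphism to a continuous map $\chi$ into $\mathrm{Stab}(X)$. The main obstacle I expect is verifying that the support property holds uniformly in $\zeta$ so that the Bridgeland lift actually exists on neighbourhoods of each component; this reduces to checking that the polynomial bound of Theorem~\ref{ebound} is controlled only by the numerical invariants $(m_0, m_1, m_2, d, \deg_H \ox_X)$, which is precisely the uniformity already built into that boundedness theorem, and hence completes the construction of the natural map $\chi$.
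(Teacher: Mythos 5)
Your construction matches the paper's: the paper likewise obtains the map by sending a point of $\mathrm{Hilb}^{l}(X)$ to the cone object $L_{-t}^s(m_0)$ built from the corresponding length-$l$ zero subscheme, takes the slope polynomial of Theorem~\ref{stability1} on the tilted heart $\mathcal A_q$, and evaluates the Euler pairing at $t=\sqrt{-1}$ to produce the algebraic central charge $Z(-)=\chi(L^s_{-t,q}(m_0),-)\in\mathbb Z[\sqrt{-1}]$, invoking Bridgeland's deformation theorem for the manifold structure on $\mathrm{Stab}(X)$. Your added remarks on the support property via Theorem~\ref{ebound} and on local constancy of $Z_\zeta$ on each Hilbert component only make explicit what the paper leaves implicit in its discussion of numerical equivalence classes of slope sequences.
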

\begin{remark}
From Prop.~\ref{deform} given a slope sequence class $\left\{L_{-t}^s(m_0)\right\}$ as above there exists a family of slope sequence classes with different slope polynomials inducing the same $\Dx$-stability data. Indeed, any slope sequence class $\left\{L_{-t}^s\right\}$ can be deformed to this \textit{skeleton} slope sequence class $\left\{L_{-t}^s(m_0)\right\}$ with the same $\Dx$-stability data on $X$.
\end{remark}

\subsection{Hodge Index Theorem and Bogomolov Inequality} \label{hb}

To close this section we finally present the relation of the boundedness theorem~\ref{ebound} to Hodge Index theorem and Bogomolov Inequality in the intersection theory of algebraic geometry. Let us recall the basic definitions and properties of intersection numbers that we use. For the modern comprehensive approach to intersection theory, we refer to~\cite{MR1644323, MR3617981}.

Let $X$ be a proper scheme over a general field $k$ of dimension $n$. The Chern character defines a ring homomorphism from the Grothendieck group $K^0(X)$ of locally free sheaves on $X$ to the \textit{cohomology} group $A^*(X)$ as
$$
\mathrm{ch} : K^0(X) \longrightarrow A^*(X)_{\mathbb Q} := \bigoplus_{r=0}^n A^r(X)_{\mathbb Q}.
$$
The cohomology group has a cup product structure $$A^r(X) \otimes A^s(X) \xrightarrow{\ \cup \ } A^{r+s}(X),$$ and a cap product action of $A^*(X)$ on $A_*(X)$ $$A^r(X) \otimes A_s(X) \xrightarrow{\ \cap \ } A_{r-s}(X),$$ where $A_*(X) := \bigoplus_{r=0}^n A_r(X)$ is the Chow group of algebraic cycles modulo rational equivalence on $X$. If $X$ is nonsingular then $A^*(X) \simeq A_*(X)$. There is a natural homomorphism, the \textit{degree}, from zero cycles $A_0(X)$ to $\mathbb Z$ defined by
$$
\deg(\ax) = \int_X \ax = \sum_x n_x[K(x) : k]
$$
for each zero cycle $\ax = \sum_x n_x[x]$ with the function field $K(x)$. Indeed, $\deg(\ax) = p_*(\ax)$ for the structure morphism $p$ of $X$ to $\mathrm{Spec}(k)$. If $\ax \in A^r(X)$ and $\bx \in A^s(X)$ we denote the intersection cocycle class by $\ax.\bx:= \ax \cup \bx \in A^{r+s}(X)$ satisfying some axioms. 

According to Riemann-Roch theorem for proper schemes of finite type over $k$, for all scheme $X$ of dimension $n$ there is a homomorphism $$\tau = \tau_X : K_0(X) \longrightarrow A_*(X) = \bigoplus_{r=0}^n A_r(X)$$ from the Grothendieck group $K_0(X)$ of coherent sheaves on $X$ to the Chow group $A_*(X)$, and $\tau([\mathcal O_Y]) = [Y] + \mathrm{cycles\ of\ dimension} < \dim Y$ for a closed subvariety $Y$ of $X$.  Again if $X$ is nonsingular, one has $K^0(X) \simeq K_0(X)$. Then for a locally free sheaf $E$ on $X$ its Euler characteristicis is given by the formula
$$
\chi(\mathcal O_X, E) = \deg(\tau(E)) = \deg(\mathrm{ch}(E)\cap \mathrm{Td}(X))= \int_X \mathrm{ch}(E)\cap \mathrm{Td}(X),
$$
where the \textit{Todd class} of $X$ is denoted by $\mathrm{Td}(X) = \tau([\mathcal O_X])= [X] + \ax$ for $\ax \in \bigoplus_{r=0}^{n-1} A_r(X)$ and $\tau(E \otimes F) = \mathrm{ch}(E) \cap \tau(F)$ for $E \in K^0(X), F\in K_0(X)$.

Suppose $C$ is a proper (integral) curve embedded in $X$, for a locally free sheaf $E$ on $X$ we define the degree of $E$ with respect to $C$ as
$$
\deg_C(E) = -\chi\left(\mathcal O_C, E\otimes \ox_X^{\bullet}\right) + \mathrm{rk}(E)\chi(\mathcal O_C, \ox_X^{\bullet}).
$$
Here, $\ox_X^{\bullet}$ is the dualizing complex of $X$ induced by the structure morphism $X \xrightarrow{f} \mathrm{Spec}(k)$, i.e., $\ox_X^{\bullet} = f^!\left(\mathcal O_{\mathrm{Spec}(k)}\right)$. Note that if $C$ is not a locally complete intersection then the degree defined in Def.~\ref{rd} may not work since the homological dimension of $X$ could not be finite. 
\begin{lem}
Let $X$ be a proper scheme over $k$ and $C$ be a proper curve with a proper morphism $f : C \rightarrow X$. For any locally free coherent sheaf $E \in K^0(X)$, one has
$$
\deg_C(E) = \int_C c_1(f^*E) \cap [C] = \int_X c_1(E) \cap f_*[C],
$$
where $c_1(f^*E) \in A^1(C), c_1(E) \in A^1(X)$ are the first Chern classes of $f^*E$ and $E$, respectively, and $[C] \in A_1(C), f_*[C] \in A_1(X)$. 
\end{lem}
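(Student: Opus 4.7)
The plan is to compute $\deg_C(E)$ by reducing the two Euler characteristics defining it to Euler characteristics on the curve $C$ via Grothendieck--Verdier duality, to evaluate the result using Riemann--Roch on $C$, and then to push the identity forward to $X$ via the projection formula.

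First I apply Grothendieck--Verdier duality to the proper morphism $f : C \to X$: for any bounded complex $\mathcal F$ on $X$ one has
\[
R\mathrm{Hom}_X(f_*\mathcal O_C, \mathcal F) \simeq Rf_*\bigl(R\mathrm{Hom}_C(\mathcal O_C, f^!\mathcal F)\bigr) = Rf_* f^!\mathcal F,
\]
so that $\chi(\mathcal O_C, \mathcal F) = \chi_C(f^!\mathcal F)$. Since $\ox_X^\bullet$ and $\ox_C^\bullet$ are the upper-shriek pullbacks of the base field along the structure maps, functoriality of the upper shriek along $C \to X \to \mathrm{Spec}(k)$ yields $f^!\ox_X^\bullet \simeq \ox_C^\bullet$, and because $E$ is locally free the compatibility of $f^!$ with perfect tensor factors gives $f^!(E \otimes \ox_X^\bullet) \simeq f^*E \otimes \ox_C^\bullet$. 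Hence
\[
\chi(\mathcal O_C, E \otimes \ox_X^\bullet) = \chi_C(f^*E \otimes \ox_C^\bullet), \qquad \chi(\mathcal O_C, \ox_X^\bullet) = \chi_C(\ox_C^\bullet).
\]

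Next I invoke Serre duality on the integral (hence Cohen--Macaulay) proper curve $C$, namely $\chi_C(\mathcal G \otimes \ox_C^\bullet) = \chi_C(\mathcal G^\vee)$ for locally free $\mathcal G$, together with Riemann--Roch $\chi_C(V) = \int_C c_1(V) \cap [C] + \mathrm{rk}(V)\,\chi_C(\mathcal O_C)$, which follows from Grothendieck--Riemann--Roch applied to the structure morphism $p_C : C \to \mathrm{Spec}(k)$. Combining these with the previous identities produces
\[
\chi(\mathcal O_C, E \otimes \ox_X^\bullet) = -\int_C c_1(f^*E) \cap [C] + \mathrm{rk}(E)\,\chi_C(\mathcal O_C), \qquad \chi(\mathcal O_C, \ox_X^\bullet) = \chi_C(\mathcal O_C).
\]
Substituting into the definition $\deg_C(E) = -\chi(\mathcal O_C, E \otimes \ox_X^\bullet) + \mathrm{rk}(E)\chi(\mathcal O_C, \ox_X^\bullet)$ cancels the two copies of $\mathrm{rk}(E)\chi_C(\mathcal O_C)$ and leaves the first equality $\deg_C(E) = \int_C c_1(f^*E) \cap [C]$.

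Finally I derive the second equality from the projection formula $f_*\bigl(c_1(f^*E) \cap [C]\bigr) = c_1(E) \cap f_*[C]$ in the intersection theory of proper morphisms, combined with the compatibility of the degree map with proper pushforward, $\int_C \alpha = \int_X f_*\alpha$ for $\alpha \in A_0(C)$. The main technical subtlety is the isomorphism $f^!(E\otimes \ox_X^\bullet) \simeq f^*E \otimes \ox_C^\bullet$, which rests on the standard compatibility of $f^!$ with tensoring by a perfect complex; without the Cohen--Macaulay hypothesis on $C$ one has to keep $\ox_C^\bullet$ in place of a shifted sheaf throughout, but the cancellation leading to $\deg_C(E) = \int_C c_1(f^*E) \cap [C]$ goes through verbatim.
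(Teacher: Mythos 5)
Your proof is correct and follows essentially the same route as the paper's: identify $f^!\omega_X^{\bullet}\simeq\omega_C^{\bullet}$ by functoriality of the upper shriek, use the adjunction $f_*\dashv f^!$ (with compatibility of $f^!$ against the perfect factor $E$) to move both Euler characteristics to $C$, evaluate by Riemann--Roch on $C$, and finish with the projection formula. The only difference is cosmetic: where the paper evaluates $\chi_C(\mathcal O_C, f^*E\otimes\omega_C^{\bullet})$ directly via the singular Riemann--Roch transformation and the identity $\tau(\omega_C^{\bullet})=-[C]+\alpha$ with $\alpha\in A_0(C)$, you interpose Serre duality to rewrite it as $\chi_C((f^*E)^{\vee})$ before applying Riemann--Roch for locally free sheaves on the (possibly singular) curve $C$ -- both rest on the same Riemann--Roch machinery and yield the identical cancellation.
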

\begin{proof}
Assume $\ox_X^{\bullet} = f_X^!(k)$ is the dualizing complex of $X$ associated to the structure morphism $X \xrightarrow{f_X} \mathrm{Spec}(k)$. Then we have $\ox_C^{\bullet} = f_C^!(k) = (f_X \circ f)^!(k) = f^!\circ f_X^!(k) = f^!(\ox_X^{\bullet})$ for the dualizing complex of $C$ associated to $C \xrightarrow{f_C} \mathrm{Spec}(k)$. By the adjunction $f_* \dashv f^!$,
\begin{align*}
\deg_C(E) &= -\chi\left(f_*\mathcal O_C, E\otimes \ox_X^{\bullet}\right) + \mathrm{rk}(E)\chi(f_*\mathcal O_C, \ox_X^{\bullet})\\ &= -\chi_C\left(\mathcal O_C, f^*E\otimes \ox_C^{\bullet}\right) + \mathrm{rk}(E)\chi_C(\mathcal O_C, \ox_C^{\bullet})\\ &= -\int_C (\mathrm{ch}(f^*E) \cap \tau(\ox_C^{\bullet})-\mathrm{rk}(E)\tau(\ox_C^{\bullet}))\\ &= \int_C c_1(f^*E) \cap [C].
\end{align*} 
For the last equality we use the fact $\tau(\ox_C^{\bullet}) = -[C] + \ax$ for $\ax \in A_0(C)$ and $\mathrm{rk}(E) = \mathrm{rk}(f^*E)$.
\end{proof}

Let us turn to cases of proper surfaces $X$, i.e., integral schemes of finite type over a perfect field $k$ of dimension $n = 2$. We are going to show that one can find a Weil divisor on X so that for any Cartier divisor of trivial degree with respect to this Weil divisor its self-intersection number can not be positive.
\begin{theorem}[Hodge Index Theorem]\label{hodge1}
Suppose $X$ is a proper surface over a perfect field~$k$. Then there exists a proper curve $C \subset X$ such that if $L$ is an invertible sheaf with $\deg_C(L) = \int_Xc_1(L) \cap [C] = 0$, then $c_1(L)^2 := \int_X(c_1(L).c_1(L))\cap [X] \leq 0$.
\end{theorem}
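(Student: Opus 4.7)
The plan is to reduce to the normal projective case and apply the boundedness Theorem~\ref{ebound} to tensor powers $L^{\otimes n}$. Since an invertible sheaf has rank one and is therefore automatically $\mu$-semistable, Theorem~\ref{ebound} will produce a uniform (constant in $n$) upper bound on $\chi(\mathcal O_X, L^{\otimes \pm n})$, while Riemann--Roch produces an Euler characteristic growing like $n^2 c_1(L)^2$; matching these two estimates forces $c_1(L)^2 \leq 0$.

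First, by Chow's lemma followed by normalization one obtains a finite birational morphism $\tilde X \to X$ where $\tilde X$ is a normal projective surface over the perfect field $k$; the projection formula for Cartier divisors under such a morphism preserves the self-intersection number $c_1(L)^2$ and the degree $\int_X c_1(L).[C]$ upon pulling back $L$. So one may assume from the outset that $X$ is a normal projective surface equipped with a very ample invertible sheaf $\mathcal O_X(H)$, and take the curve $C$ to be any effective divisor in $|H|$; the hypothesis $\deg_C L = 0$ then reads $c_1(L).H = 0$ and hence $\mu(L^{\otimes n}) = n\, \mu(L) = 0$ for every positive integer $n$.

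Since each $L^{\otimes \pm n}$ is invertible and hence trivially $\mu$-semistable of slope $0$, Theorem~\ref{ebound} yields the uniform bound
\[
\chi(\mathcal O_X, L^{\otimes \pm n}) \;\leq\; d \cdot \hat P(0, \ox_X, d),
\]
whose right-hand side is a constant independent of $n$ (equal to either $d\binom{2}{2}$ or $d\binom{-\mu(\ox_X)/d-1}{2}$ according to the two cases appearing in Theorem~\ref{ebound}). On the other hand, Riemann--Roch on the normal projective surface $X$ gives
\[
\chi(\mathcal O_X, L^{\otimes n}) \;=\; \chi(\mathcal O_X) + \tfrac{n^2}{2}\, c_1(L)^2 - \tfrac{n}{2}\, c_1(L).K_X,
\]
where $K_X$ denotes a representative of the canonical class. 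Summing the formulas at $\pm n$ the linear terms cancel and one obtains
\[
\chi(\mathcal O_X, L^{\otimes n}) + \chi(\mathcal O_X, L^{\otimes -n}) \;=\; 2\chi(\mathcal O_X) + n^2\, c_1(L)^2.
\]
If $c_1(L)^2 > 0$, the right-hand side tends to $+\infty$ as $n \to \infty$, contradicting the uniform upper bound on each summand; hence $c_1(L)^2 \leq 0$.

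The main technical obstacle lies in justifying Riemann--Roch on a possibly singular normal projective surface, where the canonical class $K_X$ is only Weil and the pairing $c_1(L).K_X$ must be interpreted via the reflexive dualizing sheaf $\ox_X$; alternatively one may pass to a resolution of singularities and exploit that both the leading term $\tfrac{n^2}{2}c_1(L)^2$ and the boundedness estimate from Theorem~\ref{ebound} are compatible with pullback along proper birational morphisms of normal surfaces, so that the asymptotic contradiction persists. A secondary care-point is the initial reduction from a general proper surface to a normal projective one, for which one must verify that Chow's lemma combined with normalization preserves both the Cartier self-intersection $c_1(L)^2$ and the degree against the chosen curve $C$.
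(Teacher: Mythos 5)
Your proposal is correct and follows essentially the same route as the paper: Chow's lemma plus normalization to reduce to a normal projective surface $X'$ with $[C]=\pi_*[H]$, then the boundedness Theorem~\ref{ebound} applied to the automatically $\mu$-semistable sheaves $\pi^*L^{\otimes n}$ of slope $0$ against the quadratic growth from Riemann--Roch. The only (harmless) variations are that you sum the Euler characteristics at $\pm n$ to cancel the linear term where the paper simply lets the quadratic term dominate for $n\to\infty$, and that the Chow-lemma map is proper birational rather than finite as you describe it.
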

\begin{proof}
By the Chow lemma~\cite{MR217084, MR0463157}, there is a proper birational surjective morphism $\pi : X' \rightarrow X$ with a projective integral scheme $X'$ of finite type over $k$. If $X'$ is not normal, then by normalization there is a normal projective integral scheme $X''$ of finite type over $k$ with a finite birational morphism $\pi': X'' \rightarrow X'$. Thus we may assume $X'$ is normal.

Let $\mathcal O_{X'}(H)$ be a very ample invertible sheaf on $X'$ and $[H] \in A_1(X')$ be the associated cycle. Set $[C] := \pi_*[H] \in A_1(X)$ and consider an invertible sheaf $L \in \mathrm{Pic}(X)$ with trivial $\deg_C(L)$. The projection formula implies $0 = \deg_C(L) = \int_Xc_1(L)\cap \pi_*[H] = \int_{X'}c_1(\pi^*L)\cap [H] = \deg_H(\pi^*L)$ and thus $\deg_H(\pi^*(L)^{\otimes m}) =  \int_{X'}m\cdot c_1(\pi^*L)\cap [H] = 0$ for any integer $m > 0$. Using the boundedness theorem~\ref{ebound} and the fact that $\pi^*L^{\otimes m}$ is stable for all $m$, we have $$\chi(\mathcal O_{X'}, \pi^*L^{\otimes m}) \leq d\cdot \hat P(\ox_{X'}, d)$$
with $d=\deg_{H}X'$, and $\hat P(\ox_{X'}, d)$ is a constant depending on $\ox_{X'}, d$ as $\mu(\pi^*L^{\otimes m}) = 0$.

On the other hand, the Riemann-Roch formula for normal surfaces~\cite{MR1644323} says $$\chi(\mathcal O_{X'}, \pi^*L^{\otimes m}) = \frac{m^2}{2} \int_{X'}(c_1(\pi^*L).c_1(\pi^*L)) \cap [X'] + \frac m 2 \int_{X'}c_1(\pi^*L) \cap [K] + \chi(\mathcal O_{X'}, \mathcal O_{X'}).$$ As $m$ goes to infinity, this will contradict to $\chi(\mathcal O_{X'}, \pi^*L^{\otimes m}) \leq d\cdot \hat P(\ox_{X'}, d)$ if $c_1(\pi^*L)^2 > 0$. Hence $c_1(L)^2 = c_1(\pi^*L)^2 \leq 0$.
\end{proof}
\begin{remark}
If there is an invertible sheaf $\bar H$ on an irreducible surface $X$ over an algebraically closed field with $c_1\left(\bar H\right)^2 > 0$, then for any invertible sheaf $L$, not numerically equivalent to $0$, of $\int_{X}\left(c_1(L).c_1\left(\bar H\right)\right) \cap [X] = 0$, Kleiman shows that $c_1(L)^2 < 0$  in~\cite{10.1007/BFb0066296}.
\end{remark}

More generally, the same techniques as above yield the following version of Hodge Index Theorem.
\begin{theorem}[Hodge Index Theorem]
Let $X$ is a proper surface over a perfect field~$k$ with a proper curve $C \subset X$ as the one in theorem~\ref{hodge1}. Then given an invertible sheaf $L \in \mathrm{Pic}(X)$, we have the inequality $$c_1(L)^2 \cdot [C]^2 \leq \left(\int_X c_1(L)\cap [C]\right)^2,$$ where $[C]^2$ is a positive integer depending on $[C]$.
\end{theorem}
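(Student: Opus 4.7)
The plan is to reduce to the special case already established in Theorem~\ref{hodge1} by constructing a suitable auxiliary invertible sheaf whose degree with respect to $C$ vanishes, and then expanding the resulting self-intersection inequality. Following the proof of Theorem~\ref{hodge1}, I pass to the normal projective surface $\pi : X' \to X$ with the very ample $\mathcal O_{X'}(H)$, so that $[C] = \pi_*[H] \in A_1(X)$ and $[C]^2$ may be interpreted, via the projection formula, as $H^2 = \int_{X'} c_1(\mathcal O_{X'}(H))^2 \cap [X']$, which is the positive integer $\deg_H X' > 0$ since $H$ is very ample on a surface.

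Set $a = H^2 \in \mathbb Z_{>0}$ and $b = \int_{X'} c_1(\pi^*L)\cap [H] \in \mathbb Z$. Define the auxiliary invertible sheaf
\begin{equation*}
M := \pi^*L^{\otimes a}\otimes \mathcal O_{X'}(-H)^{\otimes b}\in \mathrm{Pic}(X').
\end{equation*}
A direct computation gives $\int_{X'}c_1(M)\cap [H] = a\cdot b - b\cdot a = 0$. Applying Theorem~\ref{hodge1} on the normal projective surface $X'$ (with curve $[H]$), one obtains the inequality $c_1(M)^2\leq 0$.

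Expanding the self-intersection of $c_1(M)$ on $X'$ yields
\begin{equation*}
c_1(M)^2 = a^2\, c_1(\pi^*L)^2 - 2ab \int_{X'} c_1(\pi^*L)\cap [H] + b^2 H^2 = a^2 c_1(\pi^*L)^2 - a\, b^2.
\end{equation*}
Since $a>0$ I may divide to obtain $a\cdot c_1(\pi^*L)^2 \leq b^2$, which is the sought inequality on $X'$. To descend to $X$, I invoke the projection formula for the proper birational morphism $\pi$: $\pi_*[X']=[X]$, so $c_1(\pi^*L)^2 = c_1(L)^2$; moreover $b = \int_{X'}c_1(\pi^*L)\cap [H] = \int_X c_1(L)\cap \pi_*[H] = \int_X c_1(L)\cap [C]$. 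With the identification $[C]^2 = H^2 = a > 0$, the inequality takes the form
\begin{equation*}
c_1(L)^2\cdot [C]^2 \leq \left(\int_X c_1(L)\cap [C]\right)^2,
\end{equation*}
as asserted.

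The main conceptual point (and the only place where something could go wrong) is the meaning of $[C]^2$ when $X$ is neither projective nor normal: $C=\pi_*[H]$ is only a $1$-cycle on $X$, so its self-intersection must be defined via the projection formula as $H^2$ on $X'$, and one must check this is independent of the choice of resolution $\pi$ (it is, because any two such resolutions are dominated by a third and the projection formula is functorial). All remaining computations are routine manipulations with Chern classes and the cap/intersection product, and they go through because $\pi^*$ and $\pi_*$ are compatible with intersections via the projection formula on proper schemes.
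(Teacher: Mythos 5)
Your proof is correct, but it takes a different route from the paper's. The paper proves the general inequality directly: it applies the boundedness theorem~\ref{ebound} to $\pi^*L^{\otimes m}$ with its (generally nonzero) slope $m\cdot\deg_H(\pi^*L)/d$, so that $\hat P$ contributes the quadratic term $\frac{m^2}{2d^2}\deg_H(\pi^*L)^2$, and then compares leading coefficients with the Riemann--Roch expansion of $\chi(\mathcal O_{X'},\pi^*L^{\otimes m})$ as $m\to\infty$ to read off $d\cdot c_1(\pi^*L)^2\leq \deg_H(\pi^*L)^2$. You instead reduce to the degree-zero special case of Theorem~\ref{hodge1} by the classical device of the auxiliary sheaf $M=\pi^*L^{\otimes a}\otimes\mathcal O_{X'}(-H)^{\otimes b}$ with $a=H^2$, $b=\deg_H(\pi^*L)$, and then expand $c_1(M)^2\leq 0$; the computation $c_1(M)^2=a^2c_1(\pi^*L)^2-ab^2$ is right and the descent to $X$ via the projection formula matches what the paper does. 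What your approach buys is that only the slope-zero instance of the boundedness machinery is ever invoked (the rest is formal intersection theory), which is the standard textbook derivation; what the paper's approach buys is that it exercises the full slope-dependent form of $\hat P$ and exhibits $[C]^2=d$ as arising directly from the asymptotics. One small point you should make explicit: applying Theorem~\ref{hodge1} to $M\in\mathrm{Pic}(X')$ requires knowing that the curve that theorem produces on the normal projective surface $X'$ can be taken to be $H$ itself; this holds because the Chow-lemma and normalization steps in its proof are trivial when the surface is already normal and projective, but as stated Theorem~\ref{hodge1} only asserts the existence of \emph{some} curve, so this identification needs a sentence of justification.
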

\begin{proof}
Let $\pi : X' \rightarrow X$ be the finite birational surjective morphism with a normal projective surface $X'$ and a very ample invertible sheaf $\mathcal O_{X'}(H)$ so that $[C] = \pi_*[H]$. Again by the boundedness theorem~\ref{ebound}, for $m > 0$ and $L \in \mathrm{Pic}(X)$, 
$$
\chi(\mathcal O_{X'}, \pi^*L^{\otimes m}) \leq d\cdot \hat P(m \cdot \deg_H(\pi^*L), \ox_{X'}, d) = d\cdot \left(\frac{m^2}{2d^2} \deg_H(\pi^*L)^2 \right) + O(m)
$$
with $d = \deg_H X' = \deg_H(\mathcal O_{X'}(H)) = c_1(\mathcal O_{X'}(H))^2$. Compared with Riemann-Roch formula, set $[C]^2:= d$ and one can easily see $$[C]^2\cdot c_1(L)^2 = d \cdot c_1(\pi^*L)^2 \leq \deg_H(\pi^*L)^2 = \left(\int_X c_1(L)\cap [C]\right)^2.$$
\end{proof}

Next, we derive the Bogomolov Inequality on normal projective surfaces $X$ for strongly $\mu$-semistable locally free sheaves, whose tensor powers remain $\mu$-semistable. Indeed, if $\mathrm{char}\, k = 0$, then $E^{\otimes m}$ is still $\mu$-semistable for each $\mu$-semistable sheaf $E$ on $X$. However, for $\mathrm{char}\, k = p > 0$, this statement is not ture in general. Moreover, the Frobenius pullback $F^*E$ of $\mu$-semistable sheaf $E$ need not to be $\mu$-semistable. The sheaf whose all Frobenius pullbacks $(F^n)^*E$ remain $\mu$-semistable is called strongly $\mu$-semistable. Langer in ref.~\cite{MR2051393} shows that a tensor product of strongly $\mu$-semistable sheaves is strongly $\mu$-semistable as well, and this result had also been observed earlier in theorem 3.23 of~\cite{MR742599}.
\begin{theorem}[Bogomolov Inequality]\label{bogomolov}
Suppose $X$ is a normal projective surfaces over a perfect field $k$ with a very ample invertible sheaf $\mathcal O_X(H)$. If $E$ is a  strongly $\mu$-semistable locally free sheaf, then 
$$
\Dx(E) := \int_X\Dx(E)\cap [X] := (\mathrm{rk}(E)-1)c_1(E)^2 - 2\cdot \mathrm{rk}(E)c_2(E) \leq 0.
$$
Here, $c_2(E) := \int_Xc_2(E)\cap [X]$.
\end{theorem}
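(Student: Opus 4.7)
The plan is to apply the boundedness estimate of Theorem~\ref{ebound} iteratively to the tensor powers of the endomorphism bundle $F := E \otimes E^{\vee}$. The key geometric point is that $F$ has trivial first Chern class, so all its tensor powers have $\mu = 0$; this forces the auxiliary polynomial $\hat P$ in Theorem~\ref{ebound} to evaluate to a single constant independent of the power, whereas the left-hand side of that estimate, computed via Riemann--Roch, grows linearly in $m$ with a coefficient proportional to $\Dx(E)$.

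First I would record the Chern data of $F^{\otimes m}$. A direct computation on the surface $X$ gives $c_1(F) = 0$ and $\mathrm{ch}_2(F) = 2r\,\mathrm{ch}_2(E) - c_1(E)^2 = \Dx(E)$, hence $c_2(F) = -\Dx(E)$. Iterating $\mathrm{ch}(F^{\otimes m}) = \mathrm{ch}(F)^m$ and using that $c_1(F) = 0$ (together with $\Dx(E)^2 = 0$ in the Chow ring of a surface), one obtains $c_1(F^{\otimes m}) = 0$ and $\mathrm{ch}_2(F^{\otimes m}) = m\, r^{2(m-1)} \Dx(E)$. By the hypothesis that $E$ is strongly $\mu$-semistable, so is $E^{\vee}$, and by the tensor-product preservation theorem for strongly $\mu$-semistable sheaves cited in Theorem~\ref{bogomolov}, each $F^{\otimes m}$ is strongly $\mu$-semistable and in particular $\mu$-semistable with $\mu(F^{\otimes m}) = 0$.

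Next I would invoke Riemann--Roch for locally free sheaves on the normal projective surface $X$ in the form $\chi(\mathcal{O}_X, F^{\otimes m}) = \int_X \mathrm{ch}(F^{\otimes m}) \cap \tau_X(\mathcal O_X)$. Because $c_1(F^{\otimes m}) = 0$, the cross term against the degree-one part of $\tau_X(\mathcal O_X)$ vanishes, leaving the exact identity
\begin{equation*}
\chi(\mathcal{O}_X, F^{\otimes m}) = r^{2m}\chi(\mathcal{O}_X, \mathcal{O}_X) + \mathrm{ch}_2(F^{\otimes m}) = r^{2m}\chi(\mathcal{O}_X, \mathcal{O}_X) + m\, r^{2m-2}\Dx(E).
\end{equation*}
On the bounding side, Theorem~\ref{ebound} applied to the $\mu$-semistable sheaf $F^{\otimes m}$ reads $\chi(\mathcal{O}_X, F^{\otimes m}) \leq \chi(\mathcal O_{H^2}, F^{\otimes m}) \cdot \hat P(\mu(F^{\otimes m}), \ox_X, d)$. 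Since $\mu(F^{\otimes m}) = 0$ for every $m$, the value $C := \hat P(0, \ox_X, d)$ is a constant depending only on $(X, H, \ox_X)$, and $\chi(\mathcal O_{H^2}, F^{\otimes m}) = d \cdot r^{2m}$, so $\chi(\mathcal{O}_X, F^{\otimes m}) \leq d\cdot r^{2m}\cdot C$.

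Combining the two estimates and dividing by $r^{2m-2}$ yields
\begin{equation*}
m\, \Dx(E) \leq r^2\bigl(d\cdot C - \chi(\mathcal{O}_X, \mathcal{O}_X)\bigr),
\end{equation*}
whose right-hand side is independent of $m$; letting $m \to \infty$ forces $\Dx(E) \leq 0$. The main obstacle will be the first step, namely the preservation of strong $\mu$-semistability under dualising and iterated tensor products in positive characteristic, which is supplied by Langer's theorem~\cite{MR2051393}; the ancillary subtlety of verifying that the Fulton--Todd class $\tau_X(\mathcal O_X)$ on a normal, possibly singular, surface $X$ contributes only its rank piece when $c_1 = 0$ is routine from the functoriality of $\tau_X$.
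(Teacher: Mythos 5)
Your proposal is correct and follows essentially the same route as the paper's own proof: pass to $E\otimes E^{\vee}$, use Langer's preservation of strong $\mu$-semistability under tensor powers, compute $\chi(\mathcal O_X, (E\otimes E^{\vee})^{\otimes m})$ by Riemann--Roch (where the $c_1$ cross-term vanishes), compare with the $m$-independent upper bound from Theorem~\ref{ebound} at $\mu=0$, and let $m\to\infty$. The Chern-class bookkeeping ($c_1(F)=0$, $\mathrm{ch}_2(F)=\Dx(E)$, $\mathrm{ch}_2(F^{\otimes m})=m\,r^{2m-2}\Dx(E)$) matches the paper's formulas exactly.
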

\begin{proof}
As $E$ is locally free, the Chern character of $E$ is 
\begin{align*}
\mathrm{ch}(E) &= \mathrm{rk}(E) + c_1(E) + \frac 1 2\left(c_1(E)^2 - 2\cdot c_2(E)\right)\\ &=  \mathrm{rk}(E) + c_1(E) + \frac{1}{2\cdot \mathrm{rk}(E)}\left(\Dx(E) + c_1(E).c_1(E)\right).
\end{align*}
$E^{\vee}$, dual of $E$, is strongly $\mu$-semistable and thus $E' := E\otimes E^{\vee}$ is also strongly $\mu$-semistable with $c_1(E') = 0$ and $\Dx(E') = 2\cdot\mathrm{rk}(E)^2\Dx(E)$. Let $n$ be a positive integer, then $\Dx(E'^{\otimes n}) = n\cdot \mathrm{rk}(E')^{2(n-1)}\Dx(E')$. As above, we can compute $\chi(\mathcal O_X, E'^{\otimes n})$ by the Riemann-Roch formula: 
$$
\chi(\mathcal O_X, E'^{\otimes n}) = \frac{n}{2} \mathrm{rk}(E')^{n-2}\Dx(E') + \mathrm{rk}(E')^{n}\chi(\mathcal O_X, \mathcal O_X).
$$
On the other hand, $E'^{\otimes n}$ is strongly $\mu$-semistable as $E$ is strongly $\mu$-semistable, thus the boundedness theorem~\ref{ebound} implies 
$$
\frac{\chi(\mathcal O_X, E'^{\otimes n})}{\mathrm{rk}(E')^n} \leq d\cdot \hat P\left(\ox_X, d = \deg_H X\right).
$$
Therefore, if $\Dx(E)$ is positive, as $n$ goes to infinity, the Riemann-Roch formula yields
$$
\frac{\chi(\mathcal O_X, E'^{\otimes n})}{\mathrm{rk}(E')^n} = \frac{n}{2\cdot \mathrm{rk}(E')^{2}} \Dx(E') + \chi(\mathcal O_X, \mathcal O_X)
$$
which would contradict the boundedness theorem above.
\end{proof}
\begin{cor}
If $X$ is nonsingular, then $\Dx(E) \leq 0$ for all strongly $\mu$-semistable torsion free sheaf $E$.
\end{cor}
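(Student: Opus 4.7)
The plan is to reduce the torsion free case to Theorem~\ref{bogomolov} by passing to the reflexive hull. On a nonsingular projective surface $X$, any reflexive coherent sheaf is automatically locally free, so for a torsion free sheaf $E$ the canonical injection $E \hookrightarrow E^{\vee\vee}$ fits into a short exact sequence
\[
0 \longrightarrow E \longrightarrow E^{\vee\vee} \longrightarrow Q \longrightarrow 0,
\]
with $E^{\vee\vee}$ locally free and $Q$ supported in codimension $2$, hence of finite length.

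First I would compare discriminants. Since the determinant line bundle is unchanged by passing to the double dual, $c_1(E) = c_1(E^{\vee\vee})$, and $\mathrm{rk}(E) = \mathrm{rk}(E^{\vee\vee}) =: r$. Additivity of the Chern character across the above sequence, combined with $\mathrm{ch}(Q) = \mathrm{length}(Q) \in A_0(X)_{\mathbb{Q}}$, gives $c_2(E) = c_2(E^{\vee\vee}) + \mathrm{length}(Q)$. Hence
\[
\Dx(E) \;=\; (r-1)c_1(E^{\vee\vee})^2 - 2r\bigl(c_2(E^{\vee\vee}) + \mathrm{length}(Q)\bigr) \;=\; \Dx(E^{\vee\vee}) - 2r\cdot\mathrm{length}(Q) \;\leq\; \Dx(E^{\vee\vee}),
\]
so it suffices to prove $\Dx(E^{\vee\vee}) \leq 0$ via Theorem~\ref{bogomolov}.

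The second step is to verify that $E^{\vee\vee}$ inherits strong $\mu$-semistability from $E$. Smoothness of $X$ makes every iterated Frobenius $F^n$ flat, so pulling back the displayed sequence preserves exactness and yields $0 \to (F^n)^*E \to (F^n)^*(E^{\vee\vee}) \to (F^n)^*Q \to 0$ with $(F^n)^*Q$ still of finite length; flatness together with smoothness further ensures that $(F^n)^*(E^{\vee\vee})$ is locally free, hence reflexive, and agrees with $(F^n)^*E$ outside a finite set of points. These two facts force the natural identification $(F^n)^*(E^{\vee\vee}) \cong \bigl((F^n)^*E\bigr)^{\vee\vee}$. A standard saturation argument then shows that $\mu$-semistability of $(F^n)^*E$ passes to its double dual: any subsheaf of $(F^n)^*(E^{\vee\vee})$ has the same slope as its intersection with $(F^n)^*E$, the quotient being a subsheaf of the zero-dimensional $(F^n)^*Q$ and contributing nothing to the degree. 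Consequently $E^{\vee\vee}$ is strongly $\mu$-semistable and locally free.

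Applying Theorem~\ref{bogomolov} to $E^{\vee\vee}$ yields $\Dx(E^{\vee\vee}) \leq 0$, which combined with the discriminant comparison above gives $\Dx(E) \leq 0$. The only mildly delicate point is verifying the compatibility $(F^n)^*(E^{\vee\vee}) \cong \bigl((F^n)^*E\bigr)^{\vee\vee}$ and the transfer of $\mu$-semistability to the reflexive hull; both are routine on smooth surfaces but deserve a short justification, while every other step is purely formal.
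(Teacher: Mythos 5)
Your proof is correct and follows essentially the same route as the paper: pass to the double dual $E^{\vee\vee}$, which is locally free on a nonsingular surface, use the relation $\Dx(E^{\vee\vee}) = \Dx(E) + 2\,\mathrm{rk}(E)\,\mathrm{length}(E^{\vee\vee}/E) \geq \Dx(E)$, and apply Theorem~\ref{bogomolov}. The only difference is that you supply justifications (the Chern-character computation across $0 \to E \to E^{\vee\vee} \to Q \to 0$ and the verification that $E^{\vee\vee}$ inherits strong $\mu$-semistability via $(F^n)^*(E^{\vee\vee}) \cong \bigl((F^n)^*E\bigr)^{\vee\vee}$) which the paper merely asserts.
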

\begin{proof}
The double dual $E^{\vee\vee}$ is a strongly $\mu$-semistable locally free sheaf since $X$ is nonsingular, and then $\Dx(E)$ and $\Dx(E^{\vee\vee})$ are related by $\Dx(E^{\vee\vee}) = \Dx(E) + 2\mathrm{rk}(E)l(E^{\vee\vee}/E) \geq \Dx(E)$, where $l(-)$ denotes the length of zero schemes in parentheses. Hence $0 \geq \Dx(E^{\vee\vee}) \geq \Dx(E)$ by theorem~\ref{bogomolov}.
\end{proof}
\begin{remark}
This corollary is one of the main results proved by Langer in~\cite{MR2051393} where he shows that for every torsion free sheaf $E$ on nonsingular projective variety $X$ there exists $k_0$ such that all HN factors of $(F^{k_0})^*E$ are strongly $\mu$-semistable. Note that the definition of the discriminant differs from the one Langer uses by the sign. Moreover, if $\mathrm{char}\, k = 0$ then all $\mu$-semistable sheaves are strongly $\mu$-semistable and the corollary implies the classical Bogomolove Inequality.
\end{remark}
In a similar fashion it is straightforward to establish the correspondence between the discriminant $\Dx(E)$ and the geometry of  ambient normal projective surface $X$, where the $\mu$-~semistable torsion free sheaf $E$ is represented by a bounded complex of locally free coherent sheaves. The characters of geometry of $X$ are obtained by the Riemann-Roch formula and the universal polynomial $\hat P$ in the boundedness theorem~\ref{ebound}. We illustrate the result in the following proposition.
\begin{prop}
Assume $X$ is a normal projective surface over a perfect field $k$ with a very ample invertible sheaf $\mathcal O_X(H)$. Let $E$ be a $\mu$-semistable torsion free sheaf represented by a finite complex of locally free coherent sheaves. Then the upper bound of $\Dx(E)$ is given by
$$
\Dx(E) \leq 2d\cdot \mathrm{rk}(E)^2\hat P(\mu(E), \ox_X, d) - c_1(E)^2 - \mathrm{rk}(E) \int_Xc_1(E)\cap [K] - 2\cdot\mathrm{rk}(E)^2\chi(\mathcal O_X, \mathcal O_X)
$$
with $d = \deg_H X, \mu(E) = \frac{\deg_H(E)}{\mathrm{rk}(E)}$ and $\mathrm{Td}(X) = [X] +\frac 1 2 [K] + \frac{1}{12}\ax$ for $[K] \in A_1(X), \ax \in A_0(X)$. Here, $\hat P(\mu(E), \ox_X, d)$ is the binomial coefficient function determined in the boundedness theorem~\ref{ebound}.
\end{prop}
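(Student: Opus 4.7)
The plan is to apply the Hirzebruch--Riemann--Roch formula to express the Euler characteristic $\chi(\mathcal O_X, E)$ in terms of the Chern classes of $E$ and the Todd class of $X$, then substitute the definition of the discriminant $\Dx(E)$ and solve for $\Dx(E)$, and finally invoke the boundedness Theorem~\ref{ebound} to bound $\chi(\mathcal O_X, E)$ from above.

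Since $E$ is represented by a bounded complex of locally free coherent sheaves, its class lies in $K^0(X)$ and the Chern character $\mathrm{ch}(E) \in A^*(X)_{\mathbb Q}$ is well defined. Riemann--Roch for proper schemes (as recalled at the start of Section~\ref{hb}) gives
\[
\chi(\mathcal O_X, E) \;=\; \int_X \mathrm{ch}(E) \cap \mathrm{Td}(X).
\]
With $\mathrm{Td}(X) = [X] + \tfrac{1}{2}[K] + \tfrac{1}{12}\ax$, taking the degree-zero part and using $\chi(\mathcal O_X, \mathcal O_X) = \tfrac{1}{12}\int_X \ax$ yields
\[
\chi(\mathcal O_X, E) \;=\; \mathrm{rk}(E)\,\chi(\mathcal O_X, \mathcal O_X) + \tfrac{1}{2}\int_X c_1(E) \cap [K] + \int_X \mathrm{ch}_2(E),
\]
with $\mathrm{ch}_2(E) = \tfrac{1}{2}(c_1(E)^2 - 2 c_2(E))$.

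Next I would rewrite $\mathrm{ch}_2(E)$ using the identity $c_1(E)^2 - 2c_2(E) = \frac{1}{\mathrm{rk}(E)}\bigl(c_1(E)^2 + \Dx(E)\bigr)$, which is a direct consequence of the definition $\Dx(E) = (\mathrm{rk}(E)-1)c_1(E)^2 - 2\,\mathrm{rk}(E) c_2(E)$. Multiplying the resulting identity by $2\,\mathrm{rk}(E)$ and rearranging gives
\[
\Dx(E) \;=\; 2\,\mathrm{rk}(E)\,\chi(\mathcal O_X, E) - 2\,\mathrm{rk}(E)^2\,\chi(\mathcal O_X, \mathcal O_X) - \mathrm{rk}(E)\int_X c_1(E)\cap [K] - c_1(E)^2.
\]

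Finally, since $E$ is $\mu$-semistable torsion free, Theorem~\ref{ebound} provides
\[
\chi(\mathcal O_X, E) \;\leq\; \chi(\mathcal O_{H^2}, E)\,\hat P(\mu(E), \ox_X, d) \;=\; d\cdot \mathrm{rk}(E)\,\hat P(\mu(E), \ox_X, d),
\]
using $\chi(\mathcal O_{H^2}, E) = d \cdot \mathrm{rk}(E)$. Substituting this into the preceding identity immediately yields the desired inequality. The main subtlety is not the algebra itself but the fact that $X$ is only normal (hence possibly singular along a codimension $\geq 2$ locus), so one must invoke Riemann--Roch in the form valid for proper schemes together with the convention that $\mathrm{ch}(E)$ and $\Dx(E)$ are computed from the given locally free resolution in $K^0(X)$; the Todd class $\mathrm{Td}(X) = \tau([\mathcal O_X])$ is defined via the $\tau$-homomorphism, and intersection numbers such as $c_1(E)^2$ and $c_1(E).[K]$ are meaningful because $c_1(E), c_1(\ox_X) \in A^1(X)_{\mathbb Q}$ act on $A_*(X)$. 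Modulo this bookkeeping the proof reduces to the two-line Riemann--Roch computation combined with Theorem~\ref{ebound}.
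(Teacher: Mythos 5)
Your proposal is correct and follows essentially the same route the paper intends: the paper leaves the proof implicit, but its preceding computation (e.g.\ in the proof of Theorem~\ref{bogomolov}) uses exactly your rewriting $\mathrm{ch}_2(E) = \frac{1}{2\,\mathrm{rk}(E)}\left(\Dx(E) + c_1(E)^2\right)$ together with Riemann--Roch and the bound $\chi(\mathcal O_X, E) \leq d\cdot\mathrm{rk}(E)\hat P(\mu(E), \ox_X, d)$ from Theorem~\ref{ebound}. Your bookkeeping remark about $[E] \in K^0(X)$ and the $\tau$-homomorphism on the possibly singular $X$ is exactly the point the paper's hypothesis "represented by a finite complex of locally free coherent sheaves" is there to address.
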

\begin{remark}
If $X$ is nonsingular, compared with Langer's Bogomolov inequality for $\mu$-semistable torsion free sheaves in~\cite{MR2051393}, our bound is independent of $\mathrm{char}\, k = p > 0$ with lower order in $\mathrm{rk}(E)$, but corresponds to $c_1(E).c_1(H), c_1(E).c_1(\ox_X), c_1(\ox_X).c_1(H)$ and $c_1(E)^2$. For the case of rank two $\mu$-semistable bundle on nonsingular projective surfaces, Hein in~\cite{MR2240369} proves a weak version of Bogomolov's inequality independent of $\mathrm{char}\, k = p$. Indeed, our form is more closer to Hein's Weak Bogomolov Inequality.
\end{remark}
In particular, this inequality in the proposition gives an upper bound of $\mathrm{ch}_2(E)$:
$$
\int_X\mathrm{ch}_2(E)\cap [X] \leq d\cdot\mathrm{rk}(E)\hat P(\mu(E), \ox_X, d) - \frac 1 2  \int_Xc_1(E)\cap [K] - \mathrm{rk}(E)\chi(\mathcal O_X, \mathcal O_X).
$$
Conversely, the Hirzebruch-Riemann-Roch formula combined with Hodge Index Theorem and classical Bogomolov Inequality yields an upper bound of the Euler characteristic of $\mu$-semistable torsion free sheaf on nonsingular projective surfaces over $k$ of $\mathrm{char}\, k = 0$, that is, a boundedness theorem similar to our boundedness theorem~\ref{ebound} but a different polynomial $\hat P$. However, coherent sheaves on singular surfaces may not have bounded locally free resolutions, in this case we have no way at present to extend the definitions of Chern classes and Chern characters fulfilling the Whitney formula and the product formula to all coherent sheaves in general.

\section{Effective restriction theorem in higher dimension} \label{higher}

In this section we first show the effective restriction theorem of $\mu$-semistable torsion free sheaves on normal projective surfaces over a perfect field $k$ by the boundedness theorem~\ref{ebound}. Finally we extend our boundedness theorem to higher dimensional normal integral projective scheme over $k$ and thus the construction of $\Dx$-stability of degree $(\dim X - 1)$, Hodge Index Theorem, Bogomolov Inequality, and effective restriction theorem automatically hold for normal integral projective schemes of arbitrary dimension by the same arguments as in surfaces.

At first we recall the technical lemma which is needed in proofs of Theorem~\ref{lbound} and Theorem~\ref{ltbound}.
\begin{lem}[Lemma 1.4.~\cite{MR2051393}]\label{lan}
Let $r=\sum_{i=0}^m r_i$ with positive real numbers $r_i$ and $\mu_0 > \mu_1 > \cdots > \mu_m$ be real numbers. Set $r\mu = \sum_{i=1}^mr_i\mu_i$, then one has the inequality:
$$
\sum_{i<j}r_ir_j(\mu_i - \mu_j)^2 \leq r^2(\mu_0 - \mu)(\mu - \mu_m).
$$
\end{lem}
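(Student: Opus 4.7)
The plan is to reduce the pairwise-sum inequality to a weighted-variance bound through a standard symmetrization, and then obtain that bound from the pointwise interval constraint $\mu_m\le\mu_i\le\mu_0$.

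First I would symmetrize the left-hand side. Because $(\mu_i-\mu_j)^2$ is symmetric in $(i,j)$ and vanishes on the diagonal, one has $\sum_{i<j}r_ir_j(\mu_i-\mu_j)^2 = \tfrac12 \sum_{i,j}r_ir_j(\mu_i-\mu_j)^2$. Expanding the square and using $r=\sum_i r_i$ together with $r\mu = \sum_i r_i\mu_i$ (reading $\mu$ as the full weighted mean; see the last paragraph) yields the standard identity
\[
\sum_{i<j}r_ir_j(\mu_i-\mu_j)^2 \;=\; r\sum_i r_i\mu_i^2 - (r\mu)^2 \;=\; r\sum_i r_i(\mu_i-\mu)^2.
\]
So the claim is equivalent to the univariate weighted-variance bound
\[
\sum_i r_i(\mu_i-\mu)^2 \;\le\; r(\mu_0-\mu)(\mu-\mu_m).
\]

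Second, I would exploit the interval constraint. Since $\mu_m\le\mu_i\le\mu_0$ for every $i$, we have $(\mu_0-\mu_i)(\mu_i-\mu_m)\ge 0$, which rearranges to the pointwise quadratic bound $\mu_i^{\,2}\le(\mu_0+\mu_m)\mu_i-\mu_0\mu_m$. Multiplying by $r_i\ge 0$ and summing gives $\sum_i r_i\mu_i^2 \le (\mu_0+\mu_m)\,r\mu - r\mu_0\mu_m$. Subtracting $r\mu^{2}$ and using the factorization
\[
(\mu_0+\mu_m)\mu - \mu_0\mu_m - \mu^{2} \;=\; -(\mu-\mu_0)(\mu-\mu_m) \;=\; (\mu_0-\mu)(\mu-\mu_m)
\]
produces exactly the required inequality; combined with the first step this closes the argument.

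I do not anticipate any substantive obstacle: the argument is purely algebraic and uses only nonnegativity of the $r_i$ and the extremality of $\mu_0,\mu_m$. The one point that needs clarifying is the interpretation of the weighted mean in the hypothesis — the formula \textquotedblleft$r\mu=\sum_{i=1}^{m}r_i\mu_i$\textquotedblright\ as written is apparently a typographical slip for $r\mu=\sum_{i=0}^{m}r_i\mu_i$ (otherwise the bound degenerates when $m=0$, and it is the full weighted mean that is needed in the Harder--Narasimhan applications of Section~\ref{higher}). With this reading both steps above go through verbatim.
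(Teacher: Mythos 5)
Your proof is correct. Note first that the paper itself does not prove this lemma: it is imported verbatim (typo included) from Langer's paper \cite{MR2051393}, so there is no in-paper argument to compare against. Your two steps both check out: the symmetrization identity $\sum_{i<j}r_ir_j(\mu_i-\mu_j)^2 = r\sum_i r_i\mu_i^2-(r\mu)^2 = r\sum_i r_i(\mu_i-\mu)^2$ is a direct expansion using $r=\sum_i r_i$ and $r\mu=\sum_i r_i\mu_i$, and the pointwise bound $(\mu_0-\mu_i)(\mu_i-\mu_m)\ge 0$ summed against the weights $r_i$ gives exactly $\sum_i r_i(\mu_i-\mu)^2\le r(\mu_0-\mu)(\mu-\mu_m)$ after the factorization you state; this is the weighted Bhatia--Davis variance inequality. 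Your diagnosis of the hypothesis is also right: the displayed formula $r\mu=\sum_{i=1}^m r_i\mu_i$ must be read as $r\mu=\sum_{i=0}^m r_i\mu_i$ (the full weighted mean), as is confirmed both by the degeneration at $m=0$ and by the way the lemma is actually invoked in Section~\ref{higher}, where the author writes $r d \hat{\mu}(E) = \sum_{i=0}^m r_i \hat{\mu}(F_i)$ before applying the inequality. The only caveat worth recording is that your argument, unlike the bare citation, makes the lemma self-contained and shows that the strict ordering $\mu_0>\cdots>\mu_m$ is not needed beyond $\mu_m\le\mu_i\le\mu_0$ and $r_i\ge 0$.
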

Before proving the effective restriction theorem, let us give an alternative approach with this technical lemma to the binomial polynomial $\hat P$ in the boundedness theorem~\ref{ebound} for $\mu$-semistable torsion free sheaves. Fix a normal integral projective surface X over $k$ with a finite separable surjective morphism $f: X \rightarrow \mathbf P^2$ such that $\mathcal O_X(H) = f^*(\mathcal O_{\mathbf P^2}(1))$. Given a $\mu$-semistable torsion free sheaf $E$, the Euler characteristic of $E$ is $\chi(\mathcal O_X, E) = \chi(\mathcal O_{\mathbf P^2}, f_*E) = \sum_{i=0}^m\chi(\mathcal O_{\mathbf P^2}, F_i)$ where all $F_i$ are $\mu$-semistable factors of HN filtration of $f_*E$ on $\mathbf P^2$ with $\mu(F_0) > \mu(F_1) > \cdots > \mu(F_m)$. The lemma~\ref{p2} says that $\chi(\mathcal O_{\mathbf P^2}, F_i) \leq \chi(k(p), F_i) \cdot \binom{\hat{\mu}(F_i)}{2}$ with $\hat{\mu}(F_i) = -\frac{\chi(\mathcal O_{\mathbf P^1}, F_i)}{\chi(k(p), F_i)}$ for $i = 0, \dots, m$. Therefore, by directly computation, 
\begin{align*}
\chi(\mathcal O_X, E) - \chi(\mathcal O_{H^2}, E)\binom{\hat{\mu}(E)}{2} &= \sum_{i=0}^m\left(\chi(\mathcal O_{\mathbf P^2}, F_i) - \chi(k(p), F_i) \binom{\hat{\mu}(F_i)}{2} \right) \\
&\quad + \sum_{i=0}^m\chi(k(p), F_i) \binom{\hat{\mu}(F_i)}{2} - \chi(\mathcal O_{H^2}, E)\binom{\hat{\mu}(E)}{2} \\
&\leq  \sum_{i=0}^m\chi(k(p), F_i) \binom{\hat{\mu}(F_i)}{2} - \chi(\mathcal O_{H^2}, E)\binom{\hat{\mu}(E)}{2}.
\end{align*}
Set $d:= \deg_H X, r_i := \mathrm{rk}(F_i) = \chi(k(p), F_i)$ and $r:= \mathrm{rk}(E)$, then $r\cdot d =  \chi(\mathcal O_{H^2}, E) = \sum_{i=0}^m\chi(k(p), F_i) = \sum_{i=0}^m r_i$ and $r d \hat{\mu}(E) = -\chi(\mathcal O_H, E) = -\sum_{i=0}^m\chi(\mathcal O_{\mathbf P^1}, F_i) = \sum_{i=0}^m r_i \hat{\mu}(F_i)$. Then one can deduce
\begin{align*}
\sum_{i=0}^m\chi(k(p), F_i)\binom{\hat{\mu}(F_i)}{2} - \chi(\mathcal O_{H^2}, E)\binom{\hat{\mu}(E)}{2} &= \frac 1 2\sum_{i=0}^mr_i\hat{\mu}(F_i)^2 - \frac 1 2 \left(\sum_{i=0}^mr_i\right)\frac{\left(\sum_{i=0}^m r_i \hat{\mu}(F_i)\right)^2}{\left(\sum_{i=0}^mr_i\right)^2}\\
&= \frac{1}{2rd}\left(\sum_{j=0}^mr_j\sum_{i=0}^mr_i\hat{\mu}(F_i)^2 - \left(\sum_{i=0}^m r_i \hat{\mu}(F_i)\right)^2\right).
\end{align*}
Here,  the technical lemma~\ref{lan} implies 
\begin{align*}
\sum_{j=0}^mr_j\sum_{i=0}^mr_i\hat{\mu}(F_i)^2 - \left(\sum_{i=0}^m r_i \hat{\mu}(F_i)\right)^2 &= \sum_{i<j}r_ir_j\left(\hat{\mu}(F_i)^2 + \hat{\mu}(F_j)^2\right) - 2r_ir_j\hat{\mu}(F_i)\hat{\mu}(F_j)\\ &= \sum_{i<j}r_ir_j\left(\hat{\mu}(F_i) - \hat{\mu}(F_j)\right)^2 \\&\leq r^2 d^2(\hat{\mu}(F_0) - \hat{\mu}(f_*E))(\hat{\mu}(f_*E) - \hat{\mu}(F_m)).
\end{align*}
Furthermore, by the Prop.~\ref{max} and $\hat{\mu}(F_0) = \hat{\mu}_{\max}(f_*E)$, we obtain 
$$
\hat{\mu}(F_0) - \hat{\mu}(f_*E) \leq \frac{\chi(\mathcal O_H, \mathcal O_X)}{\chi(\mathcal O_{H^2}, \mathcal O_X)} + 2 = 2 - \hat{\mu}(\mathcal O_X).
$$
Similarly, $\hat{\mu}(F_m) = \hat{\mu}_{\min}(f_*E)$ and the Prop.~\ref{min} lead to 
$$
\hat{\mu}(f_*E) - \hat{\mu}(F_m) \leq - \frac{\chi(\mathcal O_H, \ox_X)}{\chi(\mathcal O_{H^2}, \mathcal O_X)} + 1 = 1 + \hat{\mu}(\ox_X).
$$
Using these relations, we finally arrive at
\begin{theorem}\label{lbound}
For any $\hat{\mu}$-semistable torsion free sheaf $E$ on a normal integral projective surface $X$ over a perfect field $k$ with a fixed very ample invertible sheaf $\mathcal O_X(H)$, the upper bound of the Euler characteristic of $E$ is given by 
\begin{align*}
\chi(\mathcal O_X, E) &\leq  \chi(\mathcal O_{H^2}, E)\hat P(\hat{\mu}(E), \ox_X, d) \\ &= \chi(\mathcal O_{H^2}, E)\left(\binom{\hat{\mu}(E)}{2} + \frac 1 2 (2 - \hat{\mu}(\mathcal O_X))(1 + \hat{\mu}(\ox_X)) \right),
\end{align*}
where $\ox_X$ is the dualizing sheaf of $X$ and $d = \deg_H X$.
\end{theorem}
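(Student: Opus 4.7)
The plan is to run the argument already laid out in the paragraphs preceding the statement, with the upper bound now recast in the cleaner closed form written in the theorem. Concretely, start by fixing (via Noether normalization) a finite separable surjective morphism $f\colon X\to\mathbf P^2$ with $\mathcal O_X(H)=f^{*}\mathcal O_{\mathbf P^2}(1)$. Exactness of $f_{*}$ and projection onto the ample chain give $\chi(\mathcal O_X,E)=\chi(\mathcal O_{\mathbf P^2},f_{*}E)$, and $f_{*}E$ is torsion free since $f$ is finite and dominant.

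Next, take the Harder--Narasimhan filtration of $f_{*}E$ with $\hat\mu$-semistable factors $F_0,\dots,F_m$, ordered $\hat\mu(F_0)>\cdots>\hat\mu(F_m)$, set $r_i:=\mathrm{rk}(F_i)$, $r:=\mathrm{rk}(E)$, and apply Lemma~\ref{p2} to each $F_i$ on $\mathbf P^2$ to obtain $\chi(\mathcal O_{\mathbf P^2},F_i)\le r_i\binom{\hat\mu(F_i)}{2}$. Summing and using $rd=\sum r_i$ and $rd\,\hat\mu(E)=\sum r_i\hat\mu(F_i)$, the quantity $\chi(\mathcal O_X,E)-\chi(\mathcal O_{H^2},E)\binom{\hat\mu(E)}{2}$ collapses to
\[
\frac{1}{2rd}\Bigl(\textstyle\sum_j r_j\sum_i r_i\hat\mu(F_i)^2-\bigl(\sum_i r_i\hat\mu(F_i)\bigr)^{2}\Bigr)
=\frac{1}{2rd}\sum_{i<j}r_ir_j(\hat\mu(F_i)-\hat\mu(F_j))^{2}.
\]
Feeding this into the convexity Lemma~\ref{lan} with weights $r_i$ bounds the sum by $\tfrac{rd}{2}(\hat\mu(F_0)-\hat\mu(f_{*}E))(\hat\mu(f_{*}E)-\hat\mu(F_m))$.

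The last step is to replace $\hat\mu(F_0),\hat\mu(F_m)$ by intrinsic data of $X$: since $E$ is $\hat\mu$-semistable, Proposition~\ref{max} applied with the identification $\hat\mu=\mu/d-\chi(\mathcal O_{H^{n-1}},\mathcal O_X)/d$ gives $\hat\mu(F_0)=\hat\mu_{\max}(f_{*}E)\le \hat\mu(E)+2-\hat\mu(\mathcal O_X)$, while Proposition~\ref{min} (using Grothendieck--Verdier duality on the nonsingular locus and reflexivity of $\omega_X$) yields $\hat\mu(F_m)=\hat\mu_{\min}(f_{*}E)\ge \hat\mu(E)-1-\hat\mu(\omega_X)$. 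Substituting these into the bound from the preceding step, together with $\chi(\mathcal O_{H^2},E)=rd$, produces exactly
\[
\chi(\mathcal O_X,E)\le \chi(\mathcal O_{H^2},E)\Bigl(\tbinom{\hat\mu(E)}{2}+\tfrac12(2-\hat\mu(\mathcal O_X))(1+\hat\mu(\omega_X))\Bigr).
\]

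No step is genuinely new: all the individual estimates have been established in Section~\ref{sec:nv}. The only care needed is the bookkeeping for the two $\hat\mu$-slope shifts (a $+2$ on the upper side and a $-(n+1)=-3$ on the lower side in dimension $n=2$) that come from the normalization $\hat\mu=-\chi(\mathcal O_{H^{n-1}},-)/\chi(\mathcal O_{H^n},-)$ differing from $\mu/d$, and this is exactly what produces the factors $2-\hat\mu(\mathcal O_X)$ and $1+\hat\mu(\omega_X)$ rather than the $\mu$-version $3+\mu(\omega_X)/d$ appearing in Proposition~\ref{min}. The one mild subtlety is checking that the crude pointwise replacement of each $\binom{\hat\mu(F_i)}{2}$ by the envelope at $\hat\mu(E)$ is sharper than the ad hoc supremum $\hat P_2$ used in Theorem~\ref{ebound}; this is automatic because the quadratic correction is exactly what Lemma~\ref{lan} extracts.
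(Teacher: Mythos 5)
Your proposal is correct and follows essentially the same route as the paper: the paper's own proof of Theorem~\ref{lbound} is precisely the chain you describe --- push forward along a finite separable $f\colon X\to\mathbf P^2$, apply Lemma~\ref{p2} to each HN factor of $f_*E$, rewrite the excess as the weighted variance $\frac{1}{2rd}\sum_{i<j}r_ir_j(\hat\mu(F_i)-\hat\mu(F_j))^2$, bound it via Lemma~\ref{lan}, and control $\hat\mu(F_0),\hat\mu(F_m)$ by Propositions~\ref{max} and~\ref{min}. The only cosmetic slip is that the quantity $\chi(\mathcal O_X,E)-\chi(\mathcal O_{H^2},E)\binom{\hat\mu(E)}{2}$ is bounded above by (rather than equal to) the variance expression, since Lemma~\ref{p2} already contributes an inequality at that stage.
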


\subsection{Effective restriction theorem in surfaces} \label{efr}

To prove the effective restriction theorem, we will need the following more precise formula of the universal polynomial $\hat P$ in the boundedness theorem~\ref{ebound} for general torsion free coherent sheaves on $X$.
\begin{theorem}\label{ltbound}
Suppose $X$ is a normal integral projective surface over a perfect field $k$ with a very ample invertible sheaf $\mathcal O_X(H)$. Let $G$ be a torsion free coherent sheaf on $X$, then
$$
\chi(\mathcal O_X, G) \leq  \chi(\mathcal O_{H^2}, G) \left(\hat P(\hat{\mu}(G), \ox_X, d) + \frac{1}{2}\left(\hat{\mu}_{\max}(G) - \hat{\mu}(G)\right)(\hat{\mu}(G) - \hat{\mu}_{\min}(G))\right),
$$
where $d = \deg_H X$, $\ox_X$ is the dualizing sheaf of $X$, and $\hat P$ is the binomial polynomial for $\mu$-semistable torsion free sheaves in Theorem~\ref{ebound}, or the polynomial in Theorem~\ref{lbound}.
\end{theorem}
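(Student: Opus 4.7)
The plan is to reduce the general case to the $\hat{\mu}$-semistable case established in Theorem~\ref{lbound} via the Harder--Narasimhan filtration and then to quantify the loss incurred by averaging over HN factors using Langer's inequality (Lemma~\ref{lan}). First, since $\mathrm{Coh}(X)$ is Noetherian, the corollary after Prop.~\ref{hnf} guarantees a HN filtration of $G$ with respect to $\hat{\mu}$-stability whose factors $G_0,\dots,G_m$ are $\hat{\mu}$-semistable torsion free sheaves (torsion freeness is preserved because any torsion subsheaf of $G_i$ would lift to a torsion subsheaf of $G$, contradicting torsion freeness of $G$), ordered as $\hat{\mu}(G_0) > \hat{\mu}(G_1) > \cdots > \hat{\mu}(G_m)$ with $\hat{\mu}(G_0) = \hat{\mu}_{\max}(G)$ and $\hat{\mu}(G_m) = \hat{\mu}_{\min}(G)$.

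Set $r_i := \chi(\mathcal O_{H^2}, G_i)$, $r:=\sum_i r_i = \chi(\mathcal O_{H^2}, G)$, and note the additivity relations $\chi(\mathcal O_X, G) = \sum_i \chi(\mathcal O_X, G_i)$ and $r\,\hat{\mu}(G) = \sum_i r_i\,\hat{\mu}(G_i)$ coming from Lemma~\ref{Hilbert} applied to the short exact sequences of the HN filtration. Applying Theorem~\ref{lbound} to each factor $G_i$ and summing yields
\begin{equation*}
\chi(\mathcal O_X, G) \;\leq\; \sum_{i=0}^{m} r_i\,\hat P(\hat{\mu}(G_i), \ox_X, d).
\end{equation*}
Because the non-$\binom{\cdot}{2}$ part of $\hat P$ is independent of $\hat{\mu}$, it contributes exactly $r\cdot\tfrac12(2-\hat{\mu}(\mathcal O_X))(1+\hat{\mu}(\ox_X))$ to the sum, which is the constant piece of $r\,\hat P(\hat{\mu}(G),\ox_X,d)$. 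It therefore remains to control
\begin{equation*}
\sum_{i} r_i\binom{\hat{\mu}(G_i)}{2} \;-\; r\binom{\hat{\mu}(G)}{2} \;=\; \tfrac{1}{2}\Bigl(\textstyle\sum_i r_i\,\hat{\mu}(G_i)^2 - r\,\hat{\mu}(G)^2\Bigr),
\end{equation*}
where the linear terms cancel thanks to the identity $\sum_i r_i\hat{\mu}(G_i)=r\hat{\mu}(G)$.

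The right-hand side above is precisely $\tfrac{1}{2r}\sum_{i<j} r_ir_j(\hat{\mu}(G_i)-\hat{\mu}(G_j))^2$ by expansion, so Lemma~\ref{lan} applied with $\mu_i=\hat{\mu}(G_i)$ (after renormalizing so that $r\hat{\mu}=\sum r_i\mu_i$ as required by the lemma) bounds it by $\tfrac{r}{2}(\hat{\mu}_{\max}(G)-\hat{\mu}(G))(\hat{\mu}(G)-\hat{\mu}_{\min}(G))$. Combining the two contributions gives the claimed inequality. The main potential obstacle is the compatibility between the averaging convention in Lemma~\ref{lan} (where the displayed mean $r\mu = \sum_{i=1}^m r_i\mu_i$ omits the top index) and our mean $r\hat{\mu}(G) = \sum_{i=0}^m r_i \hat{\mu}(G_i)$; this is handled by noting that the same algebraic identity $\sum_{i<j} r_ir_j(\mu_i-\mu_j)^2 = r\sum_i r_i\mu_i^2 - (\sum_i r_i\mu_i)^2$ is at work, so the bound transfers verbatim after identifying $\mu_0=\hat{\mu}_{\max}$, $\mu_m=\hat{\mu}_{\min}$, and the weighted mean with $\hat{\mu}(G)$.
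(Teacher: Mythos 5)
Your proposal is correct and follows essentially the same route as the paper's own proof: decompose $G$ via its Harder--Narasimhan filtration, apply the semistable bound of Theorem~\ref{lbound} to each factor, and control the resulting convexity defect $\sum_i r_i\binom{\hat{\mu}(G_i)}{2} - r\binom{\hat{\mu}(G)}{2}$ with Lemma~\ref{lan}. The only (cosmetic) difference is that you normalize with $r_i = \chi(\mathcal O_{H^2}, G_i)$ and work directly in $\hat{\mu}$, whereas the paper uses $r_i = \mathrm{rk}(G_i)$ and carries the factor $d$ through $\mu$; the algebra is identical.
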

\begin{proof}
This proof is similar to Theorem~\ref{lbound}. Let $G_i$'s be the $\hat{\mu}$-semistable factors of the HN filtration of $G$ on $X$ with $\hat{\mu}(G_0) > \hat{\mu}(G_1) > \cdots > \hat{\mu}(G_m)$. Set $r_i:=\mathrm{rk}(G_i)$ and $r:= \mathrm{rk}(G)$. Then 
\begin{align*}
\chi(\mathcal O_X, G) &- \chi(\mathcal O_{H^2}, G)\hat P(\hat{\mu}(G), \ox_X, d) = \sum_{i=0}^m\left(\chi(\mathcal O_X, G_i) - \chi(\mathcal O_{H^2}, G_i)\hat P(\hat{\mu}(G_i), \ox_X, d)\right)\\
&\quad + \sum_{i=0}^m\chi(\mathcal O_{H^2}, G_i) \hat P(\hat{\mu}(G_i), \ox_X, d) - \chi(\mathcal O_{H^2}, G)\hat P(\hat{\mu}(G), \ox_X, d) \\
&\leq  \sum_{i=0}^m\chi(\mathcal O_{H^2}, G_i) \hat P(\hat{\mu}(G_i), \ox_X, d) - \chi(\mathcal O_{H^2}, G)\hat P(\hat{\mu}(G), \ox_X, d)\\
&=  \frac{1}{2d}\sum_{i=0}^mr_i\mu(G_i)^2 - \frac{1}{2d}\left(\sum_{i=0}^mr_i\right)\frac{\left(\sum_{i=0}^m r_i\mu(G_i)\right)^2}{\left(\sum_{i=0}^mr_i\right)^2} \\
&=  \frac{1}{2rd}\left(\sum_{j=0}^mr_j\sum_{i=0}^mr_i\mu(G_i)^2 - \left(\sum_{i=0}^m r_i \mu(G_i)\right)^2\right)\\
&\leq \frac{\mathrm{rk}(G)}{2d}(\mu_{\max}(G) - \mu(G))(\mu(G) - \mu_{\min}(G)).
\end{align*}
\end{proof}

Now we are ready to prove the effective restriction theorem below.
\begin{theorem}\label{effective}
Let $E$ be a $\mu$-stable torsion free sheaf of $\mathrm{rk}(E) > 1$ on a normal projective surface $X$ over a perfect field $k$ with a very ample invertible sheaf $\mathcal O_X(H)$. Assume $D \in |\mathcal O_X(lH)|$ is a normal divisor such that $E|_D$ is torsion free. If
$$
l > 2(1 - \mathrm{rk}(E))\left(\chi(\mathcal O_X, E) - d\cdot\mathrm{rk}(E)\hat P(\hat{\mu}(E), \ox_X, d)\right) + \frac{1}{d\cdot\mathrm{rk}(E)(\mathrm{rk}(E) - 1)},
$$
then $E|_D$ is $\mu$-stable.
\end{theorem}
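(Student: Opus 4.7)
The plan is a contradiction argument that applies the boundedness Theorem~\ref{ltbound} to an elementary modification of $E$ along $D$. Suppose $E|_D$ is not $\mu$-stable. Then the HN filtration on the normal curve $D$ yields a saturated destabilizing subsheaf $F \subset E|_D$ of rank $r' \in \{1,\dots,r-1\}$, where $r := \mathrm{rk}(E)$, with torsion-free quotient $G := E|_D/F$ on $D$ satisfying $\mu_D(G) \leq \mu_D(E|_D) = l\,\mu(E) \leq \mu_D(F)$. Define the elementary modification $\tilde E := \ker(E \twoheadrightarrow E|_D \twoheadrightarrow G)$, a torsion-free subsheaf of $E$ of full rank $r$. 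Since $\ker(E \to E|_D) = E(-D)$, the snake lemma produces two short exact sequences
\begin{equation*}
0 \to \tilde E \to E \to G \to 0, \qquad 0 \to E(-D) \to \tilde E \to F \to 0,
\end{equation*}
from which a direct calculation with $\chi(\mathcal O_H, -)$ gives $\hat\mu(\tilde E) = \hat\mu(E) - l(r-r')/r$.

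The next step is to bound the HN slope spread of $\tilde E$. Since $\tilde E \subset E$ and $E$ is $\mu$-stable, every subsheaf of $\tilde E$ has slope at most $\mu(E)$, giving $\hat\mu_{\max}(\tilde E) \leq \hat\mu(E)$. Conversely, $E(-D) \subset \tilde E$ is $\mu$-stable of slope $\mu(E)-ld$, and the quotient $\tilde E/E(-D) = F$ is a torsion sheaf on $X$ supported on $D$. Hence for every nonzero torsion-free quotient $A$ of $\tilde E$ the composition $E(-D) \to \tilde E \to A$ is nonzero (otherwise $A$ would be a quotient of the torsion sheaf $F$, hence itself torsion). The image $I \subset A$ is a nonzero quotient of the $\mu$-stable sheaf $E(-D)$, so $\mu(A) \geq \mu(I) \geq \mu(E(-D)) = \mu(E) - ld$. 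Therefore $\hat\mu_{\min}(\tilde E) \geq \hat\mu(E) - l$, and consequently
\begin{equation*}
\hat\mu_{\max}(\tilde E) - \hat\mu(\tilde E) \leq \frac{l(r-r')}{r}, \qquad \hat\mu(\tilde E) - \hat\mu_{\min}(\tilde E) \leq \frac{l r'}{r}.
\end{equation*}

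Finally, apply Theorem~\ref{ltbound} to $\tilde E$ to obtain an upper bound on $\chi(\mathcal O_X, \tilde E)$ in terms of $\hat P(\hat\mu(\tilde E),\omega_X,d)$ plus $\tfrac12 \cdot \tfrac{l^2 r'(r-r')}{r^2}$. Independently, compute $\chi(\mathcal O_X, \tilde E) = \chi(\mathcal O_X, E(-D)) + \chi(\mathcal O_X, F)$ explicitly: expand the first term by the Hilbert polynomial of $E$ (yielding a quadratic in $l$ with leading coefficient $dr/2$) and the second by Riemann–Roch on $D$, using the lower bound $\deg_D(F) \geq r' l\,\mu(E)$. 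A direct computation shows that the leading $l^2$-contributions on both sides of the boundedness inequality match, both equal to $\tfrac{dl^2(r-r')}{2}$, so the contradiction must be extracted from the coefficient of $l^1$. Rearranging this linear inequality, and noting that $r'(r-r')$ is bounded below by $r-1$ when $r'\in\{1,r-1\}$ (the extremal case forcing the worst estimate), produces an upper bound on $l$ contradicting the stated hypothesis; the term $\tfrac{1}{dr(r-1)}$ arises precisely from this extremal case and from converting a non-strict inequality to a strict one valid across all destabilizing ranks. The main technical obstacle is the delicate bookkeeping after the $l^2$ cancellation, in particular tracking the $\deg_D(F)$ contribution, the arithmetic-genus term from Riemann–Roch on $D$, and the $\hat\mu(\omega_X)$ piece of the universal polynomial $\hat P$ from Theorem~\ref{lbound} with consistent signs; once this is organized the stated effective bound on $l$ follows at once.
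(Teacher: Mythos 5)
Your construction is the same as the paper's: take the (minimal) destabilizing quotient $T$ of $E|_D$, form the elementary modification $G=\ker(E\to E|_D\to T)$ sitting in $0\to G\to E\to T\to 0$ and $0\to E(-D)\to G\to S\to 0$, compute $\hat\mu(G)=\hat\mu(E)-l(r-\rho)/r$, bound the HN spread of $G$ using $G\subset E$ and $E(-D)\subset G$, and feed everything into Theorem~\ref{ltbound}. So the strategy is right, and your $l^2$-cancellation claim is correct.

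The gap is in the slope-spread estimates, and it is fatal as written. You bound $\hat\mu_{\max}(G)-\hat\mu(G)\leq l(r-\rho)/r$ and $\hat\mu(G)-\hat\mu_{\min}(G)\leq l\rho/r$ using only the \emph{non-strict} inequalities $\mu_{\max}(G)\leq\mu(E)$ and $\mu_{\min}(G)\geq\mu(E(-D))$. With these bounds the right-hand side of Theorem~\ref{ltbound} contributes exactly $\tfrac{d(r-\rho)l^2}{2}+\tfrac{d(r-\rho)l}{2}-d(r-\rho)l\hat\mu(E)$ in the $l$-dependent part, which cancels \emph{identically} (not just in the $l^2$ coefficient, but also in the $l^1$ coefficient) against the $l$-dependent part of your lower bound for $\chi(\mathcal O_X,G)$; the whole inequality degenerates to $\chi(\mathcal O_X,E)\leq d\,r\,\hat P(\hat\mu(E),\ox_X,d)$, which is just the boundedness theorem for $E$ and yields no restriction on $l$ whatsoever. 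The paper's proof survives precisely because $\mu$-stability of $E$ and of $E(-D)$, combined with integrality of degrees, gives the sharper estimates $\mu_{\max}(G)-\mu(G)\leq \tfrac{r-\rho}{r}ld-\tfrac{1}{r(r-1)}$ and $\mu(G)-\mu_{\min}(G)\leq\tfrac{\rho}{r}ld-\tfrac{1}{r(r-1)}$; the cross terms in the product of these two then produce the surviving linear term $-\tfrac{l}{2(r-1)}$ and the constant $\tfrac{1}{2dr(r-1)^2}$, which are exactly what forces $l$ below the stated threshold. Your attribution of the $\tfrac{1}{dr(r-1)}$ term to the extremal value of $r'(r-r')$ cannot be right: $r'(r-r')$ enters only the $l^2$ coefficient, which cancels exactly for every $r'$. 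You need to reinstate the quantitative stability defect $\tfrac{1}{r(r-1)}$ in both spread bounds before the bookkeeping can close.
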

\begin{proof}
If $E|_D$ is not $\mu$-stable, there exists a short exact sequence $0 \rightarrow S \rightarrow E|_D \rightarrow T \rightarrow 0$ with the minimal destablizing quotient sheaf $T$ of $E|_D$, or $E|_D$ is $\mu$-semistable on $D$. Either would implies $\mu(T) \leq \mu(E|_D)$. Then this short exact sequence induces two short exact sequences $0 \rightarrow G \rightarrow E \rightarrow T \rightarrow 0$ and $0 \rightarrow E(-D) \rightarrow G \rightarrow S \rightarrow 0$. Set $\rho := \mathrm{rk}(S)$ on $D$ and $r := \mathrm{rk}(E)$, then 
$$
\mu(G) = \frac{\deg_H G}{r} = \frac{\deg_H E - (r - \rho)ld}{r} = \mu(E) - \frac{r - \rho}{r}ld = \mu(E(-D)) + \frac{\rho}{r}ld.
$$
Since $E$ and $E(-D)$ are $\mu$-stable, we obtain 
\begin{align*}
\mu_{\max}(G) - \mu(G) &= \mu_{\max}(G) - \mu(E) +  \frac{r - \rho}{r}ld \leq \frac{r - \rho}{r}ld - \frac{1}{r(r-1)},\\
\mu(G) - \mu_{\min}(G) &= \mu(E(-D)) - \mu_{\min}(G) + \frac{\rho}{r}ld \leq \frac{\rho}{r}ld - \frac{1}{r(r-1)}.
\end{align*}
On the other hand, the lemma~\ref{Hilbert} says
\begin{align*}
\chi(\mathcal O_X, E(-D)) &= \chi(\mathcal O_{H^2}, E)\binom{-l}{2} + \chi(\mathcal O_H, E)l + \chi(\mathcal O_X, E), \\
\chi(\mathcal O_X, E|_D) &= - \chi(\mathcal O_{H^2}, E)\binom{-l}{2} -  \chi(\mathcal O_H, E)l.
\end{align*}
Then $\mu(T) \leq \mu(E|_D)$ implies
$$
\chi(\mathcal O_X, T) \leq -(r-\rho)d\binom{-l}{2} - \frac{r - \rho}{r}l\chi(\mathcal O_H, E),
$$
and thus $\chi(\mathcal O_X, G) = \chi(\mathcal O_X, E) - \chi(\mathcal O_X, T) \geq \chi(\mathcal O_X, E) + (r-\rho)d\binom{-l}{2} + \frac{r - \rho}{r}l\chi(\mathcal O_H, E)$.
Hence, one has 
\begin{align*}
\chi(\mathcal O_X, G) &- \chi(\mathcal O_{H^2}, G)\hat P(\hat{\mu}(G), \ox_X, d) \geq \chi(\mathcal O_X, E) - \chi(\mathcal O_{H^2}, E)\hat P(\hat{\mu}(E), \ox_X, d)\\
& +(r-\rho)d\binom{-l}{2} + \frac{r - \rho}{r}l\chi(\mathcal O_H, E) + (r - \rho)ld\left(\hat{\mu}(E) - \frac 1 2\right) - \frac{(r-\rho)^2}{2r}l^2d.
\end{align*}
Applying Theorem~\ref{ltbound} to $G$ we finally arrive at
$$
\chi(\mathcal O_X, E) - \chi(\mathcal O_{H^2}, E)\hat P(\hat{\mu}(E), \ox_X, d) \leq \frac{-l}{2(r -1)} + \frac{1}{2dr(r-1)^2}.
$$
Therefore, 
$$
l \leq 2(1 - \mathrm{rk}(E))\left(\chi(\mathcal O_X, E) - d\cdot\mathrm{rk}(E)\hat P(\hat{\mu}(E), \ox_X, d)\right) + \frac{1}{d\cdot\mathrm{rk}(E)(\mathrm{rk}(E) - 1)},
$$
a contradiction.
\end{proof}
\begin{cor}
Let $E$ be a $\mu$-semistable torsion free sheaf of $\mathrm{rk}(E) > 1$ on $X$ and $D \in |\mathcal O_X(lH)|$ is a normal divisor such that the restrictions to $D$ of all $\mu$-stable factors of Jordan-H\"{o}lder filtration of $E$ are torsion free. If 
$$
l > 2(1 - \mathrm{rk}(E))\left(\chi(\mathcal O_X, E) - d\cdot\mathrm{rk}(E)\hat P(\hat{\mu}(E), \ox_X, d)\right) + \frac{1}{d\cdot\mathrm{rk}(E)(\mathrm{rk}(E) - 1)},
$$
then $E|_D$ is $\mu$-semistable.
\end{cor}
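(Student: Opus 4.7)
The plan is to reduce the $\mu$-semistable case to the $\mu$-stable case established in Theorem~7.1 by applying that theorem to each stable Jordan--H\"older (JH) factor of $E$. First I would choose a JH filtration $0 = E_0 \subset E_1 \subset \cdots \subset E_k = E$ whose successive quotients $F_i := E_i/E_{i-1}$ are $\mu$-stable, all sharing the common slope $\mu(F_i) = \mu(E)$, equivalently $\hat{\mu}(F_i) = \hat{\mu}(E)$. By hypothesis each $F_i|_D$ is torsion free; since $D \in |\mathcal O_X(lH)|$ is a Cartier divisor and $F_i$ is torsion free, $\mathrm{Tor}_1^{\mathcal O_X}(\mathcal O_D, F_i) = 0$, so restricting the filtration produces short exact sequences $0 \to E_{i-1}|_D \to E_i|_D \to F_i|_D \to 0$ on $D$. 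By induction on $i$ the restriction $E|_D$ is itself torsion free.

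Next, for each factor $F_i$ with $\mathrm{rk}(F_i) \geq 2$ I would apply Theorem~7.1 to the $\mu$-stable sheaf $F_i$: the restriction $F_i|_D$ is $\mu$-stable on the normal curve $D$ provided that $l$ exceeds the associated bound
$$
B(F_i) \;:=\; 2\bigl(1 - \mathrm{rk}(F_i)\bigr)\Bigl(\chi(\mathcal O_X, F_i) - d \cdot \mathrm{rk}(F_i)\,\hat P(\hat{\mu}(E), \ox_X, d)\Bigr) + \frac{1}{d \cdot \mathrm{rk}(F_i)\bigl(\mathrm{rk}(F_i) - 1\bigr)}.
$$
For a JH factor $F_i$ of rank one, the restriction $F_i|_D$ is a rank-one torsion free sheaf on a curve, hence automatically $\mu$-semistable. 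In all cases the slope computation $\mu(F_i|_D) = l\,\mu(E) = \mu(E|_D)$ shows that the restricted filtration $\{E_i|_D\}$ exhibits $E|_D$ as an iterated extension of $\mu$-semistable sheaves of one common slope, and so $E|_D$ is itself $\mu$-semistable.

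The hard part will be to verify that the hypothesis $l > B(E)$ stated in the corollary implies the factorwise bound $l > B(F_i)$ for every $F_i$ of rank $\geq 2$. The comparison rests on three ingredients: (i) the additivities $\chi(\mathcal O_X, E) = \sum_j \chi(\mathcal O_X, F_j)$ and $\mathrm{rk}(E) = \sum_j \mathrm{rk}(F_j)$; (ii) the equality $\hat{\mu}(F_j) = \hat{\mu}(E)$, which makes the polynomial $\hat P$ identical in $B(E)$ and in every $B(F_j)$; and (iii) the non-negativity $d\cdot \mathrm{rk}(F_j)\,\hat P(\hat{\mu}(E),\ox_X, d) - \chi(\mathcal O_X, F_j) \geq 0$ supplied by Theorem~7.3 applied to each $\mu$-semistable $F_j$. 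Together these show that the principal term $2(r-1)\bigl(dr\hat P - \chi(\mathcal O_X, E)\bigr)$ of $B(E)$ dominates its analogue $2(r_i-1)\bigl(d r_i \hat P - \chi(\mathcal O_X, F_i)\bigr)$ in $B(F_i)$, since $(r-1)\geq (r_i-1)$ and $\sum_j\bigl(d r_j\hat P-\chi(\mathcal O_X,F_j)\bigr)\geq d r_i\hat P-\chi(\mathcal O_X,F_i)$. The delicate residual task is to absorb the non-negative gap $\tfrac{1}{d r_i(r_i-1)} - \tfrac{1}{d r(r-1)}$ appearing when $r_i < r$; this is handled by exploiting the strictness of the boundedness inequality for proper JH factors together with the integrality of $\chi(\mathcal O_X, F_i)$ and of the rank, which supplies a margin in the main-term comparison sufficient to cover the correction.
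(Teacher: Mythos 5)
Your strategy is exactly the paper's: restrict the Jordan--H\"older filtration to $D$, apply the $\mu$-stable restriction theorem (Theorem~\ref{effective}) to each stable factor $F_i$, and conclude that $E|_D$ is an iterated extension of semistable sheaves of one slope. The paper's entire proof of this corollary is the single remark that ``the first term of $E$ on the right side of the inequality is larger than the same term of any $\mu$-stable factors,'' i.e.\ it compares only the leading terms $2(r-1)\bigl(dr\hat P-\chi(\mathcal O_X,E)\bigr)$ versus $2(r_i-1)\bigl(dr_i\hat P-\chi(\mathcal O_X,F_i)\bigr)$, which does follow from Theorem~\ref{lbound} (each $dr_j\hat P-\chi(\mathcal O_X,F_j)\geq 0$), the additivity of $\chi$ and $\mathrm{rk}$, and $\hat\mu(F_j)=\hat\mu(E)$. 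So up to that point you and the paper coincide, and you are more explicit.

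The ``delicate residual task'' you flag is, however, a genuine gap, and your proposed resolution does not close it. You need
$$
2(r-1)\sum_j\bigl(dr_j\hat P-\chi(\mathcal O_X,F_j)\bigr)-2(r_i-1)\bigl(dr_i\hat P-\chi(\mathcal O_X,F_i)\bigr)\;\geq\;\frac{1}{dr_i(r_i-1)}-\frac{1}{dr(r-1)},
$$
and when $r_i<r$ the right-hand side is strictly positive while the left-hand side can be $0$: Theorem~\ref{lbound} is stated as a non-strict inequality, so nothing forbids $dr_j\hat P-\chi(\mathcal O_X,F_j)=0$ for every $j$, in which case the main-term surplus vanishes. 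Your appeal to ``strictness of the boundedness inequality'' is therefore unsupported, and integrality does not rescue it either, since $dr_j\hat P$ is only a rational number (the binomial $\binom{\hat\mu}{2}$ and the $\hat\mu(\ox_X)$ term have denominators), so the quantity $dr_j\hat P-\chi(\mathcal O_X,F_j)$ need not be a nonnegative integer with a gap at $0$. To make the argument airtight you would either have to prove a quantitative lower bound on $dr_i\hat P-\chi(\mathcal O_X,F_i)$ whenever it is nonzero and show it dominates $\tfrac{1}{dr_i(r_i-1)}$, or, more cleanly, bypass the factorwise reduction altogether and rerun the proof of Theorem~\ref{effective} directly for semistable $E$ (dropping the two $-\tfrac{1}{r(r-1)}$ terms that strict stability provided and using the strict inequality $\mu(T)<\mu(E|_D)$ for a genuinely destabilizing quotient); that route gives the semistable statement without needing $l>B(F_i)$ for each factor. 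Note that the paper's own one-line proof silently ignores exactly the same correction-term discrepancy, so this is a defect you have inherited and honestly identified rather than introduced.
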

\begin{proof}
Note that the first term of $E$ on the right side of the inequality is larger than the same term of any $\mu$-stable factors.
\end{proof}
\begin{remark}
The ideal of our argument follows the proofs of Langer's effective restriction theorem and Bogomolov's effective restriction theorem. Either needed the inequality of Chern classes or discriminant of $\mu$-semistable torsion free sheaves. However, as in the previous discussion the only way possible to define Chern classes and Chern characters on all coherent sheave is to resolve each coherent sheaf by a bounded complex of locally free coherent sheaves, which may not exist on singular schemes in general.
\end{remark}

\subsection{Boundedness for higher dimensional schemes} \label{bh}

The final task is to extend the boundedness theorem~\ref{ebound} or~\ref{lbound} on normal projective surfaces to normal integral projective scheme of arbitrary dimension of finite type over a perfect field $k$. Let us now turn to a normal integral projective scheme $X$ of dimension $n$ of finite type over $k$ with a finite separable morphism $f : X \rightarrow \mathbf P^n$ such that the very ample invertible sheaf $\mathcal O_X(H) = f^*(\mathcal O_{\mathbf P^n}(1))$. As in the context of Sec.~\ref{sec:nv}, we first generalize Lemma~\ref{p2} for $\mu$-semistable sheaves on the projective plane $\mathbf P^2$ to the $n$-dimensional projective space $\mathbf P^n$. We would need the following restriction theorem of strongly $\mu$-semistable sheaves on projective spaces which is non-effective version of Theorem~4.1 in ref.~\cite{MR2727611}. This proof is a slightly modification of Langer's proof of Theorem~4.1 in~\cite{MR2727611} which is a combination of proofs of Grauert-M\"ulich, Flenner's~\cite{MR780080}, and Mehta-Ramanathan restriction theorem~\cite{MR649194}. For the detailed proofs of these theorems we refer to the book~\cite{MR2665168}.
\begin{theorem}\label{ls}
Given any $\mu$-semistable torsion free sheaf $E$ of rank $r \geq 2$ on $\mathbf P^n$ over an algebraically closed field $k$, and let $a >> 0$ be any sufficiently large integer, then the restriction of $E$ to the generic hypersurface in $|aH|$ is strongly $\mu$-semistable.
\end{theorem}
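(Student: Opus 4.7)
The plan is to adapt Langer's proof of the non-effective restriction theorem from~\cite{MR2727611}, combining the Grauert-M\"ulich restriction theorem, Flenner's codimension estimate in the complete linear system $|aH|$, and Langer's control of Frobenius pullbacks via the Bogomolov Inequality. If $\mathrm{char}\, k = 0$, every $\mu$-semistable torsion free sheaf on a smooth projective variety is automatically strongly $\mu$-semistable, and the statement reduces to the classical Flenner/Mehta-Ramanathan restriction theorem on $\mathbf P^n$. So we may assume $\mathrm{char}\, k = p > 0$.

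The first substantive step is to introduce Langer's asymptotic slope invariants
$$
L_{\max}(E) := \lim_{k \to \infty} \frac{\mu_{\max}((F^k)^* E)}{p^k}, \qquad L_{\min}(E) := \lim_{k \to \infty} \frac{\mu_{\min}((F^k)^* E)}{p^k},
$$
which are known to exist for any torsion free sheaf on a smooth projective variety. Applying the Bogomolov Inequality (Theorem~\ref{bogomolov}) on $\mathbf P^n$ to the HN factors of each $(F^k)^* E$ and passing to the limit yields a uniform bound
$$
L_{\max}(E) - L_{\min}(E) \leq C(r, \Delta(E), n),
$$
depending only on the rank $r$ and the discriminant of $E$, independently of $k$.

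Next I would apply the Grauert-M\"ulich restriction theorem to each iterated Frobenius pullback $(F^k)^* E$ on $\mathbf P^n$ and to a generic hypersurface $D \in |aH|$. This produces, for each $k$, an inequality of the shape
$$
\mu_{\max}\bigl((F^k)^* E|_D\bigr) - \mu_{\min}\bigl((F^k)^* E|_D\bigr) \leq p^k \bigl(L_{\max}(E) - L_{\min}(E)\bigr) + \frac{p^k \cdot a (r-1)}{r}.
$$
Rescaling by $p^k \cdot a$, the right-hand side is bounded by $\tfrac{C(E)}{a} + \tfrac{r-1}{r}$. Flenner's codimension estimate on the non-semistable locus then shows that the subset $V_k \subset |aH|$ parametrizing those $D$ for which $(F^k)^* E|_D$ fails to be $\mu$-semistable has codimension $\geq 2$ once $a$ exceeds a threshold depending only on $r$, $\Delta(E)$ and $n$---critically, the threshold is independent of $k$ by the uniform bound above. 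A generic $D \in |aH|$ therefore avoids the countable union $\bigcup_{k \geq 0} V_k$, and since $((F^k)^* E)|_D \simeq (F_D^k)^*(E|_D)$, the restriction $E|_D$ is strongly $\mu$-semistable. Torsion-freeness of $E|_D$ for generic $D$ follows from a Bertini-type argument applied to the singular support of $E$, which has codimension $\geq 2$.

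The principal obstacle is the uniformity in the Frobenius exponent $k$: on its own the Grauert-M\"ulich bound $p^k(L_{\max}-L_{\min}) + \tfrac{a(r-1)}{r}\cdot p^k$ blows up, and Flenner's argument only yields one value of $a$ per individual filtration. The decisive point is Langer's insight that the asymptotic widths $L_{\max}(E) - L_{\min}(E)$ are bounded by a \emph{single} constant extracted from the Bogomolov discriminant, so that after renormalization by $p^k$ the family of Frobenius pullbacks behaves as a single object controlled by one value of $a$. Pinning down this uniform Flenner-type codimension estimate is the technical heart of the proof; everything else is a routine application of Mehta-Ramanathan together with the Bogomolov bound already established in Section~\ref{hb}.
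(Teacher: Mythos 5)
Your overall architecture (reduce to characteristic $p$, control Frobenius pullbacks over the family of hypersurfaces in $|aH|$, invoke Flenner-type estimates) is in the right spirit, but the step you yourself identify as decisive is circular within this paper. You bound $L_{\max}(E)-L_{\min}(E)$ by ``applying the Bogomolov Inequality (Theorem~\ref{bogomolov}) on $\mathbf P^n$ to the HN factors of each $(F^k)^*E$.'' Theorem~\ref{bogomolov} is proved only for normal projective \emph{surfaces}; for $n>2$ the inequality $\Delta(E).H^{n-2}\leq 0$ on $\mathbf P^n$ is exactly Lemma~\ref{pn}, which the paper proves by induction on $n$ \emph{using} Theorem~\ref{ls}. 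The remark following the paper's proof states this explicitly: one cannot assume Bogomolov in dimension $>2$ here, since that is what the restriction theorem is being used to establish. So the uniform bound on the asymptotic slope widths --- the ``technical heart'' of your argument --- has no available source, and the uniform-in-$k$ Flenner codimension estimate built on it collapses.

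The paper's route avoids this entirely. It first applies Mehta--Ramanathan to get that $E|_D$ is $\mu$-semistable and torsion free for general $D\in|aH|$, then argues by contradiction with a \emph{single} Frobenius exponent: by Langer's Theorem~2.7 there is one $m$ such that the relative HN filtration of $q^*((F^m)^*E)$ over the incidence variety $Z\to\Pi=|aH|$ restricts to the HN filtration on general fibers with strongly semistable quotients. Non-descent of this filtration produces a nonzero map $E_i\to\Omega_Z\otimes (F^m)^*E/E_i$, whence the consecutive slope gaps $\mu(E^i_{Z_s})-\mu(E^{i+1}_{Z_s})$ are bounded above by $\mu_{\max}((\Omega_{Z/\mathbf P^n})_{Z_s})\leq a^2/\bigl(\binom{a+n}{n}-a-1\bigr)$ (Flenner's estimate), while quantization of the determinants forces these gaps to be at least $a/\max\{(r^2-1)/4,1\}$. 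Comparing the two bounds gives $a\gtrsim\binom{a+n}{n}$, impossible for $a\gg 0$. No discriminant bound, no asymptotic invariants, and no countable union of bad loci (a single $m$ suffices) are needed. If you want to salvage your approach you would have to either restrict to $n=2$ or find an independent proof of the higher-dimensional Bogomolov inequality on $\mathbf P^n$, which defeats the purpose of the induction.
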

\begin{proof}
If char $k = 0$, by Flenner's, or Mehta-Ramanathan restriction theorem the restriction of $E$ to a general hypersurface $D \in |aH|$ with $a >> 0$ is again $\mu$-semistable and torsion free, thus strongly $\mu$-semistable. 

Assume char $k > 0$, by Mehta-Ramanathan restriction theorem $E|_D$ is $\mu$-semistable and torsion free for general $D \in |aH|$ with $a >> 0$. Suppose $E|_D$ is not strongly $\mu$-semistable and $\Pi = |aH|$, then we have $\Pi \xleftarrow{\, p\, } Z \xrightarrow{\, q\, } \mathbf P^n$ for the incidence variety $Z = \{(D, x) \in \Pi \times \mathbf P^n : x \in D \}$ such that $(F^m)^*E$ has a relative HN filtration for some $m > 0$ and there exists an open subset $U$ of $\Pi$ so that for all $s \in U$ the fiber of the relative HN filtration forms the HN filtration of $E|_D \simeq (q^*E)_{Z_s}$ with strongly $\mu$-semistable quotients by Theorem~2.7 in~\cite{MR2051393}. Here $F^m : \mathbf P^n \rightarrow \mathbf P^n$ is the $m$-th absolute Frobenius morphism.

Let us take such minimal $m$ and thus $E_0 \subset E_1 \subset \cdots \subset E_l = q^*((F^m)^*E)$ be the relative HN filtration with $E^i := E_i/E_{i+1}$ which does not descend. Then by Theorem~5.1 in~\cite{MR291177}, we get a nontrivial morphism 
$$
E_i \rightarrow \Ox_Z \otimes (F^m)^*E/E_i
$$
such that
$$
\mu_{\min}(E_i \otimes ((F^m)^*E/E_i)^*)_{Z_s} ) \leq \max\{\mu_{\max}((\Ox_{Z/\mathbf P^n})_{Z_s}), \mu_{\max}(q^*\Ox_{\mathbf P^n})_{Z_s}\}
$$
for generic fiber $Z_s$ of $p$. Note that $\Ox_{\mathbf P^n} \hookrightarrow \mathcal O_{\mathbf P^n}(-1)^{\oplus n+1}$, one has $\mu_{\max}(\Ox_{\mathbf P^n}) < 0$ and the restriction of quotients of HN filtration of $\Ox_{\mathbf P^n}$ to a general hypersurface is $\mu$-semistable for $a >> 0$ by  Mehta-Ramanathan restriction theorem. Indeed, the general fact proved by Maruyama~\cite{MR571438} said that the restriction of a $\mu$-semistable sheaf of rank $< n$ to a general hyperplane is $\mu$-semistable as well. Thus we get $\mu_{\max}(q^*\Ox_{\mathbf P^n})_{Z_s} < 0$, and
$$
\mu\left(E^i_{Z_s}\right) - \mu\left(E^{i+1}_{Z_s}\right) \leq \mu_{\max}((\Ox_{Z/\mathbf P^n})_{Z_s}).
$$
Here $\det\left(E^i\right) \simeq p^*\mathscr M_i \otimes q^*\mathscr L_i$ for some line bundles $\mathscr M_i$ on $\Pi$ and $\mathscr L_i$ on $\mathbf P^n$, and thus $\deg\left( E^i_{Z_s}\right) = \deg(\mathscr L_i|_{Z_s}) = aL_i.H^{n-1}$ with $L_i = c_1(\mathscr L_i)$. Hence,
$$
\mu\left(E^i_{Z_s}\right) - \mu\left(E^{i+1}_{Z_s}\right) \geq \frac{a}{\max\left\{\frac{r^2 - 1}{4}, 1\right\}}.
$$
On the hand, the kernel $K$ of the evaluation map $H^0(\mathcal O_{\mathbf P^n}(a)) \otimes \mathcal O_{\mathbf P^n} \rightarrow \mathcal O_{\mathbf P^n}(a)$ is $\mu$-semistable presented by Brenner in~\cite{MR2435644}, and from the proof of Flenner's theorem (see Step 4 of proof of Theorem~7.1.1 in~\cite{MR2665168}) one can find 
$$
\mu_{\max}((\Ox_{Z/\mathbf P^n})_{Z_s}) \leq \frac{a^2}{\binom{a+n}{n} - a - 1},
$$
and then by simple computation with the inequalities above, it turns out that
$$
a \geq \frac{1}{\max\left\{\frac{r^2 - 1}{4}, 1\right\} + 1}\left(\binom{a+n}{n} - 1 \right),
$$
a contradiction to the assumption $a >> 0$.
\end{proof}
\begin{remark}
In Theorem~4.1 in~\cite{MR2727611} Langer proves an effective restriction theorem of strongly $\mu$-semistability on a smooth projective variety $X$ of dimension $n$ with $\mu_{\max}(\Ox_X) \leq 0$ by using his version of Bogomolov Inequality in Theorem~2.2 in~\cite{MR2214897}. However, we can not assume Bogomolov Inequality holds for $n > 2$ which is what we are planing to show now. Thus, instead of Langer's Bogomolov Inequality we use Mehta-Ramanathan restriction theorem~\cite{MR649194} to prove this non-effective restriction theorem of strongly $\mu$-semistability on projective spaces.
\end{remark}

Together with the restriction theorem of strongly $\mu$-semistability on $\mathbf P^n$ and arguments in previous section we are ready to prove the  generalized lemma below.
\begin{lem}\label{pn}
Let $E$ be a $\hat{\mu}$-semistable torsion free sheaf on $\mathbf P^n$. Then the Euler characteristic of $\mathcal O_{\mathbf P^2}$ and $E$ has an upper bound
$$
(-1)^{n-2}\chi(\mathcal O_{\mathbf P^2}, E) \leq (-1)^{n}\chi(k(p), E)\binom{\hat{\mu}(E)}{2}
$$
with $\mathrm{rk}(E) = (-1)^n\chi(k(p), E)$ and $\hat{\mu}(E) = -\frac{\chi(\mathcal O_{\mathbf P^1}, E)}{\chi(k(p), E)}$.
\end{lem}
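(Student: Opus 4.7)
The plan is to reduce the inequality on $\mathbf P^n$ to the $\mathbf P^2$ version in Lemma~\ref{p2} by restricting $E$ to a generic linear $\mathbf P^2 \subset \mathbf P^n$. For the regular closed immersion $i : \mathbf P^2 \hookrightarrow \mathbf P^n$ of codimension $n-2$, whose normal bundle is $N_{\mathbf P^2/\mathbf P^n} \simeq \mathcal O_{\mathbf P^2}(1)^{\oplus(n-2)}$, Grothendieck-Verdier duality (via the factorisation of structure morphisms $\mathbf P^2 \to \mathbf P^n \to \mathrm{Spec}\,k$) gives
$$
i^! \mathcal O_{\mathbf P^n} \simeq \det N_{\mathbf P^2/\mathbf P^n}[-(n-2)] \simeq \mathcal O_{\mathbf P^2}(n-2)[-(n-2)].
$$
Adjunction then produces the key identity
$$
(-1)^{n-2}\chi(\mathcal O_{\mathbf P^2}, E) = \chi_{\mathbf P^2}\bigl(\mathcal O_{\mathbf P^2}, (i^*E)(n-2)\bigr),
$$
valid whenever the generic $\mathbf P^2$ is chosen so that $L i^*E$ is concentrated in cohomological degree zero.

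The next step is to verify that for a sufficiently generic linear $\mathbf P^2 \subset \mathbf P^n$ the restriction $i^*E$ is torsion free and $\hat{\mu}$-semistable on $\mathbf P^2$. Torsion freeness together with vanishing of higher Tors follows from a dimension count: since $E$ is locally free outside a closed subset of codimension $\geq 2$, moving $\mathbf P^2$ in a large enough family eliminates problematic intersections. The harder requirement is semistability: in characteristic zero it follows from iterated application of Mehta--Ramanathan's hyperplane restriction theorem down from $\mathbf P^n$ to $\mathbf P^2$, while in positive characteristic one first passes to the strongly $\mu$-semistable regime via Theorem~\ref{ls} and then uses preservation of strong $\mu$-semistability under generic hyperplane restriction to iterate down to $\mathbf P^2$. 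This restriction step is the main obstacle of the proof.

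Granted that $i^*E$ is torsion free and $\mu$-semistable on $\mathbf P^2$, its twist $(i^*E)(n-2)$ has rank $\mathrm{rk}(E)$, and using $\hat{\mu}(\mathcal O_{\mathbf P^k}) = k$ together with the fact that the degree of $E$ with respect to $\mathcal O(1)$ is preserved under restriction to a generic linear subspace, one computes
$$
\hat{\mu}_{\mathbf P^2}\bigl((i^*E)(n-2)\bigr) = \mu_{\mathbf P^2}(i^*E) + (n-2) + 2 = \mu_{\mathbf P^n}(E) + n = \hat{\mu}_{\mathbf P^n}(E).
$$
Lemma~\ref{p2} applied to $(i^*E)(n-2)$ on $\mathbf P^2$ then yields
$$
\chi_{\mathbf P^2}\bigl(\mathcal O_{\mathbf P^2}, (i^*E)(n-2)\bigr) \leq \mathrm{rk}(E)\binom{\hat{\mu}(E)}{2} = (-1)^n \chi(k(p), E)\binom{\hat{\mu}(E)}{2},
$$
which combined with the Grothendieck-Verdier identity gives the required bound $(-1)^{n-2}\chi(\mathcal O_{\mathbf P^2}, E) \leq (-1)^n \chi(k(p), E)\binom{\hat{\mu}(E)}{2}$, completing the proposed argument.
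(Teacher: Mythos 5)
Your Grothendieck--Verdier reduction to a twisted restriction on a linear $\mathbf P^2$, and the slope bookkeeping $\hat{\mu}_{\mathbf P^2}\bigl((i^*E)(n-2)\bigr) = \hat{\mu}_{\mathbf P^n}(E)$, are both correct. The gap sits exactly at the step you yourself flag as the main obstacle: the restriction of a $\mu$-semistable torsion free sheaf on $\mathbf P^n$ to a \emph{generic linear} hyperplane (hence, by iteration, to a generic linear $\mathbf P^2$) need not be $\mu$-semistable, and none of the tools you invoke delivers this. The Mehta--Ramanathan theorem~\cite{MR649194}, Flenner's theorem, and Theorem~\ref{ls} of this paper all concern general hypersurfaces $D \in |aH|$ with $a \gg 0$; they say nothing about $a = 1$. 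The only hyperplane statements available are Maruyama's theorem~\cite{MR571438}, which requires $\mathrm{rk}(E) \leq n-1$, and the Grauert--M\"ulich bound, which only controls the gaps between consecutive HN slopes of the restriction and does not give semistability. So for arbitrary rank you cannot iterate down to a linear $\mathbf P^2$; and if you instead restrict to a large-degree hypersurface $D$ as Theorem~\ref{ls} permits, $D$ is no longer a projective plane and Lemma~\ref{p2} cannot be applied to it.

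This is precisely why the paper takes a different route. It first uses Riemann--Roch and $\mathrm{Pic}(\mathbf P^n) \simeq \mathbb Z$ to translate the asserted inequality into the Bogomolov inequality $\Delta(E).H^{n-2} \leq 0$ on $\mathbf P^n$, and then runs an induction on $n$ in which the restriction is taken to a general hypersurface $D \in |aH|$ with $a \gg 0$ --- a normal projective variety of dimension $n-1$, not a projective space --- where Theorem~\ref{ls} guarantees strong $\mu$-semistability of $E|_D$ and the inductive hypothesis, fed through Theorems~\ref{hebound} and~\ref{bogomolovn} in dimension $n-1$, supplies the Bogomolov inequality on $D$. To salvage your more direct argument you would need a genuine hyperplane restriction theorem for semistability on $\mathbf P^n$ valid in all ranks, which is not available.
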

\begin{proof}
Using the Riemann-Roch formula for coherent sheaves on $\mathbf P^n$ and the fact $\mathrm{Pic}(\mathbf P^n) \simeq \mathbb Z$, the inequality is equivalent to the Bogomolov Inequality for $\mu$-semistable torsion free sheaves on $\mathbf P^n$, i.e.,
$$
\Dx(E).H^{n-2} = \left((\mathrm{rk}(E)-1)c_1(E)^2 - 2\cdot \mathrm{rk}(E)c_2(E)\right).H^{n-2} \leq 0,
$$ 
where $H := \mathcal O_{\mathbf P^n}(1)$. We prove this inequality by induction on the dimension $n$. Assume this inequality hold for $\mathbf P^{n-1}$. Note that if the restriction of a $\mu$-semistable torsion free sheaf on $\mathbf P^n$ to a general hyperplane $\mathbf P^{n-1}$ is also $\mu$-semistable torsion free, then this inequality holds simultaneously. Hence for any torsion free coherent sheaf $E$ of $\mathrm{rk}(E) \leq n - 1$ on $\mathbf P^n$, one has $\Dx(E).H^{n-2} \leq 0$ by Maruyama's theorem in~\cite{MR571438}.

Therefore, the previous discussion in Sec.~\ref{sec:nv} implies the boundedness theorem and Bogomolov Inequality (see Theorem~\ref{hebound} and~\ref{bogomolovn}) on normal projective varieties of dimension $= n - 1$. From Theorem~\ref{ls} the restriction of a $\mu$-semistable torsion free sheaf $E$ on $\mathbf P^n$ to a general hypersurface $D$ in $|aH|$ for $a >> 0$ is also strongly $\mu$-semistable. Then by Bogomolov Inequality for strongly $\mu$-semistable sheaves on $D$, the inequality $\Dx(E).H^{n-2} \leq 0$ holds and thus we are done. 
\end{proof}
\begin{remark}
Theorem~3.2 presented by Langer in ref.~\cite{MR2051393} implies $\Dx(E).H^{n-2} \leq 0$ for any strongly $\mu$-semistable sheaf $E$ on smooth projective varieties. Note that $\Ox_{\mathbf P^n} \hookrightarrow \mathcal O_{\mathbf P^n}(-1)^{\oplus n+1}$, one has $\mu_{\max}(\Ox_{\mathbf P^n}) < 0$ then each $\mu$-semistable sheaves is also strongly $\mu$-semistable. Indeed, in homogeneous spaces, abelian and toric varieties all semistable sheaves are strongly semistable. Thus this also yields the inequality in the lemma above. Indeed, the idea of Langer's proof considers the generalized stability with respect to a collection of nef divisors $D_1,\dots, D_{n-1}$ with the numerically nontrivial 1-cycle $D_1.\cdots.D_{n-1}$, which is much more than what we really need. For our purpose, it is sufficient to use this restriction theorem of strongly $\mu$-semistability on projective spaces.
\end{remark}

As for the previously discussed boundedness theorem on normal projective surfaces, the established correspondence between normal integral projective schemes of dimension $n$ and the $n$-dimensional projective space carries over to the higher dimensional boundedness theorem as well, which we illustrate in the following generalized theorem. 
\begin{theorem}\label{hebound}
Let $E$ be a torsion free coherent sheaf on a normal integral projective scheme $X$ over a perfect field $k$ of $\dim X = n$ with a fixed very ample invertible sheaf $\mathcal O_X(H)$. Then there exists a polynomial $\hat P(\hat{\mu}_{\max}(E), \hat{\mu}_{\min}(E), \ox_X, d = \deg_H X)$ depending on the HN filtration of $E$, the degree of the canonical sheaf $\ox_X$ of $X$ and the degree of $X$ with respect to $\mathcal O_X(H)$ such that
\begin{align*}
&(-1)^{n-2}\chi(\mathcal O_{H^{n-2}}, E) \leq (-1)^n\chi(\mathcal O_{H^n}, E)\hat P(\hat{\mu}_{\max}(E), \hat{\mu}_{\min}(E), \ox_X, d)\\
&=(-1)^n\chi(\mathcal O_{H^n}, E)\left(\hat P(\hat{\mu}(E), \ox_X, d) + \frac{1}{2}\left(\hat{\mu}_{\max}(E) - \hat{\mu}(E)\right)(\hat{\mu}(E) - \hat{\mu}_{\min}(E))\right)
\end{align*}
with $(-1)^n\chi(\mathcal O_{H^n}, E) = d\cdot \mathrm{rk}(E)$ and $\hat{\mu}(E) = -\frac{\chi(\mathcal O_{H^{n-1}}, E)}{\chi(\mathcal O_{H^n}, E)}$. Here, the polynomial $\hat P$ is the same as the one in Theorem~\ref{ebound} or~\ref{lbound}, i.e.,
$$
\hat P(\hat{\mu}(E), \ox_X, d) = \binom{\hat{\mu}(E)}{2} + \frac 1 2 (n - \hat{\mu}(\mathcal O_X))(1 + \hat{\mu}(\ox_X)).
$$
\end{theorem}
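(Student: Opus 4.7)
The proof will proceed in parallel to the surface case (Theorem~\ref{lbound} combined with Theorem~\ref{ltbound}), using Lemma~\ref{pn} as the higher-dimensional replacement for Lemma~\ref{p2}. First I would invoke Noether normalization to fix a finite separable surjective morphism $f : X \to \mathbf P^n$ with $\mathcal O_X(H) = f^{*}\mathcal O_{\mathbf P^n}(1)$; exactness of $f_{*}$ gives $\chi(\mathcal O_{H^c}, E) = \chi(\mathcal O_{\mathbf P^c}, f_{*}E)$ for $c = 0, 1, 2, n-1, n$, so $\mathrm{rk}$ and $\hat{\mu}$ transform exactly as recorded in Lemma~\ref{order}. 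The argument then splits into two stages, a semistable estimate followed by a convexity step.

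Stage one: assume $E$ is $\hat{\mu}$-semistable. Let $f_{*}E = F_0 \supset \cdots \supset F_m$ be the HN filtration of $f_{*}E$ on $\mathbf P^n$ with graded pieces $\bar F_i$ of ranks $\rho_i$ and slopes $\hat{\mu}(\bar F_i)$ strictly decreasing. Propositions~\ref{max} and~\ref{min} (which are stated for arbitrary $n$) yield
\begin{align*}
\hat{\mu}(\bar F_0) - \hat{\mu}(f_{*}E) &\leq n - \hat{\mu}(\mathcal O_X), \\
\hat{\mu}(f_{*}E) - \hat{\mu}(\bar F_m) &\leq 1 + \hat{\mu}(\omega_X),
\end{align*}
after substituting $\hat{\mu}(f_{*}E) = \hat{\mu}(E)$. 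Apply Lemma~\ref{pn} to each $\bar F_i$ to obtain
$$
(-1)^{n-2}\chi(\mathcal O_{\mathbf P^2}, \bar F_i) \leq (-1)^n \rho_i \binom{\hat{\mu}(\bar F_i)}{2}.
$$
Summing over $i$ and matching the constant term using $\sum \rho_i = d\cdot \mathrm{rk}(E)$ and $\sum \rho_i \hat{\mu}(\bar F_i) = d\cdot\mathrm{rk}(E)\hat{\mu}(E)$, the discrepancy is controlled by
$$
\sum_i \rho_i\binom{\hat{\mu}(\bar F_i)}{2} - d\cdot \mathrm{rk}(E)\binom{\hat{\mu}(E)}{2} = \frac{1}{2d\cdot\mathrm{rk}(E)}\sum_{i<j}\rho_i\rho_j(\hat{\mu}(\bar F_i)-\hat{\mu}(\bar F_j))^2,
$$
and Lemma~\ref{lan} bounds the right side by $\tfrac12 d\cdot \mathrm{rk}(E)(n - \hat{\mu}(\mathcal O_X))(1 + \hat{\mu}(\omega_X))$, giving exactly the claimed $\hat{P}(\hat{\mu}(E), \omega_X, d)$.

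Stage two: for general torsion free $E$, take the HN filtration on $X$ itself with $\hat{\mu}$-semistable pieces $E_0, \dots, E_M$ of ranks $r_i$ and slopes $\hat{\mu}(E_i)$ strictly decreasing between $\hat{\mu}_{\max}(E)$ and $\hat{\mu}_{\min}(E)$. Apply stage one to each $E_i$ and sum, then apply Lemma~\ref{lan} a second time (exactly as in the proof of Theorem~\ref{ltbound}) to the resulting expression
$$
\sum_i r_i\,\hat{P}(\hat{\mu}(E_i),\omega_X,d) - \mathrm{rk}(E)\,\hat{P}(\hat{\mu}(E),\omega_X,d) = \frac{1}{2d\cdot\mathrm{rk}(E)}\sum_{i<j}r_ir_j(\hat{\mu}(E_i)-\hat{\mu}(E_j))^2,
$$
which is bounded by $\tfrac{1}{2}\mathrm{rk}(E)(\hat{\mu}_{\max}(E)-\hat{\mu}(E))(\hat{\mu}(E)-\hat{\mu}_{\min}(E))$, accounting for the extra term in the stated $\hat{P}(\hat{\mu}_{\max},\hat{\mu}_{\min},\omega_X,d)$.

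The main obstacle is stage one, specifically the legitimacy of invoking Lemma~\ref{pn}: one must know the bound holds for the graded pieces of a HN filtration on $\mathbf P^n$, which are $\mu$-semistable but possibly not torsion free globally. Since $f_{*}E$ is torsion free and the HN pieces of a torsion free sheaf on $\mathbf P^n$ remain torsion free (as quotients modulo the torsion of subsequent steps are saturated), this is fine; the inductive input that Lemma~\ref{pn} uses (Bogomolov-type inequality in dimension $n$) is precisely what Sec.~\ref{bh} establishes via strongly $\mu$-semistable restriction (Theorem~\ref{ls}). The remaining work is purely bookkeeping with signs, since the factor $(-1)^{n-2}$ on the left and $(-1)^n$ on the right arise naturally from Lemma~\ref{Hilbert} applied in codimensions $n-2$ and $n$.
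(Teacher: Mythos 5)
Your proposal is correct and follows essentially the same route as the paper: the paper proves Theorem~\ref{hebound} exactly by transporting the two-stage surface argument (Theorem~\ref{lbound} for the semistable case via the finite cover $f:X\to\mathbf P^n$, Propositions~\ref{max}--\ref{min}, and Lemma~\ref{lan}; then Theorem~\ref{ltbound} for the general case via a second application of Lemma~\ref{lan} to the HN factors on $X$), with Lemma~\ref{pn} substituted for Lemma~\ref{p2}. The only blemish is a harmless constant slip in your stage-two display (the prefactor should be $\tfrac{1}{2\,\mathrm{rk}(E)}$ rather than $\tfrac{1}{2d\cdot\mathrm{rk}(E)}$ given how you normalized the left-hand side), which does not affect the stated final bound.
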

\begin{remark}\label{rem0}
(i) The fact that the performed analysis of the boundedness theorem for higher dimensional varieties closely parallels the study of the boundedness theorem on surfaces does not come as a surprise, because the correction terms arise from the difference in the binomial term $\binom{\hat{\mu}(E)}{2}$ between the $\mu$-semistable sheaf and the sum of upper bounds of HN factors of its direct image in $\mathbf P^n$. As a result, the topological data of the normal integral projective scheme $X$ for the degree, structure sheaf and canonical sheaf resulting from a given complete linear system $|\mathcal O_X(H)|$ are the same for the discussed boundedness theorem.

(ii) As $X$ is integral projective scheme over $k$, the polynomial $\hat P$ has the crude form below
$$
\hat P(\hat{\mu}(E), d) = \binom{\hat{\mu}(E)}{2} + \frac{d^2}{2},
$$
since $\mu_{\max}(f_*F) - \mu(f_*F) \leq d$ and $\mu(f_*F) - \mu_{\min}(f_*F) \leq d$ for each $\mu$-semistable torsion free sheaf $F$ by Lemma~6.2.2 in ref.~\cite{MR2214897}. Therefore, for a torsion free sheaf $E$ on $X$ we have the following bound 
$$
(-1)^{n-2}\chi(\mathcal O_{H^2}, E) \leq (-1)^n\chi(\mathcal O_{H^n}, E)\left(\hat P(\hat{\mu}(E), d) + \frac{1}{2}\left(\hat{\mu}_{\max}(E) - \hat{\mu}(E)\right)(\hat{\mu}(E) - \hat{\mu}_{\min}(E))\right).
$$
\end{remark}

In the normal projective surfaces over a perfect field the construction of $\Dx$-stability of degree one, Hodge Index Theorem, Bogomolov Inequality and effective restriction theorem rely on Theorem~\ref{ebound} or~\ref{lbound} of normal projective surfaces such that for any $\mu$-semistable sheaf $E$ the Euler characteristic $\chi(\mathcal O_X, E)$ has an upper bound of the form $\chi(\mathcal O_{H^2}, E)\hat P(\hat{\mu}(E), \ox_X, d)$ with a binomial function $\hat P$. Therefore, the boundedness theorem for higher dimension discussed here directly deduces higher dimensional versions of those theorems in the context of Sec.~\ref{sec:nv} and~\ref{efr}. Specifically, Bogomolov Inequality for strongly $\hat\mu$-semistable locally free sheaves on a normal integral projective scheme is also valid.
\begin{theorem}[Bogomolov Inequality]\label{bogomolovn}
Suppose $X$ is a normal integral projective scheme of dimension $n$ over a perfect field $k$ with a very ample invertible sheaf $\mathcal O_X(H)$. If $E$ is a strongly $\hat\mu$-semistable locally free sheaf, then 
$$
\Dx(E).H^{n-2} := \left((\mathrm{rk}(E)-1)c_1(E)^2 - 2\cdot \mathrm{rk}(E)c_2(E)\right).H^{n-2} \leq 0.
$$
\end{theorem}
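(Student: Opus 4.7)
The strategy mirrors the surface case (Theorem~\ref{bogomolov}), now invoking the higher-dimensional boundedness theorem~\ref{hebound} applied to tensor powers of $E\otimes E^{\vee}$. The plan is to sandwich $\chi(\mathcal{O}_{H^{n-2}}, E'^{\otimes m})$ between an upper bound from boundedness, of order $\mathrm{rk}(E')^m$, and a lower bound from Riemann-Roch whose leading term in $m$ is proportional to $\Dx(E).H^{n-2}$; letting $m\to\infty$ then forces $\Dx(E).H^{n-2}\leq 0$.

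First I form $E' := E\otimes E^{\vee}$, a locally free sheaf of rank $\mathrm{rk}(E)^2$ with $c_1(E')=0$ and $\Dx(E') = 2\,\mathrm{rk}(E)^2\,\Dx(E)$. Since both $E$ and $E^{\vee}$ are strongly $\hat{\mu}$-semistable and tensor products of strongly $\mu$-semistable locally free sheaves remain so by Langer~\cite{MR2051393} (with $\hat{\mu}$- and $\mu$-semistability agreeing up to the constant shift by $\hat{\mu}(\mathcal{O}_X)$), every tensor power $E'^{\otimes m}$ is a strongly $\hat{\mu}$-semistable locally free sheaf with $\mathrm{rk}(E'^{\otimes m}) = \mathrm{rk}(E')^m$ and $c_1(E'^{\otimes m})=0$, so that $\hat{\mu}(E'^{\otimes m}) = \hat{\mu}(\mathcal{O}_X)$ is independent of $m$; a direct computation from $\mathrm{ch}(E'^{\otimes m}) = \mathrm{ch}(E')^m$ then gives $\Dx(E'^{\otimes m}) = m\,\mathrm{rk}(E')^{2(m-1)}\,\Dx(E')$.

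Applying Theorem~\ref{hebound} to the $\hat{\mu}$-semistable sheaf $E'^{\otimes m}$, the correction term in $\hat{P}$ vanishes because $\hat{\mu}_{\max} = \hat{\mu}_{\min}$, yielding the upper bound
\[
(-1)^{n-2}\chi(\mathcal{O}_{H^{n-2}}, E'^{\otimes m}) \,\leq\, d\cdot \mathrm{rk}(E')^m\cdot C,
\]
where $C := \hat{P}(\hat{\mu}(\mathcal{O}_X), \ox_X, d)$ depends only on $X$. For the lower bound I choose $H_1,\ldots,H_{n-2}\in|\mathcal{O}_X(H)|$ forming a regular sequence so that $i:H^{n-2}\hookrightarrow X$ is a local complete intersection of codimension $n-2$, and apply Grothendieck-Verdier duality via $i^!(-) \simeq Li^*(-)\otimes\ox_{H^{n-2}/X}[-(n-2)]$ followed by Hirzebruch-Riemann-Roch on the surface $H^{n-2}$ together with the projection formula. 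Since $c_1(E'^{\otimes m})=0$, only the $\mathrm{ch}_0$ and $\mathrm{ch}_2$ contributions survive, producing
\[
(-1)^{n-2}\chi(\mathcal{O}_{H^{n-2}}, E'^{\otimes m}) \,=\, \frac{m\,\mathrm{rk}(E')^{m-2}}{2}\,\Dx(E').H^{n-2} \,+\, \mathrm{rk}(E')^m\cdot D,
\]
where $D$ is a constant depending on $\mathrm{Td}(H^{n-2})$, $\ox_{H^{n-2}/X}$ and $\mathrm{rk}(E')$.

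Combining the two estimates and dividing through by $\mathrm{rk}(E')^m$ gives $\frac{m}{2\,\mathrm{rk}(E')^2}\Dx(E').H^{n-2}\leq dC - D$ for every $m\geq 1$, which is possible only if $\Dx(E').H^{n-2}\leq 0$; hence $\Dx(E).H^{n-2}\leq 0$. The main obstacle is the rigour of the Riemann-Roch step: although $E'^{\otimes m}$ is locally free so that its Chern character is defined on $X$, the complete intersection $H^{n-2}$ need not be smooth. This is handled either by choosing the $H_i$ generically so that $H^{n-2}$ is a normal (hence Cohen-Macaulay) locally complete intersection surface on which HRR applies, or by bypassing the restriction and computing $\chi(\mathcal{O}_{H^{n-2}}, E'^{\otimes m})$ via the Koszul resolution of $\mathcal{O}_{H^{n-2}}$ on $X$ combined with the Baum-Fulton-MacPherson Riemann-Roch homomorphism $\tau_X:K_0(X)\to A_*(X)_{\mathbb Q}$ applied termwise to the locally free entries of the Koszul complex. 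In either route the key verification is that once the $\mathrm{ch}_2$ contribution is isolated, the remainder collects into a single $O(\mathrm{rk}(E')^m)$ term, which follows from $c_1(E'^{\otimes m})=0$ and the multinomial structure of $\mathrm{ch}(E'^{\otimes m})$.
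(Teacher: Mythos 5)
Your proposal is correct and follows exactly the route the paper intends: the paper proves the surface case (Theorem~\ref{bogomolov}) by applying the boundedness theorem to the tensor powers $E'^{\otimes m}$ of $E'=E\otimes E^{\vee}$ and comparing with Riemann--Roch as $m\to\infty$, and then states that Theorem~\ref{bogomolovn} follows from Theorem~\ref{hebound} "by the same arguments as in surfaces," which is precisely what you carry out. Your extra care about performing the Riemann--Roch step on the (possibly singular) complete intersection surface $H^{n-2}$, via a general choice of hyperplanes or the Koszul resolution together with $\tau_X$, is a useful rigor point that the paper leaves implicit.
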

\begin{remark}
This theorem generalizes Theorem~3.2 in~\cite{MR2051393} to strongly $\mu$-semistable locally free sheaves on normal projective  varieties. Indeed, for any tensor power of a $\mu$-semistable locally free sheaf which is again $\mu$-semistable, this inequality holds. Thus Hodge Index Theorem is a special case of Bogomolov Inequality.
\end{remark}

Moreover, in this setting an upper bound of this form on normal integral projective schemes $X$ yields a $\Dx$-stability of degree $= \dim X - 1$ with the associated numerical slope polynomial $P_t(-) = \chi(L_{-t}, -)$ induced by a slope sequence $\{L_{-t}\}$ in the tilted heart of a bounded t-structure on $\mathrm{D^b}(X)$. 
\begin{theorem}\label{stabilityn}
Suppose $X$ is a normal integral projective scheme of dimension $n$ over a perfect field $k$ with a very ample invertible sheaf $\mathcal O_X(H)$. Given a rational number $q = \frac{m_1}{m_2} \in \mathbb Q$, there exists the tilted heart $\mathcal A_q$ of a bounded t-structure on the bounded derived category of coherent sheaves $\mathrm{D^b}(X)$ with a one parameter family of slope sequences $\left\{L_{-t}^s(m_0) : m_0 > m_{\min}\right\}$ and $L_{-t}^s(m_0)[1] \in \mathrm{Ext}^n\left(\mathcal O_{H^n}^{\oplus m_2\binom t n + m_1\cdot \binom{t}{n-1} + m_0\binom{t}{n-2}} , \mathcal O_X(-tH)^{\oplus m_2}\right)$ such that the associated slope polynomials $P^s_{t, m_0}(-) = \chi\left(L^s_{-t}(m_0), -\right)$ induce $\Dx$-stabilities of degree $=n-1$ on $\mathrm{D^b}(X)$. Here, $m_{\min}$ is the smallest integral which is larger than $m_2 \cdot \hat P(q, \ox_X, d)$, i.e.,
$$
m_{\min} = \min\left\{m \in \mathbb Z : m > m_2 \cdot \hat P(q, \ox_X, d = \deg_H X)\right\}
$$
with the polynomial $\hat P$ appearing in Theorem~\ref{hebound}.
\end{theorem}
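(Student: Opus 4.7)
The plan is to imitate the surface construction of Theorem~\ref{stability1} step by step, using Theorem~\ref{hebound} in place of Theorem~\ref{ebound}. The first move is to fix $q = m_1/m_2 \in \mathbb Q$ with $m_2 > 0$ and build the torsion pair $(\mathcal F_q^{\bot}, \mathcal F_q)$ on $\mathrm{Coh}(X)$ from the $\hat\mu$-HN filtration: $\mathcal F_q$ consists of torsion free sheaves whose maximal HN factor satisfies $\hat\mu \le q$, while $\mathcal F_q^{\bot}$ consists of extensions of torsion sheaves by torsion free sheaves whose minimal HN factor satisfies $\hat\mu > q$. Since $\mathrm{Coh}(X)$ possesses the HN property with respect to $\hat\mu$ (Prop.~\ref{hnf} applied to Hilbert polynomial), every coherent sheaf fits in a short exact sequence with pieces in $\mathcal F_q^{\bot}$ and $\mathcal F_q$, so the HRS tilt gives the heart $\mathcal A_q$ of a bounded $t$-structure on $\mathrm{D^b}(X)$.

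Second, I would verify that $\mathcal A_q$ is Noetherian by a direct generalization of Prop.~\ref{noetherian}. The argument there was essentially dimension-agnostic: one uses the discrete valuation $\check a_{n-1}(-) = m_2 a_{n-1}(-) - m_1 a_n(-) \ge 0$ on $\mathcal A_q$ coming from the tilted numerical polynomial, combines it with Noetherianity of $\mathrm{Coh}(X)$ applied to the cohomology sheaves $H^0$ and $H^{-1}$, and uses the $S_2$-property of $\mathcal O_X$ (which holds since $X$ is normal, cf.~Serre's criterion) to conclude that an ascending chain in $\mathcal A_q$ must stabilize. The proof transcribes verbatim once one replaces the codimension-$2$ statement by codimension-$n$ statement; the vanishing $\mathcal Ext^{\le 1}(H^0(L_i), \mathcal O_X) = 0$ for a torsion sheaf supported in codimension $\ge 2$ is what is used and needs no change.

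Third, I would construct the tilted ample chain and promote it to a slope sequence. By the same computation as in the surface case, Lemma~\ref{ef} applied to $\{\mathcal O_X(-tH)[1]\}$ (which restricted to $\mathcal A_q$ gives a tilted ample chain satisfying the hypotheses of Prop.~\ref{bound}) yields the tilted numerical polynomial $\widetilde P_t(-) = \sum_{c=0}^n (-1)^c \chi(\mathcal O_{H^c},-)\binom{t}{c}$. To cancel the top two coefficients as prescribed by Prop.~\ref{bound}, I would take cones against $\mathcal O_{H^n}[-n+1]$: explicitly, set $m(t) := m_2\binom{t}{n} + m_1\binom{t}{n-1} + m_0\binom{t}{n-2}$ and $$L^s_{-t}(m_0) := \mathrm C\!\left(\mathcal O_{H^n}^{\oplus m(t)}[-n+1] \longrightarrow \mathcal O_X(-tH)^{\oplus m_2}[1]\right),$$ so that the associated numerical polynomial has degree $\le n-1$ with top coefficient $m_2 a_{n-1}(-) - m_1 a_n(-)$, matching the $\check a_{n-1}$ on $\mathcal A_q$.

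The key technical step --- and the main obstacle --- is to ensure that for $m_0$ chosen above a threshold $m_{\min}$, the polynomial $P^s_{t,m_0}(E)$ is a genuine slope polynomial on $\mathcal A_q$, i.e., its non-zero leading coefficient is positive for every non-zero $E \in \mathcal A_q$. The bound $m_{\min} > m_2\cdot \hat P(q,\ox_X,d)$ enters exactly here: when $E \in \mathcal A_q$ has $\check a_{n-1}(E) = 0$ (so the polynomial drops in degree), Theorem~\ref{hebound} applied on the tilted side --- where such $E$ decomposes into a $\mu$-semistable sheaf $F[1]$ of slope exactly $q$ in $\mathcal F_q$ plus a torsion contribution in codimension $\ge 2$ --- produces the inequality $m_2 \cdot (-1)^{n-2}\chi(\mathcal O_{H^{n-2}},F) \le m_2 d \cdot \mathrm{rk}(F)\, \hat P(q,\ox_X,d)$, so the new coefficient $m_0 a_{n-2}(E) - m_2 a_{n-2}\bigl(\hat P\text{-contribution}\bigr)$ is strictly positive as soon as $m_0 > m_{\min}$; when the polynomial drops further, the boundedness argument cascades inductively on the dimension of the support of $E$ (using that the analogous statement of Theorem~\ref{hebound} on lower-dimensional closed subschemes of $X$ holds by induction). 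Once this positivity is established, Prop.~\ref{bound} yields a slope polynomial $P^s_{t,m_0}$ of degree $n-1$, Prop.~\ref{hnf} (combined with Noetherianity just shown) delivers HN sequences for every object of $\mathcal A_q$, and then Cor.~\ref{ts} glued across shifts $\mathcal A_q[j]$ gives the $\Delta$-stability data of degree $n-1$ on the whole of $\mathrm{D^b}(X)$, completing the proof.
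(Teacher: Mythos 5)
Your proposal is correct and follows the route the paper intends: the paper gives no separate argument for Theorem~\ref{stabilityn}, asserting only that the surface construction of Theorem~\ref{stability1} carries over once Theorem~\ref{hebound} replaces Theorem~\ref{ebound}, and your write-up supplies exactly those steps (the HRS tilt, Noetherianity of $\mathcal A_q$ via Prop.~\ref{noetherian}, the cone against $\mathcal O_{H^n}^{\oplus m(t)}[-n+1]$ cancelling the top coefficients, and strict positivity of the degree-$(n-2)$ coefficient from $m_0 > m_2\hat P(q,\ox_X,d)$ applied to the semistable sheaf $H^{-1}(E)$ of slope $q$). The only cosmetic deviation is at the end: once the polynomial drops below degree $n-2$ the object is a torsion sheaf supported in codimension $\geq 3$ and one needs only positivity of its Hilbert polynomial, i.e.\ condition (iii) of Prop.~\ref{bound}, rather than any induction on boundedness for lower-dimensional subschemes (which need not be normal).
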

\begin{remark}
Compared with the notions of Gieseker stability, Mumford-Takemoto stability, or $\Dx$-stability of degree~$=\dim X$ which are well defined on any projective scheme $X$ over a field, we could not expect to construct $\Dx$-stability of degree~$=\dim X - 1$ by using the tilted heart from the abelian category of coherent sheaves on arbitrary projective schemes. Indeed, the tilted heart may not be weakly $\Dx$-Noetherian. For a general treatment compare Theorem~\ref{tHN}.
\end{remark}

Finally, more general effective restriction theorem occurs from the similar calculation in the proof of Theorem~\ref{effective}.
\begin{theorem}\label{effectiven}
Let $E$ be a $\hat\mu$-(semi)stable torsion free sheaf of $\mathrm{rk}(E) > 1$ on a normal integral projective scheme $X$ of $\dim X = n$ over a perfect field $k$ with a very ample invertible sheaf $\mathcal O_X(H)$. Assume $D \in |\mathcal O_X(lH)|$ is a normal divisor such that $E|_D$ is torsion free. If
$$
l > 2(1 - \mathrm{rk}(E))\left((-1)^{n-2}\chi(\mathcal O_{H^{n-2}}, E) - d\cdot\mathrm{rk}(E)\hat P(\hat{\mu}(E), \ox_X, d)\right) + \frac{1}{d\cdot\mathrm{rk}(E)(\mathrm{rk}(E) - 1)},
$$
then $E|_D$ is $\hat\mu$-(semi)stable. Here, $\hat P$ is the polynomial in Theorem~\ref{hebound}.
\end{theorem}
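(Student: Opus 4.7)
The plan is to mirror the surface argument of Theorem~\ref{effective} almost verbatim, with the Euler characteristic $\chi(\mathcal{O}_X,-)$ replaced throughout by the codimension-two invariant $(-1)^{n-2}\chi(\mathcal{O}_{H^{n-2}},-)$, and Theorem~\ref{ltbound} replaced by the higher dimensional boundedness Theorem~\ref{hebound}. Suppose for contradiction that $E|_D$ fails to be $\hat\mu$-(semi)stable; choose a destabilising short exact sequence $0\to S\to E|_D\to T\to 0$ with $\hat\mu(T)\leq\hat\mu(E|_D)$ (strict inequality in the stable case), set $\rho=\mathrm{rk}(S)$ and $r=\mathrm{rk}(E)$, and lift along $E\twoheadrightarrow E|_D$ to the two short exact sequences
\[
0\to G\to E\to T\to 0, \qquad 0\to E(-D)\to G\to S\to 0
\]
of torsion free sheaves on $X$, exactly as in the surface proof.

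Next I bound the $\hat\mu$-slopes of $G$. Additivity of $\hat\mu$ along the second sequence together with $\hat\mu(E(-lH))=\hat\mu(E)-l$ give $\hat\mu(G)=\hat\mu(E)-\tfrac{r-\rho}{r}l=\hat\mu(E(-D))+\tfrac{\rho}{r}l$; the $\hat\mu$-stability of $E$ and of its line-bundle twist $E(-D)$, combined with the standard denominator bound $\tfrac{1}{dr(r-1)}$ on the gap between $\hat\mu$-slopes of a $\hat\mu$-stable sheaf of rank $r$ and any of its proper subsheaves, then forces
\[
\hat\mu_{\max}(G)-\hat\mu(G)\leq \tfrac{r-\rho}{r}l-\tfrac{1}{dr(r-1)}, \qquad \hat\mu(G)-\hat\mu_{\min}(G)\leq \tfrac{\rho}{r}l-\tfrac{1}{dr(r-1)}.
\]
For the semistable variant I replace $E$ by a Jordan--Hölder factor and use monotonicity of $\hat P$ on the corresponding slope patch, exactly as in the corollary following Theorem~\ref{effective}.

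For the third step I expand the Hilbert polynomial of Lemma~\ref{Hilbert} restricted to the codimension-two stratum $H^{n-2}$:
\[
(-1)^{n-2}\chi(\mathcal{O}_{H^{n-2}},E(-lH)) = (-1)^n\chi(\mathcal{O}_{H^n},E)\binom{-l}{2} + (-1)^{n-2}l\,\chi(\mathcal{O}_{H^{n-1}},E) + (-1)^{n-2}\chi(\mathcal{O}_{H^{n-2}},E),
\]
together with the analogous expansion of $(-1)^{n-2}\chi(\mathcal{O}_{H^{n-2}},E|_D)$ obtained by subtraction. The slope inequality $\hat\mu(T)\leq \hat\mu(E|_D)$ translates into an upper bound on $(-1)^{n-2}\chi(\mathcal{O}_{H^{n-2}},T)$, hence by additivity along the first sequence into a lower bound on $(-1)^{n-2}\chi(\mathcal{O}_{H^{n-2}},G)$ of the form $(-1)^{n-2}\chi(\mathcal{O}_{H^{n-2}},E)$ plus explicit terms linear and quadratic in $l$.

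Finally, pairing this lower bound with the upper bound supplied by Theorem~\ref{hebound} applied to $G$ and inserting the slope estimates of the second paragraph, the $\binom{-l}{2}$ contributions cancel precisely between the $E(-D)$ side and the $T$ side, leaving a linear-in-$l$ inequality
\[
(-1)^{n-2}\chi(\mathcal{O}_{H^{n-2}},E)-dr\,\hat P(\hat\mu(E),\ox_X,d) \leq \frac{-l}{2(r-1)} + \frac{1}{2dr(r-1)^2},
\]
which contradicts the hypothesis on $l$ after multiplication by $2(r-1)$. The main technical obstacle I anticipate lies in the third and fourth steps: verifying that the quadratic-in-$l$ contributions cancel exactly once one combines the $\binom{-l}{2}$ terms coming from $E(-D)$ with the quadratic contribution $\tfrac12(\hat\mu_{\max}(G)-\hat\mu(G))(\hat\mu(G)-\hat\mu_{\min}(G))$ supplied by Theorem~\ref{hebound}, and keeping the signs produced by the $(-1)^{n-2}$ factor consistent throughout the passage between Euler characteristics at codimension $0$ on $X$ and codimension $n-2$ on $H^{n-2}$.
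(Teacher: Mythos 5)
Your proposal is correct and follows essentially the same route as the paper: the paper itself gives no separate argument for Theorem~\ref{effectiven}, stating only that it "occurs from the similar calculation in the proof of Theorem~\ref{effective}" with Theorem~\ref{hebound} supplying the boundedness input, which is precisely the substitution $\chi(\mathcal O_X,-)\mapsto(-1)^{n-2}\chi(\mathcal O_{H^{n-2}},-)$ and $\mu\mapsto\hat\mu$ that you carry out. Your spelled-out version, including the slope-gap bound $\tfrac{1}{dr(r-1)}$ for $\hat\mu$ and the Jordan--H\"older reduction for the semistable case, is a faithful and complete rendering of what the paper leaves implicit.
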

\begin{remark}\label{rem1}
(i) The theorem gives a positive answer to the question that whether for any $\mu$-semistable torsion free sheaf $E$ on a normal projective variety $X$ there exists a sufficiently large integer $d_0$ such that the restriction of $E$ to the generic hypersurface of degree $d > d_0$ is $\mu$-semistable again. Indeed, the lower bound $d_0$ can be numerically controlled by the cohomological datum of $E$ and $X$.

(ii) For an integral projective scheme $X$ over $k$, using the inequality in Remark~\ref{rem0} the similar argument implies that for a $\hat\mu$-(semi)stable torsion free sheaf $E$, the restriction of $E$ to a general divisor $D \in |\mathcal O_X(lH)|$ is $\hat\mu$-(semi)stable again if $E|_D$ is torsion free and
$$
l > 2(1 - \mathrm{rk}(E))\left((-1)^{n-2}\chi(\mathcal O_{H^{n-2}}, E) - d\cdot\mathrm{rk}(E)\hat P(\hat{\mu}(E), d)\right) + \frac{1}{d\cdot\mathrm{rk}(E)(\mathrm{rk}(E) - 1)}.
$$
\end{remark}

\begin{center}
\textbf{Acknowledgements} 
\end{center}We would like to thank 
Sheng-Fu Chu, Rung-Tzung Huang, Chin-Yu Hsiao, Yuan-Pin Lee and I-Hsun Tsai 
for discussions and correspondences. Main part of this paper was completed during the author's stay at Institute of Mathematics, Academia Sinica. The author was supported by Taiwan Ministry of Science and Technology projects 109-2811-M-008-532. 


\bibliographystyle{alpha}
\bibliography{nvpaper}
\end{document}